\documentclass[11pt,a4paper]{amsart}

\usepackage{aliascnt}
\usepackage{hyperref}
\usepackage{amsfonts}
\usepackage{mathrsfs}
\usepackage{amsmath}
\usepackage{amsmath,graphics}
\usepackage{amssymb}
\usepackage{indentfirst,latexsym,bm,amsmath,pstricks,amssymb,amsthm,graphicx,fancyhdr,float,color}
\usepackage[all]{xy}
\usepackage{amsthm}
\usepackage{amscd}
\usepackage[all]{hypcap}

\newtheorem{theorem}{Theorem}[section]

\newaliascnt{lemma}{theorem}
\newtheorem{lemma}[lemma]{Lemma}
\aliascntresetthe{lemma}

\newaliascnt{conjecture}{theorem}
\newtheorem{conjecture}[conjecture]{Conjecture}
\aliascntresetthe{conjecture}

\newaliascnt{proposition}{theorem}
\newtheorem{proposition}[proposition]{Proposition}
\aliascntresetthe{proposition}

\newaliascnt{corollary}{theorem}
\newtheorem{corollary}[corollary]{Corollary}
\aliascntresetthe{corollary}

\newaliascnt{problem}{theorem}

\aliascntresetthe{problem}

\newaliascnt{claim}{theorem}
\newtheorem{claim}[claim]{Claim}
\aliascntresetthe{claim}

\theoremstyle{definition}

\newaliascnt{definition}{theorem}
\newtheorem{definition}[definition]{Definition}
\aliascntresetthe{definition}

\newaliascnt{example}{theorem}
\newtheorem{example}[example]{Example}
\aliascntresetthe{example}

\theoremstyle{remark}

\newaliascnt{remark}{theorem}
\newtheorem{remark}[remark]{Remark}
\aliascntresetthe{remark}

\newaliascnt{remarks}{theorem}
\newtheorem{remarks}[remarks]{Remarks}
\aliascntresetthe{remarks}


\setlength{\headheight}{12pt}
\setlength{\textheight}{22cm} \setlength{\textwidth}{16cm}
\setlength{\oddsidemargin}{0.3cm}
\setlength{\evensidemargin}{0.3cm}
\setlength{\topmargin}{0cm} \setlength{\unitlength}{1mm}

\numberwithin{equation}{section}
\renewcommand{\theequation}{\arabic{section}-\arabic{equation}}

\numberwithin{figure}{section}



\setlength{\parskip}{0.5mm}
\def\wt{\widetilde}
\def\ol{\overline}
\def\ra{\rightarrow}
\def\lra{\longrightarrow}

\def\Aut{\text{\rm{Aut\,}}}
\def\({$($}
\def\){$)$}
\def\chit{\chi_{\rm top}}
\def\bbp{\mathbb P}
\def\cala{\mathcal A}

\def\calm{\mathcal M}

\def\cals{\mathcal S}
\def\calt{\mathcal T}

\def\rank{\text{{\rm rank\,}}}

\def\ols{\overline{S}}
\def\oly{\overline{Y}}

\def\olb{\overline{B}}
\def\olf{\bar{f}}

\def\Alb{{\rm Alb}}

\newcommand{\Mcal}{\mathcal{M}}

\newcommand{\Acal}{\mathcal{A}}
\newcommand{\Acalg}{\mathcal{A}_g}

\newcommand{\Jac}{\mathrm{Jac}}
\newcommand{\Tcal}{\mathcal{T}}
\newcommand{\Tcalg}{\mathcal{T}_g}
\newcommand{\THcal}{\mathcal{TH}}

\newcommand{\Scal}{\mathcal{S}}

\newcommand{\TScalgn}{\mathcal{TS}_{g,n}}
\newcommand{\Xcal}{\mathcal{X}}

\newcommand{\Cbb}{\mathbb{C}}
\newcommand{\Gbb}{\mathbb{G}}

\newcommand{\Pbb}{\mathbb{P}}
\newcommand{\Qbb}{\mathbb{Q}}
\newcommand{\Rbb}{\mathbb{R}}
\newcommand{\Sbb}{\mathbb{S}}

\newcommand{\Zbb}{\mathbb{Z}}

\newcommand{\Gbf}{\mathbf{G}}
\newcommand{\Hbf}{\mathbf{H}}
\newcommand{\Tbf}{\mathbf{T}}
\newcommand{\Ubf}{\mathbf{U}}

\newcommand{\mrm}{\mathrm{m}}

\newcommand{\Res}{\mathrm{Res}}
\newcommand{\bsh}{\backslash}
\newcommand{\ad}{\mathrm{ad}}
\newcommand{\der}{\mathrm{der}}
\newcommand{\isom}{\simeq}
\newcommand{\mono}{\hookrightarrow}

\newcommand{\GSp}{\mathrm{GSp}}
\newcommand{\Sp}{\mathrm{Sp}}
\newcommand{\PGL}{\mathrm{PGL}}

\newcommand{\SL}{\mathrm{SL}}
\newcommand{\Nrd}{\mathrm{Nrd}}

\newcommand{\Mat}{\mathrm{Mat}}

\newcommand{\wrt}{\textrm{with\ respect\ to\ }}
\newcommand{\BB}{{BB}}
\newcommand{\dec}{\mathrm{dec}}
\newcommand{\Mbar}{\overline{M}}
\newcommand{\sing}{\mathrm{sing}}
\newcommand{\inv}{{-1}}


\begin{document}

\title{Finiteness of hyperelliptic and superelliptic curves with CM Jacobians}

\author{Ke CHEN}

\address{Department of mathematics, Nanjing University, Nanjing, China, 210093}
\email{kechen@ustc.edu.cn}

\author{Xin Lu}
\address{Institut f\"ur Mathematik, Universit\"at Mainz, Mainz, Germany, 55099}
\email{x.lu@uni-mainz.de}

\author{Kang Zuo}
\address{Institut f\"ur Mathematik, Universit\"at Mainz, Mainz, Germany, 55099}
\email{zuok@uni-mainz.de}

\thanks{This work is supported by SFB/Transregio 45 Periods, Moduli Spaces and Arithmetic of Algebraic Varieties of the DFG (Deutsche Forschungsgemeinschaft),
and partially supported by National Key Basic Research Program of China (Grant No. 2013CB834202) and NSFC}

\subjclass[2010]{Primary 11G15, 14G35, 14H40; Secondary 14D07, 14K22}



\keywords{Coleman-Oort conjecture, superelliptic curves, complex multiplication, Torelli locus, Jacobians.}


\phantomsection

\maketitle

\begin{abstract}
	In this paper we study the Coleman-Oort conjecture for superelliptic curves,
	i.e. curves defined by affine equations $y^n=F(x)$ with $F$ a separable polynomial.
	We prove that up to isomorphism there are at most finitely many superelliptic curves of fixed genus $g\geq 8$ with CM Jacobians.
	The proof relies on the geometric structures of Shimura subvarieties in Siegel modular varieties
	and the stability properties of Higgs bundles associated to fibred surfaces.
\end{abstract}

\tableofcontents

\section{Introduction} This paper is dedicated to the Coleman-Oort conjecture for superelliptic Torelli locus. Our main result is the following:

\begin{theorem}[superelliptic Coleman-Oort]\label{thm-main}
	For fixed genus $g\geq 8$, there exist, up to isomorphism,
	at most finitely many smooth superelliptic curves of genus $g$ whose Jacobians are CM abelian varieties.

\end{theorem}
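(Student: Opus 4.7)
The plan is to argue by contradiction. Assuming that for some fixed $g\geq 8$ there are infinitely many non-isomorphic smooth superelliptic curves of genus $g$ with CM Jacobian, I would send each one to the corresponding point of the Siegel modular variety $\Acalg$ and take the Zariski closure $Z$ of the resulting countable set. Since any infinite set of CM points in $\Acalg$ is forced (by the now-available André-Oort statement for $\Acalg$, or by a weaker Edixhoven-type result suitable for our purposes) to accumulate on a positive-dimensional Shimura subvariety, $Z$ contains a positive-dimensional irreducible Shimura subvariety $Z_0\subset\Acalg$ that lies entirely in the closure of the superelliptic Torelli locus.

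Next I would restrict to a smooth projective curve $\olb$ obtained as a generic complete intersection of hyperplane sections inside a toroidal compactification of $Z_0$, and pull back the universal family to produce a semistable fibration $\olf:\ols\to\olb$ whose generic fibre is a smooth superelliptic curve $y^{n}=F(x)$ of genus $g$. Two pieces of information on the associated logarithmic Higgs bundle $(E,\theta)=(E^{1,0}\oplus E^{0,1},\theta)$ are then played against each other. First, because $\olb$ maps into a Shimura subvariety, the VHS is of Shimura type and $(E,\theta)$ satisfies the Arakelov equality: after splitting off the maximal flat unitary sub-VHS, the Higgs field is generically an isomorphism and every remaining Hodge summand achieves the Simpson-Arakelov slope bound. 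Second, the cyclic group $\mu_n$ acts fibrewise on $\ols/\olb$ and decomposes $(E,\theta)$ into $\mu_n$-eigen-subbundles whose ranks can be computed from the branching data of $F$ via Chevalley-Weil. Applying the Viehweg-Zuo stability inequalities for Higgs sub-bundles of $\fomegasb$ to each eigen-component, one obtains an upper bound on how many eigen-summands can be maximal.

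The heart of the argument is then a numerical comparison between these two bounds, carried out for every admissible signature $(n,d,\{e_i\})$ consistent with a superelliptic family of genus $g$. The threshold $g\geq 8$ should emerge precisely at the point where the number of nontrivial $\mu_n$-eigenspaces forced to be maximal by the Shimura condition strictly exceeds what the stability estimates for surface fibrations permit. The main obstacle I foresee lies in the small-$n$ regime, in particular $n=2$ (hyperelliptic), where the eigenspace decomposition is coarsest and several candidate maximal sub-VHS's can degenerate into unitary summands; here the stability argument has to be supplemented by a separate analysis of the isotrivial and unitary parts, and of the possible isogeny decomposition of the generic Jacobian, in order to rule out every superelliptic signature of genus $\geq 8$ that could support a positive-dimensional Shimura subvariety in the Torelli locus.
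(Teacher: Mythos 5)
Your overall architecture (Andr\'e--Oort for $\Acal_g$ to produce a positive-dimensional Shimura subvariety $Z_0$ generically contained in the superelliptic Torelli locus, then a Higgs-bundle/eigenspace analysis on a one-dimensional family) matches the paper's, but there is a genuine gap at the dimensional-reduction step. You propose to cut $Z_0$ by a generic complete intersection curve $\olb$ and then assert that the resulting VHS over $\olb$ ``is of Shimura type and satisfies the Arakelov equality.'' This is false: the Arakelov equality for a curve in $\Acal_g$ characterizes Shimura \emph{curves} (totally geodesic curves through a CM point), and a generic hyperplane-section curve of a higher-dimensional Shimura subvariety is not totally geodesic, so its Higgs field will not be generically isomorphic on the non-unitary part. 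The paper has to work much harder here: it first reduces to the case where $Z_0$ is defined by a $\Qbb$-simple group (using that non-simple Shimura data admit proper positive-dimensional subdata and that Hecke orbits are dense), and then uses the conjunction of (a) the Baily--Borel boundary of a simple Shimura subvariety of dimension $\geq 2$ having codimension $\geq 2$, and (b) the crucial geometric fact that the \emph{open} superelliptic Torelli locus contains no complete curves (\autoref{affiness}, a consequence of the degree formulas of \autoref{invariantstheorem}), to force the decomposable locus to meet $Z_0$ in codimension one; that intersection is a union of Shimura subvarieties, and a Hecke translate of one of them is a lower-dimensional Shimura subvariety generically in $\TScalgn$. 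Without some substitute for this argument, your generic curve carries no Arakelov equality and the rest of your plan does not get off the ground.

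A secondary but still substantive issue is the ``heart'' of your argument: for genuine Shimura curves, a pure numerical comparison between the Arakelov equality and Viehweg--Zuo/stability bounds on the $\mu_n$-eigen-summands does \emph{not} suffice for general $n$ (it essentially only closes the hyperelliptic case $n=2$, which is the case already settled by slope inequalities in the earlier work the paper cites). The paper's exclusion of Shimura curves additionally requires: showing the flat unitary part is nonzero and that suitable eigen-pieces of it become \emph{trivial} after base change; producing a second fibration $\olf':\ols\to\olb'$ via the Castelnuovo--de Franchis lemma from the vanishing of wedge products of $1$-forms in complementary eigenspaces (using that $\ols/G$ is ruled); exploiting the transitivity of the $\Gal(\Qbb(\xi_p)/\Qbb)$-action on eigenspaces; and an induction on the number of prime factors of $n$ via the quotient maps $\rho_{n,n_1}$. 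You should be aware that the compact-curve case and composite $n$ are where the real difficulty lies, not the small-$n$ regime you flag.
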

The notion of superelliptic curves is generalized from the hyperelliptic case:

\begin{definition}\label{def-superelliptic}
For an integer $n>1$, an $n$-superelliptic curve is an algebraic curve
(or simply superelliptic curve if $n$ is clear from the text)
defined by an $n$-superelliptic equation,
i.e. an affine equation of the form $$y^n=F(x)$$ with $F$ a separable polynomial (i.e. admitting no multiple root).
When $n=2$ we obtain the usual notion of hyperelliptic curves.
Note that the genus of such an curve can be computed explicitly in terms of $n$ and $\deg F$,  cf. \eqref{eqn-3-9},
and that for any fixed $g\geq 2$, there are finitely many possibilities of $(n,\deg F)$
such that the curve defined by $y^n=F(x)$ is of genus $g$.
\end{definition}

In the rest of this section we briefly review the Coleman-Oort conjecture including its original formulation and the superelliptic analogue of interest, and explain the main ideas in the proof.

\subsection{Coleman-Oort conjecture}
The original conjecture of Coleman \cite{coleman-87} predicts that when the genus $g\geq 4$,
there should be, up to isomorphism, at most finitely many smooth projective complex algebraic curves of genus $g$
whose Jacobians are CM, i.e. abelian varieties with complex multiplication.
Naturally one may restate the problem as the finiteness of CM points in the open Torelli locus $\Tcalg^\circ$,
by which we mean the schematic image of the Torelli morphism $$j^\circ:\Mcal_g\ra \Acal_g,\ \ C\mapsto\Jac(C).$$
Here we write $\Mcal_g$ resp. $\Acal_g$ for the moduli space of smooth projective curves of genus $g$ with
level-$\ell$ structure resp. of principal polarized abelian varieties of dimension $g$ plus level-$\ell$ structure,
usually denoted as $\Mcal_{g,\ell}$ resp. $\Acal_{g,\ell}$ in the literature,
with $\ell\geq3$ a fixed integer to assure the representability of the moduli spaces by schemes.

There have been many studies on counterexamples to the conjecture for small $g$.
For example, from cyclic covers of $\Pbb^1$, typically given by families of curves over $\Pbb^1$
with an affine equation $$y^n=F_\lambda(x)$$ with $F_\lambda(x)$ a polynomial in $x$ depending
on an parameter $\lambda$, one obtains infinitely many curves, non-isomorphic to each other,
whose Jacobians are CM; see \cite{moonen-10} for the example with $n=9$ and $F_\lambda(x)=x(x-1)(x-\lambda)$,
in which the curves are of genus 7.
This suggests that the correct version of the conjecture should have an assumption that $g$ is large enough (at least $g\geq 8$).

On the other hand, there are also considerable positive progresses toward the conjecture. Many of them are reformulated in terms of Shimura subvarieties in $\Acal_g$ (see \autoref{sec-shimura} below for details on Shimura varieties), justified by the Andr\'e-Oort conjecture affirming the behavior of an arbitrary infinite family of CM points under taking the Zariski closure:

\begin{theorem}[Andr\'e-Oort conjecture for $\Acal_g$]
	\label{tsimerman andre oort} Let $\Sigma$ be an infinite subset of CM points in $\Acal_g$. Then the Zariski closure of $\Sigma$ equals a finite union of Shimura subvarieties.	
\end{theorem}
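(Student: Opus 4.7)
The plan is to deploy the Pila--Zannier strategy, combining o-minimal geometry on the Siegel uniformization with arithmetic lower bounds for Galois orbits of CM points. Write $Z := \overline{\Sigma}^{\mathrm{Zar}} \subset \Acal_g$. By noetherian induction on $\dim Z$, it suffices to show that any positive-dimensional irreducible component $Z_0$ of $Z$ containing infinitely many points of $\Sigma$ already contains a positive-dimensional Shimura subvariety; one then passes to the residual locus and iterates. So the task reduces to: given an irreducible algebraic $Z_0 \subset \Acal_g$ containing an infinite set of CM points, produce a special subvariety of positive dimension inside $Z_0$.

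On the o-minimal side, I would realize $\Acal_g = \Gamma \bsh \Hbb_g$ with $\Hbb_g$ the Siegel upper half space and $\Gamma$ the principal congruence subgroup of $\Sp_{2g}(\Zbb)$ giving the chosen level structure, fix a Siegel fundamental domain $\calf \subset \Hbb_g$, and invoke the theorem of Peterzil--Starchenko that $\pi|_\calf \colon \calf \to \Acal_g$ is definable in the o-minimal structure $\Rbb_{\mathrm{an},\exp}$. Consequently the definable set $Y := \pi|_\calf^{-1}(Z_0)$ splits as an algebraic part $Y^{\mathrm{alg}}$ (the union of all positive-dimensional complex algebraic subvarieties of $\Hbb_g$ contained in $Y$) and a transcendental complement. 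By the hyperbolic Ax--Lindemann theorem for $\Acal_g$ (Pila--Tsimerman, and for general Shimura varieties Klingler--Ullmo--Yafaev), the image $\pi(Y^{\mathrm{alg}})$ is a finite union of weakly special subvarieties of $Z_0$; the Pila--Wilkie counting theorem then bounds the number of quadratic (or low-degree) algebraic points of height $\leq T$ in $Y \setminus Y^{\mathrm{alg}}$ by $C_\epsilon T^\epsilon$ for every $\epsilon>0$.

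The arithmetic input is the Galois lower bound of Tsimerman: for a CM point $x \in \Acal_g$ whose abelian variety has CM by an order $R$ in a CM field,
\[
|\Gal(\ol{\Qbb}/K)\cdot x| \;\gg\; |\mathrm{disc}(R)|^\delta
\]
for some $\delta>0$ and a fixed reflex field $K$, with this bound resting crucially on the averaged Colmez conjecture (proved independently by Andreatta--Goren--Howard--Madapusi~Pera and by Yuan--Zhang). Simultaneously, each Galois conjugate of $x$ lifts to a point of $\calf$ whose coordinates are algebraic of degree and height polynomially bounded in $|\mathrm{disc}(R)|$. Playing the polynomial lower bound on orbit size against the subpolynomial Pila--Wilkie upper bound forces, for all but finitely many CM points in $Z_0$, many of their lifts to land in $Y^{\mathrm{alg}}$. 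Thus $\pi(Y^{\mathrm{alg}})$ contains infinitely many CM points, and by Ax--Lindemann one of its irreducible components is a positive-dimensional weakly special subvariety of $Z_0$; any weakly special subvariety containing a CM point is automatically special, closing the induction.

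I expect the chief obstacle to be Step 3, the arithmetic Galois lower bound, which is essentially equivalent in strength to the averaged Colmez conjecture on Faltings heights of CM abelian varieties: it is this input that promotes the argument from products of modular curves (Pila, Pila--Tsimerman) to the full case of $\Acal_g$. The o-minimal ingredients—definability of $\pi|_\calf$, hyperbolic Ax--Lindemann, and Pila--Wilkie—were available in considerable generality for Shimura varieties before Tsimerman's final assembly, so the heavy lifting in the proof is arithmetic rather than geometric.
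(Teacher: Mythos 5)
This theorem is not proved in the paper at all: it is quoted as an external input, attributed to Tsimerman \cite{tsimerman-15} building on Pila--Tsimerman \cite{pila tsimerman} and on the averaged Colmez conjecture \cite{andreatta goren howard madapusi pera}, \cite{yuan-zhang}, and the paper explicitly states that this is beyond its scope. Your outline is an accurate summary of exactly that cited proof (Pila--Zannier strategy: definability of the uniformization, hyperbolic Ax--Lindemann, Pila--Wilkie counting versus the Galois-orbit lower bound from averaged Colmez), so it matches the source the paper relies on rather than offering an alternative.
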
 There have been many works focusing on the Andr\'e-Oort conjecture. In \cite{klingler yafaev annals} and \cite{ullmo yafaev annals} the conjecture is proved for all Shimura varieties assuming the Generalized Riemann Hypothesis, which is inspired by earlier works of Edixhoven and Yafaev, cf. \cite{noot bourbaki}. The o-minimality approach of Pila first establishes unconditionally the case of products of Siegel modular varieties of low genus cf. \cite{pila annals} \cite{pila tsimerman}, and the case of general Siegel modular varieties is proved recently by Tsimerman \cite{tsimerman-15}. His proof relies on the average Faltings height conjecture of Colmez  proved by Andreatta-Goren-Howard-Madapusi Pera \cite{andreatta goren howard madapusi pera} and Yuan-Zhang\cite{yuan-zhang},
which are way beyond the scope of the present work.
It suffices to keep in mind that this theorem transforms the conjecture of Coleman into:

\begin{conjecture}[Coleman-Oort]\label{coleman-oort conjecture} For $g$ sufficiently large,
	there exists no Shimura subvariety of positive dimension contained generically in the Torelli locus $\Tcal_g$ in $\Acal_g$.
	
\end{conjecture}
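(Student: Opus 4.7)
I would argue by contradiction. Suppose $Z \subset \Acal_g$ is a Shimura subvariety of positive dimension contained generically in $\Tcal_g$. After passage to a suitable finite cover we obtain a smooth family of curves $f: \mathcal{C} \to Z^\circ$ over the smooth Torelli locus, with $\Jac(\mathcal{C}_z) \simeq \mathcal{X}_z$ for the restriction of the universal abelian scheme. The plan is to combine the Hodge-theoretic rigidity imposed by the Shimura structure on $Z$ with the geometric constraints coming from the fact that the fibers are curves, and derive a numerical contradiction valid for all sufficiently large $g$.

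\textbf{Hodge-theoretic input.} The first step is to analyze the Higgs bundle $(E = E^{1,0} \oplus E^{0,1}, \theta)$ of the weight-one variation of Hodge structure on $Z$. The decomposition of the adjoint Mumford--Tate group $G^{\ad} = \prod_i G_i$ and Deligne's classification of Hermitian symmetric domains split $E^{1,0}$ into $U \oplus V$, where $U$ is a \emph{unitary} summand with $\theta|_U = 0$ and $V$ carries a maximal Higgs field saturating the Arakelov--Viehweg--Zuo inequality. The $\rank$ of $U$ is computable from the Hodge datum of $Z$, and the Shimura structure forces tight numerical relations between $\dim Z$, $\rank V$, and the irreducible factors of $G^{\ad}$. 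After choosing a toroidal compactification $\overline{Z}$ and cutting by a generic complete curve $\olb \subset \overline{Z}$, these relations descend to inequalities on $\deg \fomegasb$ and the discriminant of a semistable model $\olf: \ols \to \olb$.

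\textbf{Contradiction from curve geometry.} Next, exploit the fact that $f$ is a family of curves so that $\theta$ factors through the Kodaira--Spencer map. Combining the slope inequality of Cornalba--Harris--Xiao and its logarithmic refinements with the stability analysis of Higgs bundles associated to fibred surfaces developed in the body of this paper, one should show that a positive-dimensional Shimura structure on $Z$ forces the unitary rank $\rank U$ to be very close to $g$. A unitary summand of near-maximal rank in the Hodge bundle of the family of Jacobians corresponds to a nearly isotrivial direct factor of the generic Jacobian; but by the generic Torelli theorem of Andreotti--Green--Voisin, for $g \geq g_0$ the locus of curves in $\Mcalg$ whose Jacobian admits such a large constant direct factor has codimension strictly greater than $\dim Z$, producing the desired contradiction.

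\textbf{Main obstacle.} The hardest step is the third: converting the numerical bound $\rank U \approx g$ into a genuine geometric obstruction for an arbitrary family of curves in $\Mcalg$. In the superelliptic case treated by \autoref{thm-main} the cyclic automorphism and the explicit deformation theory of $y^n = F(x)$ yield a direct route from the Higgs bundle decomposition to a contradiction with the dimension of the parameter space of superelliptic equations. For a general Shimura subvariety one has no such automorphism or parameter count to lean on, and the crucial missing ingredient is a sharp upper bound on $\dim Z$ in terms of the Kodaira--Spencer rank of a curve family with a large flat direct summand in its Hodge bundle. Producing such a bound, uniformly across the classification of Hodge data of $Z$ and of the possible embeddings $G \hookrightarrow \GSp_{2g}$, is the main challenge I foresee in carrying the plan through to its conclusion.
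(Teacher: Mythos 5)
The statement you are attempting is \autoref{coleman-oort conjecture}, which the paper states as an open \emph{conjecture} and does not prove; the paper's actual theorem (\autoref{thm-main}, \autoref{thm-main'}) is the much more restricted superelliptic case, where the cyclic automorphism group and the explicit equation $y^n=F(x)$ supply all the quantitative input. So there is no proof in the paper to compare yours against, and your proposal would need to be a complete argument on its own. It is not: as you yourself concede in the final paragraph, the decisive step is missing, and that step is exactly the open content of the conjecture.

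Beyond the acknowledged gap, two specific points in your sketch would fail as written. First, the claim that the Shimura structure ``forces the unitary rank $\rank U$ to be very close to $g$'' is not established by any known slope or Miyaoka--Yau type inequality for \emph{general} families of curves; in the paper the bound on $\rank A_{\olb}^{1,0}$ (\autoref{prop-3-11}) is derived from an improved slope inequality (\autoref{prop-3-4}) whose proof is an explicit computation with the local invariants $s_{\gamma,\ell}$ of the cyclic cover, and the general Cornalba--Harris--Xiao inequality $\omega^2_{\ols/\olb}\geq (4-4/g)\deg\fomegasb$ is far too weak to force $\rank U$ anywhere near $g$. Second, even granting a large flat summand $U$, a unitary (or even trivial) local subsystem of $R^1f_*\Qbb$ does \emph{not} correspond to a constant direct factor of the generic Jacobian --- flatness of the Higgs field on $U$ says nothing about $U$ being a constant sub-variation of Hodge structure --- so the appeal to generic Torelli to bound the codimension of the relevant locus in $\Mcalg$ does not apply. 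In the paper this difficulty is circumvented only because the $G$-action, the Deligne/Esnault--Viehweg analysis of eigensheaves, and the Castelnuovo--de Franchis lemma together produce an actual second fibration $\bar f':\ols\to\olb'$ (cf. \autoref{prop-3-13}, \autoref{claim-3-31}); no analogue of this construction is available for an arbitrary family of curves.
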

Here $\Tcal_g$ is the closure of $\Tcal_g^\circ$, in which $\Tcal_g^\circ$ is open;
and a Shimura subvariety $M\subset\Acal_g$ is said to be contained generically in $\Tcal_g$
if the intersection $\Tcal_g^\circ\cap M$ is Zariski dense open in $M$.
We refer to \cite{moonen-oort-13} (and the references therein) for a thorough discussion on this subject.
The rich geometry of Shimura varieties has lead to various results confirming the conjecture
for many Shimura subvarieties of prescribed type, cf. \cite{hain}, \cite{de jong zhang}, \cite{chen-lu-zuo} etc.

\subsection{Variant for $n$-superelliptic curves}



Naturally one may formulate problems of Coleman-Oort type for moduli spaces of curves with additional data. Define $\Scal_{g,n}$ to be the moduli space of cyclic branched cover $C\ra\Pbb^1$ defined by an $n$-superelliptic equation as in \autoref{def-superelliptic}, with $C$ of fixed genus $g$. We have the evident morphism forgetting the cover $$\Scal_{g,n}\ra \Mcal_g,\ \ (C\ra\Pbb^1)\mapsto C,$$ and we write $\TScalgn^\circ$ for its image inside $\Acal_g$ under the Torelli morphism, referred to as the $n$-superelliptic open Torelli locus. Similar to the case of $\Tcal_g^\circ$, it is locally closed in $\Acal_g$, whose closure $\TScalgn=\overline{\TScalgn^\circ}$ is called the $n$-superelliptic Torelli locus. Often the integer $n$ is omitted when it is clear from the context.

Thanks to \autoref{tsimerman andre oort}, we'll focus on the following equivalent form of the main result:
\begin{theorem}\label{thm-main'}For $g\geq 8$, the superelliptic Torelli locus does not contain generically any Shimura subvariety in $\Acal_g$ of positive dimension.
\end{theorem}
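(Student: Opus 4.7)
The plan is to argue by contradiction. Suppose $M \subset \Acal_g$ is a Shimura subvariety of positive dimension whose intersection with the open superelliptic Torelli locus $\TScalgn^\circ$ is Zariski dense in $M$.

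My first reduction is to a one-parameter family. Cutting $M$ by sufficiently ample hyperplane sections and pulling back along the finite forgetful map $\Scal_{g,n} \to \TScalgn^\circ$ (possibly after passing to a level structure), I obtain a smooth projective curve $\olb$ together with a semistable fibration $\olf : \ols \to \olb$ whose general fibre is a smooth $n$-superelliptic curve of genus $g$ and whose associated Jacobians are Zariski dense in $M$. Reducing the original finiteness question to a Higgs-bundle question on $\olf$ is the key translation.

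Next I would exploit the cyclic symmetry of the superelliptic equation $y^n = F(x)$. The resulting $\mu_n$-action on each fibre induces an eigenspace decomposition
$$\olf_* \omegasb = \bigoplus_{i=1}^{n-1} \cale_i,$$
in which each rank $\rank \cale_i$ is combinatorially determined by $(n, \deg F, i)$ through \eqref{eqn-3-9}. The full logarithmic Higgs bundle associated to the polarised variation of Hodge structure on $R^1 \olf_* \Qbb$ respects this decomposition, so one obtains an individual Higgs field $\theta_i$ on each isotypic piece. The Shimura hypothesis then forces a very rigid structure: after a finite étale base change, $R^1 \olf_* \Qbb$ splits as a direct sum of a unitary flat local system and a non-flat part on which the Viehweg--Zuo Arakelov-type inequality for the Higgs bundle is an equality. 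By the Simpson correspondence together with the group-theoretic description of Shimura subvarieties, this decomposition is compatible with the $\mu_n$-isotypic one, so on each $\cale_i$ the Higgs field must be either identically zero (flat contribution) or fibrewise injective of maximal rank (Shimura contribution).

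Finally I would derive a contradiction via the explicit geometry of cyclic covers. Every $n$-superelliptic family factors through the family of configurations of $\deg F$ ordered points on $\Pbb^1$, so the Kodaira--Spencer maps on each isotypic piece are expressible in terms of coboundaries coming from $\Mcal_{0,\deg F}$. Combined with the slope estimates on each $\cale_i$ and the stability properties of Higgs subbundles associated to a fibred surface, this yields numerical inequalities on $(n, \deg F)$ that are compatible with the Shimura rigidity only if $g \leq 7$.

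The main obstacle is the interaction between the two decompositions: matching the character-isotypic decomposition of the Hodge bundle with the flat / non-flat decomposition from Shimura theory, and controlling eigenspaces on which the Higgs field has intermediate rank. The threshold $g = 8$ is genuinely sharp, as Moonen's family with $n = 9$ and $F_\lambda = x(x-1)(x-\lambda)$ realises a positive-dimensional Shimura subvariety inside the superelliptic locus for $g = 7$; consequently the argument must be tight, and the technical heart lies in carrying out the matching eigenspace by eigenspace for all pairs $(n, \deg F)$ producing a fixed $g \geq 8$.
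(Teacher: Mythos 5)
Your outline founders at the very first reduction. Cutting the Shimura subvariety $M$ by generic ample hyperplane sections produces a curve in $M$ that is \emph{not} a Shimura curve: it is not totally geodesic, so the Viehweg--Zuo rigidity you invoke later --- the splitting of $R^1\olf_*\Qbb$ into a unitary part and a part on which the Arakelov inequality is an equality (equivalently, on which the Higgs field is generically an isomorphism) --- simply does not hold for the resulting family. That rigidity is a characterization of Shimura curves, not of arbitrary curves inside a Shimura variety, and without it the entire Higgs-theoretic second half of your argument has no starting point. The paper's dimensional reduction (\autoref{dimensional reduction}) is genuinely different and cannot be replaced by a Bertini argument: one first reduces to $\Gbf^\ad$ simple via Hecke density (\autoref{density of hecke orbits}), then uses that the Baily--Borel boundary of such an $M$ has codimension $\geq 2$ together with the key fact that $\TScalgn^\circ$ contains no complete curves (\autoref{affiness}, itself a consequence of the degree formulas in \autoref{invariantstheorem}) to force the decomposable locus $\Acal_g^{\dec}\cap M$ to have a codimension-one component; since $\Acal_g^{\dec}$ is a finite union of Shimura subvarieties, a Hecke translate of that component is a \emph{bona fide} Shimura subvariety of strictly smaller positive dimension generically contained in $\TScalgn$, and induction terminates at an honest Shimura curve. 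You need some argument of this kind (or another mechanism producing a totally geodesic curve) before any of the Arakelov/Higgs machinery can be applied.

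Two further points would still need repair even after fixing the reduction. First, it is not true that each isotypic piece $\cale_i$ carries a Higgs field that is either zero or fibrewise of maximal rank: the paper's decomposition \eqref{eqn-3-33} allows each eigenspace to contain both an ample summand $A^{1,0}_{\olb,i}$ (on which $\theta$ is generically an isomorphism onto $A^{0,1}_{\olb,i}\otimes\Omega^1_{\olb}(\log\Delta_{nc})$) and a nonzero flat summand $F^{1,0}_{\olb,i}$, and controlling the interplay of these is exactly where the work lies. Second, slope inequalities on the $\cale_i$ alone do not yield the contradiction for general $n$: the paper must in addition produce a second fibration $\bar f':\ols\to\olb'$ via the Castelnuovo--de Franchis lemma applied to eigenforms in complementary eigenspaces, use the transitivity of the $\Gal(\Qbb(\xi_p)/\Qbb)$-action, treat compact and non-compact Shimura curves by different arguments, and run an induction on the number of prime factors of $n$ through the maps $\rho_{n,n_1}$. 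Your proposal correctly identifies the eigenspace decomposition and the sharpness at $g=7$, but as written it collapses the hardest steps into assertions that are either false (the hyperplane section, the dichotomy on each $\cale_i$) or unsubstantiated (that numerics alone force $g\leq 7$).
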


Note that the main result is sharp due to the counterexample with $g=7$ given in \cite{moonen-10} mentioned above.

Precedent to our result, various cases of the superelliptic Coleman-Oort conjecture have been studied by Y. Zarhin in a serious of works (see for example \cite{zarhin crelle}, \cite{zarhin zeitschrift}, and the references therein), with emphasis on the endomorphism algebras  of the Jacobians when the Galois group of the cover   is the full permutation group $S_d$ or the alternative group $A_d$.
When $n$ is prime to 3, the problem for the $n$-superelliptic Legendre family $y^n=x(x-1)(x-\lambda)$ is already solved in \cite{de jong noot}.

In \cite{moonen-10}, Moonen has proved that the $n$-superelliptic Torelli locus $\TScalgn$ itself is not a Shimura subvariety when the genus $g$ is at least 8. In fact the main theorem in \cite{moonen-10} gives a complete finite list of subvarieties $Z(m,N,a)\subset\Acal_g$ which are Shimura subvarieties. Here $Z(m,N,a)$ is the subvariety of Jacobians of cyclic covers of $\Pbb^1$ admitting an affine equation of the form $$y^m=\prod_{i=1,\cdots,N}(x-t_i)^{a_i}$$ for distinct points $t_i$ with local monodromy datum $a=(a_1,\cdots,a_N)$. Note that the special subvarieties obtained this way are defined by products of unitary groups and symplectic groups. We mention also that the main result of \cite{venkataramana inventiones} implies that the monodromy group of these $Z(m,N,a)$ are arithmetic subgroups in the corresponding Mumford-Tate groups up to central part under suitable constraints upon the local monodromy data. Since the fundamental group of a Shimura variety only differs from an arithmetic subgroup of the derived part of its Mumford-Tate group by a finite quotient, hence a general $Z(m,N,a)$, which is of dimension $N-3$, cannot be a Shimura subvariety, using a direct computation of the dimension of a Shimura variety from its Mumford-Tate group, cf. \cite[formula 3.3.1]{moonen-10}.

\subsection{Strategy of the proof}\label{sec-strategy-pf}
The proof of \autoref{thm-main'} is divided into two main steps:
we first reduce the proof to the case of Shimura curves (i.e. Shimura subvarieties of dimension one),
and then we exclude the existence of Shimura curves using the stability properties of the associated logarithmic Higgs bundles.

The reduction to the Shimura curves is formulated as follows:

\begin{theorem}\label{reduction to shimura curves}
	The superelliptic Torelli locus contains generically some Shimura subvariety of positive dimension
	if and only if it contains generically some Shimura curve.
\end{theorem}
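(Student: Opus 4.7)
The ``if'' direction of \autoref{reduction to shimura curves} is immediate, since a Shimura curve is itself a Shimura subvariety of positive dimension. For the ``only if'' direction, my plan is to exhibit, inside an arbitrary positive-dimensional Shimura subvariety $M$ generically contained in $\TScalgn$, a Shimura curve that still meets the open locus $\TScalgn^\circ$ in a Zariski dense subset.

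The central ingredient I would invoke is the structural fact that the union of Shimura curves contained in any positive-dimensional Shimura subvariety $M$ is Zariski dense in $M$. Concretely, if $(H,X_H)$ is a Shimura datum defining $M$, then $X_H$ is a Hermitian symmetric domain of positive dimension, and at every point one can produce totally geodesic complex discs coming from $\Qbb$-rational semisimple subgroups of $H$ of Hermitian type and real rank one (built from $\mathrm{SL}_2$-triples defined over $\Qbb$ after passing to a $\Qbb$-simple factor of $H^{\mathrm{ad}}$). Using density of $H(\Qbb)$ in $H(\Rbb)$ one obtains many $H(\Qbb)$-conjugates of any single such subgroup, and the associated Shimura curves in $M$ then sweep out a Zariski dense subset.

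Granted this abundance statement, the rest of the argument is short. Let $Z := M \setminus \TScalgn^\circ$, which by the hypothesis that $M$ is generically contained in $\TScalgn$ is a proper Zariski closed subset of $M$. Since the union of the Shimura curves in $M$ is Zariski dense, there must exist at least one Shimura curve $C \subset M$ not entirely contained in $Z$. Because $C$ is one-dimensional and irreducible, $C \cap Z$ is then a proper closed, hence finite, subset of $C$, so $C \cap \TScalgn^\circ = C \setminus (C \cap Z)$ is Zariski dense open in $C$. Consequently $C$ is a Shimura curve contained generically in $\TScalgn$, which completes the reduction.

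The main obstacle is thus the abundance step, that is, producing enough $\Qbb$-rational rank-one Hermitian subgroups of $H$ so that the resulting Shimura curves cover a dense subset of $M$. This is essentially a group-theoretic statement about $\Qbb$-simple factors of $H^{\mathrm{ad}}$ of Hermitian type and is standard in the theory of Shimura varieties; I would cite it (or the equivalent statement about dense embeddings of $\mathrm{SL}_2$-type $\Qbb$-subgroups into semisimple $\Qbb$-groups) rather than reprove it. If that statement were unavailable in the form needed, an alternative would be to intersect $M$ with sufficiently many Hecke translates of a fixed Shimura curve, using that CM points in $M$ are dense and that through each such point passes a Shimura curve inside $M$.
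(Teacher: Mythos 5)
Your deduction from the ``abundance'' statement to the theorem is logically fine, but the abundance statement itself is the entire content of the problem, and it is false in the generality you need. It is not true that every positive-dimensional Shimura subvariety $M\subset\Acal_g$ contains a Zariski dense union of Shimura curves; in general $M$ need not contain a single Shimura curve. Your proposed construction via $\mathfrak{sl}_2$-triples defined over $\Qbb$ already fails whenever $\Gbf^\der$ is $\Qbb$-anisotropic (i.e.\ whenever $M$ is compact), since such a triple generates a $\Qbb$-isotropic subgroup. Passing to anisotropic rank-one $\Qbb$-subgroups does not save the argument: take $F$ real quadratic and $B$ a totally indefinite quaternion division algebra over $F$ with $B\not\cong B^\sigma$ (where $\sigma$ is the nontrivial automorphism of $F$); the corresponding two-dimensional Shimura subvariety of $\Acal_4$ parametrizing abelian fourfolds with multiplication by $B$ contains no Shimura curve at all, because a one-dimensional subdatum would require a quaternion algebra $A$ over $\Qbb$ with $A\otimes_\Qbb F\cong B\cong B^\sigma$. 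The same example defeats your fallback (``through each CM point of $M$ passes a Shimura curve inside $M$''). So the key lemma you plan to cite does not exist, and no purely group-theoretic argument can work here.

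The paper's route is genuinely different and uses superelliptic-specific input. After reducing to $\Qbb$-simple $M$ of dimension at least $2$ (\autoref{non-simple Shimura data} combined with Hecke translation, \autoref{density of hecke orbits}), the boundary of the closure $\overline{M}$ in the Baily--Borel compactification has codimension at least $2$ (\autoref{baily-borel compactification}); since $\TScalgn^\circ$ contains no complete curves (\autoref{affiness}, a consequence of the degree formulas of \autoref{invariantstheorem}), a generic complete curve in $\overline{M}$ avoiding the boundary must meet the decomposable locus $\Acal_g^\dec\cap M$, which therefore has an irreducible component of codimension one in $M$; this component is a Shimura subvariety, a Hecke translate of which is generically contained in $\TScalgn$, and one descends in dimension by induction. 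Any repair of your argument must replace the false abundance lemma by a step of this kind that actually uses the hypothesis that $M$ is generically contained in $\TScalgn$.
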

In fact one first reduces the above theorem to the statement for simple Shimura varieties of positive dimension,
cf. \autoref{non-simple Shimura data};
and then the boundary behavior of Baily-Borel compactification implies the dimensional reduction to Shimura curves,
using the crucial property that the open $n$-superelliptic Torelli locus contains no compact (i.e. complete) curves.
Note that when $n=2$, the open hyperelliptic Torelli locus is affine,
while the general superelliptic case follows from \autoref{invariantstheorem}.



Based on the above dimension reduction, the main theorem is thus reduced to:
\begin{theorem}\label{thm-curve}
	For any fixed $g\geq 8$, there does not exist any Shimura curve contained generically in the Torelli locus
	of superelliptic curves of genus $g$.
\end{theorem}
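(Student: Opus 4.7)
The plan is to assume, for contradiction, that some Shimura curve $M$ is generically contained in $\TScalgn$ with $g\geq 8$. After passing to a finite \'etale cover of $M$ and applying semistable reduction, one obtains a family of semistable curves $f: S\to B$ over a smooth projective curve $B$, whose generic fiber is a smooth superelliptic curve of genus $g$, and whose induced period map $B\to\Acal_g$ dominates $M$. Let $\Delta\subset B$ be the locus of singular fibers, and consider the logarithmic Higgs bundle $(E^{1,0}\oplus E^{0,1},\theta)$ on $(B,\Delta)$ arising from the variation of Hodge structures on $R^1 f_*\Cbb$.

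First, because $M$ is a Shimura curve, the Viehweg--Zuo structure theorem ensures that $E$ admits a canonical $\theta$-invariant orthogonal decomposition $E=E_{\rm a}\oplus E_{\rm f}$, where the Higgs field on the ``ample part'' $E_{\rm a}$ induces isomorphisms $E_{\rm a}^{1,0}\isom E_{\rm a}^{0,1}\otimes\Omega^1_B(\log\Delta)$ (the Arakelov equality), and the ``flat part'' $E_{\rm f}$ has vanishing Higgs field. Moreover $E_{\rm a}^{1,0}\neq 0$, since $M$ has positive dimension and the period map is non-constant along $B$.

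Next, one exploits the $\mu_n$-action on $S/B$ induced by $y\mapsto\zeta_n y$, which lifts to $E$ and produces an eigenspace decomposition $E^{1,0}=\bigoplus_{i=1}^{n-1}E^{1,0}_i$ respected by $\theta$. The ranks of the $E^{1,0}_i$ are explicitly determined by $(n,\deg F)$ through the Chevalley--Weil formula, and sum to $g$. The Viehweg--Zuo decomposition refines this eigenspace decomposition, giving an ample/flat splitting on each $E^{1,0}_i$. Combining with slope inequalities for direct images of relative dualizing sheaves in semistable fibrations (Xiao, Cornalba--Harris, Lu--Zuo), together with the Fujita-type semi-positivity of $f_*\omega_{S/B}$ and its $\mu_n$-eigenpieces, one obtains strict upper bounds on $\deg E^{1,0}_{i,{\rm a}}$, and hence on the total rank and degree of the ample part of $E^{1,0}$.

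The main difficulty -- and the technical core of the argument -- is to show that these numerical bounds, combined with the Riemann--Hurwitz formula expressing $g$ in terms of $(n,\deg F)$, force $E_{\rm a}^{1,0}=0$ whenever $g\geq 8$, contradicting the nontriviality of the period map along $B$. Since by \autoref{def-superelliptic} only finitely many pairs $(n,\deg F)$ realize a fixed genus, the strategy is a combination of uniform estimates (valid when $n$ or $\deg F$ is large) with a finite case analysis of the remaining low-complexity configurations, in which the local monodromy data around $\Delta$ are severely restricted by the cyclic covering structure. The sharpness of the bound $g\geq 8$ reflects that Moonen's genus-$7$ example precisely saturates the resulting slope inequalities, so any margin disappears below $g=8$; conversely, producing that margin uniformly in $(n,\deg F)$ for $g\geq 8$ is where the main obstacle lies.
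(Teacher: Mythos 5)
Your setup---semistable model, logarithmic Higgs bundle, the Viehweg--Zuo ample/flat decomposition with the Arakelov equality, the $\mu_n$-eigenspace decomposition with Chevalley--Weil ranks, and slope/Miyaoka--Yau inequalities bounding the ample part---matches the paper's framework. But the mechanism you propose for the contradiction is a genuine gap: you claim the numerical bounds force $E^{1,0}_{\rm a}=0$. They cannot. The inequalities of \autoref{prop-3-11} only yield $\rank A^{1,0}_{\olb}\leq (4g-4)/\lambda$ with $\lambda$ bounded by $12$, which is a positive bound of order $g$; and since the Arakelov characterization is an exact equality of degrees, no refinement of the slope constant alone will drive the ample part to zero. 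A purely numerical closing of the argument works only in the hyperelliptic case $n=2$ (Lu--Zuo's Theorem E), and the paper explicitly states that its method for general $n$ is different. What is actually needed is a \emph{lower} bound on $\rank A^{1,0}_{\olb}$, equivalently an upper bound on the flat part $F^{1,0}_{\olb}$, and that is where the real content lies.

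The missing ideas are: (i) showing some eigenpiece $F^{1,0}_{\olb,i_0}$ with $i_0>n/2$ is nonzero, and that after base change $F^{1,0}_{\olb,i}$ is trivial for $n-i_0\leq i\leq i_0$ (via Esnault--Viehweg's description of the eigensheaves and Deligne's rigidity for unitary subsystems); (ii) taking eigenforms $\omega_{i_0},\omega_{n-i_0}$, noting that $\omega_{i_0}\wedge\omega_{n-i_0}$ is $G$-invariant, hence descends to the ruled surface $\ols/G$ and vanishes, and invoking the Castelnuovo--de Franchis lemma to produce a \emph{second} fibration $\bar f'\colon\ols\to\olb'$ through which these forms factor (\autoref{prop-3-13}); (iii) using the transitivity of the $\Gal\big(\Qbb(\xi_p)/\Qbb\big)$-action on the eigenspaces to spread the pullback property over the whole flat part; (iv) the criterion of \autoref{lem-3-71}, which converts the existence of such an irregular fibration with $g(\olb')\geq\rank F^{1,0}_{\olb}$ into the contradiction $g<8$ (comparing with the geometric genus of cuspidal fibers in the non-compact case, or with $g\geq 2g(\olb')$ in the compact case); and (v) for composite $n$, the induction on the number of prime factors via the quotient families $\rho_{n,n'}$ and the transitivity statement \autoref{prop-3-6}, leaving a finite list of exceptional pairs $(n,\alpha_0)$ treated case by case. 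Your remark that Moonen's $g=7$ example ``saturates the slope inequalities'' points in the wrong direction: the $g=7$ family is excluded from the theorem's hypotheses, but for $g\geq 8$ the inequalities by themselves still leave room, and the argument that eliminates that room is the geometric construction of $\bar f'$, not a sharpening of the numerics.
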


The proof of \autoref{thm-curve} is the most technical part of our paper.
The main idea is to study the logarithmic Higgs bundle for the family of semi-stable superelliptic curves
associated to such a possible Shimura curve $C$ contained generically in $\TScalgn$,
in particular its eigenspace decomposition with respect to the action of the cyclic group $G\cong \Zbb/n\Zbb$.
We apply Viehweg-Zuo's characterization for Shimura curves by the maximality of  Higgs fields on  Higgs eigen sub-bundles
and the geometrical properties of this family to obtain a new fibration on the total space of this family
with some extra properties, and deduce a contradiction by analyzing this new fibration, which establishes \autoref{thm-curve}. More precisely:
\begin{list}{}
	{\setlength{\labelwidth}{4mm}
		\setlength{\leftmargin}{6mm}
		\setlength{\itemsep}{2.5mm}}
	\item[(i).]
	
	Let $\bar f: \ol S\to \ol B$  be  the family of semi-stable superelliptic curves representing such a possible Shimura curve $C$
	with semi-stable singular fibres $\Upsilon\subset \ol S$ over the discriminate locus $\Delta \subset \ol B$, cf. \autoref{defrepresenting}.
	Then there exists a global action of $G\cong \mathbb Z/n\mathbb Z$ on $\ols$
	(after a possible base change of $\ol B$), which induces  an action
	on the  logarithmic Higgs bundle
	$$(E_{\olb}^{1,0}\oplus E_{\olb}^{0,1},~\theta_{\olb}):=
	\big(\bar f_*\Omega^1_{\ols/\olb}(\log \Upsilon)\oplus R^1\bar f_*\mathcal O_{\ols},\,\theta_{\olb}\big)$$
	corresponding to the $\mathbb Q$-local system
	$\mathbb V_B:=R^1\bar f_*(\mathbb Q_{\bar S\setminus \Upsilon})$ on $B=\bar B\setminus \Delta$
	under the Simpson correspondence, cf. \cite{sim90}.
	Hence one obtains an eigen-space decomposition
	$$(E_{\olb}^{1,0}\oplus E_{\olb}^{0,1},~\theta_{\olb})
	=\bigoplus_{i=0}^{n-1}(E_{\olb}^{1,0}\oplus E_{\olb}^{0,1},~\theta_{\olb})_i$$
	corresponding to the $G$-action on
	$$ \mathbb V_B\otimes\mathbb C=\bigoplus_{i=0}^{n-1}\mathbb V_i.$$
	By \cite{viehweg-zuo-04}  there is a unique  strictly maximal decomposition
	$$(E_{\olb}^{1,0}\oplus E_{\olb}^{0,1},~\theta_{\olb})=(A^{1,0}_{\ol B}\oplus A^{0,1}_{\ol B},\theta_{\ol B}|_A)\oplus (F_{\olb}^{1,0}\oplus F_{\olb}^{0,1},~0)$$
	such that $ \theta_{\ol B}|_A$ is an isomorphism at the generic point and $(F_{\olb}^{1,0}\oplus F_{\olb}^{0,1},~0)$ corresponds to the maximal unitary local sub-system $\mathbb V^u_B\subset \mathbb V_B\otimes\mathbb C.$
	The above decomposition is  stabilized by
	the action of $G\cong \mathbb Z/n\mathbb Z$. In particular, there is an induced eigen-space decomposition
	$$(F_{\olb}^{1,0}\oplus F_{\olb}^{0,1},~0)=\bigoplus_{i=0}^{n-1}
	(F_{\olb}^{1,0}\oplus F_{\olb}^{0,1},~0)_i,$$
	which corresponds to the eigen-space decomposition
	$$\mathbb{V}_{B}^u=\bigoplus_{i=0}^{n-1}\mathbb{V}_{B,i}^u.$$
	As the first step, we show that $(F_{\olb}^{1,0}\oplus F_{\olb}^{0,1},~0)\not=0$ in our situation.
	
	\item[(ii).] If $F^{1,0}_{\ol B}\simeq \mathcal O_{\ol B}^{\oplus r}$ is a trivial vector bundle (up to a suitable
	base change of $\ol B$) and if there exists an irregular fibration $\bar f':\ol S\to \ol B'$
	such that those 1-forms in $H^0(\ol S, \Omega^1_{\ol S})$ coming from
	$F^{1,0}_{\ol B}\subset \bar f_*\Omega^1_{\ol S/\ol B}(\log \Upsilon)$ are pulled back of 1-forms on $\ol B'$ via  $\bar f'$, then one can easily derive a contradiction from the existence of  $\bar f'$ once $g\geq 8$, cf. \autoref{lem-3-71}.
	However, it is a priori not clear whether such a fibration always exists in general.
	
	\vspace{1mm}
	We try to achieve this  by looking at first the most simple case when $C$ is non-compact, $g\geq 8$ and $n=p\geq 5$ is a prime number.
	In this case, by \cite[\S\,4]{viehweg-zuo-04} for non-compact Shimura curves one deduces that $\mathbb V_B^u$ is just the maximal trivial  local subsystem
	$\mathbb V_B^{tr}\subset\mathbb V_B$ (up to a possible base change), or equivalently  $(F_{\ol B}^{1,0}\oplus F_{\ol B}^{0,1},0)$  is a trivial vector bundle.
	Note that $\mathbb V_B^{tr}\subset \mathbb V_B$ is a local subsystem defined over $\mathbb Q$,
	and hence the  local subsystem of $\mathbb{Q}(\xi_p)$-vector spaces
	$\mathbb V_B^{tr}\otimes \mathbb Q(\xi_p)\subset \mathbb V_B\otimes \mathbb Q(\xi_p)$  is stabilized by the action of  the Galois group
	${\rm Gal\,}\big(\mathbb Q(\xi_p)/\mathbb Q\big)$. Since $p$ is prime, ${\rm Gal\,}\big(\mathbb Q(\xi_p)/\mathbb Q\big)$ induces a transitive permutation  on the  eigen-subspaces
	$$\mathbb V_B^{tr}\otimes \mathbb Q(\xi_p)=\bigoplus_i\mathbb V_{B,i}^{tr}.$$
	Applying the Hurwitz-Chevalley-Weil formula (cf. \cite[Propsition\,5.9]{moonen-oort-13}) to ramified cyclic covers of $\Pbb^1$,
	together with the strictly maximal decomposition
	and the transitivity of the ${\rm Gal\,}\big(\mathbb Q(\xi_p)/\mathbb Q\big)$-action, one sees that
	$$\rank F^{1,0}_{\ol B, (p-1)/2}\geq \rank F_{\olb,(p+1)/2}^{1,0}>0.$$
	Take any two non-zero holomorphic 1-forms  $\alpha$ and $\beta$, which come from $F^{1,0}_{\ol B, (p-1)/2}$
	and $F^{1,0}_{\ol B, (p+1)/2}$ respectively.
	Then the wedge product  $\alpha\wedge\beta$
	is  a $G$-invariant holomorphic 2-form, hence descends to a holomorphic 2-form on the ruled surface
	$\ol S/G\to \ol B$.
	As all 2-forms on a ruled surface vanish, we get $\alpha\wedge\beta=0.$
	Now applying the Castelnuovo-de Franchis lemma  (cf. \cite[Theorem\,IV-5.1]{bhpv-04}) to $\alpha,\beta$, one finds a fibration
	$\bar f': \ol S\to \ol B'$ such that $\alpha$ and $\beta$ are pulled back from holomorphic  1-forms on $\ol B'$ via $\bar f'$.
	In fact,  all holomorphic  1-forms from  $F_{\olb,(p-1)/2}^{1,0}\oplus F_{\olb,(p+1)/2}^{1,0}$ are obtained in this way.
	Note that  the  pull-back map
	$$(\bar f')^*: H^1(\ol B',\mathbb Q)\to H^1(\ol S,\mathbb Q)$$ is defined over $\mathbb Q$ and the Hodge symmetry  (under the complex conjugation) gives
	$$\overline{F^{1,0}_{\ol B, (p-1)/2}}=F^{0,1}_{\ol B, (p+1)/2}.$$
	This implies that all holomorphic and anti-holomorphic 1-forms from
	$F^{1,0}_{\ol B, (p+1)/2}\oplus F^{0,1}_{\ol B, (p+1)/2}$ are pulled back via $\bar f'$;
	or equivalently, all classes in
	$H^1\big(\ol S,\mathbb Q(\xi_p)\big)_{(p+1)/2}=\big(H^1(\ol S,\mathbb Q)\otimes \mathbb Q(\xi_p)\big)_{(p+1)/2}$
	are pulled back of classes from $H^1(\ol B',\mathbb Q(\xi_p))$ via $\bar f'$.
	Finally the transitivity of	${\rm Gal\,}\big(\mathbb Q(\xi_p)/\mathbb Q\big)$-action
	implies that all classes from $F^{1,0}_{\ol B}$ and $F^{0,1}_{\ol B}$ are pulled back via $\bar f'.$
	In particular, the 1-forms in $H^0(\ol S, \Omega^1_{\ol S})$ coming from
	$F^{1,0}_{\ol B}\subset \bar f_*\Omega^1_{\ol S/\ol B}(\log \Upsilon)$ are pulled back of 1-forms on $\ol B'$ via  $\bar f'$
	
	\vspace{1mm}
	When $C$ is compact, the situation is much more complicated, 
	mainly due to two difficulties:\vspace{1.5mm}
	\begin{enumerate}
		\item[(1)] the flat subbundle $F^{1,0}_{\ol B}$  does not have to be trivial, even after any finite base changes.
		\item[(2)] the ${\rm Gal\,}\big(\mathbb Q(\xi_p)/\mathbb Q\big)$-action
		does not stablilize the unitary local sub-system $\mathbb V_{B}^u\subset \mathbb V_B\otimes \mathbb C.$
	\end{enumerate}\vspace{1.5mm}
	Thereby the above argument in the non-compact case no longer works here.
	To remedy the situation  we establish a slope inequality,  cf. \autoref{prop-3-4},
	which implies, together with the Akakelov equality for characterizing Shimura curves
	and the Miyaoka-Yau type inequality, that $F_{\olb,i_m}^{1,0}\neq 0$ for some $i_m>p/2$ in the case when $n=p$ is prime.
	Applying the local property of the eigen-sheaves of differential forms
	of cyclic covers described by Esnault-Viehweg \cite{esnault-viehweg-92}
	together with the Bogomolov lemma \cite[Lemma\,7.5]{sak-80} 
	and Deligne's lemma on the triviality of rank one Higgs bundle \cite[\S\,4.2]{deligne-71},
	one proves the triviality of $F_{\olb,i}^{1,0}$'s with $p-i_m\leq i\leq i_m$.
	This again enables us to produce a new fibration on $\ols$ by the same type of arguments
	as in the non-compact case such that a ``large part" of 1-forms from $F^{1,0}_{\ol B}$
	are pulling-backs of 1-forms via this new fibration,
	which is sufficient to derive a contradiction for the case when $g\geq 8$ and $p$ is prime.
%
%
	
	\item[(iii).] The general case (i.e., when $n$ is not prime)
	follows by induction on the number of prime factors in $n$.
	If $n$ is not prime and $n_1~|~n$,
	there is natural way to define a map $\rho_{n,n_1}$ from $C$ to $\mathcal{TS}_{g_1,n_1}$
	once a superelliptic automorphism group is chosen on the general fiber of $\bar f$. Here $g_1$ is the genus for the $n_1$-superelliptic curve $y^{n_1}=F(x)$ using the same separable polynomial $F$ as before.
	The key point for the induction process is to prove that
	$\rho_{n,n_1}(C)$ is again a Shimura curve generically contained in $\mathcal{TS}_{g_1,n_1}$
	when $n_1$ is maximal possible.
	By induction, it suffices to deal with the cases when $g\geq 8$ but $g_1<8$,
	and only finitely many possibilities arise.
	We apply to each of these cases the same idea used in the $p$-superelliptic case,
	and derive a contradiction for each of them.
\end{list}

\vspace{1.5mm}
\begin{remark}
	The hyperelliptic case ($n=2$) of the main theorem has already been established
	in our unpublished preprint \cite{chen-lu-zuo-15}.
	However, soon after the announcement in \cite{chen-lu-zuo-15},
	we realize that the same idea should be fruitful for general superelliptic curves,
	which has thus grown into the present uniform treatment.
\end{remark}


The paper is organized as follows.
In \autoref{sec-shimura} we recall some preliminaries on Shimura subvarieties
and present the dimensional recurrence so that the main theorem
is reduced to the exclusion of Shimura curves.
In \autoref{sec-proof-curve}, we prove \autoref{thm-curve} along the idea explained above,
and hence complete the proof of the main theorem.
Finally in \autoref{sec-superelliptic-family} we
provide the necessary technical preparations for various numerical properties about families of superelliptic curves,
which are used in \autoref{sec-proof-curve}.

 \subsection*{Notations}
 \begin{enumerate}
 	\item Let $a$ and $b$ be two non-zero integers. We write $a~|~b$ if $a$ divides $b$, i.e., if $b=ac$ for some integer $c$.
 	\item If $x$ is a rational number, we denote its integral part and fractional part by $[x]$ and $\{x\}$ respectively;
 	e.g., $\big[\frac53\big]=1$ and $\big\{\frac53\big\}=\frac23$.
 	\item For an $n$-superelliptic curve in \autoref{def-superelliptic} defined by $y^n=F(x)$,
 	we denote by $\alpha_0=\deg(F)$ the degree of $F(x)$, and
 	\begin{equation}\label{eqn-3-45}
 		\alpha=\left\{\begin{aligned}
 		&\alpha_0, &~&\text{~if~} n~|~\alpha_0;\\
 		&\alpha_0+1, && \text{~if~} n\,{\not|~}\alpha_0.
 		\end{aligned}\right.
 	\end{equation}
 \end{enumerate}

%
%
%
%

 \section{Shimura subvarieties and dimensional reduction}\label{sec-shimura}

 \subsection{Shimura varieties and Shimura curves}
 We first review briefly some facts needed for Shimura varieties, including a rough classification of Shimura curves.

 The Shimura varieties we use in this paper are actually the connected Shimura varieties defined as in \cite{chen-lu-zuo} and \cite{lu-zuo-14}, and standard notations such as $\Sbb=\Res_{\Cbb/\Rbb}\Gbb_\mrm$, $\Gbf(\Qbb)^+$, etc. follow the convention in \cite{deligne pspm} and \cite{milne introduction}. It suffices to bear in mind that our Shimura varieties are complex algebraic varieties of the form $\Gamma\bsh X^+$, where: \begin{itemize}
 	\item $X^+$ comes from a connected Shimura datum $(\Gbf,X;X^+)$ as in \cite[Definition\,2.1.3]{chen-lu-zuo};
 	in particular, $(\Gbf,X)$ is a Shimura datum in the sense of \cite{deligne pspm} and
 	$X^+$ is a connected component of $X$, which is also an Hermitian symmetric domain;
 	
 	\item $\Gamma$ is a congruence subgroup of $\Gbf^\der(\Qbb)^+$; see also \cite[Remark\,2.1.5]{chen-lu-zuo}
 	for the difference between the choice of $\Gamma\subset\Gbf^\der(\Qbb)^+$
 	and congruence subgroups in $\Gbf(\Qbb)^+$, arithmetic subgroups in $\Gbf^\ad(\Qbb)^+$, etc.
 \end{itemize}

 Inside $\Gamma\bsh X^+$ we have Shimura subvarieties associated to Shimura subdata, cf. \cite[Definition\,2.1.4]{chen-lu-zuo}.
 The zero-dimensional ones among them are called CM points, motivated by classical CM points in Siegel modular varieties.

 \begin{remark}
 	\label{remark cm points}Note that any Shimura variety contains a Zariski dense subset of CM points, and the Andr\'e-Oort conjecture studies the converse of this property. In fact starting with an arbitrary Shimura datum $(\Gbf,X)$, a point $x\in X$ is actually an $\Rbb$-group homomorphism $\Sbb\ra\Gbf_\Rbb$, and it is conjugate, under $\Gbf(\Rbb)$, to some $x':\Sbb\ra\Gbf_\Rbb$ having image in some $\Tbf_\Rbb$ with $\Tbf$ a $\Qbb$-torus in $\Gbf$, which gives $(\Tbf,x')\subset(\Gbf,X)$   a subdatum defining a CM point in $\Gamma\bsh X^+$.
 \end{remark}

 It is useful to mention the following result from \cite{moonen linear}\begin{theorem}[Moonen]\label{moonen linearity} Let $M$ be a Shimura variety defined by $(\Gbf,X;X^+)$ and let $Z\subset M$ be a closed irreducible subvariety (over $\Cbb$) . Then $Z$ is a Shimura subvariety if and only if it is totally geodesic and contains a CM point. 	
 \end{theorem}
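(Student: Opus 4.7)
The plan is to handle the two directions separately. The forward direction is essentially formal: if $Z$ arises from a Shimura subdatum $(\Hbf,Y;Y^+)\subset(\Gbf,X;X^+)$, then $Y^+$ embeds as a Hermitian symmetric subdomain of $X^+$, hence is totally geodesic in $X^+$; since total geodesy descends through the uniformization $X^+\to M$, the image $Z$ is totally geodesic in $M$. The existence of a CM point on $Z$ follows by applying \autoref{remark cm points} to the subdatum $(\Hbf,Y)$, which furnishes a $\Qbb$-torus inside $\Hbf$ giving rise to a CM point in $Z$.

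For the converse I would first lift to the universal cover. Choose a CM point $[x_0]\in Z$ and a preimage $x_0\in X^+$, and let $\tilde Z\subset X^+$ be the irreducible analytic component through $x_0$ of the preimage of $Z$. Total geodesy is a local Riemannian property, so $\tilde Z$ is a totally geodesic complex submanifold of $X^+$ passing through $x_0$. By the classical structure theory of Hermitian symmetric spaces (Helgason), any such $\tilde Z$ is itself a Hermitian symmetric subdomain, and there is a connected real algebraic Lie subgroup $H\subset\Gbf_\Rbb$ whose identity component acts transitively on $\tilde Z$ by holomorphic isometries, giving $\tilde Z = H(\Rbb)^+\cdot x_0$ and realizing $\tilde Z$ as the symmetric space of $H$.

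The heart of the proof is to descend $H$ to a $\Qbb$-algebraic subgroup $\Hbf\subset\Gbf$, and this is exactly where the CM hypothesis at $x_0$ is essential. My approach is via Mumford--Tate theory: the CM condition provides a $\Qbb$-torus $\Tbf=\mathrm{MT}(x_0)\subset H$ through which $x_0\colon\Sbb\to\Gbf_\Rbb$ factors, so $H$ already contains a large $\Qbb$-rational piece. I would then take $\Hbf$ to be the generic Mumford--Tate group of the variation of Hodge structure pulled back to $\tilde Z$, i.e.\ the smallest $\Qbb$-algebraic subgroup of $\Gbf$ whose $\Rbb$-points contain both a Hodge-generic Mumford--Tate subgroup along $\tilde Z$ and the conjugate $gx_0g^{-1}$ for $g$ in a small open of $H^\circ$. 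Since generic Mumford--Tate groups are $\Qbb$-algebraic and the variation on $\tilde Z$ is controlled by $H^\circ$, a dimension count and the density of CM-type tangents force $\Hbf_\Rbb^\circ = H^\circ$.

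Once $\Hbf$ is established, I would verify that $(\Hbf,\Hbf(\Rbb)\cdot x_0;\tilde Z)$ satisfies Deligne's axioms for a Shimura subdatum of $(\Gbf,X;X^+)$: the axioms are inherited from $(\Gbf,X)$ together with the Hermitian-symmetric structure on $\tilde Z$ already established. The associated Shimura subvariety of $M$ then coincides with $Z$, because both are irreducible closed subvarieties containing $[x_0]$ and locally equal to the image of $\tilde Z$ near $[x_0]$. The main obstacle in the whole argument is precisely the $\Qbb$-descent of the transitivity group $H$; without the CM point one only obtains that $Z$ is \emph{weakly special}, a condition strictly weaker than being a Shimura subvariety (a Hecke translate of a special subvariety may be totally geodesic without being special).
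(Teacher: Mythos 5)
The paper does not prove \autoref{moonen linearity}; it is quoted from Moonen \cite{moonen linear}, so there is no internal proof to compare against and your attempt must stand on its own. Your forward direction is fine, and the overall shape of the converse (lift to $X^+$, recognize $\tilde Z$ as a Hermitian symmetric subdomain with real transitivity group $H$, then try to produce a $\Qbb$-subdatum) is the right one. But the crucial descent step is genuinely gappy. First, the asserted equality $\Hbf^\circ_\Rbb=H^\circ$ for $\Hbf$ the generic Mumford--Tate group of $Z$ is false as stated: $\Hbf$ always contains the Mumford--Tate torus of the ``rigid'' directions and possibly compact real factors, all of which act trivially on $\tilde Z$, so at best $\tilde Z$ is an $\Hbf(\Rbb)^+$-orbit inside the associated domain, not the full symmetric space of $\Hbf_\Rbb$. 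Second, and more seriously, ``a dimension count and the density of CM-type tangents'' is not an argument: a single $\Qbb$-torus $\mathrm{MT}(x_0)\subset H$ is nowhere near enough to descend the real group $H$ to $\Qbb$ (a real subgroup containing a $\Qbb$-torus need not be $\Qbb$-algebraic), and ``CM-type tangents'' is not a defined notion.

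The actual mechanism, which your sketch omits, is monodromy. One needs (a) Deligne's theorem of the fixed part together with Andr\'e's theorem that the connected algebraic monodromy group $\Mbf$ of $Z$ is a normal subgroup of $\mathrm{MT}(Z)^{\der}$; this yields a decomposition $Y^+\cong Y_1^+\times Y_2^+$ of the domain attached to $\mathrm{MT}(Z)$ with $\tilde Z\subseteq Y_1^+\times\{y_2\}$; (b) total geodesy, combined with the Zariski density of the monodromy of $Z$ in $\Mbf$, to force $\tilde Z=Y_1^+\times\{y_2\}$, i.e.\ that $Z$ is weakly special; and (c) only now does the CM hypothesis enter: it forces $y_2$ to be a special point of $Y_2^+$, so that $\Mbf\cdot\mathrm{MT}(x_0)$ defines a genuine subdatum whose associated special subvariety is $Z$. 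Your closing remark shows you know that total geodesy alone yields only weak specialness, but the proof as written uses the CM point in the wrong place (to descend $H$) and never supplies steps (a) or (c), which are where the theorem actually lives.
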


 The theorem implies that being a Shimura subvariety or not can be verified over the universal covering $\wp:X^+\ra M$: $Z$ is a Shimura subvariety if and only if one of the geometric irreducible component $Z^+$ in $\wp^\inv(Z)\subset X^+$ is an equivariantly embedded Hermitian symmetric subdomain containing some point $y\in X^+$ such that $y(\Sbb)\subset\Tbf_\Rbb$ for some $\Qbb$-torus $\Tbf$ in $\Gbf$, i.e. $(\Tbf,y)$ is a subdatum of $(\Gbf,X;X^+)$ defining some CM point. Here an Hermitian symmetric domain $X'^+$ is said to be equivariantly embedded in $X^+$ if it is defined by some semi-simple Lie subgroup $G'$ of $\Gbf^\ad(\Rbb)^+$ and the inclusion $X'^+\mono X^+$ is equivariant \wrt $G'\mono\Gbf^\ad(\Rbb)^+$ sending a Cartan involution of $G'$ to a Cartan involution of $\Gbf^\ad(\Rbb)^+$, in the sense of \cite{satake real embedding}.



 \begin{example}[Siegel modular varieties]\label{shimura subvarieties in siegel modular varieties}
 	
 	In this paper $\Acal_g$ is the Shimura variety $\Gamma_g(\ell)\bsh \Xcal_g^+$ defined by $(\GSp_{2g},\Xcal_g;\Xcal_g^+)$, where $\GSp_{2g}$ is the $\Qbb$-group of symplectic similitude on the standard symplectic $\Qbb$-space $\Qbb^{2g}$, $\Xcal_g$ is the double half space of Siegel of genus $g$, and $\Xcal_g^+$ is the upper half space.
 	The group $\Gamma_g(\ell)$ is the principal congruence subgroup of level-$\ell$ in $\Sp_{2g}(\Zbb)$, equal to the kernel of the reduction modulo $\ell$: $\Sp_{2g}(\Zbb)\ra \Sp_{2g}(\Zbb/\ell)$, where for simplicity we choose $\ell$ to be an odd integer at least 3. It is well-known that $\Acal_g$ represents the moduli functor parametrizing principally polarized abelian varieties with full level-$\ell$ structure. 
 \end{example}

 Problems of Coleman-Oort type only involve  Shimura subvarieties in $\Acal_g=\Acal_{g,\ell}$. Since we have chosen $\ell$ large enough which makes $\Gamma_g(\ell)$ torsion free, the Shimura subvarieties are smooth complex submanifolds in $\Acal_g$.

 \begin{example}[Shimura curves]\label{example shimura curves}
  Shimura curves are one-dimensional Shimura varieties, hence they are associated to Shimura data of the form $(\Gbf,X;X^+)$ with $X^+$ the Poincar\'e upper half plane, which is the only one-dimensional Hermitian symmetric domain.
  This forces $\Gbf^\ad$ to be of the form $\Res_{L/\Qbb}\Hbf$ where $L$ is some totally real number field and $\Hbf$ is an $F$-form of $\PGL_2$;
  moreover, $\Hbf$ becomes $\PGL_{2,\Rbb}$ along exactly one real embedding $\sigma=\sigma_1:L\mono\Rbb$,
  and it is compact along the other real embeddings $\sigma_2,\cdots,\sigma_d:L\mono\Rbb$.
 	
 In this paper we are interested in Shimura curves inside $\Acal_g$,
 and these curves are given by Shimura subdata $(\Gbf,X;X^+)$ of $(\GSp_{2g},\Xcal_g;\Xcal_g^+)$.
 If the $\Qbb$-group $\Gbf^\der=\Res_{L/\Qbb}\Hbf$ as above comes from an $L$-form $\Hbf$ of $\PGL_2$,
 then the variation of Hodge structures (VHS for short) on $X^+$ associated to any algebraic representation of $\Gbf$
 is of even weights, which contradicts the existence of the canonical VHS of odd weight associated to
 the standard symplectic representation (equivalently from the universal abelian scheme of the moduli problem).
 Hence $\Hbf$ is an $L$-form of $\SL_2$. The classification of such $\Hbf$ is found in \cite{platonov rapinchuk}:
 \begin{enumerate}
 		\item[(i)] $\Hbf$ is an inner form of $\SL_2$, realized as the kernel of $\Nrd:\Gbb_\mrm^{D/L}\ra \Gbb_{\mrm,L}$ where $D$ is a quaternion $L$-algebra, $\Gbb_\mrm^{D/L}$ is the $L$-group sending an $L$-algebra $R$ to $(R\otimes_LD)^\times$, and $\Nrd$ is the reduced norm map;
 		\item[(ii)] $\Hbf$ is an outer form of $\SL_2$, which is the derived group of a unitary $L$-group $\Ubf$:
 		\begin{enumerate}
 			\item[(ii-a)] either $\Ubf$ is the $L$-group of automorphisms of an Hermitian form $E^2\times E^2\ra E$ for an imaginary quadratic extension of $L$;
 			\item[(ii-b)] or for some imaginary quadratic extension $E$ over $L$ and some quaternion division $E$-algebra $D$ there exists an involution $*$ of second hand on $D$ together with a $*$-Hermitian form $D\times D\ra E$, of which $\Ubf$ is the $L$-group of automorphisms commuting with the natural action of $D$. (Note that (ii-a) can be seen as the case when $D=\Mat_2(E)$.)
 		\end{enumerate}
 	\end{enumerate}	

  It is also known that Shimura curves associated to $(\Gbf,X;X^+)$ are compact (i.e. proper)
  if and only if the $\Qbb$-rank of $\Gbf^\ad$ is zero.	
 \end{example}
 The following result will be used later in \autoref{cor-3-4}.
 \begin{lemma}\label{lem-evenrank}
 	 Let $C\subset\Acal_g$ be a compact Shimura curve, with $E_C$ the Higgs bundle on $C$ associated to the universal abelian scheme over $C$ defined by the moduli problem. Write $E_C=F_C\oplus A_C$ for the decomposition into the flat part and the maximal part. Then the rank of $A_C$ is divided by 4, namely the rank of $A_C^{1,0}$ is even.
 	
 \end{lemma}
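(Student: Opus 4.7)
The plan is to identify $A_C$ representation-theoretically via the classification of $\mathbf{G}^{\der}=\Res_{L/\mathbb{Q}}\mathbf{H}$ in Example \ref{example shimura curves} and then to compute its rank modulo $4$. Let $V$ be the standard symplectic $\mathbb{Q}$-representation of $\mathbf{G}$ realizing $C\subset\mathcal{A}_g$, and write $\mathbf{G}^{\der}_{\mathbb{R}}=\mathbf{H}_{\sigma_1}\times\prod_{i\geq 2}\mathbf{H}_{\sigma_i}$ with $\mathbf{H}_{\sigma_1}\cong\SL_{2,\mathbb{R}}$ the unique noncompact factor. Every irreducible $\mathbf{G}^{\der}_{\mathbb{R}}$-sub-representation of $V\otimes\mathbb{R}$ decomposes as an external tensor product of irreducible $\mathbf{H}_{\sigma_i}$-representations, and the requirement that the associated polarized Hodge structure has weight one (only two pieces in the Hodge filtration) forces the $\mathbf{H}_{\sigma_1}$-factor to be either trivial or the $2$-dimensional standard representation. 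Sub-reps with trivial $\mathbf{H}_{\sigma_1}$-factor have monodromy factoring through the compact factors $\mathbf{H}_{\sigma_i}$ ($i\geq 2$) and so yield purely unitary sub-local systems, contributing to the flat part $F_C$ under the Deligne--Simpson correspondence; sub-reps with standard $\mathbf{H}_{\sigma_1}$-factor carry all the nontrivial Higgs-field variation and thus make up $A_C$. Consequently $\rank A_C$ equals the real dimension of the $\sigma_1$-component of $V\otimes\mathbb{R}$, and by Hodge symmetry $\rank A_C^{1,0}=\tfrac12\rank A_C$.

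I would then verify divisibility by $4$ of this dimension in each of the three possibilities of Example \ref{example shimura curves}. In case (i) the representation $V$ is built from $D$-modules via restriction of scalars from $L$ to $\mathbb{Q}$, and $D\otimes_{L,\sigma_1}\mathbb{R}\cong\Mat_2(\mathbb{R})$; if $V'$ is a $D$-module of $D$-rank $r$, its $\sigma_1$-component has real dimension $4r$ and decomposes as $2r$ copies of the standard $\SL_2(\mathbb{R})$-representation. In case (ii-a) the representation is built from copies of the standard Hermitian $E$-module $E^2$, and since $E\otimes_{L,\sigma_1}\mathbb{R}\cong\mathbb{C}$ each copy contributes complex dimension $2$ to the $\sigma_1$-component, giving real dimension a multiple of $4$. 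In case (ii-b) one has $D\otimes_{L,\sigma_1}\mathbb{R}\cong\Mat_2(\mathbb{C})$, and each copy of $V'=D$ contributes a free $\Mat_2(\mathbb{C})$-module of real dimension $8$. In all three cases $\rank A_C$ is divisible by $4$, so $\rank A_C^{1,0}$ is even.

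The main obstacle is the clean identification of the maximal Higgs sub-bundle $A_C$ with the $\sigma_1$-contribution to $V\otimes\mathbb{R}$, which relies on combining Simpson's correspondence with the representation-theoretic rigidity of weight-one polarized Hodge structures for $\SL_2$ (forcing only the trivial and the standard $2$-dimensional representation to appear). Once this identification is secured, the case analysis reduces to straightforward dimension bookkeeping for modules over $\Mat_2(\mathbb{R})$, $\mathbb{C}$, and $\Mat_2(\mathbb{C})$ respectively.
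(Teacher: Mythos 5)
Your proposal is correct and follows the same overall strategy as the paper's proof: invoke Satake's classification of the symplectic representation of $\Gbf^\der=\Res_{L/\Qbb}\Hbf$, split off the trivial part, and then count module dimensions case by case over $\Mat_2(\Rbb)$, $\Cbb$ and $\Mat_2(\Cbb)$ according to the three types in \autoref{example shimura curves}. The one genuine difference lies in the identification of $A_C$, and there your version is the more accurate one. The paper writes $\Qbb^{2g}=T\oplus V$ and asserts $\rank A_C=\dim_\Qbb V$, concluding divisibility by $4d$ with $d=[L:\Qbb]$; you instead observe that only the constituents on which the noncompact factor $\Hbf_{\sigma_1}$ acts through the standard representation contribute to $A_C$, the components at the compact places $\sigma_2,\dots,\sigma_d$ being unitary and hence absorbed into the flat part $F_C$. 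This gives $\rank A_C=\dim_\Rbb\bigl(V\otimes_{L,\sigma_1}\Rbb\bigr)=\dim_L V$ rather than $\dim_\Qbb V$ (for example, for a quaternion division algebra $D$ over a real quadratic field acting on $V=D$ one gets $\rank A_C=4$, not $8$). Both identifications yield the divisibility by $4$ that the lemma actually needs, so the discrepancy is harmless for the application, but your bookkeeping is the one consistent with the definition of $F_C$ as the maximal unitary part. One point you should make explicit, as the paper does: in case (i) it is the compactness of $C$ (equivalently the $\Qbb$-anisotropy of $\Gbf^\der$) that forces $D$ to be a division algebra, so that $V$ is a \emph{free} $D$-module and the phrase ``$D$-rank $r$'' makes sense with $\sigma_1$-contribution $4r$; for split $D$ the standard representation contributes only rank $2$, which is exactly why the conclusion fails for the noncompact modular curve.
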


 \begin{proof}
 	The decomposition of Higgs bundles $E_C=F_C\oplus A_C$ is determined by the representation of the fundamental group of $C$
 	and is invariant under base change, which in turn is determined by the algebraic representation $\Gbf^\der\ra \Sp_{2g}$
 	on $\Qbb^{2g}$, following \cite{chen-lu-zuo}. From the classification in \cite{satake rational embedding},
 	we see that $\Gbf^\der\ra\Sp_{2g}$ decomposes as $\Qbb^{2g}=T\oplus V$,
 	where the action of $\Gbf^\der$ on $T$ is trivial, and the $\Qbb$-vector space $V$ has the structure of an $F$-subspace,
 	such that the action of $\Gbf^\der$ is the scalar restriction from an $L$-group homomorphism $\Hbf\ra\Sp_{V,L}$
 	for some $F$-linear  symplectic structure on $V$.
 	Moreover, following the cases listed in \autoref{example shimura curves}:
 	\begin{enumerate}
 		\item[(i)] if $L$ is totally real and $\Hbf$ comes from a quaternion $L$-algebra $D$, then $V$ is a $D$-vector space, and $D$ must be a division $L$-algebra because $C$ is compact and $\Gbf^\der$ is of $\Qbb$-rank zero; this forces the $L$-dimension of $V$ to be a multiple of 4;
 		
 		\item[(ii)] if for some CM field $E$ of real part $L$ and degree $2d$ over $\Qbb$, and $\Hbf$ is associated to $D$ a quaternion $E$-algebra: \begin{enumerate}
 			\item[(ii-a)] either $D\isom\Mat_2(E)$, and $V$ is a direct sum of copies of the standard representation of $\Hbf$ on $E^2$, whose $E$-dimension is even, and $L$-dimension is divided by 4;
 			\item[(ii-b)] or $D$ is a quaternion division $E$-algebra, and $V$ is a $D$-vector space, whose $E$-dimension is divided by 4 and $L$-dimension divided by 8.
 		\end{enumerate}
 	\end{enumerate}
 	Hence the $\Qbb$-dimension of $V$, which also equals the rank of $A_C$, must be a multiple of $4d$, with $d$ the degree of $L$. 	
 \end{proof}
 \subsection{Dimensional reduction}
 In  this subsection we recall some properties of Shimura varieties, with focus on Shimura subvarieties in $\Acal_g$, and explain the reduction of the main result to the exclusion of Shimura curves.

 \begin{lemma}[finite intersection]\label{finite intersection} The collection of finite unions of Shimura subvarieties in $\Acal_g$ is stable under finite intersections.
 	
 \end{lemma}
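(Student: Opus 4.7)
The plan is to apply distributivity to reduce to the intersection of two Shimura subvarieties, and then identify each irreducible component of the intersection with a Shimura subvariety via a generic Mumford-Tate group argument; no additional machinery beyond \autoref{moonen linearity} and \autoref{remark cm points} is needed.

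First I would observe that, since finite intersection distributes over finite union, it suffices to treat the case $Y = Z_1 \cap Z_2$ with $Z_1, Z_2 \subset \Acal_g$ two Shimura subvarieties. As $Y$ is a closed algebraic subset of $\Acal_g$, it has only finitely many irreducible components, and the essential task is to show that every irreducible component $W$ of $Y$ is itself a Shimura subvariety.

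To do this, I would consider the polarized $\Qbb$-variation of Hodge structure of weight one on the smooth locus of $W$ obtained by restricting that of $\Acal_g$ (coming from the universal abelian scheme), and let $\Gbf_W \subset \GSp_{2g}$ be its generic Mumford-Tate group --- a connected $\Qbb$-subgroup. Since $W$ lies in both $Z_1$ and $Z_2$, the group $\Gbf_W$ is contained in each of the $\Qbb$-groups defining $Z_1$ and $Z_2$, hence in their intersection. The Hodge-theoretic construction then makes $(\Gbf_W, X_W)$ a Shimura subdatum of $(\GSp_{2g}, \Xcal_g)$, whose associated Shimura subvariety $M_W \subset \Acal_g$ contains $W$. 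By the very definition of $\Gbf_W$ as the \emph{generic} Mumford-Tate group, the $\Gbf_W(\Rbb)^+$-orbit of a Hodge-generic point of $W$ has dimension equal to $\dim W$, whence $\dim M_W = \dim W$. Since $M_W$ is irreducible as a connected Shimura subvariety in the sense used throughout the paper, this forces $M_W = W$, and $W$ is Shimura.

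The main obstacle I anticipate lies not in the geometric content but in the careful verification that $(\Gbf_W, X_W)$ qualifies as a Shimura subdatum in the precise sense of the paper --- i.e., satisfies Deligne's axioms --- and that the connected component $X_W^+$ is chosen so that $M_W$ coincides with $W$. These are standard but somewhat technical bookkeeping points in the theory of generic Mumford-Tate groups. As an alternative route bypassing this bookkeeping, one could instead verify Moonen's criterion \autoref{moonen linearity} directly: total geodesicity of $W$ would follow from the observation that each lift of $W$ to $\Xcal_g^+$ sits inside an intersection of two equivariantly embedded Hermitian symmetric subdomains, whose connected components are themselves equivariantly embedded by a Cartan-involution computation on tangent spaces $\mathfrak{p}_1 \cap \mathfrak{p}_2$ at any common base point; and the existence of a CM point in $W$ would then be supplied by \autoref{remark cm points} applied to the Shimura subdatum $(\Gbf_W, X_W)$ constructed above.
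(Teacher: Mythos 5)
Your overall strategy is sound and genuinely different from the paper's: the paper invokes the Deligne--Milne description of Shimura subvarieties of $\Acal_g$ as connected components of moduli loci of principally polarized abelian varieties with prescribed Hodge classes, so that intersecting two such loci amounts to prescribing the union of the two sets of Hodge classes, and the connected components of the intersection are automatically Shimura. You instead work with generic Mumford--Tate groups, which is the dual point of view and requires you to handle each irreducible component $W$ of $Z_1\cap Z_2$ individually and to produce the subdatum $(\Gbf_W,X_W)$ explicitly. Both routes are standard; the paper's is shorter precisely because it never descends to the level of individual components.

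However, one step of your argument as written is false. The definition of $\Gbf_W$ as the generic Mumford--Tate group does \emph{not} imply that the $\Gbf_W(\Rbb)^+$-orbit of a Hodge-generic point of $W$ has dimension $\dim W$, nor that $\dim M_W=\dim W$: for a Hodge-generic curve $W\subset\Acal_g$ one has $\Gbf_W=\GSp_{2g}$ and $M_W=\Acal_g$. In general $M_W$ is only the \emph{smallest} Shimura subvariety containing $W$, and the equality $\dim M_W=\dim W$ is exactly the assertion that $W$ is special --- which is what you are trying to prove, so the argument is circular at this point. The repair is immediate from facts you have already recorded: since $\Gbf_W\subseteq\Gbf_1\cap\Gbf_2$ and $X_W^+$ is the $\Gbf_W(\Rbb)^+$-orbit of a point lying over $W\subseteq Z_1\cap Z_2$, the Shimura subvariety $M_W$ is contained in $Z_1\cap Z_2$; as $W$ is an irreducible component of $Z_1\cap Z_2$ and $M_W$ is irreducible with $W\subseteq M_W\subseteq Z_1\cap Z_2$, one gets $M_W=W$ with no dimension count at all. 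With this correction (and the bookkeeping you already flag, namely that $(\Gbf_W,X_W)$ satisfies Deligne's axioms), the proof goes through.
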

 \begin{proof}
 	As is explained in \cite{deligne milne hodge classes}, Shimura subvarieties in $\Acal_g$ are geometrically connected component of moduli subspaces parametrizing principally polarized abelian varieties with prescribed Hodge classes. It is clear that for two such moduli subspaces $M_1$ and $M_2$, the intersection $M_1\cap M_2$ remains a moduli subspace by joining the two sets of Hodge classes defining $M_1$ and $M_2$ respectively. Hence if $M_1^+\subset M_1$ and $M_2^+\subset M_2$ are Shimura subvarieties given as their connected components, then $M_1^+\cap M_2^+$ is a finite union of Shimura subvarieties that are connected components in $M_1\cap M_2$. This shows that the collection of finite unions of Shimura subvarieties in $\Acal_g$ is stable under finite intersection.
 \end{proof}




 Shimura varieties are in general non-complete, and among the various theories of compactification we simply mention the minimal one:
 \begin{theorem}[Baily-Borel compactification]\label{baily-borel compactification} Let $M=\Gamma\bsh X^+$ be a Shimura variety. Then the following hold:
 	
 	(1) $M$ is a normal quasi-projective algebraic variety over $\Cbb$. It admits a compactification, called the Baily-Borel compactification $M^\BB$, which is a projective algebraic variety over $\Cbb$ containing $M$ as a dense open subvariety, together with the universal property that if $M\ra Z$ is a morphism of complex algebraic varieties with $Z$ projective, then it admits a unique factorization of the form $M\mono M^\BB\ra Z$.
 	
 	(2) The boundary components of $M$, i.e. irreducible components of $M^\BB \bsh M$, are of codimension at least 2, unless $\Gbf^\ad$ admits a $\Qbb$-factor isomorphic to $\PGL_{2,\Qbb}$.
 	
 	(3) If $f:(\Gbf',X';X'^+)\ra(\Gbf,X;X^+)$ is a morphism of connected Shimura data compatible with the choices of congruence subgroups $\Gamma'\subset\Gamma$ in $\Gbf'^\der(\Qbb)^+$ and in $\Gbf^\der(\Qbb)^+$ respectively, then the evident map $M'=\Gamma'\bsh X'^+\ra M=\Gamma\bsh X^+$ extends uniquely to the compactifications $M'^\BB\ra M^\BB$.
 	
 	
 	
 	
 \end{theorem}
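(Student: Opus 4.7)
The plan is to invoke the classical theory of Baily--Borel, viewing the three parts as (1) the original construction of the minimal compactification, (2) a combinatorial statement about rational parabolic subgroups, and (3) a functoriality statement that follows formally once the construction has been fixed.

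For part (1), I would begin by recalling that $X^+$ is a bounded symmetric domain, on which the congruence subgroup $\Gamma$ acts properly discontinuously; since $\ell\geq 3$ the action is actually free, so $M=\Gamma\bsh X^+$ is a complex manifold. The Baily--Borel construction proceeds by fixing an integer $k\gg 0$ and considering the graded ring of $\Gamma$-invariant holomorphic automorphic forms of weight divisible by $k$ on $X^+$; classical estimates on growth at the cusps, combined with the Koecher principle, show that this ring is finitely generated and separates points and tangents on $M$. Projective spectrum of this ring gives the embedding of $M$ as a Zariski-open subset of a projective normal variety $M^{\BB}$. The universal property with respect to morphisms $M\to Z$ into projective varieties is then deduced from the fact that such a morphism is defined by a linear system of sections of an ample line bundle, which extend across $M^{\BB}$ by Koecher again.

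For part (2), the boundary of $M^{\BB}$ is stratified by $\Gamma$-conjugacy classes of proper maximal rational parabolic subgroups $P\subset \Gbf$. Each such $P$ corresponds to a rational boundary component $F_P\subset \partial X^+$, whose quotient by an arithmetic group gives a boundary stratum of $M^{\BB}$, and a direct Lie-theoretic computation of the unipotent radical of $P$ shows that the dimension of the stratum is strictly smaller than $\dim F_P+\dim A_P$, where $A_P$ is the split center of a Levi of $P$. The codimension of this stratum in $M^{\BB}$ equals 1 precisely when $A_P$ has $\Qbb$-rank 1 and $F_P$ is of the maximal possible dimension; following the classification of Hermitian symmetric domains, this occurs if and only if one of the $\Qbb$-simple factors of $\Gbf^{\ad}$ is isomorphic to $\mathrm{PGL}_{2,\Qbb}$ (the Poincar\'e upper half plane case, whose boundary strata are cusps that, when combined with a trivial factor, yield divisors). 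I would finish by a case-by-case verification using Satake's classification of rational boundary components.

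For part (3), I would use the universal property from (1). The morphism $(\Gbf',X';X'^+)\to (\Gbf,X;X^+)$ sends $X'^+$ into $X^+$ equivariantly with respect to $\Gbf'^{\der}(\Qbb)^+\to \Gbf^{\der}(\Qbb)^+$ and is compatible with $\Gamma'\subset \Gamma$, so it descends to a morphism $M'\to M$ of quasi-projective varieties. Composing with $M\hookrightarrow M^{\BB}$ gives a morphism $M'\to M^{\BB}$ into a projective variety, to which the universal property of $M'^{\BB}$ applies and yields the extension $M'^{\BB}\to M^{\BB}$; its uniqueness is automatic since $M'$ is dense in $M'^{\BB}$. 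Alternatively, functoriality can be obtained directly from the construction, as rational parabolics of $\Gbf'$ map to rational parabolics of $\Gbf$, inducing a map of boundary stratifications compatible with the ambient embedding into projective space.

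The only step that requires real work is part (2): the bookkeeping for the codimension of boundary strata, and in particular the identification of the exceptional case $\mathrm{PGL}_{2,\Qbb}$, is the technical heart of the statement. Parts (1) and (3) are, respectively, the original Baily--Borel theorem and a routine consequence of its universal property.
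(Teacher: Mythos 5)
The paper offers no proof of this theorem: it is stated as classical background and attributed to Baily--Borel (the sentence immediately after the statement refers the description of the boundary components to \cite{baily borel compactification}). So there is nothing in the paper to compare against line by line, and your overall sketch --- Proj of the ring of automorphic forms for (1), the stratification of the boundary by rational boundary components attached to maximal rational parabolic $\Qbb$-subgroups for (2), functoriality for (3) --- is the standard route through the classical literature.

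There is, however, a genuine gap in your treatment of the universal property in (1), and it propagates into your (3). The property as stated --- every morphism $M\ra Z$ with $Z$ projective factors through $M^{\BB}$ --- cannot be obtained from ``sections of an ample line bundle extend by Koecher'': the pullback $f^*\calo_Z(1)$ has no a priori relation to the automorphic line bundle, and in fact the assertion fails once $\dim M\geq 2$. Take $M=\Acal_g$ with $g\geq 2$ and $Z$ a projective toroidal compactification of $\Acal_g$: a factorization $M\mono M^{\BB}\ra Z$ would yield (by density and properness) a surjective birational morphism $M^{\BB}\ra Z$ carrying the Baily--Borel boundary, which has codimension at least $2$, onto the toroidal boundary divisor --- impossible for dimension reasons. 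Hence your derivation of (3) from the universal property of (1) is not available. The correct route to (3) --- which is the only part of the theorem the paper actually uses, in the subsequent corollary on boundary components --- is the one you relegate to an ``alternative'': functoriality of the construction itself, i.e.\ the facts that rational boundary components of $X'^+$ map to rational boundary components of $X^+$ and that automorphic forms pull back to automorphic forms with the required growth conditions, as in Satake and Baily--Borel. For (2), your strategy (Satake's classification plus a case check) is the right one, but the displayed comparison of the stratum dimension with $\dim F_P+\dim A_P$ is not a codimension estimate as written; the actual computation compares $\dim F_P$ with $\dim X^+$ factor by factor over the $\Qbb$-simple factors of $\Gbf^{\ad}$ and isolates $\PGL_{2,\Qbb}$ as the only factor producing a codimension-one boundary component.
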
	

 In fact from \cite{baily borel compactification} we know that the boundary components are lower dimensional Shimura subvarieties associated to proper parabolic $\Qbb$-subgroups of $\Gbf$. In particular, if $\Gbf$ admits no proper parabolic $\Qbb$-subgroups, then no boundary component is needed, and the Shimura variety in question is projective itself.

 \begin{corollary}[boundary components]\label{boundary components} Let $M=\Gamma\bsh X^+$ be a Shimura variety defined by $(\Gbf,X;X^+)$ and a congruence subgroup $\Gamma\subset\Gbf^\der(\Rbb)^+$. Let $M'\subset M$ be a Shimura subvariety defined by some subdatum $(\Gbf',X';X'^+)$ such that $\Gbf'^\ad$ admits no $\Qbb$-factor isomorphic to $\PGL_{2,\Qbb}$. Write $\Mbar'$ for the closure of $M'$ inside $M^\BB$, then the irreducible components of $\Mbar'\bsh M'$ are of codimension at least 2 in $\Mbar'$.
 	
 \end{corollary}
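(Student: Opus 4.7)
The plan is to deduce this purely formally from parts (2) and (3) of \autoref{baily-borel compactification} by comparing the Baily–Borel compactification of $M'$ with its closure inside $M^\BB$. Concretely, let $M'^\BB$ denote the Baily–Borel compactification of the Shimura variety $M'$ itself. By part (3) applied to the morphism of Shimura data $(\Gbf',X';X'^+)\hookrightarrow(\Gbf,X;X^+)$, the inclusion $M'\hookrightarrow M\hookrightarrow M^\BB$ extends uniquely to a morphism
\[
\phi: M'^\BB\lra M^\BB
\]
of projective complex varieties. Since $M'^\BB$ is projective and irreducible (as $M'$ is connected and open dense in it), the image $\phi(M'^\BB)$ is an irreducible closed subvariety of $M^\BB$ containing $M'$ as an open dense subset, hence coincides with $\Mbar'$.

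Next, I would verify that the boundary of $M'^\BB$ maps into the boundary of $M^\BB$. This is essentially built into the Baily–Borel construction: the boundary components of both compactifications are indexed by the respective conjugacy classes of proper parabolic $\Qbb$-subgroups, and the extension in part (3) sends a cusp of $M'$ associated to a proper parabolic $\Qbb$-subgroup $\Pbf'\subset\Gbf'$ into the cusp of $M$ associated to any proper parabolic $\Qbb$-subgroup of $\Gbf$ containing $\Pbf'$. Thus $\phi(M'^\BB\setminus M')\subset M^\BB\setminus M$. Combined with $\phi(M')=M'$ and the surjectivity $\phi(M'^\BB)=\Mbar'$ established above, this gives
\[
\Mbar'\setminus M'\;=\;\phi\bigl(M'^\BB\setminus M'\bigr).
\]

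Finally, the hypothesis that $\Gbf'^\ad$ has no $\Qbb$-factor isomorphic to $\PGL_{2,\Qbb}$ allows part (2) of \autoref{baily-borel compactification} to be applied to $M'^\BB$, yielding $\dim(M'^\BB\setminus M')\leq\dim M'^\BB-2=\dim\Mbar'-2$. Combining with the surjection $\phi(M'^\BB\setminus M')=\Mbar'\setminus M'$ and the elementary inequality $\dim\phi(Z)\leq\dim Z$ for a morphism of projective varieties gives $\dim(\Mbar'\setminus M')\leq\dim\Mbar'-2$, which is exactly the claim about codimension. The only subtle step, and the one most deserving of care, is the verification that $\phi$ carries the cusps of $M'^\BB$ into the cusps of $M^\BB$, but this is a direct unpacking of the functoriality asserted in part (3) of the cited theorem together with the parabolic description of boundary components from \cite{baily borel compactification}.
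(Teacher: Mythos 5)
Your argument is correct and is essentially the paper's own proof: both extend the inclusion to $M'^\BB\ra M^\BB$ via functoriality of the Baily--Borel compactification, identify $\Mbar'$ with the image, and apply the codimension-$\geq 2$ bound to the boundary of $M'^\BB$ using the hypothesis on $\Gbf'^\ad$. The only difference is that you take extra care to check cusps map to cusps; for the dimension bound one only needs the inclusion $\Mbar'\setminus M'\subseteq\phi(M'^\BB\setminus M')$, which already follows set-theoretically from $\phi(M')=M'$ and $\phi(M'^\BB)=\Mbar'$.
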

 \begin{proof}
 	The closed immersion $M'\mono M$ extends to a morphism between their compactifications $M'^\BB\ra M^\BB$, which is generically injective, and the closure $\Mbar'$ of $M'$ in $M^\BB$ is also the closure of the image of $M'^\BB$. The conclusion is clear because $M'^\BB$ only joints to $M'$ finitely many boundary components of codimension at least 2.
 \end{proof}

 We also need the notion of decomposable locus in $\Acal_g$.

 \begin{definition}[decomposable locus]\label{decomposable locus} A principally polarized abelian variety $A$ over $\Cbb$ is said to be decomposable if it is isomorphic to a direct product $A=A_1\times A_2$ with $A_1$ and $A_2$ both principally polarized of dimension $>0$ whose polarizations induce the polarization of $A$. We thus get the locus $\Acal_g^\dec\subset \Acal_g$ of decomposable principally polarized abelian varieties.
 	
 \end{definition}
 \begin{lemma} The decomposable locus $\Acal_g^\dec$ is a finite union of Shimura subvarieties in $\Acal_g$.
 	
 \end{lemma}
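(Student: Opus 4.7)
The plan is to stratify $\Acal_g^\dec$ according to the isomorphism type of the splitting. For each ordered pair $(g_1,g_2)$ of positive integers with $g_1+g_2=g$, let $\Acal_g^{\dec,(g_1,g_2)}\subset\Acal_g$ denote the locus of principally polarized abelian varieties admitting a decomposition $A\simeq A_1\times A_2$ with $\dim A_i=g_i$ and polarization inherited from the product of principal polarizations on the $A_i$. Since any decomposable $A$ admits such a splitting for some $(g_1,g_2)$, and since there are only finitely many such unordered partitions, it suffices to show that each $\Acal_g^{\dec,(g_1,g_2)}$ is a finite union of Shimura subvarieties of $\Acal_g$.

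To produce the Shimura structure, I would write down the morphism of Shimura data
\[
(\Gbf^{(g_1,g_2)},\Xcal_{g_1}\times\Xcal_{g_2};\Xcal_{g_1}^+\times\Xcal_{g_2}^+)\longrightarrow(\GSp_{2g},\Xcal_g;\Xcal_g^+),
\]
where $\Gbf^{(g_1,g_2)}=\GSp_{2g_1}\times_{\Gbb_\mrm}\GSp_{2g_2}$ is the fibered product over the common similitude character, and the map is induced by the block-diagonal inclusion coming from the orthogonal symplectic decomposition $\Qbb^{2g}=\Qbb^{2g_1}\oplus\Qbb^{2g_2}$. The restriction of any $x\in\Xcal_g^+$ coming from such a block-diagonal torus clearly factors through $\Sbb\to\Gbf^{(g_1,g_2)}_{\Rbb}$, so this is indeed a morphism of connected Shimura data.

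Next, after choosing compatible congruence subgroups (one may pull back $\Gamma_g(\ell)$ to $\Gbf^{(g_1,g_2),\der}(\Qbb)^+$ and pass to a finite-index subgroup contained in $\Gamma_{g_1}(\ell)\times\Gamma_{g_2}(\ell)$), the induced map on connected Shimura varieties has image a Shimura subvariety by definition. Modular-theoretically, this image parametrizes principally polarized abelian varieties equipped with a direct-sum decomposition of the prescribed type, so its underlying set in $\Acal_g$ is precisely $\Acal_g^{\dec,(g_1,g_2)}$. The discrepancy between the image in the moduli problem and the image as a reduced subscheme is controlled by finitely many choices of level structure compatible with the splitting, giving at worst a finite union of Shimura subvarieties.

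The main point to verify carefully is the last one: that the set-theoretic image in $\Acal_g$ is closed and decomposes into finitely many Shimura components. This is where one uses that the forgetful map from the moduli of decomposed abelian varieties to $\Acal_g$ is finite (a decomposable $A$ has only finitely many principal product decompositions, up to the finite action permuting isomorphic factors), together with Moonen's criterion (\autoref{moonen linearity}): each geometric component of the image is totally geodesic and contains CM points (obtained by taking products of CM points in $\Acal_{g_1}$ and $\Acal_{g_2}$), hence is a Shimura subvariety. Finally, summing over the finitely many ordered partitions $g=g_1+g_2$ gives $\Acal_g^\dec$ as a finite union of Shimura subvarieties, as required.
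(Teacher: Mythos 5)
Your proof is correct and follows essentially the same route as the paper: both identify, for each partition $g=g_1+g_2$, the Shimura subdatum given by the block-diagonal subgroup of $\GSp_{2g}$ preserving the symplectic splitting $\Qbb^{2g}=\Qbb^{2g_1}\oplus\Qbb^{2g_2}$ (your fibered product over the similitude character is exactly the paper's $\GSp_{2g_1,2g_2}$), and then take the finite union over partitions. Your extra care about level structures and the finiteness of the set of splittings is a reasonable refinement of a point the paper passes over quickly, but it does not change the argument.
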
	
 \begin{proof}
 	It suffices to notice that if a $g$-dimensional principally polarized abelian variety $A$ admits a decomposition $A\isom A_1\times A_2$   as in \autoref{decomposable locus}, with $\dim A_1=m>0$ and $\dim A_2=g-m>0$, where we assume for simplicity $m\leq g-m$, then the point in  $\Acal_g$ corresponding to $A$ lies in the Shimura subvariety $\Acal_{m,g-m}$ of $\Acal_g$ which is defined by the subdatum $(\GSp_{2m,2g-2m},\Xcal_{m,g-m};\Xcal_{m,g-m}^+)$.
 	
 	Here $\GSp_{2m,2g-2m}$ is the $\Qbb$-subgroup of $\GSp_{2g}$ consisting of symplectic similitude on $\Qbb^{2g}$ preserving the direct sum $\Qbb^{2g}=\Qbb^{2m}\oplus\Qbb^{2g-2m}$ into two symplectic $\Qbb$-subspace using the evident symplectic basis, $\Xcal_{m,g-m}^+=\Xcal_{m}^+\times \Xcal_{g-m}^+$ is the product of two Siegel upper half spaces, and $\Xcal_{m,g-m}$ is the orbit of $\Xcal_{m,g-m}$ inside $\Xcal_{g}$ under $\GSp_{2m,2g-2m}(\Rbb)$. Since $\GSp_{2m,2g-2m}^\der=\Sp_{2m}\times\Sp_{2g-2m}\subset\Sp_{2g}$, one verifies easily that $\Xcal_{m,g-m}$ consists of only two copies of $\Xcal_{m,g-m}^+$.
 	
 	The conclusion is thus clear because $$\Acal_g^\dec=\bigcup_{1\leq m\leq g/2}\Acal_{m,g-m}$$ is a finite union of Shimura subvarieties. \end{proof}

 In the rest of this section we prove the reduction of the main results to the exclusion of Shimura curves. We first reduce to the case where the Shimura subvarieties in question are simple, i.e. defined by Shimura data $(\Gbf,X;X^+)$ with $\Gbf^\ad$ simple $\Qbb$-groups, essentially because non-simple Shimura varieties admit proper Shimura subvarieties, and their Hecke orbits are Zariski dense.

 We start with the fact that a non-simple Shimura variety contains proper Shimura subvariety of dimension $>0$, the proof of which is reduced to the following lemma on subdata of non-simple Shimura data.

 \begin{lemma}[non-simple Shimura varieties]\label{non-simple Shimura data} Let $(\Gbf,X;X^+)$ be a connected Shimura datum, which is non-simple in the sense that $\Gbf^\ad$ is non-simple as a $\Qbb$-group. Let $M=\Gamma\bsh X^+$ be a Shimura variety associated to it. Then $M$ contains proper Shimura subvarieties $M'\subsetneq M$ of dimension $>0$.
 	
 \end{lemma}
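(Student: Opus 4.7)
The plan is to exploit the non-simple decomposition of $\Gbf^{\ad}$ to produce a proper positive-dimensional Shimura subdatum, then descend to a Shimura subvariety of $M$. First, write $\Gbf^{\ad}=\Gbf_1^{\ad}\times\cdots\times\Gbf_k^{\ad}$ as a product of $\Qbb$-simple factors, with $k\geq 2$ by the non-simplicity hypothesis. This induces a product decomposition $X^+=X_1^+\times\cdots\times X_k^+$ of the Hermitian symmetric domain. The axioms of a Shimura datum in the sense of \cite{deligne pspm} forbid any $\Qbb$-factor of $\Gbf^{\ad}$ from being compact at infinity, so each $X_i^+$ has positive dimension.

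Next, for each $i=2,\ldots,k$ apply \autoref{remark cm points} to the connected Shimura datum $(\Gbf_i^{\ad},X_i;X_i^+)$ to choose a CM point $y_i\in X_i^+$, realized through a $\Qbb$-torus $\Tbf_i\subset\Gbf_i^{\ad}$ such that $y_i:\Sbb\to\Gbf_{i,\Rbb}^{\ad}$ factors through $\Tbf_{i,\Rbb}$. Set
\[
\Hbf^{\ad}:=\Gbf_1^{\ad}\times\Tbf_2\times\cdots\times\Tbf_k\subset\Gbf^{\ad},
\]
a reductive $\Qbb$-subgroup, and let $\Hbf\subset\Gbf$ denote its preimage under the canonical isogeny $\Gbf\to\Gbf^{\ad}$; this is again a reductive $\Qbb$-subgroup, containing the center of $\Gbf$ and with adjoint group $\Hbf^{\ad}$. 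Pick any $x_1\in X_1^+$ and set $x:=(x_1,y_2,\ldots,y_k)\in X^+$. By construction the homomorphism $x:\Sbb\to\Gbf_{\Rbb}$ factors through $\Hbf_{\Rbb}$, and its $\Hbf^{\ad}(\Rbb)^+$-orbit equals the slice $X_1^+\times\{y_2\}\times\cdots\times\{y_k\}\subset X^+$. This produces a Shimura subdatum $(\Hbf,X_H;X_H^+)\subset(\Gbf,X;X^+)$, and with the congruence subgroup $\Gamma_H:=\Gamma\cap\Hbf^{\der}(\Qbb)^+$ it defines a Shimura subvariety $M_H\hookrightarrow M$ of dimension $\dim X_1^+>0$, which is strictly smaller than $\dim X^+$ because the remaining factors $X_2^+,\ldots,X_k^+$ are all positive-dimensional. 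Hence $M_H$ is a proper Shimura subvariety of $M$ of positive dimension, as required.

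The only delicate point is the lifting from the adjoint datum back to $\Gbf$ itself, and it turns out to be essentially formal: taking $\Hbf$ as the preimage of $\Hbf^{\ad}$ yields a reductive $\Qbb$-subgroup with the correct adjoint, and the factorization of $x$ through $\Hbf_{\Rbb}$ follows from its factorization through $\Hbf_{\Rbb}^{\ad}$ combined with the fact that the center of $\Gbf_{\Rbb}$ lies inside $\Hbf_{\Rbb}$. Equivalently, one may first carry out the construction entirely on the adjoint Shimura variety $M^{\ad}=\Gamma^{\ad}\backslash X^+$ and then take a connected component of the preimage of the resulting proper positive-dimensional Shimura subvariety under the canonical finite covering $M\to M^{\ad}$, using the functoriality of Shimura subvarieties under such coverings.
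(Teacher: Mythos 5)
Your proof is correct and follows essentially the same route as the paper: decompose $\Gbf^\ad$ into $\Qbb$-simple factors, use \autoref{remark cm points} to collapse all but one factor to a CM torus, and keep the remaining positive-dimensional factor so that the resulting subdatum is proper of positive dimension. The only divergence is in the lifting from the adjoint datum back to $(\Gbf,X;X^+)$: the paper lifts only the CM torus and invokes Moonen's criterion (\autoref{moonen linearity}), whereas you take the preimage of the whole subgroup $\Hbf^\ad$ in $\Gbf$ directly — both are valid, though note that the adjoint group of your $\Hbf$ is $\Gbf_1^\ad$ rather than $\Hbf^\ad$ itself, a harmless slip of terminology.
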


 \begin{proof}
 	We first consider the case where $\Gbf=\Gbf^\ad$. Write $\Gbf=\prod_{i=1}^{r}\Gbf_i$ for the decomposition into the direct product of two semi-simple $\Qbb$-groups of adjoint type, each of which admits no compact $\Qbb$-factors as $\Gbf$ does. Take a base point $x\in X^+$, the composition $x_i:\Sbb\overset{x}{\ra}\Gbf_\Rbb\ra\Gbf_{i,\Rbb}$ induces a Shimura datum $(\Gbf_i,X_i;X_i^+=\Gbf_i(\Rbb)^+x_i)$ and we have a decomposition of Shimura data $(\Gbf,X;X^+)\isom(\Gbf_1,X_1;X_1^+)\times(\Gbf_2,X_2;X_2^+)$. Note that $\dim X_i>0$ for $i=1,2$, hence we may form subdata in $(\Gbf,X;X^+)$ of the form $(\Gbf',X';X'^+)=(\Tbf_1,y_1)\times(\Gbf_2,X_2;X_2^+)$ with $\Tbf_1$ some $\Qbb$-torus in $\Gbf_1$ by \autoref{remark cm points}, and it defines a proper Shimura subvariety because $0<\dim X'=\dim X_2<\dim X$.
 	
 	In general, to produce a proper Shimura subvariety amounts to find an equivariantly embedded Hermitian symmetric subdomain $X'^+$ in $X^+$ containing some CM point $x'$ according to \autoref{moonen linearity}. The subdomain $X'^+$ can be found using only the Shimura $(\Gbf^\ad,X^\ad;X^+)$ because $X^+$ is homogeneous under $\Gbf^\ad(\Rbb)^+$ (here we put $X^\ad$ the orbit of $X^+$ under $\Gbf^\ad(\Rbb)$). We may simply take $X'^+$ from a subdatum $(\Gbf',X';X'^+)$ of $(\Gbf^\ad,X^\ad;X^+)$ as in the adjoint case above. Note that the evident morphism $(\Gbf,X;X^+)\ra(\Gbf^\ad,X^\ad;X^+)$ is the identity on $X^+$ with $\Gbf\ra\Gbf^\ad$ the canonical projection modulo the center of $\Gbf$, hence a CM point given by $(\Tbf,x)\subset(\Gbf^\ad,X^\ad;X^+)$ lifts uniquely to $(\Hbf,y)\subset(\Gbf,X;X^+)$: here $\Hbf$ is the pre-image of $\Tbf$ in $\Gbf$, which remains a $\Qbb$-torus, and $y$ is the same point as $x$, viewed as a homomorphism $\Sbb\ra\Gbf_\Rbb$ which reduces to $x$ modulo the center.
 \end{proof}

 Inside a Shimura variety $M=\Gamma\bsh X^+$ defined by $(\Gbf,X;X^+)$ we can talk about Hecke translation associated to elements in $\Gbf(\Qbb)^+$, as is defined in \cite{chen-lu-zuo}[Definition 2.1.9]. We also have the following:




 \begin{lemma}[density of Hecke orbits]\label{density of hecke orbits} Let $M\subset\Acal_g$ be a Shimura subvariety defined by $(\Gbf,X;X^+)$, which is contained generically in $\TScalgn$.
 	
 	(1) Assume that $M$ contains a proper Shimura subvariety $M'\subsetneq M$ of dimension $>0$ defined by some subdatum $(\Gbf',X';X'^+)$. Then there exists a Hecke translate $M''=\wp(qX'^+)$ of $M'$ in $M$ contained generically in $\TScalgn$.
 	
 	(2) It suffices to prove the main theorem for $M$ such that $\Gbf^\ad$ is $\Qbb$-simple.
 	
 \end{lemma}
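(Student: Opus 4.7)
The plan is to combine Zariski density of Hecke orbits with a dimension induction.

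For part (1), I will use the standard fact that the full Hecke orbit $\bigcup_{q\in\Gbf(\Qbb)^+}\wp(qX'^+)$ is Zariski dense in $M$; analytically this follows from strong approximation for the simply connected cover of $\Gbf^\der$ (which yields density of $\Gbf(\Qbb)^+$ in $\Gbf(\Rbb)^+$) combined with the transitivity of $\Gbf^\der(\Rbb)^+$ on $X^+$, and descends through $\wp$. Since $M$ is contained generically in $\TScalgn$, the set $M\setminus\TScalgn^\circ$ is proper Zariski-closed in $M$. If every Hecke translate $\wp(qX'^+)$ lay in this closed subset, so would their union, contradicting density. Thus some $M'':=\wp(qX'^+)$ meets $\TScalgn^\circ$ non-trivially. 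Now $M''$ is itself a Shimura subvariety, associated to the conjugate subdatum $(q\Gbf'q^{\inv},qX';qX'^+)$, hence irreducible; the non-empty open subset $M''\cap\TScalgn^\circ$ is therefore dense in $M''$, proving that $M''$ is contained generically in $\TScalgn$.

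For part (2), I will induct on $\dim M$. If $\Gbf^\ad$ is already $\Qbb$-simple, there is nothing to do. Otherwise \autoref{non-simple Shimura data} produces a proper Shimura subvariety $M'\subsetneq M$ of positive dimension, and part (1) yields a Hecke translate $M''$ still contained generically in $\TScalgn$, with $0<\dim M''=\dim M'<\dim M$. Because the datum defining $M''$ is a $\Qbb$-conjugate of that defining $M'$, their adjoint groups are $\Qbb$-isomorphic; the induction hypothesis applied to $M''$ then produces a positive-dimensional Shimura subvariety of $\TScalgn$ whose adjoint group is $\Qbb$-simple. Applied to any putative counterexample to \autoref{thm-main'}, this reduces the proof of the main theorem to the case where $\Gbf^\ad$ is $\Qbb$-simple.

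The only delicate ingredient is the Zariski density of the Hecke orbit, which ultimately relies on strong approximation and the homogeneity of $X^+$ under $\Gbf^\der(\Rbb)^+$. The remainder of the argument is formal, using only irreducibility of Shimura subvarieties and the definition of \emph{contained generically}, and presents no serious technical difficulty beyond citing a clean density statement.
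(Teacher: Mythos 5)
Your argument is correct and takes essentially the same route as the paper: part (1) is precisely the density-of-Hecke-orbits argument that the paper delegates to the citation \cite[Lemma\,2.1.10]{chen-lu-zuo} (Zariski density of $\bigcup_q\wp(qX'^+)$ in $M$, plus irreducibility of the translate and openness of $M\cap\TScalgn^\circ$ in $M$), and part (2) is the same descent combining \autoref{non-simple Shimura data} with part (1). The only cosmetic remark is that the density of $\Gbf(\Qbb)^+$ in $\Gbf(\Rbb)^+$ is more accurately attributed to real (weak) approximation for connected reductive $\Qbb$-groups than to strong approximation, but the fact you invoke is standard either way.
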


 \begin{proof} (1) This is exactly the same as \cite[Lemma\,2.1.10]{chen-lu-zuo}
 	when one replaces the usual Torelli locus $\Tcal_g$ by $\TScalgn$.

 	
 	(2) Assume that $M$ is defined by some subdatum $(\Gbf,X;X^+)$ such that $\Gbf^\ad$ is NOT $\Qbb$-simple. Then by \autoref{non-simple Shimura data} $(\Gbf,X;X^+)$ contains some subdatum $(\Gbf',X';X'^+)$ with $0<\dim X'^+<\dim X^+$.
 	It defines a Shimura subvariety $M'\subset M$ of dimension $>0$.
 	If $M$ is not contained generically in $\THcal_g$, then after passing to a suitable Hecke translate we may assume that $M'$ is contained generically in $\THcal_g$, and to prove the main theorem it suffices to exclude the generic inclusion of $M'$ in $\THcal_g$.
 \end{proof}

 Aside from the basic properties of Shimura varieties listed above, we need  the following crucial fact:

 \begin{theorem}\label{affiness} The open $n$-superelliptic Torelli locus $\TScalgn$ contains no complete curves.
 \end{theorem}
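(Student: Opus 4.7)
The plan is to prove that $\TScalgn^\circ$ is actually an \emph{affine} variety, which immediately rules out any complete curve inside it. This generalizes the classical fact that for $n=2$ the open hyperelliptic Torelli locus is affine, and proceeds by an invariant-theoretic description of $\Scal_{g,n}$ followed by a Torelli transfer.

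First, I would identify $\Scal_{g,n}$, up to a finite morphism, with a configuration moduli of points on $\Pbb^1$. A smooth $n$-superelliptic curve $y^n=F(x)$ of genus $g$, viewed as a cyclic cover $C\to\Pbb^1$, is determined up to isomorphism of covers by the unordered configuration of $\alpha$ distinct points on $\Pbb^1$ forming its branch locus (the $\deg F$ roots of $F$ together with $\infty$ when $n\nmid\deg F$) modulo the $\PGL_2$-action, plus a finite amount of discrete data recording the local $\mu_n$-character at each branch point. This yields a natural finite morphism $\Scal_{g,n}\to\mathcal B_\alpha:=\Mcal_{0,\alpha}/S_\alpha$, where $\Mcal_{0,\alpha}$ denotes the moduli of $\alpha$ labeled distinct points on $\Pbb^1$ modulo $\PGL_2$. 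Fixing three marked points at $0,1,\infty$ realizes $\Mcal_{0,\alpha}$ as the complement of a hyperplane arrangement inside $\mathbb A^{\alpha-3}$, hence an affine variety; a finite group quotient preserves affineness, so both $\mathcal B_\alpha$ and $\Scal_{g,n}$ are affine. This is essentially the content of the invariants theorem invoked in the introduction.

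Second, I would transport affineness to $\TScalgn^\circ$ via the Torelli morphism. The composition $j\colon\Scal_{g,n}\to\Mcal_g\to\Acal_g$ has image $\TScalgn^\circ$ by definition, and by Torelli's theorem the polarized Jacobian determines the underlying curve, so $j$ is quasi-finite onto its image. Were a complete curve $D\subset\TScalgn^\circ$ to exist, completing the universal family of superelliptic covers over $\Scal_{g,n}$ via admissible covers yields a proper extension $\bar j\colon\overline{\Scal_{g,n}}\to\Acal_g^{\BB}$ (cf.\ \autoref{baily-borel compactification}); its preimage $\bar j^{-1}(D)$ is then proper of positive dimension. The main obstacle is to guarantee that this preimage is not absorbed entirely into the boundary $\overline{\Scal_{g,n}}\setminus\Scal_{g,n}$; this is secured precisely because $D$ lies in the open locus $\TScalgn^\circ$, so at least one irreducible component of $\bar j^{-1}(D)$ dominates $D$ through smooth covers and meets the interior $\Scal_{g,n}$ in a positive-dimensional complete subscheme --- contradicting the affineness of $\Scal_{g,n}$ and finishing the proof.
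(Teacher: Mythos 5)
Your route is genuinely different from the paper's. The paper's proof is numerical: a complete curve $C\subset\TScalgn^{\circ}$ avoids $\Acal_g^{\dec}$, so the representing semi-stable family $\bar f\colon\ols\to\olb$ of \autoref{defrepresenting} has no singular fibers and is non-isotrivial; the formulas of \autoref{invariantstheorem} express $\deg\bar f_*\omega_{\ols/\olb}$ purely as a sum of local contributions of singular fibers, forcing $\deg\bar f_*\omega_{\ols/\olb}=0$, which contradicts the strict positivity of the Hodge bundle degree for non-isotrivial semi-stable families. In particular your closing attribution is a misreading: \autoref{invariantstheorem} is a Cornalba--Harris type degree formula, not a statement of invariant theory about configuration spaces, and the paper uses it in exactly the opposite direction. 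Your first step --- $\Scal_{g,n}$ is affine because it is finite over a finite-group quotient of some $\Mcal_{0,m}$ (with the point at infinity distinguished when its local monodromy differs) --- is correct.

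Two caveats on your second step. First, the announced goal, affineness of $\TScalgn^{\circ}$ itself, is precisely what the authors' remark following this theorem says fails for general $n$; quasi-finiteness of $j$ (which is all that Torelli gives you) does not transport affineness forward --- for that you would need $j$ to be finite, hence proper, onto its image, which you do not establish. Fortunately your executed argument only needs the weaker no-complete-curves conclusion. Second, and more substantively, the pivotal claim that some component of $\bar j^{-1}(D)$ ``meets the interior $\Scal_{g,n}$ in a positive-dimensional \emph{complete} subscheme'' is asserted rather than derived: a component dominating $D$ is indeed complete, but its intersection with the open locus $\Scal_{g,n}$ is only open in it, and an open dense subset of a complete curve is not complete. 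What you must show is that $\bar j^{-1}(D)$ misses the boundary $\overline{\Scal_{g,n}}\setminus\Scal_{g,n}$ entirely. This is true, but for a reason you never state: boundary points of the compactification are nodal curves, whose Jacobians are either non-compact (so their images lie in $\Acal_g^{\BB}\setminus\Acal_g$) or compact but decomposable (so their images lie in $\Acal_g^{\dec}$), and both loci are disjoint from $\TScalgn^{\circ}$, which consists of indecomposable Jacobians of smooth curves. Once that is inserted, $\bar j^{-1}(D)$ is a complete, positive-dimensional, closed subvariety of the affine variety $\Scal_{g,n}$, and the contradiction is genuine.
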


 This is a consequence of \autoref{invariantstheorem} studied in Section 4: if $\TScalgn$ contains a complete curve $C$,
 then the family of $n$-superelliptic curves $\bar{f}:\overline{S}\ra\overline{B}$ representing $C$ is smooth,
 i.e. admits no singular fibers.
 This would give a Hodge bundle $\bar{f}_*\omega_{\overline{S}/\overline{B}}$ of degree $0$
 by the formulas in \autoref{invariantstheorem} (see also \cite[Proposition\,4.7]{cornalba-harris-88} for the hyperelliptic case),
 which is absurd.

 \begin{remark} When treating the hyperelliptic case in the unpublished preprint \cite{chen-lu-zuo-15}, we have made use of the stronger property that the hyperelliptic open Torelli locus is affine. For the general $n$-superelliptic case the affiness is not true, but it suffices to exclude complete curves like above.
 	
 \end{remark}

 \begin{theorem}[dimensional reduction]\label{dimensional reduction} If the $n$-superelliptic  Coleman-Oort conjecture holds for Shimura curves in $\Acal_g$, then it holds for any Shimura subvarieties in $\Acal_g$.
 	
 \end{theorem}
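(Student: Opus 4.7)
The plan is to argue by contradiction, combining the Baily-Borel boundary estimate with the non-existence of complete curves in $\TScalgn$. Assume the superelliptic Coleman-Oort conjecture is known for Shimura curves in $\Acal_g$; suppose for contradiction there is a Shimura subvariety $M\subset\Acal_g$ of positive dimension generically contained in $\TScalgn$. If $\dim M=1$ then $M$ itself is a Shimura curve generically in $\TScalgn$, immediately contradicting the hypothesis. So assume $d:=\dim M\ge 2$ and seek a contradiction.

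First I would apply \autoref{density of hecke orbits}(2), which rests on \autoref{non-simple Shimura data} and the Hecke-density argument of \autoref{density of hecke orbits}(1), in order to reduce to the case that the defining datum $(\Gbf,X;X^+)$ has $\Gbf^\ad$ that is $\Qbb$-simple. Because $\Gbf^\ad$ is $\Qbb$-simple and $d\ge 2$, $\Gbf^\ad$ cannot be isomorphic to $\PGL_{2,\Qbb}$ (whose associated Hermitian symmetric domain is one-dimensional). Invoking \autoref{boundary components}, the boundary $\overline{M}\setminus M$ inside the Baily-Borel closure $M^\BB$ is then of codimension $\ge 2$ in the projective variety $\overline{M}$.

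Next I would apply a Bertini argument on $\overline{M}$: in a fixed projective embedding, a very general complete-intersection curve $C\subset\overline{M}$ cut out by $d-1$ hyperplane sections is irreducible and smooth, and by the codimension-$\ge 2$ bound on $\overline{M}\setminus M$ avoids the Baily-Borel boundary entirely, so $C\subset M$ is a compact curve. Since $M$ is generically contained in $\TScalgn$, the bad locus $Z:=M\cap(\overline{\TScalgn}\setminus\TScalgn)$ is a proper closed subset of $M$. By further genericity of the hyperplane sections (and, if necessary, by replacing $C$ with a Hecke translate $qC\subset M$ for suitable $q\in\Gbf(\Qbb)^+$, using that Hecke orbits in $M$ are Zariski dense as in the proof of \autoref{density of hecke orbits}(1)), one arranges $C$ to be disjoint from $Z$. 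Then $C\subset M\cap\TScalgn\subset\TScalgn$ is a compact curve inside the open superelliptic Torelli locus, directly contradicting \autoref{affiness}.

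The main obstacle is ensuring that the chosen compact curve avoids the bad locus $Z=M\cap(\overline{\TScalgn}\setminus\TScalgn)$. A priori only the codimension estimate $\mathrm{codim}_M(Z)\ge 1$ is clear, in which case a generic complete-intersection curve in $M$ meets $Z$ transversely in finitely many points and just fails to lie in $\TScalgn$. The remedy should be either (i) to upgrade the codimension of $Z$ in $M$ to at least two by analyzing, via the moduli-theoretic description of $\overline{\TScalgn}\setminus\TScalgn$ as the locus of degenerate semistable $n$-superelliptic curves, how such degenerations intersect a Shimura subvariety (Hodge-theoretic rigidity forbidding codimension-one concentration of the degenerate locus in $M$), or (ii) to exploit the dense action of Hecke correspondences on $M$ to translate the compact curve off the codimension-one bad locus while preserving the containment in $M$. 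Either route closes the proof via \autoref{affiness}.
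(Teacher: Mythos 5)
Your first two steps match the paper exactly: reduce to $\Gbf^\ad$ $\Qbb$-simple of dimension $\geq 2$ via \autoref{density of hecke orbits}, then use \autoref{boundary components} to cut a complete curve in $\Mbar$ avoiding the Baily--Borel boundary. The gap is in your third step, and you have correctly located it yourself but neither of your proposed remedies closes it. The bad locus is $M\cap\Acal_g^\dec$ (degenerate superelliptic Jacobians are decomposable), and it genuinely can have codimension $1$ in $M$; the paper does not try to rule this out. Your remedy (i) — upgrading the codimension to $2$ by ``Hodge-theoretic rigidity'' — is unsubstantiated and is not what happens. Your remedy (ii) fails outright: a Hecke translate of a complete curve in $M$ is again a complete curve in the projective variety $\Mbar$, and a curve cut out by $d-1$ hyperplane sections (or any sufficiently ample curve) necessarily meets every divisor of $\Mbar$, so translation cannot move it off a codimension-one locus.

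The missing idea is that the argument is an \emph{induction on dimension}, not a direct contradiction for all $\dim M\geq 2$. The dichotomy is: either $M\cap\Acal_g^\dec$ has codimension $\geq 2$ in $M$, in which case a generic complete curve lands in $\TScalgn^\circ$ and contradicts \autoref{affiness}; or it has a component of codimension exactly $1$. In the second case one does not seek a contradiction — instead one observes, using the lemma that $\Acal_g^\dec$ is a finite union of Shimura subvarieties together with \autoref{finite intersection}, that this codimension-one component is itself a Shimura subvariety of $M$ of positive dimension (since $\dim M\geq 2$), and a Hecke translate of it is generically contained in $\TScalgn$ by \autoref{density of hecke orbits}(1). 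This produces a Shimura subvariety of strictly smaller positive dimension generically in the superelliptic Torelli locus, and iterating descends to a Shimura curve, where the hypothesis of the theorem is finally invoked. Without recognizing that the codimension-one case feeds the recursion rather than being excluded, the proof does not close.
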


 \begin{proof}
 	By \autoref{density of hecke orbits}, it suffices to consider the case where $M$ is a simple Shimura variety in $\Acal_g$ of dimension $\geq 2$, defined by some Shimura datum $(\Gbf,X;X^+)$. This implies that $\Gbf^\ad$ admits no $\Qbb$-factor isomorphic to $\PGL_{2,\Qbb}$. Write $\Mbar$ for the closure of $M$ in the Baily-Borel compactification $\Acal_g^\BB$, whose boundary components in $\Mbar \bsh M$ are of codimension at least 2. One may thus take a generic projective curve $C\subset\Mbar$ which meets $\Mbar \bsh M$ trivially.
 	
 	If the singular locus in $\TScalgn$, namely the intersection $\TScalgn^\sing:=\TScalgn\cap\Acal_g^\dec$, also meets $\Mbar$ in codimension at least 2, then we may further choose $C$ meeting $\TScalgn^\sing$ trivially. But this would produce a projective curve $C$ contained in $\TScalgn^\circ$ which contradicts \autoref{affiness}. Hence at least one of the irreducible components in $\TScalgn^\sing\cap M=\Acal_g^\dec\cap M$ is of codimension 1 in $M$.
 	
 	Sinc $\Acal_g^\dec$ is a finite union of Shimura subvarieties in $\Acal_g$, we deduce that $\Acal_g^\dec\cap M$ is a finite union of Shimura subvarieties, one of which is of codimension 1 in $M$. Hecke translation moves it into a proper Shimura subvariety $M'\subsetneq M$ contained generically in $\TScalgn$ of strictly lower dimension, and the reduction is proved.
 \end{proof}

 \section{Exclusion of Shimura curves generically in the superelliptic Torelli locus}\label{sec-proof-curve}
 In this section, we prove \autoref{thm-curve}.
 In \autoref{sec-represent}, we briefly recall the construction of the family of semi-stable
 curves representing a Shimura curve contained generically in the Torelli locus,
 and derive some general restraints for such families. 
 In \autoref{sec-non-p}, we prove \autoref{thm-curve}
 for the special case when $n=p$ is prime.
 In \autoref{sec-non-general}, we complete the proof of \autoref{thm-curve} for general $n$ by induction on
 the number of prime factors contained in $n$.

 \subsection{Representation of a Shimura curve by a family of semi-stable curves}\label{sec-represent} 

 In this section we associate a family of semi-stable curves to a given Shimura curve contained generically in the Torelli locus, and we analyze some numerical properties of its Higgs bundle. The construction of the family is similar to \cite[\S\,3]{lu-zuo-14}, which is briefly recalled for readers' convenience.

 For $\ell$ the fixed integer indicating the level structures as before,
 let $\calm^{ct}_g=\calm^{ct}_{g,\ell}\supseteq \calm_g=\calm_{g,\ell}$ be the partial compactification of
 the moduli space of smooth projective genus-$g$ curves  with level-$\ell$ structure by adding
   stable curves with  compact Jacobians.
 When $n\geq 3$, it carries a universal family of stable curves with compact Jacobians (cf. \cite{popp-77})
 \begin{equation*}
 \mathfrak f:~ \cals_g^{ct} \lra {\mathcal M}^{ct}_{g}.
 \end{equation*}
 The Torelli morphism $j^{\rm o}$ can be naturally extended to $\calm^{ct}_{g}$:
 $$ j:~\calm^{ct}_{g} \lra \cala_{g}, \qquad\text{with~}
 \calt_{g}=j\big(\calm^{ct}_{g}\big).$$
 The morphism $j^{\rm o}$ is 2:1 and ramified exactly over the locus of hyperelliptic curves (cf. \cite{os-79}).
 However, the relative dimension of $j$ is positive along the boundary $\calt_g\setminus \calt^{\rm o}_g$.

 Let $C$ be any smooth closed curve contained generically in $\calt_g$,
 and $B$ be the normalization of the strict inverse image $j^{-1}(C)$ of $C$.
 Denote by $j_B:\,B \to C$ the induced morphism.
 If $B$ is reducible, then we replace $B$ by one of its irreducible components.
 By pulling back the universal family $\mathfrak f:\,\cals_g^{ct} \to \calm^{ct}_{g}$
 to $B$ and resolving the singularities,
 we obtain a family  $f: S\to B$ of semi-stable curves that extends uniquely to a family
 $ \bar f:\ol S\to \ol B$ of semi-stable curves
 over the smooth compactification $\ol B\supseteq B$.
 \begin{definition}\label{defrepresenting}
 The family $\bar f:\,\ol S \to \ol B$ is called the family of semi-stable curves representing $C\subseteq \calt_g$
 via the Torelli morphism.
 \end{definition}

 We briefly recall some basic properties of the family $\bar f$ as follows, more details of which are found in \cite[\S\,3]{lu-zuo-14}.
 \begin{list}{}
 {\setlength{\labelwidth}{4mm}
 \setlength{\leftmargin}{6mm}}

 \item[(i)] Let $\ol C$ be the compactification of $C$ by joining a finite set of cusps $\Delta_{\ol C}$.
 The morphism $j_B: B\to C$ extends uniquely to a morphism $j_{\olb}: \ol B\to \ol C$
 such that $\Delta_{nc}:=\ol B\setminus B =j_{\olb}^{-1}(\Delta_{\ol C})$.
 Denote by $h:\,X \to C$ the universal family of abelian varieties over $C$,
 and by $\Upsilon \subseteq \ols$ the singular fibers over the  discriminate locus $\Delta\subseteq \olb$ of $\bar f$.
 Let $\mathbb V_{C}:=h_*\mathbb Q_X$ (resp. $\mathbb V_{B}:=\bar f_*\mathbb Q_{\ols\setminus \Upsilon}$) be the
 $\mathbb Q$-local system over $C$ (resp. $B$),
 and $\left(E_{\ol C}^{1,0}\oplus E_{\ol C}^{0,1},~\theta_{\ol C}\right)$
 and $\left(E_{\ol B}^{1,0}\oplus E_{\ol B}^{0,1},~\theta_{\ol B}\right)$
 be the corresponding logarithmic Higgs bundles via Simpson's correspondence over $\ol C$ and $\ol B$ respectively.
 Then
 \begin{equation}\label{eqn-3-98}
 \left(E_{\ol B}^{1,0}\oplus E_{\ol B}^{0,1},~\theta_{\ol B}\right)
 \cong j_{\olb}^*\left(E_{\ol C}^{1,0}\oplus E_{\ol C}^{0,1},~\theta_{\ol C}\right).
 \end{equation}

 \item[(ii)]
 The morphism $j_{B}$ is either an isomorphism or a double cover.
 In the first case,
 $$\deg E_{\ol B}^{1,0} = \deg E^{1,0}_{\ol C},\quad
 \deg \Omega_{\ol B}^1(\log\Delta_{nc})=\deg\Omega_{\ol C}^1(\log\Delta_{\ol C});$$
 and in the second case,
 \begin{equation*}
 \deg E^{1,0}_{\ol B} = 2\deg E^{1,0}_{\ol C},\quad
 \deg \Omega_{\ol B}^1(\log\Delta_{nc})=2\deg\Omega_{\ol C}^1(\log\Delta_{\ol C})+|\Lambda|,
 \end{equation*}
 where $\Lambda\subseteq B$ is the ramification locus of the double cover $j_B:\,B \to C$.
 Moreover, any fiber over $\Lambda$ is a semi-stable (possibly singular) hyperelliptic curve with a compact Jacobian.

 \item[(iii)] Let
 \begin{equation}\label{decompB}
 \left(E_{\ol B}^{1,0}\oplus E_{\ol B}^{0,1},\theta_{\ol B}\right)
 =\left(A_{\ol B}^{1,0}\oplus A_{\ol B}^{0,1},~\theta_{\ol B}\big|_{A_{\ol B}^{1,0}}\right)\oplus \left(F_{\ol B}^{1,0}\oplus F_{\ol B}^{0,1},~ 0\right)
 \end{equation}
 be the decomposition of the associated Higgs bundle into its ample and flat parts.
 Then
 \begin{equation*} 
 \qquad\begin{aligned}
 &~\text{the curve $C$ is a Shimura curve},\\
 \Longleftrightarrow&\left\{\begin{aligned}
 &\deg E_{\ol B}^{1,0}=\deg A_{\ol B}^{1,0}
 ={\rank A^{1,0}_{\ol B}\over 2}\cdot\deg\Omega^1_{\ol B}(\log\Delta_{nc}),&&\text{if $\deg(j_{B})=1$};\\
 &\deg E_{\ol B}^{1,0}=\deg A_{\ol B}^{1,0}
 ={\rank A^{1,0}_{\ol B}\over 2}\cdot\left(\deg\Omega^1_{\ol B}(\log\Delta_{nc})-|\Lambda|\right),
 &&\text{if $\deg(j_B)=2$}.
 \end{aligned}\right.
 \end{aligned}
 \end{equation*}
 In particular, if there is no hyperelliptic fiber over $B$, then $\Lambda=\emptyset$ and hence
 \begin{equation}\label{eqn-3-191}
 \quad \text{$C$ is a Shimura curve,}
 \quad \Longleftrightarrow\quad  \deg E_{\ol B}^{1,0}=\deg A_{\ol B}^{1,0}
 ={\rank A^{1,0}_{\ol B}\over 2}\cdot\deg\Omega^1_{\ol B}(\log\Delta_{nc}).
 \end{equation}

 \item[(iv)] If $C$ is a non-compact Shimura curve, then
 \begin{equation}\label{eqn-3-124}
 g(\ol F) = \rank F_{\ol B}^{1,0},\qquad
 \text{for any fiber $\ol F$ over $\Delta_{nc}=j_{\olb}^{-1}(\Delta_{\ol C})$},
 \end{equation}
 where $g(\ol F)$ is the geometric genus of $\ol F$.
 \end{list}

 \vspace{2mm}
 In the case when $C$ is contained generically in the superelliptic Torelli locus, 
 the family $\bar f$ constructed above is subject to   more restraints.
 In the rest part of this subsection, we always assume that $C$ is contained generically in $\mathcal{TS}_{g,n}$.

 Fix $\xi$ a primitive $n$-th root of 1 which gives the a trivialization $G\cong\Zbb/n\Zbb$.  For an $n$-superelliptic curve $\ol F$ defined as in \autoref{def-superelliptic},
 the map given by $y \mapsto \xi y$
 defines a natural action of the group $G\cong \mathbb Z/n\mathbb Z$
 on the $n$-superelliptic curve $\ol F$.%
 where $\xi$ is any primitive $n$-th root of $1$.
 We call $G$ an {\it $n$-superelliptic automorphism group} of $\ol F$,
 and the induced cover $\pi:\,\ol F \to \ol F/G\cong \bbp^1$ an {\it $n$-superelliptic cover}.
 To give an overview of the restraints,
 let us assume for the moment that the group $G\cong \mathbb Z/n\mathbb Z$
 admits an action on the surface $\ol S$ whose restriction on the general fiber
 is the $n$-superelliptic automorphism group.
 In fact, one can achieve this by a suitable finite base change (not necessarily \'etale);
 see \autoref{rems-3-1}\,(iii) below.

 Since the group $G\cong \mathbb Z/n\mathbb Z$ admits an action on the surface $\ol S$,
 it induces a natural action on the cohomological groups of $\ols$,
 and on the local system $\mathbb{V}_B:=R^1\bar f_*\mathbb Q_{\ols\setminus \Upsilon}$
 and hence also on its associated logarithmic Higgs bundle
 $\left(E_{\ol B}^{1,0}\oplus E_{\ol B}^{0,1},\,\theta_{\ol B}\right)$.
 In our case, the Higgs bundle has the form
 $$E^{1,0}_{\olb}\cong \bar f_*\Omega^1_{\ols/\olb}(\log \Upsilon),
 \qquad E^{0,1}_{\olb} \cong R^1\bar f_*\mathcal O_{\ols},$$
 and the Higgs field
 $$\theta_{\olb}:~E^{1,0}_{\olb} \lra E^{0,1}_{\olb}\otimes \Omega_{\olb}(\log \Delta_{nc})$$
 is induced by the edge morphism of the tautological sequence
 $$0\lra \bar f^*\Omega^1_{\ol B}(\log\Delta)\lra \Omega^1_{\ol S}(\log\Upsilon)
 \lra \Omega^1_{\ol S/\ol B}(\log\Upsilon) \lra 0.$$
 Consider the corresponding eigenspace decompositions
 \begin{equation}\label{eqn-3-38}
 \mathbb V_{B}\otimes \mathbb{C}=\bigoplus_{i=0}^{n-1} \mathbb V_{B,i};\qquad
 \left(E_{\ol B}^{1,0}\oplus E_{\ol B}^{0,1},~\theta_{\ol B}\right)
 =\bigoplus_{i=0}^{n-1} \left(E_{\ol B}^{1,0}\oplus E_{\ol B}^{0,1},~\theta_{\ol B}\right)_i.
 \end{equation}
 Since the quotient $\oly=\ols/G$ is ruled over $\olb$,
 it follows that
 $$\mathbb V_{B,0}=0,\qquad\text{and}\qquad \left(E_{\ol B}^{1,0}\oplus E_{\ol B}^{0,1},~\theta_{\ol B}\right)_0=0.$$
 Moreover, if we assume that the general fiber is given by $y^n=F(x)$ with a separable polynomial $F(x)$ of
 $\deg(F)=\alpha_0$,
 then according to the Riemann-Hurwitz formula and a formula due to Hurwitz-Chevalley-Weil (cf. \cite[Proposition\,5.9]{moonen-oort-13}), one has
 \begin{equation}\label{eqn-3-9}
 	g=\sum_{i=1}^{n-1} \rank E_{\ol B,i}^{1,0}=
 	\left\{\begin{aligned}
 	&\frac{(n-1)(\alpha_0-2)}{2},&\quad&\text{if $n\,|\,\alpha_0$},\\
 	&\frac{(n-1)(\alpha_0-2)+\big(n-\gcd(\alpha_0,n)\big)}{2},
 	&&\text{if $n{\not|}~\alpha_0$;}
 	\end{aligned}\right.
 \end{equation}
 \begin{equation}\label{eqn-3-35}
 \rank E_{\ol B,i}^{1,0}=\rank E_{\ol B,n-i}^{0,1}
 =\left\{\begin{aligned}
 &\frac{(n-i)\alpha_0}{n}-1, &&\text{if $n~|~\alpha_0$,\,~or $n{\not|}~\alpha_0$ and $\frac{(n-i)\alpha_0}{n}\in\mathbb Z$},\\
 &\Big[\frac{(n-i)\alpha_0}{n}\Big],&~&\text{if~}n{\not|}~\alpha_0\text{~and~}\frac{(n-i)\alpha_0}{n}\not\in\mathbb Z.
 \end{aligned}\right.
 \end{equation}
 By \cite{fujita-78-2,kollar-87},
 the Higgs bundle admits a natural decomposition
 $$\left(E_{\ol B}^{1,0}\oplus E_{\ol B}^{0,1},~\theta_{\ol B}\right)
 =\left(A_{\ol B}^{1,0}\oplus A_{\ol B}^{0,1},~\theta_{\ol B}\big|_{A_{\ol B}^{1,0}}\right)
  \oplus \left(F_{\ol B}^{1,0}\oplus F_{\ol B}^{0,1},~0\right),$$
 where $A_{\ol B}^{1,0}$ is ample,
 and $F_{\ol B}^{1,0}\oplus F_{\ol B}^{0,1}$ is flat corresponding to a unitary local subsystem
 $\mathbb{V}_{B}^u\subseteq \mathbb{V}_B\otimes \mathbb{C}$.
 The eigenspace decomposition on $\left(E_{\ol B}^{1,0}\oplus E_{\ol B}^{0,1},\,\theta_{\ol B}\right)$
 induces thus eigenspace decompositions on its associated subbundles:
 \begin{equation}\label{eqn-3-33}
 \left\{\begin{aligned}
 \left(A_{\ol B}^{1,0}\oplus A_{\ol B}^{0,1},~\theta_{\ol B}\big|_{A_{\ol B}^{1,0}}\right)&\,=
 \bigoplus_{i=1}^{n-1} \left(A_{\ol B}^{1,0}\oplus A_{\ol B}^{0,1},~\theta_{\ol B}\big|_{A_{\ol B}^{1,0}}\right)_i,\\[1mm]
 \left(F_{\ol B}^{1,0}\oplus F_{\ol B}^{0,1},~0\right)&\,=
 \bigoplus_{i=1}^{n-1} \left(F_{\ol B}^{1,0}\oplus F_{\ol B}^{0,1},~0\right)_i.
 \end{aligned}\right.
 \end{equation}

 By construction, each local subsystem $\mathbb V_{B,\,i}$
 is defined over the $n$-th cyclotomic field $\mathbb Q(\xi_n)$, where $\xi_n$ is a primitive $n$-th root of $1$.
 Thus the arithmetic Galois group ${\rm Gal\,}\big(\mathbb Q(\xi_n)/\mathbb Q\big)$
 has a natural action on the above decompositions \eqref{eqn-3-38} and \eqref{eqn-3-33}.

 \begin{lemma}\label{lem-3-12}
 	Let $\bar f:\,\ol S \to \ol B$ be the family of semi-stable curves representing
 	a Shimura curve $C$ contained generically in $\mathcal{TS}_{g,n}$ as above.
 	Let $\mathbb V_{B,\,i}^{tr} \subseteq \mathbb V_{B,\,i}$
 	be the trivial local subsystem,
 	and $\left(\big(F^{1,0}_{\olb,i}\big)^{tr}\oplus \big(F^{0,1}_{\olb,i}\big)^{tr},\,0\right)$
 	be the associated trivial flat subbundle.
 	If $\mathbb V_{B,\,i}$ and $\mathbb V_{B,\,j}$
 	are in one ${\rm Gal\,}\big(\mathbb Q(\xi_n)/\mathbb Q\big)$-orbit,
 	then
 	$$\begin{aligned}
 	\rank \mathbb V_{B,\,i}^{tr}&\,=\rank \mathbb V_{B,\,j}^{tr};\\
 	\rank \big(F^{1,0}_{\olb,i}\big)^{tr}+ \rank \big(F^{1,0}_{\olb,n-i}\big)^{tr}&\,=
 	\rank \big(F^{1,0}_{\olb,j}\big)^{tr}+ \rank \big(F^{1,0}_{\olb,n-j}\big)^{tr}.
 	\end{aligned}$$
 	In particular, if $n=p$ is prime, then for any $1\leq i<j\leq p-1$, one has
 	$$\begin{aligned}
 	\rank \mathbb V_{B,\,i}^{tr}&\,=\rank \mathbb V_{B,\,j}^{tr};\\
 	\rank \big(F^{1,0}_{\olb,i}\big)^{tr}+ \rank \big(F^{1,0}_{\olb,p-i}\big)^{tr}&\,=
 	\rank \big(F^{1,0}_{\olb,j}\big)^{tr}+ \rank \big(F^{1,0}_{\olb,p-j}\big)^{tr}.
 	\end{aligned}$$
 \end{lemma}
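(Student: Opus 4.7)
The plan is to exploit the fact that the entire local system $\mathbb{V}_B=R^1\bar f_*\mathbb{Q}_{\ols\setminus\Upsilon}$ is defined over $\mathbb{Q}$, and so is its maximal trivial sublocal system $\mathbb{V}_B^{tr}$ (it is cut out by the kernel of monodromy minus the identity, which is a $\mathbb{Q}$-condition). The Galois group $\Gal(\mathbb{Q}(\xi_n)/\mathbb{Q})$ therefore acts on $\mathbb{V}_B\otimes\mathbb{Q}(\xi_n)$ by acting on the coefficient factor, and this action stabilises $\mathbb{V}_B^{tr}\otimes\mathbb{Q}(\xi_n)$. On the $G$-eigenspace decomposition it acts by $\sigma_a\cdot\mathbb{V}_{B,i}=\mathbb{V}_{B,ai\bmod n}$ (where $\sigma_a:\xi_n\mapsto\xi_n^a$), simply because the action of $G$ comes from a $\mathbb{Z}/n\mathbb{Z}$-action on $\ols$ and hence is defined over $\mathbb{Q}$, so the $i$-eigenspace in coefficients $\mathbb{Q}(\xi_n)$ is defined by the $\mathbb{Q}$-equation ``$G$ acts through the character $g\mapsto\xi_n^{\text{exponent}\,\cdot i}$'', and Galois permutes these characters.

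First I would deduce from this that $\sigma_a$ restricts to a $\mathbb{Q}$-linear isomorphism $\mathbb{V}_{B,i}^{tr}\overset{\simeq}{\to}\mathbb{V}_{B,ai}^{tr}$, which immediately gives $\rank\mathbb{V}_{B,i}^{tr}=\rank\mathbb{V}_{B,j}^{tr}$ whenever $\mathbb{V}_{B,i}$ and $\mathbb{V}_{B,j}$ lie in the same Galois orbit. This is the first asserted equality.

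Next I would attack the second equality by combining the above with Hodge symmetry. Since $\mathbb{V}_{B,i}^{tr}$ is a trivial (constant) summand of a polarised $\mathbb{C}$-VHS, it is a sub-VHS with constant Hodge filtration, and Simpson correspondence gives the splitting
\[
\rank\mathbb{V}_{B,i}^{tr}=\rank\bigl(F^{1,0}_{\olb,i}\bigr)^{tr}+\rank\bigl(F^{0,1}_{\olb,i}\bigr)^{tr}.
\]
Complex conjugation on $\mathbb{C}$ sends $\xi_n$ to $\xi_n^{-1}$, so it swaps the $i$- and $(n-i)$-eigenspaces; combined with Hodge symmetry $\overline{E^{1,0}_{\olb,i}}=E^{0,1}_{\olb,n-i}$ and the fact that $\mathbb{V}_B^{tr}$ is defined over $\mathbb{Q}\subset\mathbb{R}$, this yields $\overline{(F^{1,0}_{\olb,i})^{tr}}=(F^{0,1}_{\olb,n-i})^{tr}$ and hence $\rank(F^{0,1}_{\olb,i})^{tr}=\rank(F^{1,0}_{\olb,n-i})^{tr}$. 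Substituting gives
\[
\rank\bigl(F^{1,0}_{\olb,i}\bigr)^{tr}+\rank\bigl(F^{1,0}_{\olb,n-i}\bigr)^{tr}=\rank\mathbb{V}_{B,i}^{tr},
\]
and the first equality then delivers the second. Finally, when $n=p$ is prime, $\Gal(\mathbb{Q}(\xi_p)/\mathbb{Q})\cong(\mathbb{Z}/p)^\times$ acts transitively on $\{1,\dots,p-1\}$, so both equalities hold for every pair $1\leq i<j\leq p-1$.

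I do not expect a real obstacle here: the whole argument is a Galois descent combined with the compatibility of the $G$-eigenspace decomposition with the rational structure. The only mildly delicate point is to confirm that the trivial sublocal system is genuinely $\mathbb{Q}$-defined (clear since ``$\gamma$ acts as identity for all $\gamma\in\pi_1$'' is a $\mathbb{Q}$-linear condition) and that the Hodge decomposition of a constant sub-VHS is itself constant, which is standard from the rigidity of polarised VHS with finite monodromy.
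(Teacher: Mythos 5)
Your proposal is correct and follows essentially the same route as the paper's own proof: Galois conjugation preserves triviality of the local subsystems (giving the equality of $\rank \mathbb V_{B,i}^{tr}$ and the sum $\rank (F^{1,0}_{\olb,i})^{tr}+\rank(F^{0,1}_{\olb,i})^{tr}$ along a Galois orbit), and complex conjugation together with Hodge symmetry identifies $\rank(F^{0,1}_{\olb,i})^{tr}$ with $\rank(F^{1,0}_{\olb,n-i})^{tr}$, after which the two equalities combine; the prime case is the transitivity of ${\rm Gal}(\mathbb Q(\xi_p)/\mathbb Q)$ on the nonzero eigenspaces. Your write-up is somewhat more explicit than the paper's about why the Galois action permutes the eigenspaces, but the underlying argument is the same.
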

 \begin{proof}
 	Since trivial local subsystems correspond to trivial representations,
 	and trivial representations remain trivial under any Galois conjugation,
 	it follows that if $\mathbb V_{B,\,i}$ and $\mathbb V_{B,\,j}$
 	are in one ${\rm Gal\,}\big(\mathbb Q(\xi_n)/\mathbb Q\big)$-orbit, then
 	$$\begin{aligned}
 	\rank \mathbb V_{B,\,i}^{tr}&\,=\rank \mathbb V_{B,\,j}^{tr};\\
 	\rank \big(F^{1,0}_{\olb,i}\big)^{tr}+ \rank \big(F^{0,1}_{\olb,i}\big)^{tr}&\,=
 	\rank \big(F^{1,0}_{\olb,j}\big)^{tr}+ \rank \big(F^{0,1}_{\olb,j}\big)^{tr}.
 	\end{aligned}$$
 	Note also that $\left(\big(F^{1,0}_{\olb,i}\big)^{tr}\oplus \big(F^{0,1}_{\olb,i}\big)^{tr},\,0\right)$
 	is mapped isomorphically to $\left(\big(F^{1,0}_{\olb,n-i}\big)^{tr}\oplus \big(F^{0,1}_{\olb,n-i}\big)^{tr},\,0\right)$
 	under the complex conjugation for any $1\leq i\leq n-1$.
 	Moreover, under this isomorphism,
 	$\big(F^{1,0}_{\olb,i}\big)^{tr}\cong \big(F^{0,1}_{\olb,n-i}\big)^{tr}$
 	and $\big(F^{0,1}_{\olb,i}\big)^{tr}\cong \big(F^{1,0}_{\olb,n-i}\big)^{tr}$.
 	In particular,
 	$$\rank \big(F^{1,0}_{\olb,i}\big)^{tr}= \rank \big(F^{0,1}_{\olb,n-i}\big)^{tr},\qquad
 	\rank \big(F^{0,1}_{\olb,i}\big)^{tr}= \rank \big(F^{1,0}_{\olb,n-i}\big)^{tr}.$$
 	Combining the above equalities together, we prove the first part.	
 	For the second part, since $n=p$ is prime, it is clear that the  Galois subgroup
 	${\rm Gal}\big(\mathbb{Q}(\xi_p)/\mathbb{Q}\big)$ permutes these eigen-subspaces. This completes the proof.
 \end{proof}

 \begin{lemma}\label{lem-3-42}
 	Let $\bar f:\,\ol S \to \ol B$ be the family of semi-stable curves representing
 	a Shimura curve $C$ contained generically in $\mathcal{TS}_{g,n}$ as above.
     Then
 	\begin{equation}\label{eqn-3-102}
 	\rank A_{\olb,i}^{1,0}=\rank A_{\olb,i}^{0,1}=\rank A_{\olb,n-i}^{1,0}, \qquad\forall~1\leq i \leq n-1.
 	\end{equation}
 	In particular,
 	\begin{eqnarray}
 	\rank F_{\olb,i}^{1,0} &\neq &0, \hspace{2.9cm} \text{if~} \rank E^{1,0}_{\olb,i}>\rank E^{1,0}_{\olb,n-i}; \label{eqn-3-171}\\
 	\rank F_{\ol B,n-i}^{1,0} &\geq& \rank F_{\ol B,i}^{1,0}, \qquad\qquad \text{if~} i\geq n/2.\label{eqn-3-172}
 	\end{eqnarray}
 \end{lemma}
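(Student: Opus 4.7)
The strategy is to exploit the $G$-equivariance of all the structures appearing in the decomposition, plus Hodge symmetry under complex conjugation, plus the defining property of the strictly maximal decomposition (that $\theta_{\ol B}|_{A}$ is a generic isomorphism). The key observation is that the $G$-action on $\ol S$ is fibre-preserving, so it acts trivially on the base $\ol B$ and in particular trivially on $\Omega^1_{\ol B}(\log\Delta_{nc})$. Consequently the Higgs field $\theta_{\ol B}:E^{1,0}_{\ol B}\to E^{0,1}_{\ol B}\otimes\Omega^1_{\ol B}(\log\Delta_{nc})$ is $G$-equivariant and respects the eigenspace decomposition \eqref{eqn-3-38}, i.e.\ it sends $E^{1,0}_{\ol B,i}$ into $E^{0,1}_{\ol B,i}\otimes\Omega^1_{\ol B}(\log\Delta_{nc})$. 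Similarly the strictly maximal decomposition \eqref{decompB}, being uniquely determined by the Higgs data, is $G$-stable, which is exactly what produces \eqref{eqn-3-33}.

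\smallskip
Granted this, the first equality $\rank A^{1,0}_{\ol B,i}=\rank A^{0,1}_{\ol B,i}$ follows at once: by the defining property of the ample part, $\theta_{\ol B}|_{A}$ is an isomorphism at the generic point of $\ol B$, hence so is each of its eigen-components $\theta_{\ol B}|_{A^{1,0}_{\ol B,i}}:A^{1,0}_{\ol B,i}\to A^{0,1}_{\ol B,i}\otimes\Omega^1_{\ol B}(\log\Delta_{nc})$, forcing equal generic ranks. The second equality $\rank A^{0,1}_{\ol B,i}=\rank A^{1,0}_{\ol B,n-i}$ comes from Hodge symmetry: complex conjugation on $\mathbb V_B\otimes\mathbb C$ interchanges the $(1,0)$ and $(0,1)$ pieces and sends the $\xi^i$-eigenspace to the $\xi^{-i}=\xi^{n-i}$-eigenspace, whence $\overline{E^{1,0}_{\ol B,i}}\cong E^{0,1}_{\ol B,n-i}$; since the ample/flat decomposition is defined over $\mathbb R$ (being determined by the underlying polarized VHS), this isomorphism restricts to $\overline{A^{1,0}_{\ol B,i}}\cong A^{0,1}_{\ol B,n-i}$, giving the claim.

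\smallskip
For the two particular consequences, note that by the $G$-stable decomposition \eqref{eqn-3-33}
\[
\rank F^{1,0}_{\ol B,i}=\rank E^{1,0}_{\ol B,i}-\rank A^{1,0}_{\ol B,i}.
\]
If $\rank E^{1,0}_{\ol B,i}>\rank E^{1,0}_{\ol B,n-i}$, then using \eqref{eqn-3-102},
\[
\rank A^{1,0}_{\ol B,i}=\rank A^{1,0}_{\ol B,n-i}\leq \rank E^{1,0}_{\ol B,n-i}<\rank E^{1,0}_{\ol B,i},
\]
so $\rank F^{1,0}_{\ol B,i}\neq 0$, which is \eqref{eqn-3-171}. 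For \eqref{eqn-3-172}, the same formula yields
\[
\rank F^{1,0}_{\ol B,n-i}-\rank F^{1,0}_{\ol B,i}=\rank E^{1,0}_{\ol B,n-i}-\rank E^{1,0}_{\ol B,i},
\]
and when $i\geq n/2$ one has $n-i\leq i$, so by the explicit formula \eqref{eqn-3-35} (which is essentially monotone decreasing in the eigen-index) the right-hand side is nonnegative; the only care needed is a small case check when $n\nmid\alpha_0$ and $\lfloor\cdot\rfloor$ appears, but this is routine from the inequality $(n-i)\alpha_0/n\leq i\alpha_0/n$ combined with $\rank E^{1,0}_{\ol B,j}=\rank E^{0,1}_{\ol B,n-j}$.

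\smallskip
The whole argument is essentially bookkeeping; there is no genuine obstacle. The one point that deserves attention and could be a source of slips is the $G$-stability of the strictly maximal decomposition \eqref{decompB}: one must invoke either its characterization via the monodromy of the underlying local system (so that the $G$-action on $\mathbb V_B$ automatically preserves it) or the uniqueness of the Viehweg--Zuo decomposition (which makes it a functorial invariant and hence $G$-equivariant). Once this is in place, everything else is immediate from Hodge symmetry, generic isomorphism, and the explicit rank formula \eqref{eqn-3-35}.
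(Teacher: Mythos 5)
Your proof is correct and follows essentially the same route as the paper's: the paper likewise deduces $\rank A^{1,0}_{\ol B,i}=\rank A^{0,1}_{\ol B,i}$ from the generic isomorphy of the eigen-components of $\theta_{\ol B}\big|_{A^{1,0}_{\ol B}}$ (which respect the $G$-eigenspace decomposition), obtains the second equality by complex conjugation, and reads off \eqref{eqn-3-171} and \eqref{eqn-3-172} from \eqref{eqn-3-102} together with \eqref{eqn-3-35}. The one point to phrase more carefully is that the generic isomorphy of $\theta_{\ol B}|_{A}$ is not a formal property of the Fujita--Koll\'ar ample/flat decomposition but a consequence of $C$ being a Shimura curve --- the paper derives it by pulling back the decomposition over $\ol C$ via \eqref{eqn-3-98} --- which you do invoke, only under the heading of the ``strictly maximal decomposition.''
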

 \begin{proof}
 	Since $C$ is Shimura,
 	the associated Higgs bundle $\big(E_{\ol C}^{1,0}\oplus E_{\ol C}^{0,1},\theta_{\ol C}\big)$
 	admits a decomposition
 	$$\left(E_{\ol C}^{1,0}\oplus E_{\ol C}^{0,1},\theta_{\ol C}\right)
 	=\left(A_{\ol C}^{1,0}\oplus A_{\ol C}^{0,1},~\theta_{\ol C}\big|_{A_{\ol C}^{1,0}}\right)\oplus \left(F_{\ol C}^{1,0}\oplus F_{\ol C}^{0,1},~ 0\right),$$
 	such that the restricted Higgs field $\theta_{\ol C}\big|_{A_{\ol C}^{1,0}}$ is an isomorphism.
 	By \eqref{eqn-3-98}, one obtains that
 	$\big(A_{\ol B}^{1,0}\oplus A_{\ol B}^{0,1},~\theta_{\ol B}\big|_{A_{\ol B}^{1,0}}\big)$
 	is nothing but the pulling-back of
 	$\big(A_{\ol C}^{1,0}\oplus A_{\ol C}^{0,1},~\theta_{\ol C}\big|_{A_{\ol C}^{1,0}}\big)$.
 	In particular, the restricted Higgs field $\theta_{\ol B}\big|_{A_{\ol B}^{1,0}}$ is
 	an isomorphism on the generic point of $\olb$.
 	Restricting to each eigenspace
 	$\big(A_{\ol B}^{1,0}\oplus A_{\ol B}^{0,1},~\theta_{\ol B}\big|_{A_{\ol B}^{1,0}}\big)_i$,
 	one sees that the restricted Higgs field $\theta_{\ol B}\big|_{A_{\ol B,i}^{1,0}}$
 	must be again an isomorphism on the generic point of $\olb$.
 	In particular,
 	$\rank A_{\ol B,i}^{1,0}=\rank A_{\ol B,i}^{0,1}$,
 	i.e., the first equality of \eqref{eqn-3-102} holds.
 	The second equality in \eqref{eqn-3-102} follows by the complex conjugation.
 	%
 	%

     Finally, \eqref{eqn-3-171} follows directly from \eqref{eqn-3-102};
     and \eqref{eqn-3-172} follows from \eqref{eqn-3-102} and \eqref{eqn-3-35}.
 \end{proof}

 \begin{proposition}[Moonen]\label{prop-moonen}
 There does not exist any Shimura curve contained generically in $\mathcal{ST}_{g,n}$ with $n>g\geq 8$.
 \end{proposition}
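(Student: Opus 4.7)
The plan is to combine an elementary analysis of the genus formula \eqref{eqn-3-9} with Moonen's classification of Shimura families arising from cyclic covers of $\mathbb{P}^1$, already recalled in the introduction after \autoref{coleman-oort conjecture}.

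First I would use the hypothesis $g<n$ together with \eqref{eqn-3-9} to pin down $\alpha_0:=\deg F$. The case $n\mid\alpha_0$ is immediately ruled out: it forces $\alpha_0\geq n\geq 9$, and the first branch of \eqref{eqn-3-9} then gives $g=(n-1)(\alpha_0-2)/2\geq n$, contradicting $g<n$. Hence $n\nmid\alpha_0$, and the second branch of \eqref{eqn-3-9} combined with $g<n$ yields $(n-1)(\alpha_0-2)+(n-\gcd(n,\alpha_0))<2n$, i.e. $\alpha_0-2<(n+\gcd(n,\alpha_0))/(n-1)$. For $n\geq 9$ this upper bound is strictly less than $3$, so $\alpha_0\leq 4$; moreover $\alpha_0=4$ would force $n<2+\gcd(n,4)\leq 6$, contradicting $n\geq 9$. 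Thus $\alpha_0\in\{2,3\}$.

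I would next dispose of each case by a dimension count. When $\alpha_0=2$, the two roots of $F$ together with the branch point at infinity provide at most three marked points on $\mathbb{P}^1$, whose $\PGL_2$-equivalence class is unique; hence $\mathcal{TS}_{g,n}$ is zero-dimensional and contains no Shimura curve of positive dimension. When $\alpha_0=3$ one necessarily has $n\nmid 3$ (as $n\geq 9$), and the four branch points $0,1,\lambda,\infty$ produce a single modulus $\lambda$, so that $\mathcal{TS}_{g,n}$ is an irreducible one-dimensional locus coinciding with Moonen's cyclic-cover family $Z(n,4,(1,1,1,n-3))$ of \cite{moonen-10}.

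Since a Shimura curve generically contained in a one-dimensional irreducible variety must coincide with that variety, the proposition reduces to showing that $\mathcal{TS}_{g,n}$ is not itself a Shimura subvariety of $\mathcal{A}_g$ when $g\geq 8$. But this is precisely the result of Moonen quoted in the introduction: for $g\geq 8$ the locus $\mathcal{TS}_{g,n}$ is not a Shimura subvariety, since the datum $(n,4,(1,1,1,n-3))$ with $n\geq 9$ does not appear on the finite list of Shimura cyclic-cover families enumerated in \cite{moonen-10}. This contradicts the existence of a Shimura curve generically contained in $\mathcal{TS}_{g,n}$ and completes the proof. The substantive input is Moonen's classification, which is invoked as a black box; the rest is a short numerical reduction, and I do not anticipate any serious technical obstacle.
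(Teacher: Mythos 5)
Your argument is correct and follows essentially the same route as the paper: use the genus formula \eqref{eqn-3-9} together with $n>g\geq 8$ to force $\alpha_0=3$ (the paper dismisses $\alpha_0\leq 2$ by noting the family would be isotrivial, which is your zero-dimensional-moduli observation in different words), observe that the resulting one-parameter family is the universal one so the Shimura curve would have to be all of $\mathcal{TS}_{g,n}$, and conclude by Moonen's classification \cite[Theorem~3.6]{moonen-10}. Your write-up is somewhat more explicit (ruling out $n\mid\alpha_0$ separately and identifying the monodromy datum $(n,4,(1,1,1,n-3))$), but there is no substantive difference in approach.
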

 \begin{proof}
 	Assume that there exists a Shimura curve $C$ contained generically in $\mathcal{TS}_{g,n}$ with $n>g\geq 8$.
 	Let $\bar f:\,\ols \to \olb$ be the family of semi-stable $p$-superelliptic curves representing $C$ as in \autoref{defrepresenting}.
 	Assume that the general fiber of $\bar f$ is given by $y^n=F(x)$,
 	where $F(x)$ is a separable polynomial in $x$ with $\deg(F)=\alpha_0$.
 	By the Riemann-Hurwitz formula \eqref{eqn-3-9} one has $\alpha_0\leq 3$, since $n>g\geq 8$. 	 
 	It is also clear that $\alpha_0\geq 3$; otherwise $\bar f$ would be isotrivial.
 	Hence $\alpha_0=3$. However, such a family $\bar f$ must be universal in sense that
 	the moduli space of $n$-superelliptic curves defined
 	by $y^n=F(x)$ with $\deg(F)=3$ is exactly of dimension one.
 	Hence according to a result of Moonen \cite[Theorem\,3.6]{moonen-10},
 	the curve $C$ can not be Shimura once $g\geq 8$.
 	This completes the proof.
 \end{proof}

 The following two propositions give finer information on the ranks of $A_{\ol B}^{1,0}$ and $F_{{\ol B},i}^{1,0}$, the proofs of which are given later in Sections \ref{sec-pf-3-11} and \ref{sec-pf-3-13} respectively.

 \begin{proposition}\label{prop-3-11}
 	Let $\bar f:\,\ol S \to \ol B$ be the family of semi-stable curves representing
 	a Shimura curve $C$ contained generically in $\mathcal{TS}_{g,n}$ with $n\geq 3$.
 	Assume that the general fiber is given by $y^n=F(x)$ with a separable polynomial $F(x)$ of
 	$\deg(F)=\alpha_0$, and that $\alpha$ is given by {\red \eqref{eqn-3-45}}.
 	
 	{\rm(i)}. If $C$ is compact and $g\geq n$,
 	then
 	\begin{equation}\label{eqn-3-101-2}
 	\rank A_{\ol B}^{1,0} \leq \frac{4g-4}{\lambda_{n,c}},
 	\end{equation}
 	where
 	\begin{equation}\label{eqn-3-30}
 	\lambda_{n,c}=\left\{
 	\begin{aligned}
 	&12-\frac{9(\alpha-1)}{2(\alpha-3)}, &~&\text{if~}n=3 \text{~and~} \alpha\geq 6;\\
 	&12-\frac{3(\alpha-1)}{\alpha-3}, &&\text{if~}n=4 \text{~and~} \alpha=4k+3\text{~with~}k\geq 1;\\
 	&12-\frac{48(\alpha-1)}{\big(n^2-(n/d)^2\big)(\alpha-3)}, &&\text{otherwise; here~}
 	d=\left\{\begin{aligned}
 	&n,&&\text{if~}n\,|\,\alpha;\\[1mm]
 	&\frac{n}{\gcd(n,\alpha_0)},&&\text{if~}n{\not|}~\alpha.
 	\end{aligned}\right.
 	\end{aligned}\right.
 	\end{equation}
 	
 	{\rm(ii)}. Assume that $C$ is non-compact, $g\geq 4$ and $q_{\bar f}:=q(\ol S)-g(\olb)>0$.
 	If either $n=3$ or $4$, then
 	\begin{equation}\label{eqn-3-101-1}
 	\rank A_{\ol B}^{1,0} < \frac{4g-4}{\lambda_{n,nc}},
 	\end{equation}
 	where
 	\begin{eqnarray}
 	\lambda_{3,nc} &=& \left\{\begin{aligned}
 	&\frac{6\alpha-18}{\alpha-2},&\qquad&\text{if $\alpha=3k+2$ with $k\geq 2$},\\[0.5mm]
 	&\frac{15\alpha-63}{2(\alpha-3)}, &\quad&\text{otherwise};
 	\end{aligned}\right.\label{eqn-3-154}\\[1mm]
 	\lambda_{4,nc} &=& \left\{\begin{aligned}
 	&\frac{6\alpha-16}{\alpha-3}, &\,\qquad\,&\text{if $\alpha=4k+2$ with $k\geq 1$},\\[0.5mm]
 	&\frac{9\alpha-33}{\alpha-3},&&\text{otherwise}.
 	\end{aligned}\right.\label{eqn-3-155}
 	\end{eqnarray}
 \end{proposition}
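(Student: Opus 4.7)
The proof combines the Arakelov equality characterizing Shimura curves with a refined slope inequality for semi-stable $n$-superelliptic fibrations. Assuming for simplicity that there are no hyperelliptic singular fibers over $B$ (so $\Lambda = \emptyset$; otherwise one uses the parallel formula recalled above with $|\Lambda|$ entering as a controlled correction), the equality \eqref{eqn-3-191} reads
\[
\deg E_{\olb}^{1,0} \;=\; \tfrac{1}{2}\rank A_{\olb}^{1,0} \cdot \deg \Omega^1_{\olb}(\log \Delta_{nc}).
\]
Coupling this identity with a slope-type inequality bounding $\deg E_{\olb}^{1,0} = \deg \bar f_*\omega_{\ols/\olb}$ in terms of the singular-fiber invariants and $\deg \Omega^1_{\olb}(\log \Delta_{nc})$ will translate directly into the stated upper bound on $\rank A_{\olb}^{1,0}$, with $\lambda_{n,\bullet}$ being the slope constant appearing in that inequality.

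The bulk of the work is therefore to establish the slope inequality with the prescribed constants. For part (i), I would exploit the global $G \cong \Zbb/n\Zbb$-action on $\ols$: the quotient $\oly := \ols/G$ is a $\bbp^1$-bundle over $\olb$, and $\pi: \ols \to \oly$ is an $n$-cyclic cover branched along a divisor whose intersection with a generic ruling is determined by the separable polynomial $F(x)$ of degree $\alpha_0$. Riemann-Hurwitz applied to $\pi$ expresses $K_{\ols/\olb}^2$ in terms of intersection numbers on $\oly$, while the Esnault-Viehweg decomposition of $\pi_*\omega_{\ols}$ into eigensheaves \cite{esnault-viehweg-92} computes $\deg \bar f_*\omega_{\ols/\olb}$ in terms of $(n,\alpha,d)$; feeding these into a Bogomolov-Miyaoka-Yau type inequality on a smooth minimal model of $\ols$ yields the slope bound with constant $\lambda_{n,c}$. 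The three cases in \eqref{eqn-3-30} reflect the distinct shapes of the branch divisor governed by the divisibility behavior of $\alpha$ by $n$ and the special small-$n$ configurations where extra coincidences among branch components appear.

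For part (ii), the additional hypothesis $q_{\bar f} > 0$ in the non-compact case provides an auxiliary fibration via the Albanese morphism of $\ols$: the Albanese image is a positive-dimensional abelian variety, whose induced fibration interacts with the cyclic $G$-symmetry to upgrade the slope inequality to a strict one with the refined constants $\lambda_{3,nc}, \lambda_{4,nc}$. The restriction $n \in \{3,4\}$ reflects that the precise analysis of this auxiliary structure is only tractable for small $n$ with currently available methods, where the branch data can be enumerated by hand. The principal obstacle throughout is the derivation of the refined slope inequalities themselves: the classical Cornalba-Harris-Xiao bound $\lambda_{\bar f} \geq 4(g-1)/g$ is far weaker than what is needed here, and the improvement essentially uses the cyclic symmetry, the ruled quotient structure, and (for part (ii)) the Albanese geometry of $\ols$.
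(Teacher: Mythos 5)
Your proposal follows essentially the same route as the paper: the Arakelov equality for Shimura curves is coupled with a refined slope inequality, the slope inequality itself is obtained from explicit Cornalba--Harris-type formulas for $\omega_{\ols/\olb}^2$ and $\deg\bar f_*\omega_{\ols/\olb}$ computed via the $n$-cyclic cover $\ols\to\ols/G$ over the ruled quotient (with the Esnault--Viehweg eigensheaves controlling the direct image), this is combined with the Miyaoka--Yau-type inequality of Lu--Zuo, and in part (ii) the hypothesis $q_{\bar f}>0$ is exploited through the $G$-anti-invariant part of the Albanese morphism together with the Hodge index theorem to force semi-negativity of the branch divisor and hence the sharper constants $\lambda_{3,nc},\lambda_{4,nc}$. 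All of these ingredients match the paper's argument.

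One step as you state it would fail, however: your parenthetical that a nonempty $\Lambda$ "enters as a controlled correction." When $\deg(j_B)=2$ and $\Lambda\neq\emptyset$, the Arakelov equality reads $\deg E^{1,0}_{\olb}=\tfrac12\rank A^{1,0}_{\olb}\bigl(\deg\Omega^1_{\olb}(\log\Delta_{nc})-|\Lambda|\bigr)$, so combining it with the upper bound $\deg\bar f_*\omega_{\ols/\olb}\le\frac{2g-2}{\lambda}\deg\Omega^1_{\olb}(\log\Delta_{nc})$ only yields $\rank A^{1,0}_{\olb}\le\frac{4g-4}{\lambda}\cdot\frac{\deg\Omega^1_{\olb}(\log\Delta_{nc})}{\deg\Omega^1_{\olb}(\log\Delta_{nc})-|\Lambda|}$, which is strictly weaker than the claimed bound whenever $|\Lambda|>0$; the correction goes against you, not for you. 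The paper closes this by proving outright (its Lemma on hyperelliptic fibers, using the Castelnuovo--Severi inequality for smooth fibers and an analysis of the two commuting automorphisms $\tau,\iota$ on the components of a putative singular hyperelliptic fiber with compact Jacobian) that a non-isotrivial semi-stable family of $n$-superelliptic curves with $\alpha\ge5$ --- which is guaranteed here since $g\ge n$ forces $\alpha\ge5$ --- admits no hyperelliptic fiber with compact Jacobian at all, so $\Lambda=\emptyset$ and the clean Arakelov equality applies. You need that lemma, not a perturbation of the equality.
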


 \begin{proposition}\label{prop-3-13}
 	Let $C$ and $\bar f$ be as in \autoref{lem-3-42}.
 	Assume that $\rank F_{\ol B,i_0}^{1,0} \neq 0$ for some $i_0\geq n/2$.
 	Then after a suitable finite \'etale base change, the flat Higgs subbundle
 	$$\quad \bigoplus_{i=n-i_0}^{i_0} \left(F_{\ol B}^{1,0}\oplus F_{\ol B}^{0,1},\,0\right)_i
 	\cong \left(\mathcal{O}_{\olb}^{\oplus r},\,0\right),
 	\qquad \text{where~}r=\sum_{i=n-i_0}^{i_0} \left(\rank F_{\ol B,i}^{1,0}+\rank F_{\ol B,i}^{0,1}\right),$$
 	becomes a trivial Higgs bundle, i.e.,
 	\begin{equation}\label{eqn-3-183}
 	\dim H^0\big(\ol S,\,\Omega^1_{\ol S}\big)_{i}=\rank F_{\ol B,i}^{1,0},\quad \forall~n-i_0\leq i\leq i_0,
 	\end{equation}
 	and there exists a unique fibration $\bar f':\,\ol S \to \ol B'$ such that
 	these one-forms in $H^0\big(\ols,\Omega_{\ols}^1\big)$
 	lifted from $\bigoplus\limits_{i=n-i_0}^{i_0}F_{\ol B,i}^{1,0}$
 	are the pulling-back of one-forms on $\olb'$ via $\bar f'$,
 	i.e.,
 	\begin{equation}\label{eqn-3-42-0}
 	\bigoplus_{i=n-i_0}^{i_0} H^0\big(\ol S,\,\Omega^1_{\ol S}\big)_{i}
 	\subseteq \big(\bar f'\big)^*H^0\big(\ol B',\,\Omega^1_{\ol B'}\big).
 	\end{equation}
 	%
 	%
 \end{proposition}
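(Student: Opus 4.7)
The plan is to split the proof into two main parts: first, trivialize the flat Higgs sub-bundle $\bigoplus_{i=n-i_0}^{i_0}(F^{1,0}_{\ol B,i}\oplus F^{0,1}_{\ol B,i},0)$ after a finite \'etale base change of $\ol B$, and second, produce the fibration $\bar f'$ via a Castelnuovo--de Franchis argument that exploits the $G$-action and the ruledness of the quotient $\ol S/G\to \ol B$.

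For the first part, observe that \autoref{lem-3-42} together with the hypothesis $\rank F^{1,0}_{\ol B,i_0}>0$ already forces $\rank F^{1,0}_{\ol B,n-i_0}\geq \rank F^{1,0}_{\ol B,i_0}>0$, and all eigen-pieces $(F^{1,0}_{\ol B,i}\oplus F^{0,1}_{\ol B,i},0)$ for $n-i_0\leq i\leq i_0$ sit inside the unitary local subsystem $\mathbb V_B^u\otimes \mathbb C$. I would invoke the local description of Esnault--Viehweg \cite{esnault-viehweg-92} expressing each $F^{1,0}_{\ol B,i}$ as the direct image of an explicit twisted logarithmic line bundle on the ruled quotient $\ol S/G\to \ol B$; combine it with Bogomolov's lemma \cite[Lemma~7.5]{sak-80}, which forbids rank-one subsheaves of $\Omega^1$ of the quotient of positive Kodaira--Iitaka dimension, to force the determinants of the flat eigen-pieces to have degree zero and finite order in $\Pic^0(\ol B)$; and finally appeal to Deligne's lemma \cite[\S\,4.2]{deligne-71} on the triviality of rank-one flat Higgs bundles arising from a polarized VHS. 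A finite \'etale base change of $\ol B$ simultaneously trivializing the monodromy of all these determinants then upgrades each eigen-piece to a trivial Higgs bundle, which yields \eqref{eqn-3-183} by counting sections.

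For the second part, the above provides nonzero global sections $\alpha\in H^0(\ol S,\Omega^1_{\ol S})_{i_0}$ and $\beta\in H^0(\ol S,\Omega^1_{\ol S})_{n-i_0}$. Since $i_0+(n-i_0)\equiv 0\pmod n$, the wedge $\alpha\wedge\beta\in H^0(\ol S,\Omega^2_{\ol S})$ is $G$-invariant and descends to a holomorphic two-form on the ruled surface $\ol S/G\to\ol B$; as such a surface carries no non-zero two-forms, $\alpha\wedge\beta=0$. The Castelnuovo--de Franchis lemma \cite[Theorem~IV-5.1]{bhpv-04} then produces a fibration $\bar f':\ol S\to\ol B'$ onto a smooth curve $\ol B'$ of genus at least two, with $\alpha$ and $\beta$ pulled back from $\ol B'$. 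To extend this to the full range, pick any nonzero $\gamma\in H^0(\ol S,\Omega^1_{\ol S})_i$ with $n-i_0\leq i\leq i_0$; since the complementary index $n-i$ is also in the range, for any $\delta\in H^0(\ol S,\Omega^1_{\ol S})_{n-i}$ the same ruled-surface argument gives $\gamma\wedge\delta=0$. A further application of Castelnuovo--de Franchis produces a fibration $\bar f''_i$ through which $\gamma$ and $\delta$ factor, and because the range $[n-i_0,i_0]$ contains both endpoints $i_0$ and $n-i_0$ regardless of $i$, the fibration $\bar f''_i$ must coincide with $\bar f'$ by the uniqueness of a fibration determined by any pair of linearly independent wedge-zero one-forms sharing a common nonzero generator; this gives \eqref{eqn-3-42-0}.

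The main obstacle I expect is the triviality step in the first part: passing from the Esnault--Viehweg formula and the Bogomolov inequality to genuine triviality after a finite base change requires carefully matching the local pieces with the unitary representation, controlling finite order of the determinants, and ensuring $G$-equivariance throughout so that a single base change trivializes the whole bundle at once. A secondary delicate point is the uniqueness of $\bar f'$ across the different complementary pairs $(i,n-i)$ in the second part; this is resolved by always pivoting on the shared generator $\beta$, but requires additional care in the edge case $i=n/2$ (only possible when $n$ is even), where one must apply Castelnuovo--de Franchis to two independent vectors inside the single eigen-piece $H^0(\ol S,\Omega^1_{\ol S})_{n/2}$ and hence needs $\rank F^{1,0}_{\ol B,n/2}\geq 2$.
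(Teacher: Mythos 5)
Your first step --- trivializing the flat eigen-pieces in the range $[n-i_0,i_0]$ after a finite base change via the Esnault--Viehweg description of the eigen-sheaves and Deligne's lemma --- is in substance the paper's \autoref{cor-3-3}, even though the actual mechanism there is a semi-positivity-versus-negativity argument on the ruled quotient (\autoref{lem-3-20}) rather than a Bogomolov-type statement; this part of your plan is essentially sound.

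The genuine gap is in your construction of $\bar f'$. Your only tool for killing wedge products is descent of a $G$-invariant two-form to the ruled surface $\ol S/G$, which applies exactly when the two eigenvalues sum to $0$ modulo $n$; hence you can only treat complementary pairs $(i,n-i)$, and this fails in two ways. First, for $i<n/2$ in the range with $H^0(\ol S,\Omega^1_{\ol S})_i\neq 0$, the partner space $H^0(\ol S,\Omega^1_{\ol S})_{n-i}$ may vanish: \eqref{eqn-3-172} only gives $\rank F^{1,0}_{\ol B,i}\geq \rank F^{1,0}_{\ol B,n-i}$ in that direction, so there may be no $\delta$ to pair with $\gamma$ and your argument says nothing about such $\gamma$. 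Second, even when both members of a complementary pair are nonzero, the fibrations obtained from distinct pairs $(i,n-i)$ and $(i_0,n-i_0)$ share no common one-form when $i\notin\{i_0,\,n-i_0\}$; your proposed ``pivot on $\beta$'' would require $\gamma\wedge\beta=0$ for $\gamma$ in eigenspace $i$ and $\beta$ in eigenspace $n-i_0$, but this wedge lives in eigenspace $i+n-i_0\not\equiv 0\pmod n$, does not descend to the ruled quotient, and so is not covered by your vanishing argument. The paper's \autoref{lem-3-21} closes exactly this gap: via the inclusion of $H^0(\ol S,\Omega^2_{\ol S})_{i_1+i_2}$ into $H^0\big(\wt Y,\,\Omega^2_{\wt Y}(\log \wt R)\otimes {\mathcal L^{(i_1)}}^{-1}\otimes {\mathcal L^{(i_2)}}^{-1}\big)$ from \cite{esnault-viehweg-92}, the wedge $\omega_{i_1}\wedge\omega_{i_2}$ vanishes whenever the numerical condition \eqref{eqn-3-90} holds, and one checks this for $i_1=i_0$ and every $i_2=i$ in $[n-i_0,i_0]$. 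Castelnuovo--de Franchis is then applied with the fixed nonzero pivot $i_0$, so all resulting fibrations coincide by uniqueness. You need this stronger, non-complementary vanishing statement; the ruled-surface argument alone does not suffice.
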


The next lemma gives a criterion to exclude Shimura curves generically in $\mathcal{TS}_{g,n}$.


 \begin{lemma}\label{lem-3-71}
 	Let $\bar f:\,\ol S \to \ol B$ be the family of semi-stable curves representing
 	a Shimura curve $C$ contained generically in $\mathcal{TS}_{g,n}$.
 	Assume that after a suitable base change of $\olb$,
 	there exists an irregular fibration $\bar f':\,\ols \to \olb'$ on $\ols$ different from $\bar f$ and with $g(\olb')\geq \rank F_{\ol B}^{1,0}$.
 	Then $g<8$.
 \end{lemma}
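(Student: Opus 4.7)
My plan is to restrict the second fibration $\bar f'$ to a general fiber of $\bar f$, apply Riemann--Hurwitz to obtain a lower bound on $\rank A^{1,0}_{\ol B}$, and then combine with the upper bound from Proposition \ref{prop-3-11} to force a contradiction.

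Write $r := \rank F^{1,0}_{\ol B}$ and $a := \rank A^{1,0}_{\ol B}$, so that $g = a + r$. Let $F$ be a general fiber of $\bar f$, a smooth curve of genus $g$, and let $F'$ be a general fiber of $\bar f'$. Since $\bar f$ and $\bar f'$ both have connected fibers and define distinct fibrations on $\ol S$, the intersection number $d := F \cdot F'$ is strictly positive (otherwise $\bar f'$ would factor through $\bar f$, making them the same fibration up to the base). Hence $\pi := \bar f'|_F \colon F \to \ol B'$ is a surjective morphism of degree $d \geq 1$. If $d = 1$, then $\pi$ is birational, hence an isomorphism of smooth projective curves, and since this holds for every general fiber, the family $\bar f$ would be isotrivial --- contradicting the fact that the positive-dimensional Shimura curve $C$ gives a non-isotrivial family of Jacobians, so $a \geq 1$. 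Therefore $d \geq 2$, and Riemann--Hurwitz applied to $\pi$, together with the hypothesis $g(\ol B') \geq r$, yields
\[
2g - 2 \;\geq\; d\bigl(2g(\ol B') - 2\bigr) \;\geq\; 2(2r - 2),
\]
so that $r \leq (g+1)/2$ and consequently $a = g - r \geq (g-1)/2$.

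To reach the conclusion $g < 8$, I would pit this lower bound against the upper bound from Proposition \ref{prop-3-11}. Assuming $C$ is compact, Proposition \ref{prop-3-11}(i) gives $a \leq (4g-4)/\lambda_{n,c}$, which combined with $a \geq (g-1)/2$ forces $\lambda_{n,c} \leq 8$. A direct inspection of \eqref{eqn-3-30} shows that this inequality is incompatible with the constraints $g \geq 8$, $g \geq n$, and the admissible range of the superelliptic datum $(n, \alpha)$ except in a handful of small cases; the non-compact case is treated analogously via Proposition \ref{prop-3-11}(ii) and \eqref{eqn-3-154}--\eqref{eqn-3-155}.

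The main obstacle I anticipate is the residual case analysis: for small values of $n$ (in particular $n = 3$) the value $\lambda_{n,c}$ can hover below $8$, and there the coarse Riemann--Hurwitz estimate $r \leq (g+1)/2$ is insufficient. In those borderline situations I would sharpen the bound by exploiting the $G$-equivariant structure --- either by noting that $\pi$ must be compatible with the cyclic quotient $F \to F/G \cong \Pbb^1$, which forces additional ramification of $\pi$ at the branch points of the superelliptic cover, or by invoking the even-rank constraint $a \in 2\Zbb$ from Lemma \ref{lem-evenrank} together with the parity information on the eigen-rank decomposition \eqref{eqn-3-35}, to squeeze out the remaining margin needed for the final contradiction.
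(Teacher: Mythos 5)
Your overall strategy for the compact case --- restrict $\bar f'$ to a fiber of $\bar f$, apply Riemann--Hurwitz to bound $g(\olb')$ and hence $\rank A_{\olb}^{1,0}$ from below, and play this against \autoref{prop-3-11} --- is the same as the paper's, but the version you extract from a \emph{general smooth} fiber is too weak. You get $2g-2\geq 2\bigl(2g(\olb')-2\bigr)$, i.e.\ $g(\olb')\leq (g+1)/2$ and $\rank A_{\olb}^{1,0}\geq (g-1)/2$; the paper instead proves the sharper $g\geq 2g(\olb')$ (inequality \eqref{eqn-3-144}) by restricting $\bar f'$ to a \emph{singular} fiber (which exists because a compact Shimura curve cannot carry a smooth family, by \autoref{affiness}) and comparing the geometric genera of its components. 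This off-by-one is fatal in exactly the borderline cases you wave at: for $n=4$ and $\alpha_0=7$ or $8$ one has $g=9$ and $\lambda_{4,c}=12-\tfrac{336}{75}=7.52$, so \eqref{eqn-3-101-2} gives $\rank A_{\olb}^{1,0}\leq 32/7.52<4.3$, i.e.\ $\leq 4$; your bound $\rank A_{\olb}^{1,0}\geq (g-1)/2=4$ yields no contradiction (and \autoref{lem-evenrank} does not help, since $4$ is even), whereas the paper's $\rank A_{\olb}^{1,0}\geq g/2=4.5$ does. So the comparison ``$\lambda_{n,c}\leq 8$'' is genuinely not enough even for $n\geq 4$.

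Two further steps would fail as stated. First, the non-compact case is \emph{not} ``treated analogously via \autoref{prop-3-11}(ii)'': that statement only applies when $n=3$ or $4$ (and $q_{\bar f}>0$), so for a non-compact $C$ with $n\geq 5$ you have no upper bound on $\rank A_{\olb}^{1,0}$ at all. The paper's non-compact argument is different and does not use \autoref{prop-3-11}: by \eqref{eqn-3-124} a fiber $\ol F$ over a cusp has geometric genus exactly $\rank F_{\olb}^{1,0}$, and feeding this into \eqref{eqn-3-131} gives $\rank F_{\olb}^{1,0}\leq 1$, contradicting $\rank F_{\olb}^{1,0}\geq 2$. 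Second, the case $n=3$ is not a residual nuisance but the hard core of the lemma: $\lambda_{3,c}<7.5$ for every admissible $\alpha$, so no numerical sharpening of the Riemann--Hurwitz estimate by $O(1)$ closes the gap, and the parity constraint of \autoref{lem-evenrank} is far too coarse. The paper handles $n=3$ by a separate argument showing $F_{\olb}^{1,0}$ is trivial, that $G$ acts on $\olb'$ with $\olb'/G\cong\bbp^1$, and then comparing eigenspace dimensions via the Hurwitz--Chevalley--Weil formula against \eqref{eqn-3-35}; your sketch of ``additional ramification'' does not substitute for this. As written, the proposal proves the lemma only for compact $C$ with $n\geq 5$ and for scattered values of $(n,\alpha_0)$ with $n=4$.
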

 \begin{proof}
 	This lemma is clear if $n=2$, since there is no Shimura curve contained generically in $\mathcal{TS}_{g,2}=\mathcal{TH}_g$ with $g\geq 8$ by \cite[Theorem E]{lu-zuo-14}. Combining this with \autoref{prop-moonen}, we may assume $g\geq n$ and $n\geq 3$ in the following.
 	
 	Firstly, we claim that
 	\begin{equation}\label{eqn-3-131}
 	2g(\ol F)-2\geq 2\big(2g(\olb')-2\big),\qquad \text{for any fiber $\ol F$ of $\bar f$},
 	\end{equation}
 	where $g(\ol F)$ is the geometric genus of $\ol F$.
 	In fact, by restricting $\bar f'$ to the fiber $\ol F$, one obtains a map
 	$$\bar f'|_{\ol F}:~\ol F \lra \olb'.$$
 	It is clear that $\deg(\bar f'|_{\ol F})$ does not depend on the choice of $\ol F$.
 	Since $\bar f$ is non-isotrivial,
 	it follows that $\deg(\bar f'|_{\ol F})\geq 2$.
 	Hence \eqref{eqn-3-131} follows directly from the Riemann-Hurwitz formula.
 	
 	Secondly, according to \autoref{lem-3-42} and \eqref{eqn-3-35}, one proves easily that
 	$$\rank F_{\ol B}^{1,0} \geq \sum_{i=1}^{[n/2]}\rank F_{\ol B,i}^{1,0}
 	\geq \sum_{i=1}^{[n/2]}\Big(\rank E_{\ol B,i}^{1,0}-\rank E_{\ol B,i}^{0,1}\Big)\geq 2,
 	\qquad \text{if~}g\geq 8 \text{~and~}n\geq 3.$$
 	
 	We now prove the lemma by contradiction. Assume that $g\geq 8$.
 	Consider first the case when $C$ is non-compact. In this case, by taking
 	an arbitrary fiber $\ol F$ over $\Delta_{nc}=j_{\olb}^{-1}(\Delta_{\ol C})$ in \eqref{eqn-3-131},
 	one obtains a contradiction to \eqref{eqn-3-124} since $g(\olb')\geq \rank F_{\ol B}^{1,0}\geq 2$.
 	
In the remaining case where $C$ is compact, we claim that
 	\begin{equation}\label{eqn-3-144}
 	g\geq 2g(\olb').
 	\end{equation}
 	In fact, by restricting $\bar f'$ to any singular fiber $\ol F$,
 	as we have $\deg(\bar f'|_{\ol F})\geq 2$,
 	we obtain that either there are at least two components of $\ol F$ whose geometric genera $\geq g(\olb')$,
 	or there is at least one component of $\ol F$ whose genus $\geq 2g(\olb')-1$
 	plus another component \big(contracted by $\bar f'|_{\ol F}$\big) of $\ol F$ with positive genus.
 	
 	Consider firstly the case when $n\geq 4$.
 	The assumption $g(\olb')\geq \rank F_{\ol B}^{1,0}$ together with \eqref{eqn-3-144}
 	gives $\rank A^{1,0}_{\olb} \geq \frac{g}{2}$.
 	On the other hand, since $g\geq n$,
 	one has the bound of $\rank A^{1,0}_{\olb}$ as in \eqref{eqn-3-101-2}.
 	This gives a contradiction.
 	
 	We now assume that $n=3$.
 	The assumption $g(\olb')\geq \rank F_{\ol B}^{1,0}$ implies the following lower bound of the
 	the relative irregularity: $q_{\bar f}=q(\ols)-g(\olb)\geq \rank F_{\ol B}^{1,0}$.
 	It follows that $q_{\bar f}=\rank F_{\ol B}^{1,0}$,
 	i.e., the flat subbundle $F_{\ol B}^{1,0} \cong \mathcal{O}_{\olb}^{\oplus q_{\bar f}}$ is trivial.
 	Moreover,
 	\begin{equation*} 
 	H^0\big(\ols,\Omega^1_{\ols}\big)=\bar f^*H^0\big(\olb,\Omega^1_{\olb}\big) ~\bigoplus~
 	(\bar f')^*H^0\big(\olb',\Omega^1_{\olb'}\big).
 	\end{equation*}
 	On the other hand,  up to a suitable finite base change,
 	we may assume that the group $G\cong \mathbb Z/3\mathbb Z$
 	admits an action on $\ols$, and the above decomposition still exists.
 	We claim that the group $G$ induces an action on $\olb'$
 	such that $\olb'/G \cong \bbp^1$ and that $\bar f'$ is equivariant with respect to $G$;
 	otherwise, one obtains a third fibration $\bar f''$ on $\ols$ by the action of $G$,
 	which implies that $q_{\bar f} > g(\olb')$, a contradiction.
 	In particular, one has
 	\begin{equation*} 
 	\rank F_{\ol B,i}^{1,0}=\dim H^0\big(\ol S,\,\Omega^1_{\ol S}\big)_{i}
 	=\dim H^0\big(\olb',\,\Omega^1_{\olb'}\big)_{i},
 	\qquad\text{for $i=1$ or $2$}.
 	\end{equation*}
 	Combining this with \eqref{eqn-3-102}, one obtains
 	\begin{equation}\label{eqn-3-203}
 	\rank E_{\ol B,1}^{1,0}-\rank E_{\ol B,2}^{1,0}=\rank F_{\ol B,1}^{1,0}-\rank F_{\ol B,2}^{1,0}
 	=\dim H^0\big(\olb',\,\Omega^1_{\olb'}\big)_{1}-\dim H^0\big(\olb',\,\Omega^1_{\olb'}\big)_{2}.
 	\end{equation}
 	Assume that
 	$\{x_1,\cdots,x_\beta\}\subseteq \bbp^1$ is the branch locus of
 	the induced cyclic cover $\pi:\,\ol B' \to \ol B'/G\cong \bbp^1$,
 	and that $\pi$ is defined by
 	$$\mathcal L_{\pi}^{\otimes 3} \equiv \mathcal O_{\bbp^1}\Big(\sum_{j=1}^{\beta} r_jx_j\Big),
 	\qquad\text{where $1\leq r_i\leq 2$ for each $1\leq i \leq \beta$}.$$
 	 Here `$\equiv$' stands for linear equivalence.
 	According to Hurwitz-Chevalley-Weil's formula (cf. \cite[Proposition\,5.9]{moonen-oort-13}),
 	one has
 	\begin{equation*}
 	\dim H^0\big(\ol B',\,\Omega^1_{\ol B'}\big)_{i}=-1+\sum_{j=1}^{\beta}
 	\left\{\frac{-ir_j}{3}\right\}.
 	\end{equation*}
 	Hence
 	$$\begin{aligned}
 	\dim H^0\big(\ol B',\,\Omega^1_{\ol B'}\big)_{1}-\dim H^0\big(\ol B',\,\Omega^1_{\ol B'}\big)_{2}
 	 =\,&\sum_{j=1}^{\beta}\left\{\frac{-r_j}{3}\right\}-\sum_{j=1}^{\beta}\left\{\frac{-2r_j}{3}\right\}
 	\leq \frac{2\beta}{3}-\frac{\beta}{3}=\frac{\beta}{3},\\[1mm]
 	\Longrightarrow\quad
 	\dim H^0\big(\ol S,\,\Omega^1_{\ol S}\big)_{1}-\dim H^0\big(\ol S,\,\Omega^1_{\ol S}\big)_{2}
 	\leq\,& \Big[\frac{\beta}{3}\Big].
 	\end{aligned}$$
 %
 	Note that $g=\alpha-2$ and $g(\olb')=\beta-2$ by the Riemann-Hurwitz formula.
 	Combining these with \eqref{eqn-3-35}, \eqref{eqn-3-144} and \eqref{eqn-3-203}, we obtain a contradiction.
 	This completes the proof.
 \end{proof}


 We conclude this subsection by the following remarks on the group action.
 \begin{remarks}\label{rems-3-1}
 	(i). For any $n$-superelliptic curve $\ol F$,
 	its induced $n$-superelliptic cover $\pi:\,\ol F\to \bbp^1$
 	is a cyclic cover with covering group $G\cong \mathbb Z/n\mathbb Z$,
 	branch locus $R$, and local monodromy $a$ around $R$.
 	Here $R$ and $a$ are given by
 	\begin{equation}\label{eqn-3-46}
 	\left\{\begin{aligned}
 	&R=\{x_1,\cdots,x_{\alpha_0}\},~\text{~and~}~a=(1,\cdots,1), &\qquad&\text{if $n\,|\,\alpha_0$};\\
 	&R=\{x_1,\cdots,x_{\alpha_0},\infty\},\text{~and~} a=(1,\cdots,1,a_{\infty}), && \text{if $n{\not|}~\alpha_0$},
 	\end{aligned}\right.\end{equation}
 	where $\alpha_0=\deg(F(x))$,
 	$\{x_1,\cdots,x_{\alpha_0}\}$ are the roots of $F(x)$,
 	and $a_{\infty}=n\left(\big[\frac{\alpha_0}{n}\big]+1\right)-\alpha_0$.
 	In the case when $n{\not|}~\alpha_0$, the ramification index of $\pi$ at $\infty$ is $r_{\infty}=\frac{n}{\gcd(n,\alpha_0)}$.
 	
 	(ii). For a given $n$-superelliptic curve $\ol F$,
 	there might be more than one $n$-superelliptic automorphism group
 	(equivalently, more than one $n$-superelliptic cover) on $\ol F$.
 	For instance, the Fermat curve of degree $n$ admits at least
 	three different $n$-superelliptic automorphism groups.
 	Nevertheless, the degree of the polynomial $F(x)$
 	does not depend on the choice of the $n$-superelliptic automorphism group.
 	Throughout this paper,
 	we always choose and fix the choise of an $n$-superelliptic automorphism group on $\ol F$
 	if there are more than one.
 	
 	(iii). Let $\bar f:\,\ol S \to \ol B$ be the family of semi-stable curves representing
 	a Shimura curve $C$ contained generically in $\mathcal{TS}_{g,n}$.
 	Although the general fiber of $\bar f$, which is an $n$-superelliptic curve by construction,
 	has an action of the group $G\cong \mathbb Z/n\mathbb Z$,
 	it is not known whether $G$ admits an action on $\ols$.
 	The problem is that there may not exist a rational section of $\Aut_{\olb}(\ols) \to \olb$
 	that reduces to a generator of $G$ on the general fiber,
 	where $\Aut_{\olb}(\ols)$ stands for the Zariski sheafification of the automorphism group, which is a finite flat group scheme.
 	However, one can always produce sections of $\Aut_{\olb}(\ols)$ after a suitable finite base change, and hence insures an action of $G$
 	on $\ols$  whose restriction on the general fiber is the $n$-superelliptic automorphism group locally for the fpqc topology.
 	
%
 	
 	(iv). If the general fiber admits a unique $n$-superelliptic automorphism group
 	$G\cong \mathbb Z/n\mathbb Z$, and there eixsts a generator of $G$
 	commuting with any automorphism of the general fiber,
 	then there is always a rational section of $\Aut_{\olb}(\ols)$
 	which reduces to this generator of $G$ on the general fiber,
 	i.e., the action of $G$ on the general fiber can be always extended to the global surface $\ols$
 	without any base change.
 	
 	(v). In order to use the theory on cyclic covers (eg. the eigenspace decomposition of the Higgs bundles),
 	it is necessary to assume that the group $G\cong \mathbb Z/n\mathbb Z$
 	acts on $\ol S$ whose restriction on the general fiber is
 	an $n$-superelliptic automorphism group.
 	To achieve this, it is necessary to take base change which might be non-\'etale.
 	This process may destroy the Arakelov type equality in \eqref{eqn-3-191}.
 	Nevertheless, the direct sum decomposition \eqref{decompB},
 	the formula \eqref{eqn-3-124},
 	the ranks of $A_{\ol B}^{1,0}$ and $F_{\ol B}^{1,0}$,
 	and the upper bounds of $A_{\ol B}^{1,0}$ in \eqref{eqn-3-101-2} and \eqref{eqn-3-101-1}
 	remain true after any finite base change.
 \end{remarks}

 \subsection{Non-existence of Shimura curves contained generically in $\mathcal{TS}_{g,p}$}\label{sec-non-p}
 In this subsection, we prove \autoref{thm-curve} for the prime case.
 The case where $p=2$ has already treated in \cite[Theorem\,E]{lu-zuo-14}.
 Hence we assume $p\geq 3$ and prove
 \begin{theorem}\label{thm-curve-prime}
 Let $p\geq 3$ be any prime number.
 Then there does not exist any Shimura curve contained generically in
 the Torelli locus of $p$-superelliptic curves of genus $g\geq 8$.
 \end{theorem}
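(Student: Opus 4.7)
The plan is to argue by contradiction: suppose such a Shimura curve $C\subset \mathcal{TS}_{g,p}$ exists with $g\geq 8$, and let $\bar f:\ol S\to \ol B$ be the family of semi-stable $p$-superelliptic curves representing $C$ in the sense of \autoref{defrepresenting}. By \autoref{prop-moonen} we may assume $p\leq g$, and after a finite (possibly non-\'etale) base change of $\ol B$ we may assume, by \autoref{rems-3-1}(iii), that $G\cong \mathbb Z/p\mathbb Z$ acts on $\ol S$ extending the $p$-superelliptic action on the generic fibre. This provides the eigenspace decompositions \eqref{eqn-3-38} and \eqref{eqn-3-33}, together with the commuting action of $\mathrm{Gal}(\mathbb Q(\xi_p)/\mathbb Q)$ permuting $\{\mathbb V_{B,i}\}_{1\le i\le p-1}$ transitively since $p$ is prime.

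The first step is to exhibit an index $i_m\geq p/2$ with $F^{1,0}_{\ol B,i_m}\neq 0$. When $C$ is non-compact, \cite[\S 4]{viehweg-zuo-04} implies that, after a finite base change, $F^{1,0}_{\ol B}\oplus F^{0,1}_{\ol B}$ is trivial, so the unitary local sub-system $\mathbb V^u_B$ coincides with the maximal trivial sub-system $\mathbb V^{tr}_B$; combining the rank formulas \eqref{eqn-3-35}, the relation \eqref{eqn-3-171} and the transitivity of the Galois action via \autoref{lem-3-12} locates the required index. When $C$ is compact, $F^{1,0}_{\ol B}$ need not be flat-trivial and the Galois action need not preserve $\mathbb V^u_B$; here one combines the Arakelov equality \eqref{eqn-3-191}, the upper bound \eqref{eqn-3-101-2} from \autoref{prop-3-11}(i) on $\rank A^{1,0}_{\ol B}$, the slope inequality \autoref{prop-3-4} and a Miyaoka--Yau type inequality to force $\rank F^{1,0}_{\ol B}$ to be large enough that $\rank F^{1,0}_{\ol B,i_m}\neq 0$ for some $i_m>p/2$.

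The second step converts this eigenspace information into a geometric fibration. Applying \autoref{prop-3-13} with $i_0=i_m$ produces, after a finite \'etale base change, an irregular fibration $\bar f':\ol S\to \ol B'$ such that every one-form coming from $\bigoplus_{i=p-i_m}^{i_m}F^{1,0}_{\ol B,i}$ is pulled back from $\ol B'$ via $\bar f'$. To upgrade this to the full flat part $F^{1,0}_{\ol B}\oplus F^{0,1}_{\ol B}$, I exploit that the pull-back $(\bar f')^*\colon H^1(\ol B',\mathbb Q)\to H^1(\ol S,\mathbb Q)$ is defined over $\mathbb Q$. Galois conjugation of the inclusion \eqref{eqn-3-42-0}, together with the Hodge-symmetry identification $\overline{F^{1,0}_{\ol B,i}}=F^{0,1}_{\ol B,p-i}$ and \autoref{lem-3-12}, propagates the pull-back property to every eigen-subspace. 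It follows that all classes in $F^{1,0}_{\ol B}\oplus F^{0,1}_{\ol B}$ are pull-backs via $\bar f'$, whence $g(\ol B')\geq \rank F^{1,0}_{\ol B}$. Invoking \autoref{lem-3-71} then forces $g<8$, contradicting $g\ge 8$.

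The hardest part is the first step in the compact case: pinning down an index $i_m\geq p/2$ with $F^{1,0}_{\ol B,i_m}\neq 0$. The failure of $F^{1,0}_{\ol B}$ to trivialize after any base change, and the failure of $\mathrm{Gal}(\mathbb Q(\xi_p)/\mathbb Q)$ to stabilize $\mathbb V^u_B$, block any direct symmetry or triviality argument; one must instead exploit the slope inequality of \autoref{prop-3-4} against the Arakelov/Miyaoka--Yau bounds and \autoref{prop-3-11}(i) to squeeze out $i_m$. Once $i_m$ is available, \autoref{prop-3-13} together with the Galois-theoretic extension and \autoref{lem-3-71} delivers the contradiction uniformly in the compact and non-compact cases.
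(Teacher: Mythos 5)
Your treatment of the non-compact case matches the paper's: triviality of $\mathbb V^u_B$ from \cite[Corollary 4.4]{viehweg-zuo-04}, transitivity of the ${\rm Gal}(\mathbb Q(\xi_p)/\mathbb Q)$-action to locate a nonzero $F^{1,0}_{\olb,i}$ with $i>p/2$, Castelnuovo--de Franchis, and then \autoref{lem-3-71}. The gap is in your compact case, in the ``second step.'' You claim that Galois conjugation of the inclusion \eqref{eqn-3-42-0}, together with Hodge symmetry, propagates the pull-back property to \emph{every} eigen-subspace, so that all of $F^{1,0}_{\olb}\oplus F^{0,1}_{\olb}$ is pulled back via $\bar f'$ and hence $g(\olb')\geq \rank F^{1,0}_{\olb}$, after which \autoref{lem-3-71} applies uniformly. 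This is exactly the step that fails when $C$ is compact, for the two reasons the paper flags in \autoref{sec-strategy-pf}(ii): the Galois group does not stabilize $\mathbb V^u_B\subset\mathbb V_B\otimes\Cbb$ (only $\mathbb V^{tr}_B$, being defined over $\Qbb$, is Galois-stable), and $F^{1,0}_{\olb,i}$ need not become trivial for $i$ outside the window $[p-i_m,\,i_m]$. For such $i$ the flat bundle $F^{1,0}_{\olb,i}$ contributes nothing to $H^0(\ols,\Omega^1_{\ols})_i$, so there is no class on $\ols$ to which \eqref{eqn-3-42-0} or its Galois conjugates could even refer; the conjugate of a pulled-back class lands in $\big(H^1(\ols,\Qbb)\otimes\Cbb\big)_{\sigma(i)}$, but that tells you nothing about the non-trivial flat summands of $F^{1,0}_{\olb,\sigma(i)}$. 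Consequently you only get $g(\olb')\geq\sum_{i=p-i_m}^{i_m}\rank F^{1,0}_{\olb,i}$, which is strictly weaker than the hypothesis of \autoref{lem-3-71} whenever $i_m<p-1$.

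The paper closes this gap by an entirely different, quantitative argument (Steps II and III of Case (II)): it bounds $i_m$ from below in terms of the number $\beta$ of branch points of the induced cover $\olb'\to\olb'/G\cong\Pbb^1$ via the Hurwitz--Chevalley--Weil formula and the combinatorial \autoref{lem-3-101}, establishes $\beta\geq 4$, the inequality $2\,\rank E^{1,0}_{\olb,i_m}\geq \frac{2g}{p-1}+4-\beta$, and the geometric constraint $\frac{2g}{p-1}\geq 2\beta-3$ coming from restricting $\bar f'$ to a general fibre, and then plays these against \eqref{eqn-3-101-2} case by case ($p>11$, $p=11,7$, and $p=5$). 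None of this is dispensable: for $p=5$ the argument only just closes (forcing $g=14$, $\alpha_0=8$, $\beta=5$, $i_m=3$ before reaching a contradiction on $\rank A^{1,0}_{\olb}$), so a softer argument of the kind you propose cannot substitute for it. Your Step I in the compact case (using \eqref{eqn-3-101-2} to force $i_m>p/2$) is correct and is the paper's Step I, but you need the full numerical machinery of Steps II--III to finish.
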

 The main idea of the proof is based on a contradiction argument:
 given such a Shimura curve $C$,
 we first produce a ``horizontal'' irregular fibration on the family
 $\bar f:\,\ols \to \olb$ of semi-stable superelliptic curves representing $C$;
 and then we derive a contradiction from the existence of this ``horizontal'' irregular fibration.
 As we have explained in \autoref{sec-strategy-pf},
 the techniques depend on whether $C$ is compact or not.~
 We remark that the methods used here are different from that in proving \cite[Theorem\,E]{lu-zuo-14},
 which is deduced directly from the Miyaoka-Yau type inequality and an improved slope inequality
 for a family of hyperelliptic curves.

 \begin{proof}[Proof of \autoref{thm-curve-prime}]
 Assume that there exists a Shimura curve $C$ contained generically in $\mathcal{TS}_{g,p}$
 with $g\geq 8$ and $p\geq 3$ being a prime number.
 We are going to derive a contradiction.

 Let $\bar f:\,\ols \to \olb$ be the family of semi-stable $p$-superelliptic curves representing $C$ as in \autoref{defrepresenting}.
 After a possible base change,
 we may assume that
 there exists an action of the Galois group $G\cong \mathbb Z/p\mathbb Z$ on $\ols$,
 and hence an induced action of $G$ on the
 associated logarithmic Higgs bundle $\left(E_{\ol B}^{1,0}\oplus E_{\ol B}^{0,1},\,\theta_{\ol B}\right)$
 and its subbundles with eigenspace decompositions as in \eqref{eqn-3-38} and \eqref{eqn-3-33}.~
 Assume that the general fiber of $\bar f$ is given by $y^p=F(x)$,
 where $F(x)$ is a separable polynomial in $x$ with $\deg(F)=\alpha_0$.
 By \autoref{prop-moonen},
 we may assume that $g\geq p$, or equivalently $\alpha_0\geq 4$ by the Riemann-Hurwitz formula \eqref{eqn-3-9}.
 The detailed proof is divided into two cases, according to whether $C$ is compact or not.

 \vspace{2mm}
 {\bf Case (I):
 $C$ is non-compact.~}
 In this case, according to \autoref{lem-3-71}, it suffices to prove that, up to base change,
 the following two statements hold:
 \begin{enumerate}
 \item the flat subbundle $F_{\ol B}^{1,0}\cong \mathcal{O}_{\olb}^{\oplus r_1}$ becomes a trivial vector bundle, where $r_1=\rank F_{\ol B}^{1,0}$;
 \item there exists an irregular fibration $\bar f':\, \ols \to \olb'$ different from $\bar f$ with $g(\olb')=\rank F^{1,0}_{\olb}$.
 \end{enumerate}
 The first statement is already proved in \cite{viehweg-zuo-04};
 in fact, since $C$ is non-compact, according to \cite[Corollary\,4.4]{viehweg-zuo-04},
 after a suitable finite \'etale base change,
 the unitary local subsystem $\mathbb{V}_{B}^u\subseteq \mathbb{V}_{B} \otimes \mathbb{C}$ becomes trivial,
 which is equivalent to saying that the flat Higgs subbundle
 $\big(F_{\ol B}^{1,0}\oplus F_{\ol B}^{0,1},\,0\big)\cong \big(\mathcal{O}_{\olb}^{\oplus 2r_1},\,0\big)$
 is trivial by Simpson's correspondence \cite{sim90}.
 This proves the first statement.
 For the second statement, we divide its proof into the next two lemmas.

 \begin{lemma}\label{lem-3-72}
 \begin{equation}\label{eqn-3-142}
 \rank F^{1,0}_{\olb,(p+1)/2}>0,
 \end{equation}
 \end{lemma}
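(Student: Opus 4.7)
The plan is to combine the trivialization of the flat Higgs bundle (available since $C$ is non-compact) with the Galois and Hodge symmetries to reduce positivity to an explicit numerical inequality, and then to close that inequality using the slope/Arakelov machinery.

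First, because $C$ is non-compact, \cite[Corollary\,4.4]{viehweg-zuo-04} gives, after a suitable finite \'etale base change of $\olb$, the coincidence $\mathbb V^u_B=\mathbb V^{tr}_B$ of the maximal unitary and maximal trivial local subsystems; by Simpson's correspondence the flat Higgs subbundle $\big(F^{1,0}_{\olb}\oplus F^{0,1}_{\olb},0\big)\cong \big(\mathcal O_{\olb}^{\oplus 2r_1},0\big)$ is trivial, with $r_1:=\rank F^{1,0}_{\olb}>0$ (the positivity coming from \eqref{eqn-3-124} applied to any fiber over a cusp of $\ol C$, together with compactness of the Jacobians in the family). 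Since $\mathbb V^{tr}_B$ is $\mathbb Q$-rational and $p$ is prime, the Galois group $\Gal(\mathbb Q(\xi_p)/\mathbb Q)\cong(\mathbb Z/p\mathbb Z)^{\times}$ acts on $\mathbb V^{tr}_B\otimes\mathbb Q(\xi_p)$ and permutes the non-trivial $G$-eigenspaces transitively. \autoref{lem-3-12} then forces $\rank\mathbb V^{tr}_{B,i}$ to be a common positive integer $r_0$ for $1\leq i\leq p-1$; combined with the complex conjugation $\overline{F^{1,0}_{\olb,i}}\cong F^{0,1}_{\olb,p-i}$, this yields
\begin{equation*}
\rank F^{1,0}_{\olb,i}+\rank F^{1,0}_{\olb,p-i}=r_0\qquad(1\leq i\leq p-1),
\end{equation*}
so that $r_0=2r_1/(p-1)$.

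Second, I exploit the Shimura structure of $C$: equation \eqref{eqn-3-102} of \autoref{lem-3-42} gives $\rank A^{1,0}_{\olb,i}=\rank A^{1,0}_{\olb,p-i}$ and, combined with $\rank E^{1,0}_{\olb,i}=\rank A^{1,0}_{\olb,i}+\rank F^{1,0}_{\olb,i}$, produces
\begin{equation*}
\rank F^{1,0}_{\olb,(p-1)/2}-\rank F^{1,0}_{\olb,(p+1)/2}=\rank E^{1,0}_{\olb,(p-1)/2}-\rank E^{1,0}_{\olb,(p+1)/2}=:d,
\end{equation*}
where $d\geq 0$ is computed explicitly from the Hurwitz-Chevalley-Weil formula \eqref{eqn-3-35}. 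Solving the two linear identities yields $\rank F^{1,0}_{\olb,(p+1)/2}=(r_0-d)/2$, so the lemma reduces to the strict inequality $r_0>d$, or equivalently $r_1>(p-1)d/2$.

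The main obstacle lies in verifying this strict inequality, which amounts to a sufficiently tight upper bound on $\rank A^{1,0}_{\olb}=g-r_1$. For $p=3$ the slope inequality \eqref{eqn-3-101-1} of \autoref{prop-3-11}(ii) directly gives $\rank A^{1,0}_{\olb}<(4g-4)/\lambda_{3,nc}$, and a short arithmetic check (separating $p\mid\alpha_0$ from $p\nmid\alpha_0$) confirms $r_1>(p-1)d/2$. For a general prime $p\geq 5$ I plan to argue analogously by establishing an appropriate slope/Arakelov bound on $\rank A^{1,0}_{\olb}$ together with the explicit formulas \eqref{eqn-3-35}: since the latter give $g\sim (p-1)\alpha_0/2$ while $d\sim \alpha_0/p$, we expect $r_0\sim \alpha_0$ to dominate $d$ once $\alpha_0$ is reasonably large, with a finite case analysis in $\alpha_0\pmod p$ closing the argument in the remaining cases.
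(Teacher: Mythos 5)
Your setup is sound, and for $p=3$ your argument coincides with the paper's (a contradiction via the non-compact slope bound \eqref{eqn-3-101-1}, which requires $q_{\bar f}>0$ and is available since $F^{1,0}_{\olb,1}\neq 0$ is trivial). The genuine gap is the case $p\geq 5$, where your closing step is only a plan: you propose to prove $r_0>d$ by bounding $\rank A^{1,0}_{\olb}$ from above with ``an appropriate slope/Arakelov bound.'' No such bound is available: \autoref{prop-3-11}\,(ii), the only slope inequality the paper establishes for non-compact $C$, is restricted to $n=3$ or $4$, and the compact bound \eqref{eqn-3-101-2} does not apply. Moreover your heuristic ``$r_0\sim\alpha_0$ dominates $d\sim\alpha_0/p$'' silently presupposes $r_1\sim g$, i.e.\ that the ample part is small --- which is essentially the statement you would be using the missing bound to prove. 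So as written the $p\geq 5$ case does not close.

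The fix is already contained in what you have established. The Galois-constancy of $\rank F^{1,0}_{\olb,i}+\rank F^{0,1}_{\olb,i}$, the constancy of $\rank E^{1,0}_{\olb,i}+\rank E^{0,1}_{\olb,i}$ in $i$ (immediate from \eqref{eqn-3-35}), and the equality $\rank A^{1,0}_{\olb,i}=\rank A^{0,1}_{\olb,i}$ from \eqref{eqn-3-102} together force $\rank A^{1,0}_{\olb,i}$ to be \emph{independent of} $i$, not merely symmetric under $i\mapsto p-i$; this is the paper's key step. Taking $i=p-1$ then gives
\[
\rank A^{1,0}_{\olb,(p+1)/2}=\rank A^{1,0}_{\olb,p-1}\leq \rank E^{1,0}_{\olb,p-1},
\qquad\text{hence}\qquad
\rank F^{1,0}_{\olb,(p+1)/2}\geq \rank E^{1,0}_{\olb,(p+1)/2}-\rank E^{1,0}_{\olb,p-1},
\]
and the right-hand side is strictly positive for $p\geq 5$ by \eqref{eqn-3-35} alone (roughly $\tfrac{(p-1)\alpha_0}{2p}$ against $\tfrac{\alpha_0}{p}$, with $\alpha_0\geq 4$). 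Your route compares the $(p+1)/2$-eigenspace only with its conjugate $(p-1)/2$, where the rank defect $d$ is too small to be beaten without external numerical input; comparing with the extreme index $p-1$ makes the defect large enough that the Hurwitz--Chevalley--Weil formula finishes the proof with no slope inequality at all.
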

 \begin{proof}[Proof of \autoref{lem-3-72}]
 	Note that the validity of \eqref{eqn-3-142} is independent on the base change.
 	This allows us to take any finite base change.
 	As we have seen above, by \cite[Corollary\,4.4]{viehweg-zuo-04},
 	we may assume that the unitary local subsystem $\mathbb{V}_{B}^u\subseteq \mathbb{V}_{B} \otimes \mathbb{C}$
 	is trivial after a suitable finite base change, i.e., $\mathbb{V}_{B}^u=\mathbb{V}_{B}^{tr}$.
 	Combining this with \autoref{lem-3-12}, we obtain
 	$$\rank F^{1,0}_{\olb,i}+\rank F^{0,1}_{\olb,i}=\rank F^{1,0}_{\olb,j}+\rank F^{0,1}_{\olb,j},
 	\qquad \forall~1\leq i\leq j \leq p-1.$$
 	By \eqref{eqn-3-35}, one checks easily that
 	$$\rank E^{1,0}_{\olb,i}+\rank E^{0,1}_{\olb,i}=\rank E^{1,0}_{\olb,j}+\rank E^{0,1}_{\olb,j},
 	\qquad \forall~1\leq i\leq j \leq p-1.$$
 	Combining these with \eqref{eqn-3-102}, we obtain
 	\begin{equation}\label{eqn-3-175}
 	\rank A^{1,0}_{\olb,i}=\rank A^{1,0}_{\olb,j},\qquad \forall~1\leq i\leq j\leq p-1.
 	\end{equation}
 	Hence
 	$$\begin{aligned}
 	\rank F^{1,0}_{\olb,i}&\,=\rank E^{1,0}_{\olb,i}-\rank A^{1,0}_{\olb,i}&&\\
 	&\,=\rank E^{1,0}_{\olb,i}-\rank A^{1,0}_{\olb,p-1}\hspace{-2mm}
 	&=~&\rank E^{1,0}_{\olb,i}-\left(\rank E^{1,0}_{\olb,p-1}-\rank F^{1,0}_{\olb,p-1}\right)\\
 	&&\geq~&\rank E^{1,0}_{\olb,i}-\rank E^{1,0}_{\olb,p-1}.
 	\end{aligned}$$
 	
 	If $p\geq 5$, then by taking $i=(p+1)/2$ in the above inequality and by using \eqref{eqn-3-35},
 	one proves \eqref{eqn-3-142}.
 	It remains to show \eqref{eqn-3-142} for $p=3$.

 	In the case when $p=3$, we prove \eqref{eqn-3-142} by contradiction.
 	Suppose that $\rank F^{1,0}_{\olb,2}=0$. Then by \eqref{eqn-3-175}, one obtains
 	\begin{equation}\label{eqn-3-176}
 	\rank A^{1,0}_{\olb}=2\,\rank A^{1,0}_{\olb,2}=2\,\rank E^{1,0}_{\olb,2}.
 	\end{equation}
 	On the other hand, by \eqref{eqn-3-171} together with \eqref{eqn-3-35}, one gets $F^{1,0}_{\olb,1}\neq 0$.
 	Since $\mathbb{V}_{B}^u$ is a trivial local subsystem,
 	it follows from Simpson's correspondence that $F^{1,0}_{\olb}=F^{1,0}_{\olb,1}$ is a trivial vector bundle.
 	In other words, the relative irregularity $q_{\bar f}=\rank F^{1,0}_{\olb}>0$.
 	It follows that there is a bound on $\rank A^{1,0}_{\olb}$ as in \eqref{eqn-3-101-1},
 	which contradicts \eqref{eqn-3-176} in view of \eqref{eqn-3-35}.
 %
 \end{proof}

 \begin{lemma}\label{claim-3-31}
 If $\rank F_{\ol B,i_0}^{1,0} \neq 0$ for some $i_0>p/2$,
 then after a suitable base change,
 there exists a fibration $\bar f':\,\ol S \to \ol B'$ such that
 these one-forms in $H^0\big(\ols,\Omega_{\ols}^1\big)$
 lifted from $F_{\ol B}^{1,0}$ are the pulling-back of one-forms on $\olb'$ via $\bar f'$,
 	i.e.,
 	\begin{equation}\label{eqn-3-177}
 	\bigoplus_{i=1}^{p-1} H^0\big(\ol S,\,\Omega^1_{\ol S}\big)_{i}
 	= \big(\bar f'\big)^*H^0\big(\ol B',\,\Omega^1_{\ol B'}\big).
 	\end{equation}
 In particular,
 $$g(\olb')= \sum\limits_{i=1}^{p-1} \dim H^0\big(\ol S,\,\Omega^1_{\ol S}\big)_{i}=\rank F_{\ol B}^{1,0}.$$
 \end{lemma}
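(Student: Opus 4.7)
The plan is to apply \autoref{prop-3-13} to produce a fibration covering an initial range of eigenspaces and then extend it to all non-trivial eigenspaces via Galois conjugation in $\mathbb{Q}(\xi_p)/\mathbb{Q}$.

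First I apply \autoref{prop-3-13} to the given $i_0 > p/2$: after a finite étale base change one obtains the fibration $\bar f':\,\ol S \to \ol B'$, the triviality of the flat Higgs subbundle $\bigoplus_{i=p-i_0}^{i_0}(F_{\olb}^{1,0}\oplus F_{\olb}^{0,1})_i$, and the inclusion
$$\bigoplus_{i=p-i_0}^{i_0} H^0\big(\ol S,\Omega^1_{\ol S}\big)_i \;\subseteq\; (\bar f')^* H^0\big(\ol B',\Omega^1_{\ol B'}\big).$$
Hodge symmetry $\overline{F^{1,0}_{\olb, i}} = F^{0,1}_{\olb, p-i}$ then shows that the complex conjugates of these 1-forms are also pullbacks, so $(\bar f')^*H^1(\ol B',\mathbb{C})$ contains the full Betti eigensubspaces $\mathbb{V}_{B,i}^{tr}$ for every $p-i_0 \leq i \leq i_0$.

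Second, I exploit $\mathbb{Q}$-rationality. In the non-compact case (already used at the start of Case~(I)) we may assume $\mathbb{V}_B^u = \mathbb{V}_B^{tr}$ after base change, and this trivial local subsystem is defined over $\mathbb{Q}$. Hence $W := (\bar f')^*H^1(\ol B',\mathbb{Q}) \subseteq H^1(\ol S,\mathbb{Q})$ is a $\mathbb{Q}$-subspace, and $W \otimes \mathbb{Q}(\xi_p)$ is stable under $\Gal(\mathbb{Q}(\xi_p)/\mathbb{Q}) \cong (\mathbb{Z}/p\mathbb{Z})^\times$. Since $p$ is prime, this group acts transitively on the non-trivial characters $\{1,\ldots,p-1\}$ of $G$, and therefore on the eigensubspaces $\mathbb{V}_{B,i}^{tr}$. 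Combining Galois-stability with Step~1 gives
$$W \otimes \mathbb{Q}(\xi_p) \;\supseteq\; \bigoplus_{i=1}^{p-1} \mathbb{V}_{B,i}^{tr},$$
and projecting onto the $(1,0)$-part of the Hodge filtration yields $\bigoplus_{i=1}^{p-1} H^0(\ol S, \Omega^1_{\ol S})_i \subseteq (\bar f')^*H^0(\ol B', \Omega^1_{\ol B'})$.

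Third, to upgrade this to equality and to obtain $g(\ol B') = \rank F^{1,0}_{\olb}$, I replace $\ol B'$ by the base of the Stein factorization of $\bar f'$, making $(\bar f')^*$ injective on global 1-forms. Galois transitivity extends the identity $\dim H^0(\ol S,\Omega^1_{\ol S})_i = \rank F^{1,0}_{\olb,i}$ from the range $[p-i_0, i_0]$ of \autoref{prop-3-13} to all indices $i\in\{1,\ldots,p-1\}$, so the left-hand side has total dimension $\rank F^{1,0}_{\olb}$. For the reverse inclusion, $\bar f'$ is produced via Castelnuovo--de Franchis from non-$G$-invariant 1-forms (of characters in $\{1,\ldots,p-1\}$), so no $G$-invariant 1-form is pulled back from $\ol B'$, which gives the stated equality and the dimension count. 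The principal obstacle is ensuring that a single fibration $\bar f'$ absorbs the pullbacks of every Galois-conjugate eigensubspace simultaneously; this is forced by the uniqueness of $\bar f'$ in \autoref{prop-3-13} together with the $\mathbb{Q}$-rationality of $W$, which compel the Galois orbit of the initial pulled-back forms to factor through the same $\bar f'$.
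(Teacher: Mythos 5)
Your proposal follows essentially the same route as the paper: produce the fibration $\bar f'$ (the paper itself notes that \autoref{prop-3-13} already yields it, though it redoes the Castelnuovo--de Franchis argument for this simple case), then use Hodge symmetry together with the $\Qbb$-rationality of $(\bar f')^*H^1(\ol B',\Qbb)$ and the transitivity of $\Gal(\Qbb(\xi_p)/\Qbb)$ on the nontrivial eigenspaces to propagate the pullback property from the range $[p-i_0,i_0]$ to all $i\in\{1,\dots,p-1\}$, and finally take $(1,0)$-parts. This is exactly the paper's argument, and the main chain of reasoning is sound. (Your identification of $\dim H^0(\ol S,\Omega^1_{\ol S})_i$ with $\rank F^{1,0}_{\olb,i}$ for all $i$ is automatic here, since in Case (I) the entire flat part is already trivial after base change; no Galois argument is needed for that step.)

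The one step that does not hold up as written is your justification of the reverse inclusion needed for the \emph{equality} in \eqref{eqn-3-177}. You argue that "no $G$-invariant $1$-form is pulled back from $\ol B'$" because $\bar f'$ was produced from non-invariant forms; that is not a valid deduction --- the construction of $\bar f'$ from particular forms places no a priori restriction on which other forms factor through it. The paper's argument is different: the uniqueness of $\bar f'$ forces $G$ to act on $\ol B'$ compatibly with $\bar f'$, and since $\ol S/G$ is ruled over $\ol B$ the quotient $\ol B'/G$ must be $\Pbb^1$, whence $H^0(\ol B',\Omega^1_{\ol B'})^G=0$ and the pullback lands entirely in $\bigoplus_{i=1}^{p-1}H^0(\ol S,\Omega^1_{\ol S})_i$. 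You should replace your closing sentence of Step 3 with this argument. Note that for the downstream application (\autoref{lem-3-71} only needs $g(\ol B')\geq\rank F^{1,0}_{\olb}$) the forward inclusion you did establish already suffices, so the gap affects only the precise equality claimed in the lemma.
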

 \begin{proof}[Proof of \autoref{claim-3-31}]
 The existence of the fibration $\bar f'$ can be deduced from \autoref{prop-3-13}.
 Nevertheless, we present a complete proof for this simple case here for readers's convenience.

 First note that by our assumption together with \eqref{eqn-3-172},
 it follows that
 $$\rank F_{\ol B,p-i_0}^{1,0}\geq \rank F_{\ol B,i_0}^{1,0} >0, \qquad \text{~for some~} i_0>p/2.$$
 In other words, one obtains that
 the spaces of one-forms in $H^0\big(\ols,\Omega_{\ols}^1\big)$ lifted from
 $F_{\ol B,i_0}^{1,0}$ and $F_{\ol B,p-i_0}^{1,0}$ are both non-zero, i.e.,
 $$H^0\big(\ol S,\,\Omega^1_{\ol S}\big)_{i_0}\neq 0,\qquad H^0\big(\ol S,\,\Omega^1_{\ol S}\big)_{p-i_0}\neq 0.$$
 Taking any two non-zero one-forms $\omega_1\in H^0\big(\ol S,\,\Omega^1_{\ol S}\big)_{i_0}$
 and $\omega_2\in H^0\big(\ol S,\,\Omega^1_{\ol S}\big)_{p-i_0}$,
 the wedge product gives a $G$-invariant two-form $\omega_1\wedge \omega_2\in H^0\big(\ol S,\,\Omega^2_{\ol S}\big)^G$,
 and hence descends to a two-form on the ruled surface $\ols/G$.
 As any ruled surface admits no non-vanishing two-form,
 it follows that $\omega_1\wedge \omega_2=0$.
 Hence by Castelnuovo-de Franchis lemma (cf. \cite[Theorem\,IV-5.1]{bhpv-04}), there exists a fibration $\bar f':\,\ol S \to \ol B'$ such that
 $$H^0\big(\ol S,\,\Omega^1_{\ol S}\big)_{i_0} \oplus H^0\big(\ol S,\,\Omega^1_{\ol S}\big)_{p-i_0}
 \subseteq  \big(\bar f'\big)^*H^0\big(\ol B',\,\Omega^1_{\ol B'}\big).$$
 Note also that  the  pulling-back map
 $$(\bar f')^*: H^1(\olb',\,\mathbb Q)\lra H^1(\ols,\,\mathbb Q)$$
 is defined over $\mathbb Q$ and the Hodge symmetry under the complex conjucation gives
 $$\overline{H^0\big(\ol S,\,\Omega^1_{\ol S}\big)_{p-i_0}}=H^1\big(\ol S,\,\mathcal{O}_{\ols}\big)_{i_0}.$$
 Hence
 $$H^0\big(\ol S,\,\Omega^1_{\ol S}\big)_{i_0} \oplus H^1\big(\ol S,\,\mathcal{O}_{\ols}\big)_{i_0}
 = \left(H^1(\ols,\,\mathbb Q) \otimes \mathbb{C}\right)_{i_0}
 \subseteq  \big(\bar f'\big)^*\left(H^1(\olb',\,\mathbb Q)\otimes \mathbb{C}\right).$$
 Note that the arithmetic Galois group ${\rm Gal}(\ol{\mathbb{Q}}/\mathbb{Q})$ acts naturally on
 the eigenspace decomposition
 $$\left(H^1(\ols,\,\mathbb Q) \otimes \mathbb{C}\right)
 =\left(H^1(\ols,\,\mathbb Q) \otimes \mathbb{C}\right)_{0} \bigoplus
 \left(\bigoplus_{i=1}^{p-1}\left(H^1(\ols,\,\mathbb Q) \otimes \mathbb{C}\right)_{i}\right),$$
 and the eigen-subspaces $\left(H^1(\ols,\,\mathbb Q) \otimes \mathbb{C}\right)_{i}$'s for $1\leq i\leq p-1$
 are permuted by this action.
 In fact, the arithmetic Galois subgroup ${\rm Gal}\big(\mathbb{Q}(\xi_p)/\mathbb{Q}\big)$
 already acts transitively on these eigen-subspaces with indices $1\leq i\leq p-1$,
 where $\xi_p$ is a primitive $p$-th root of the unit.
 Therefore,
 $$\bigoplus_{i=1}^{p-1}\left(H^1(\ols,\,\mathbb Q) \otimes \mathbb{C}\right)_{i}
 \subseteq \big(\bar f'\big)^*\left(H^1(\olb',\,\mathbb Q)\otimes \mathbb{C}\right),$$
 which is in fact an equality,
 because $G$ induces an action on $\olb'$ with $\olb'/G\cong \bbp^1$
 due to the fact that $\ols/G$ is a (may be singular) ruled surface.
 Thus \eqref{eqn-3-177} is established by taking the $(1,0)$-part.
 \end{proof}

 %

 \vspace{2mm}
 {\bf Case (II):
 	$C$ is compact.~}
 This situation is much more complicated than the non-compact case.
 We divide the detailed proof into three steps.

 {\vspace{1mm} \it Step I.~}
 First we show that $i_m>p/2$, where
 $$i_m:=\max\big\{i~|~F^{1,0}_{\olb,i}\neq 0\big\},$$
 and hence by \autoref{prop-3-13},
 after a suitable base change, there is a unique fibration $\bar f':\,\ols\to\olb'$
 such that
 \begin{equation}\label{eqn-3-178}
 	\bigoplus_{i=p-i_m}^{i_m} H^0\big(\ol S,\,\Omega^1_{\ol S}\big)_{i}
 	\subseteq \big(\bar f'\big)^*H^0\big(\ol B',\,\Omega^1_{\ol B'}\big).
 \end{equation}
 	
 Indeed, if $i_m\leq p/2$, i.e., $F^{1,0}_{\olb,i}=0$ for any $i>p/2$.
 Then $A^{1,0}_{\ol B,i}=E^{1,0}_{\ol B,i}$ for all $i>p/2$.
 Combining this with \eqref{eqn-3-102}, one obtains
 \begin{equation}\label{eqn-3-34}
 \rank A^{1,0}_{\ol B}=2\sum_{i=(p+1)/2}^{p-1}\rank E^{1,0}_{\ol B,i}.
 \end{equation}
 By \eqref{eqn-3-35}, one  verifies that this contradicts the upper bound of $\rank A^{1,0}_{\ol B}$ given in in \eqref{eqn-3-101-2}.
To illustrate the idea, we give the proof for the case when $p\,|\,\alpha_0$.
 Let $\alpha_0=kp$ with $k\geq 1$. By \eqref{eqn-3-34} and \eqref{eqn-3-35}, one obtains
 $$
 \rank A^{1,0}_{\ol B}
 =2\sum_{i=(p+1)/2}^{p-1}\big(k(p-i)-1\big)
 =\frac{(p-1)\big(k(p+1)-4\big)}{4}.
 $$
 Since $g\geq 8$, it follows that $k\geq 4$ if $p=3$; and $k\geq 2$ if $p=5$.
 Hence this gives a contradiction to the bound of $\rank A^{1,0}_{\ol B}$ in \eqref{eqn-3-101-2}.

 {\vspace{1mm} \it Step II.~}
 Since the fibration $\bar f'$ obtained in Step I is unique,
 the group $G\cong \mathbb{Z}/p\mathbb{Z}$ induces an action on $\olb'$.
 In this step, we want to prove that a ``large part" of $1$-forms from $F^{1,0}_{\olb}$
 are pulled back of $1$-forms on $\olb'$ via $\bar f'$, i.e., we give a lower bound on $i_m$.
 More precisely, we show that

 \begin{lemma}\label{lem-3-28}
 	Let $\beta$ be the number of fixed points of $G$ on $\olb'$; equivalently, $\beta$ is the number of branch points of the induced cover $\pi:\,\olb' \to \olb'/G$. Then $\beta\geq 4$, and the following inequalities hold:
 	\begin{eqnarray}
 	i_m &\geq&\left\{\begin{aligned}
 	&p-1,&\quad&\text{if~}\beta>p,\\
 	&p-1-\left[\frac{p}{\beta-1}\right], &&\text{if~}\beta\leq p,
 	\end{aligned}\right.\label{eqn-3-126}\\[1mm]
 	2\,\rank E^{1,0}_{\olb,i_m} &\geq& \frac{2g}{p-1}+4-\beta,\label{eqn-3-127-2}\\
 	\frac{2g}{p-1} &\geq& 2\beta-3.\label{eqn-3-127-1}
 	\end{eqnarray}
 \end{lemma}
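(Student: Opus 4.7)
The plan is to exploit the $G$-equivariance of the irregular fibration $\bar f':\ol S\to\ol B'$ produced in Step~I, compare the eigenspace decomposition of $H^0(\ol B',\Omega^1_{\ol B'})$ with the constraints imposed by \eqref{eqn-3-178}, and analyze the restriction of $\bar f'$ to a general fiber of $\bar f$. I would first pin down the geometry of the quotient $\ol B'/G$. Because $\bar f'$ is unique by \autoref{prop-3-13}, the $G$-action on $\ol S$ stabilizes $\bar f'$, so it descends to a fibration on the (possibly singular) ruled surface $\ol S/G\to\ol B$; since this descended pencil is distinct from $\bar f$, it must be the other ruling, forcing $\ol B'/G\cong\bbp^1$. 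Applying Riemann-Hurwitz to the degree-$p$ cyclic cover $\pi:\ol B'\to\bbp^1$ with $\beta$ branch points and local monodromy $(r_1,\ldots,r_\beta)$ yields $g(\ol B')=(p-1)(\beta-2)/2$, and the Hurwitz-Chevalley-Weil formula (cf.\ \cite[Proposition\,5.9]{moonen-oort-13}) gives
\begin{equation*}
\dim H^0(\ol B',\Omega^1_{\ol B'})_i = -1+\sum_{j=1}^{\beta}\bigl\{-i\,r_j/p\bigr\},\qquad 1\leq i\leq p-1.
\end{equation*}

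Since $\bar f'$ is $G$-equivariant, pull-back preserves eigenspaces, so combined with \eqref{eqn-3-183} the inclusion \eqref{eqn-3-178} gives, for every $p-i_m\leq i\leq i_m$,
\begin{equation*}
\rank F^{1,0}_{\ol B,i}=\dim H^0(\ol S,\Omega^1_{\ol S})_i\leq\dim H^0(\ol B',\Omega^1_{\ol B'})_i.
\end{equation*}
For \eqref{eqn-3-126} I would exploit the positivity $\rank F^{1,0}_{\ol B,i_m}>0$ together with the elementary upper bound $\bigl\{-i_m r_j/p\bigr\}\leq(p-1)/p$: if $i_m$ were strictly smaller than $p-1-[p/(\beta-1)]$, then $\sum_{j=1}^{\beta}\bigl\{-i_m r_j/p\bigr\}$ could not reach $1$, contradicting $\dim H^0(\ol B',\Omega^1_{\ol B'})_{i_m}\geq 1$. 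The bound \eqref{eqn-3-127-2} would then follow by combining the inequality at $i=i_m$ with the symmetry $\rank A^{1,0}_{\ol B,i}=\rank A^{1,0}_{\ol B,p-i}$ from \autoref{lem-3-42}, the rank formulas \eqref{eqn-3-35}, and the Shimura equality \eqref{eqn-3-191}.

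Finally, for $\beta\geq 4$ and the inequality \eqref{eqn-3-127-1}, I would analyze the restriction $\bar f'|_{\ol F}:\ol F\to\ol B'$ for a general fiber $\ol F$ of $\bar f$. This is a $G$-equivariant morphism of some degree $d'$, fitting into a commutative square with the cyclic quotients $\pi_F:\ol F\to\bbp^1$ and $\pi:\ol B'\to\bbp^1$ and descending to a degree-$d'$ cover $\bar g:\bbp^1\to\bbp^1$ of the quotients. $G$-equivariance forces $\bar g$ to send the $\alpha$ branch points of $\pi_F$ into the $\beta$ branch points of $\pi$; applying Riemann-Hurwitz to $\bar g$ and to $\bar f'|_{\ol F}$ while bookkeeping the totally-ramified contributions will yield $\alpha\geq 2\beta-1$, which is \eqref{eqn-3-127-1} in view of $2g/(p-1)=\alpha-2$, while $\beta\leq 3$ forces $g(\ol B')\leq(p-1)/2$, too small to accommodate the eigen-rank inequality just established, hence giving $\beta\geq 4$. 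The main obstacle will be the combinatorial analysis of $\bar g$: the distribution of the branch points of $\pi_F$ over those of $\pi$ must be balanced against the local ramification types of $\bar g$ and the global Riemann-Hurwitz count, and extracting the sharp bounds \eqref{eqn-3-126} and \eqref{eqn-3-127-2} simultaneously requires careful tracking of how the monodromy data $(r_j)$ interact with the admissible values of $i_m$.
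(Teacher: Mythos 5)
Your overall architecture matches the paper's: $G$-equivariance of $\bar f'$, the quotient $\ol B'/G\cong\bbp^1$, the Hurwitz--Chevalley--Weil formula for the eigenspaces of $H^0(\ol B',\Omega^1_{\ol B'})$, and the restriction of $\bar f'$ to a general fiber $\ol F$ with a Riemann--Hurwitz count on the quotient $\bbp^1\to\bbp^1$ to get $\alpha>2(\beta-1)$, i.e.\ \eqref{eqn-3-127-1}. For \eqref{eqn-3-127-2} and $\beta\geq 4$ your sketch is in the right spirit but omits the crux: since $\{x\}+\{-x\}=1$ for $x\notin\Zbb$, the two eigenspace dimensions at $i_m$ and $p-i_m$ sum to exactly $\beta-2$, which combined with $\dim H^0(\ol B',\Omega^1_{\ol B'})_{i}=\rank F^{1,0}_{\olb,i}$ gives $\beta-2=\rank F^{1,0}_{\olb,i_m}+\rank F^{1,0}_{\olb,p-i_m}\geq 2$ (hence $\beta\geq4$) and, via $\rank A^{1,0}_{\olb,i_m}=\rank A^{1,0}_{\olb,p-i_m}\leq\rank E^{1,0}_{\olb,i_m}-1$, yields \eqref{eqn-3-127-2}.

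The genuine gap is in \eqref{eqn-3-126}. Your proposed argument --- that the ``elementary upper bound $\{-i_m r_j/p\}\leq(p-1)/p$'' prevents $\sum_j\{-i_m r_j/p\}$ from reaching the required value when $i_m$ is small --- cannot work. That bound gives $\sum_j\{-i_m r_j/p\}\leq\beta(p-1)/p$, which is far above the threshold $2$ needed for $\dim H^0(\ol B',\Omega^1_{\ol B'})_{i_m}\geq1$ as soon as $\beta\geq3$; moreover there is no monotone relation between the size of a single index $i$ and $\sum_j\{-ir_j/p\}$, so no pointwise estimate at $i=i_m$ alone can force $i_m$ to be large. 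What the paper actually uses is the \emph{collective} constraint: if $i_m<j_0$ then $\dim H^0(\ol B',\Omega^1_{\ol B'})_{i}=0$ for \emph{every} $i$ with $j_0\leq i\leq p-1$, which translates into $H(k):=\sum_j\{kr_j/p\}=1$ for all $1\leq k\leq p-j_0$. Ruling this out is the content of the separate combinatorial \autoref{lem-3-101} (whose proof occupies a full page of case analysis on the $r_j$), which gives $\theta\leq[p/(\beta-1)]$ and hence the stated lower bound on $i_m$. Without that lemma, or an equivalent quantitative statement about simultaneous smallness of the sums $H(k)$ over a range of $k$, your proof of \eqref{eqn-3-126} does not go through.
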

 \begin{proof}[Proof of \autoref{lem-3-28}]
 First, similar to \autoref{claim-3-31},
 one shows that $\olb'/G\cong \mathbb{P}^1$ and
 	\begin{equation*}
 	\bigoplus_{i=1}^{p-1} H^0\big(\ol S,\,\Omega^1_{\ol S}\big)_{i}
 	= \big(\bar f'\big)^*H^0\big(\ol B',\,\Omega^1_{\ol B'}\big).
 	\end{equation*}
 Moreover, the pulling-back map
 $\big(\bar f'\big)^*:\,H^0\big(\ol B',\,\Omega^1_{\ol B'}\big) \to H^0\big(\ol S,\,\Omega^1_{\ol S}\big)$
 is equivariant with respect to the induced actions of $G$ on both sides,
 i.e.,
 \begin{equation}\label{eqn-3-181}
 	H^0\big(\ol S,\,\Omega^1_{\ol S}\big)_{i}
 	= \big(\bar f'\big)^*H^0\big(\ol B',\,\Omega^1_{\ol B'}\big)_{i},
 	\qquad \forall~1\leq i\leq p-1.
 \end{equation}
 According to the definition of $i_m$, it follows that
 \begin{equation}\label{eqn-3-180}
 \dim H^0\big(\ol S,\,\Omega^1_{\ol S}\big)_{i_m}=\rank F^{1,0}_{\olb,i_m}>0;\qquad
 \dim H^0\big(\ol S,\,\Omega^1_{\ol S}\big)_{i}=\rank F^{1,0}_{\olb,i}=0,~\,~\forall~i>i_m.
 \end{equation}
 Hence
 \begin{equation}\label{eqn-3-179}
 \dim H^0\big(\olb',\,\Omega^1_{\olb'}\big)_{i_m}>0;\qquad
 \dim H^0\big(\olb',\,\Omega^1_{\olb'}\big)_{i}=0,\quad\forall~i>i_m.
 \end{equation}

 \vspace{0.5mm} (i). We first prove \eqref{eqn-3-126} by contradiction.
 Assume that
 $$i_m < j_0:=\left\{\begin{aligned}
 &p-1,&\quad&\text{if~}\beta>p,\\
 &p-1-\left[\frac{p}{\beta-1}\right], &&\text{if~}\beta\leq p,
 \end{aligned}\right.$$
 Then by \eqref{eqn-3-179}, one has
 \begin{equation}\label{eqn-3-24}
 H^0\big(\ol B',\,\Omega^1_{\ol B'}\big)_{i}=0,
 \qquad \text{for any~} p-1\geq i\geq j_0.
 \end{equation}
 Let $\{x_1,\cdots,x_\beta\}\subseteq \bbp^1$ be the branch locus of the induced quotient map
 $\pi:\,\olb' \to \olb'/G\cong \bbp^1$,
 and assume that $\pi$ is defined by
 $$\mathcal L_{\pi}^{\otimes p} \equiv \mathcal O_{\bbp^1}\Big(\sum_{j=1}^{\beta} r_jx_j\Big),
 \qquad\text{where~}1\leq r_1\leq \cdots \leq r_{\beta}\leq p-1.$$
 Then by a formula of Hurwitz-Chevalley-Weil (cf. \cite[Proposition\,5.9]{moonen-oort-13}),
 one has
 \begin{equation}\label{eqn-3-66}
 \dim H^0\big(\ol B',\,\Omega^1_{\ol B'}\big)_{i}=-1+\sum_{j=1}^{\beta}
 \left\{\frac{-ir_j}{p}\right\}.
 \end{equation}
 By \eqref{eqn-3-24}, we get
 \begin{equation*}
 H(k):=\sum_{j=1}^{\beta} \left\{\frac{kr_j}{p}\right\}=1,
 \qquad \text{for any~} 1\leq k\leq p-j_0.
 \end{equation*}
 This contradicts \autoref{lem-3-101} below.

 \vspace{0.5mm} (ii).
 We next prove $\beta\geq 4$ and \eqref{eqn-3-127-2}.
 By \autoref{prop-3-13} and \eqref{eqn-3-181}, it follows that
 $$\dim H^0\big(\olb',\,\Omega^1_{\olb'}\big)_{i}=\dim H^0\big(\ol S,\,\Omega^1_{\ol S}\big)_{i}=\rank F^{1,0}_{\olb,i},
 \qquad \forall~p-i_m\leq i\leq i_m.$$
 According to \eqref{eqn-3-102} together with \eqref{eqn-3-180}, one obtains
 \begin{equation}\label{eqn-3-182}
 \rank A^{1,0}_{\olb,p-i_m}=\rank A^{0,1}_{\olb,i_m}=\rank A^{1,0}_{\olb,i_m}\leq \rank E^{1,0}_{\olb,i_m}-1.
 \end{equation}
 Combining these with \eqref{eqn-3-66} and \eqref{eqn-3-35}, we obtain
 \begin{equation}\label{eqn-3-205}
 	\beta-2=\dim H^0\big(\olb',\,\Omega^1_{\olb'}\big)_{i_m}+\dim H^0\big(\olb',\,\Omega^1_{\olb'}\big)_{p-i_m}
 	=\rank F^{1,0}_{\olb,i_m}+\rank F^{1,0}_{\olb,p-i_m}.
 \end{equation}
 By the definition of $i_m$ with \eqref{eqn-3-35}, one has $\rank F^{1,0}_{\olb,p-i_m}\geq \rank F^{1,0}_{\olb,i_m}\geq 1$.
 From this with \eqref{eqn-3-205}, it follows that $\beta\geq 4$.
 Moreover,
 $$\begin{aligned}
 \rank F^{1,0}_{\olb,i_m}+\rank F^{1,0}_{\olb,p-i_m}
 =\,&\rank E^{1,0}_{\olb,i_m}+\rank E^{1,0}_{\olb,p-i_m}
 -2\,\rank A^{1,0}_{\olb,i_m}\\
 \geq\,& \rank E^{1,0}_{\olb,i_m}+\rank E^{1,0}_{\olb,p-i_m}
 -2\left(\rank E^{1,0}_{\olb,i_m}-1\right)\\
 =\,&\frac{2g}{p-1}+2-2\,\rank E^{1,0}_{\olb,i_m}.
 \end{aligned}$$
 This together with \eqref{eqn-3-205} proves \eqref{eqn-3-127-2}.

 \vspace{0.5mm} (iii).
 Finally, we prove \eqref{eqn-3-127-1}.
 Let $\ol F$ be a general fiber of $\bar f$, and $\ol\Gamma=\ol F/G\cong \bbp^1$ the quotient.
 Then one has the following commutative diagram, where $\bar\varphi':\,\ols/G \to \olb'/G$ is the induced fibration.
 $$\xymatrix{
 	\ol F \ar[rrr]^-{\bar f'|_{\ol F}} \ar[d]_-{\Pi|_{\ol F}}&&&\ol B' \ar[d]^-{\pi'}  \\
 	\ol\Gamma\ar[rrr]^-{\bar\varphi'|_{\ol \Gamma}}\cong \bbp^1 &&& \ol B'/G\cong \bbp^1
 }$$
 By assumption, $\Pi|_{\ol F}$ (resp. $\pi'$) is branched over $\alpha:=\frac{2g}{p-1}+2$ (resp. $\beta$) points.

 If $\deg\big(\bar f'|_{\ol F}\big)\geq p$,
 then by the Rieman-Hurwitz formula for the map $\bar f'|_{\ol F}$, one obtains
 $$(p-1)\alpha-2p=2g-2\geq \deg\big(\bar f'|_{\ol F}\big)\cdot \big(2g(\olb')-2\big)\geq p\big((p-1)\beta-2p\big)
 \geq 2(p-1)\beta-4p+2.$$
 Hence $\alpha\geq 2(\beta-1)$;
 and if the equality holds, then $p=3$, $\alpha=6$ and $g=4$, which contradicts the assumption that $g\geq 8$.

 If $\deg\big(\bar f'|_{\ol F}\big)< p$,
 then the inverse of the branch points of $\pi'$ in $\ol\Gamma\cong \bbp^1$ is contained in that of $\Pi|_{\ol F}$.
 Let $R_0$ be the ramification locus of $\bar\varphi'|_{\ol \Gamma}$.
 Then by the Rieman-Hurwitz formula, one has
 \begin{equation}\label{eqn-3-152}
 \deg(\bar\varphi'|_{\ol \Gamma})\cdot \beta-\alpha \leq
 \deg(R_0)=2\deg(\bar\varphi'|_{\ol \Gamma})-2.
 \end{equation}
 Since $\bar f$ is non-isotrivial,
 one has $\deg\big(\bar\varphi'|_{\ol \Gamma}\big)=\deg\big(\bar f'|_{\ol F}\big)\geq 2$.
 Hence $\alpha\geq 2(\beta-1)$.
 Moreover, if $\alpha= 2(\beta-1)$, then
 $\bar\varphi'|_{\ol \Gamma}$ is a double cover branched exactly over two of the branch points of $\pi'$.
 It follows that the branch loci of $\Pi|_{\ol F}$ are invariant when $\ol F$ runs in the family $\bar f$,
 and hence any two smooth fibers of $\bar f$ are isomorphic to each other.
 This contradicts the non-isotriviality of $\bar f$.
 Thus $\alpha>2(\beta-1)$.
 This proves \eqref{eqn-3-127-1}.
 \end{proof}

 {\vspace{1mm} \it Step III.~}
 In the last step, we derive a contradiction and hence complete
 the proof of \autoref{thm-curve-prime} for the case when $C$ is compact.

 First, from \autoref{lem-3-71} and \autoref{prop-3-13}, it follows that $p\geq 5$ and $\beta\leq p$;
 in fact, if $p=3$ or $\beta>p$,
 then $i_m=p-1$ by Step I and \eqref{eqn-3-126}, and hence $g<8$ by \autoref{lem-3-71} and \autoref{prop-3-13}.

 According to \eqref{eqn-3-126}, \eqref{eqn-3-127-2},\eqref{eqn-3-9} and \eqref{eqn-3-35}, one obtains that
 \begin{equation}\label{eqn-3-68}
 \left\{\begin{aligned}
 \alpha_0+4-\beta&~\leq  \frac{2\big([\frac{p}{\beta-1}]+1\big)\alpha_0}{p},&\quad&\text{if~}p\,|\,\alpha_0;\\
 \alpha_0+3-\beta&~\leq 2\Big[\frac{\big([\frac{p}{\beta-1}]+1\big)\alpha_0}{p}\Big], &&\text{if~}p{\not|}~\alpha_0.
 \end{aligned}\right.
 \end{equation}
 Note that $[x]\leq x$ for any $x\in \mathbb Q$.
 Combining this with \eqref{eqn-3-9} and \eqref{eqn-3-127-1}, we get
 $$\left\{\begin{aligned}
 \Big(1-\frac{4}{p}\Big)\beta&~\leq 1+\frac{2}{\beta-1}-\frac{2}{p},&\quad&\text{if~}p\,|\,\alpha_0;\\
 \Big(1-\frac{4}{p}\Big)(\beta-1)&~\leq 2, &&\text{if~}p{\not|}~\alpha_0.
 \end{aligned}\right.$$
 Note also that $\beta\geq 4$ by \autoref{lem-3-28}. 
 Hence the above inequalities give a contradiction if $p>11$.
 If $p=11$ or $7$, one verifies case-by-case that there is also a contradiction
 by \eqref{eqn-3-68}, \eqref{eqn-3-127-1}, \eqref{eqn-3-9} and \eqref{eqn-3-35}.

 Finally, we consider the case when $p=5$.
 Again by \eqref{eqn-3-68}, \eqref{eqn-3-127-1}, \eqref{eqn-3-9} together with \eqref{eqn-3-35},
 one obtains that $g=14$, $\alpha_0=8$, $\beta=5$ and $i_m=3$.
 In particular, \eqref{eqn-3-127-2} is an equality,
 which implies that \eqref{eqn-3-182} is also an equality, i.e.,
 $$\rank A^{1,0}_{\olb,2}=\rank A^{1,0}_{\olb,3}=\rank E^{1,0}_{\olb,3}-1=2, \quad \text{by \eqref{eqn-3-35}}.$$
 Note also that
 $$\rank A^{1,0}_{\olb,1}=\rank A^{1,0}_{\olb,4}=\rank E^{1,0}_{\olb,4}=1.$$
 Hence $\rank A^{1,0}_{\olb}=\sum\limits_{i=1}^{4} \rank A^{1,0}_{\olb,i}=6$,
 which is a contradiction to the bound \eqref{eqn-3-101-2}.
 This completes the proof.
 \end{proof}

 We end this subsection by proving the following technical lemma which has been used in the proof of \autoref{lem-3-28}.
 \begin{lemma}\label{lem-3-101}
 	Let $p\geq 3$ be any prime number, and $1\leq r_1\leq \cdots \leq r_{\beta}\leq p-1$ be a sequence of integers such that $p~\big|~\Big(\sum\limits_{j=1}^{\beta} r_j\Big)$.
 	Let $1\leq \theta \leq p-1$ be an integer such that
 	\begin{equation}\label{eqn-3-204}
 	H(k)=1,
 	\quad\forall~ 1\leq k\leq \theta;
 	\qquad\text{~where~}H(k):=\sum_{j=1}^{\beta} \left(\frac{kr_j}{p}-\left[\frac{kr_j}{p}\right]\right)=\sum_{j=1}^{\beta}\left\{\frac{kr_j}{p}\right\}.
 	\end{equation}
 	Then $\beta\leq p$ and $\theta\leq \left[\frac{p}{\beta-1}\right]$.
 \end{lemma}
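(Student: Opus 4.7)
The first claim $\beta \leq p$ is immediate: evaluating the hypothesis at $k = 1$ and using $\{r_j/p\} = r_j/p$ for $1 \leq r_j \leq p-1$, we get $\sum_j r_j = p$; combined with $r_j \geq 1$ this forces $\beta \leq p$. The case $\beta = 2$ of the second claim is trivial since $[p/(\beta-1)] = p \geq \theta$, so we henceforth assume $\beta \geq 3$.

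The first structural observation is that $H(k)$ takes only \emph{positive integer} values on $[1, p-1]$: indeed $p H(k) = \sum_j (k r_j \bmod p)$ is a positive integer divisible by $p$ (since $k \sum r_j = k p$), so $H(k) \in \{1, 2, \ldots, \beta - 1\}$. The hypothesis $H(k) = 1$ is therefore equivalent to the residues $(k r_j \bmod p)_j$ forming a partition of $p$ into $\beta$ positive parts; in particular, every part is $\leq p - (\beta - 1)$. The pointwise recursion $H(k+1) - H(k) = 1 - w_k$, where $w_k := \#\{j : \lfloor (k+1) r_j / p \rfloor > \lfloor k r_j / p \rfloor\}$, combined with $H \geq 1$, yields $w_k = 1$ whenever $H(k) = H(k+1) = 1$; summing over $k = 1, \ldots, \theta - 1$ gives $\sum_{j=1}^\beta \lfloor \theta r_j / p \rfloor = \theta - 1$.

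Next, the Hodge-type symmetry $\{-x\} = 1 - \{x\}$ for $p \nmid x$ produces $H(k) + H(p - k) = \beta$ for $1 \leq k \leq p - 1$. For $\beta \geq 3$, $H(k) = 1$ forces $H(p - k) = \beta - 1 \neq 1$ for all $k \in [1, \theta]$, so the intervals $[1, \theta]$ and $[p - \theta, p - 1]$ must be disjoint, giving $\theta < p/2$. This already settles $\beta = 3$, where $[p/(\beta - 1)] = (p-1)/2$.

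For $\beta \geq 4$ the plan is to argue by contradiction: suppose $\theta = [p/(\beta - 1)] + 1$, so $\theta(\beta - 1) \geq p + 1$ (using $(\beta - 1) \nmid p$ for $p$ prime and $3 \leq \beta \leq p$). The bound $r_j \leq p - (\beta - 1)$ then gives $\lfloor \theta r_j / p \rfloor \leq \theta - \lceil \theta(\beta-1)/p \rceil \leq \theta - 2$ for each $j$, so the summation constraint $\sum_j \lfloor \theta r_j/p \rfloor = \theta - 1$ forces at least two indices $j$ to have $\lfloor \theta r_j / p \rfloor \geq 1$. Applying this bookkeeping at every $k \in [1, \theta]$, together with the one-wrap-per-step rule $w_k = 1$, progressively determines the wrap schedule $(j_1, j_2, \ldots, j_{\theta - 1})$ and constrains $(r_1, \ldots, r_\beta)$ until the constraints are compatible only with the extremal partition $(1, 1, \ldots, 1, p - \beta + 1)$. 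For this extremal partition one checks directly that $H(k) = 1$ if and only if $k(\beta - 1) < p$, giving maximal $\theta = [(p-1)/(\beta - 1)] = [p/(\beta - 1)]$, in contradiction with the assumption. The main technical obstacle lies in this final rigidity step: carefully tracking the wrap indices step by step and exploiting the partition constraint at each level to eliminate all non-extremal possibilities.
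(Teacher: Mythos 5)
Your preliminary reductions are all correct and well organized: the deduction $\sum_j r_j = p$ and hence $\beta\leq p$ from $H(1)=1$; the integrality of $H(k)$ (so that $H(k)=1$ means the residues $kr_j \bmod p$ partition $p$); the identity $\sum_j\big[\tfrac{\theta r_j}{p}\big]=\theta-1$ obtained by summing the one-step recursion; and the symmetry $H(k)+H(p-k)=\beta$, which disposes of $\beta=3$ rather more cleanly than the paper does (the paper reaches the same contradiction $2=\beta$ via $H(\delta)+H(\delta+1)$). The observation that $r_j\leq p-(\beta-1)$ forces at least two indices with $\big[\tfrac{\theta r_j}{p}\big]\geq 1$ under the assumption $\theta=\big[\tfrac{p}{\beta-1}\big]+1$ is also correct.

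However, for $\beta\geq 4$ your argument stops exactly where the lemma becomes nontrivial. The claim that the wrap schedule and the partition constraints ``progressively determine'' the sequence and are ``compatible only with the extremal partition $(1,\dots,1,p-\beta+1)$'' is not proved; you yourself flag it as the main technical obstacle. This rigidity step is the entire content of the lemma, and there is a concrete reason to doubt it can be done by the step-by-step bookkeeping you describe: the paper's own proof does \emph{not} establish rigidity down to the extremal partition. It only manages to show $r_j=1$ for $j\leq\beta-2$ (Step 1, itself a delicate argument involving the auxiliary quantities $t_1=[p/r_{\beta-1}]$, $t_2$, $t_2'$ and the existence of an intermediate $t_0$ with $\{t_0r_{\beta-1}/p\}>\tfrac12$), leaving $r_{\beta-1}$ and $r_\beta$ undetermined; it then needs two further quantitative inequalities, $\epsilon+t_1\geq\delta+1$ and $p\geq t_1(t_1+1)(\beta-2)+2t_1+1$, and even after that must finish with a case-by-case check for $4\leq\beta\leq 6$. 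So the endpoint you are aiming for is stronger than what the paper proves, and the route to it is unspecified. As it stands the proposal is a correct reduction plus a plan, not a proof, of the case $\beta\geq 4$.
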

 \begin{proof} The case when $\theta=1$ is clear, and we may assume that $\theta\geq2$.

 	Taking $k=1$ in \eqref{eqn-3-204}, we get immediately that $\beta \leq \sum\limits_{j=1}^{\beta}r_j=p$;
    and from the equality $H(2)=1$, we obtain that $r_{\beta}>\frac p2$ and $r_j< \frac p2$ for $1\leq j \leq \beta-1$.
 	In the following we deduce a contradiction under the assumption  $\theta>[\frac p{\beta-1}]$.
 	
 	\vspace{1.5mm}
 	(Step 1) 	First of all, we show that
 	\begin{equation}\label{eqn-3-210}
 	r_j=1,\qquad \forall~1\leq j \leq \beta-2.
 	\end{equation}
    We set
 	$$\delta=\Big[\frac p{\beta-1}\Big], ~t_1=\Big[\frac p{r_{\beta-1}}\Big],~
 	  t_2=\Big[\frac p{r_{\beta-2}}\Big], \text{~and~}~t_2'=\Big[\frac p{2r_{\beta-2}}\Big].$$
 	By assumption, $\delta+1\leq \theta$.
 	It is clear that $2t_2' \leq t_2 \leq 2t_2'+1$, and $t_1\leq t_2$.

 	Moreover,  $t_1 \leq \delta$; otherwise, $\frac{(\delta+1)r_j}p<1$ for any $1\leq j \leq \beta-1$, and it implies
 	$$1=H(\delta+1)\geq \sum\limits_{j=1}^{\beta-1} \frac{(\delta+1)r_j}p \geq \frac{\beta-1}p \cdot (\delta+1)> 1,$$ which is a contradiction.
 	Thus $t_1 \leq \min \{\delta,t_2\}$, from which together with \eqref{eqn-3-204} it follows that
 	$$1=H(t_1)> \frac{t_1 r_{\beta-2}}p+\frac{t_1 r_{\beta-1}}p >\frac{t_1 r_{\beta-2}}p+ \frac 12,
 	\quad\Longrightarrow\quad \frac{p}{2r_{\beta-2}}>t_1.$$
 	Hence $t_1\leq  t_2'$.

 	We claim also that there exists some $t_0$ with $t_2'< t_0<t_2+1$ such that $\big\{\frac{t_0 r_{\beta-1}}p \big\}> \frac12$.
 	In fact, if such $t_0$ does not exist, then by induction one has
 	$\big[\frac{t r_{\beta-1}}p\big]=\big[\frac{(t_2'+1) r_{\beta-1}}p\big]$ for any $t_2'< t<t_2+1$,
 	since $\frac{r_{\beta-1}}p  < \frac12$.
 	Hence
 	$$\begin{aligned}
 		\frac 12 \geq \Big\{\frac{t_2r_{\beta-1}}p\Big\}&\,=\Big\{\frac{(t_2'+1)r_{\beta-1}}p\Big\}+\frac{(t_2-t_2'-1)r_{\beta-1}}p\\
 		&\,> \frac{(t_2-t_2'-1)r_{\beta-1}}p \geq \frac{(t_2'-1)r_{\beta-1}}p \geq \frac{(t_1-1)r_{\beta-1}}p.
 	\end{aligned}$$
 	Note that $t_1=\big[\frac p{r_{\beta-1}}\big] \geq 2$.
 	From the above inequality it follows that $t_2=2t_2'=2t_1=4$,
 	in which case one computes easily that $r_{\beta}> \frac p2$, $r_{\beta-1}> \frac p3$ and $r_{\beta-2}> \frac p5$.
 	This contradicts the fact that $\sum\limits_{j=1}^{\beta}r_j=p.$

 	Now since
 	$$H(t_0)>\frac {t_0r_{\beta-2}}p+\Big\{\frac{t_0r_{\beta-1}}p\Big\}>\frac 12+\frac12=1,$$
 	one obtains that $t_2\geq t_0>\delta$.
 	Because
 	$$1=H(\delta+1)> \sum_{j=1}^{\beta-2}\frac{(\delta+1)r_j}p,$$
 	it follows that $r_j=1$ for any $1\leq j \leq \beta-2$.
 	
 	\vspace{1.5mm}
 	(Step 2) We show that
 	\begin{equation}\label{eqn-3-211}
 	\epsilon +t_1\geq \delta+1, \qquad \text{~where~} \epsilon=\big[\frac p{2(\beta-2)}\big].
 	\end{equation}
 	Indeed, completely similar to the estimation in (Step 1), one can show that there exists some $\tilde t_0$
 	with $\epsilon+1 \leq  \tilde t_0 \leq \epsilon+t_1+1$ such that
 	$\big\{\frac{\tilde t_0 r_{\beta-1}}p\big\} >\frac 12$.
 	For such $\tilde t_0$, we have
 	$$H(\tilde t_0)=\frac{(\beta-2)\tilde t_0}p+\left(\Big\{\frac{\tilde t_0 r_{\beta-1}}p\Big\}+
 	\Big\{\frac{\tilde t_0 r_{\beta}}p\Big\}\right)> \frac{(\beta-2)(\epsilon+1)}p +\frac 12>\frac12+\frac 12=1.$$
 	Hence we obtain $\epsilon+t_1 \geq \tilde t_0-1> \theta-1 > \delta$, i.e., $\epsilon +t_1\geq \delta+1$ as required.
 	
	\vspace{1.5mm}
	(Step 3) We proceed to show that
	\begin{equation}\label{eqn-3-212}
		p \geq t_1(t_1+1)(\beta-2)+2t_1+1.
	\end{equation}
	In fact, since $t_1\leq \delta<\theta$, by \eqref{eqn-3-204} with $k=t_1$ and using \eqref{eqn-3-210} one obtains that
 	$$1=H(t_1)>\frac{(\beta-2)t_1}p+\frac{t_1 r_{\beta-1}}p=\frac{(\beta-2)t_1+p-\eta}p,$$
 	where we write $\eta=p-t_1 r_{\beta-1}$. Thus $\eta>(\beta-2)t_1$, i.e. $\eta\geq (\beta-2)t_1+1$. Therefore,
 	\begin{equation*}
 	p=t_1 r_{\beta-1}+\eta \geq t_1 (\eta+1)+\eta \geq t_1(t_1+1)(\beta-2)+2t_1+1.
 	\end{equation*}
 	
 	\vspace{1.5mm}
 	(Step 4) Finally, we derive a contradiction.~
 	Clearly, we may assume that $\beta\geq 3$.
 	Moreover, if $\beta=3$, then $\delta=\big[\frac{p}{2}\big]=\frac {p-1}2$. Since $\theta > \delta$, one has
 	$H(\delta)+H(\delta+1)=2$ by \eqref{eqn-3-204}; on the other hand,  direct computation gives us
 	$H(\delta)+H(\delta+1)=\beta=3$. Hence we may assume that $\beta\geq 4$.
 	Combining \eqref{eqn-3-211} and \eqref{eqn-3-212}, we obtain
 	\begin{equation}\label{eqn-3-213}
 	\frac p{(t_1+1)(\beta-2)+\frac{2t_1+1}{t_1}}\geq t_1
 	\geq \Big[\frac p{\beta-1}\Big]-\Big[\frac p{2(\beta-2)}\Big]+1.
 	\end{equation}
 	Note that $\big[\frac p{\beta-1}\big]-\big[\frac p{2(\beta-2)}\big]+1 > \frac p{\beta-1}- \frac p{2(\beta-2)}$.
 	Hence
 	$$\frac{p}{\beta-1}- \frac p{2(\beta-2)} < \frac{p}{(t_1+1)(\beta-2)+\frac{2t_1+1}{t_1}}<\frac{p}{(t_1+1)(\beta-2)}.$$
 	Since $r_{\beta-1}<\frac{p}{2}$, or equivalently $t_1\geq 2$, one derives immediately a contradiction if $\beta> 6$.
 	For the cases when $4\leq \beta \leq 6$, one can derive a contradiction case-by-case according to \eqref{eqn-3-213}.
 	This completes the proof.
%
%
%
 \end{proof}

 \subsection{Non-existence of Shimura curves contained generically in $\mathcal{TS}_{g,n}$}\label{sec-non-general} In this subsection we prove \autoref{thm-curve} for $n$-superelliptic curves by induction on the number of prime factors of $n$.
 Before entering the details, we first explain the main idea of the induction process.

 %
 %

 Let $C$ be any smooth curve contained generically in $\mathcal{TS}_{g,n}$,
 and $\bar f:\,\ols \to \olb$ be the family of semi-stable $n$-superelliptic curves representing $C$
 as in \autoref{sec-represent}.
 As before, we choose and fix an $n$-superelliptic automorphism group
 for the general fiber of $\bar f$ if it has more than one such automorphism group.
 After a possible base change, we may assume that
 the group $G\cong \mathbb Z/n\mathbb Z$ admits an action on $\ols$
 which reduces to the superelliptic automorphism group on the general fiber of $\bar f$.
 Let $n_1\geq 2$ be any number dividing $n$,
 and consider the quotient family $\ols/H_1 \to \olb$,
 where $H_1 \leqslant G$ is the unique subgroup of order $\frac{n}{n_1}$.
 Resolving the singularities of $\ols/H_1$ and contracting the exceptional curves,
 one obtains a new family $\bar f_1:\,\ols_1 \to \olb$.
 By construction, $\bar f_1$ is also semi-stable,
 and there is a rational cover $\ol\Pi_{n,n_1}$ with the following diagram.
 $$\xymatrix{
 	\ols \ar@{-->}[rr]^-{\ol\Pi_{n,n_1}} \ar[dr]_-{\bar f} && \ols_1 \ar[dl]^-{\bar f_1}\\
 	&\olb&}$$
 If the general fiber of $\bar f$ is defined by $y^n=F(x)$,
 then the general fiber of $\bar f_1$ is given by $y^{n_1}=F(x)$,
 which admits a cyclic cover $\pi_1$ to $\bbp^1$
 with covering group $G_1\cong \mathbb Z/n_1\mathbb Z$,
 branch locus $R_1$, and local monodromy $a_1$ around $R_1$.
 Here $R_1$ and $a_1$ are given by
 \begin{equation}\label{eqn-3-96}
 \left\{\begin{aligned}
 &R_1=\{x_1,\cdots,x_{\alpha_0}\},~\text{~and~}~a_1=(1,\cdots,1), &\qquad&\text{if $n_1\,|\,\alpha_0$};\\
 &R_1=\{x_1,\cdots,x_{\alpha_0},\infty\},\text{~and~} a_1=(1,\cdots,1,a_{\infty,1}), && \text{if $n_1{\not|}~\alpha_0$},
 \end{aligned}\right.\end{equation}
 where $\{x_1,\cdots,x_{\alpha_0}\}$ are the set of roots of $F(x)$,
 and $a_{\infty,1}=n_1\left(\big[\frac{\alpha_0}{n_1}\big]+1\right)-\alpha_0$.
 In the case when $n_1{\not|}~\alpha_0$, the ramification index of $\pi_1$
 at $\infty$ is $r_{\infty,1}=\frac{n_1}{\gcd(n_1,\alpha_0)}$.
 By Hurwitz formula, the genus $g_1$ of a general fiber of $\bar f_1$ is given by the following formula.
 \begin{equation}\label{eqn-3-9(1)}
 g_1=\left\{\begin{aligned}
 &\frac{(n_1-1)(\alpha_0-2)}{2},&~&\text{if $n_1\,|\,\alpha_0$},\\
 &\frac{(n_1-1)(\alpha_0-2)+\frac{r_{\infty,1}-1}{r_{\infty,1}}\cdot n_1}{2},
 &&\text{if $n_1{\not|}~\alpha_0$.}
 \end{aligned}\right.
 \end{equation}
 The relative Jacobian of the family $\bar f_1$ induces a map from $B$
 to $\mathcal{TS}_{g_1,n_1}\subseteq \mathcal{A}_{g_1}$,
 which factorizes clearly through $C$:
 \begin{equation}\label{eqn-3-123}
 B \lra C \overset{\rho_{n,n_1}}\lra \mathcal{TS}_{g_1,n_1}\subseteq \mathcal{A}_{g_1}.
 \end{equation}
 We denote the image by $\rho_{n,n_1}(C)$.
 By definition, one obtains
 \begin{lemma}\label{lem-3-15}
 	Let $C\subseteq \mathcal{TS}_{g,n}$ be a Shimura curve.
 	Then the image $\rho_{n,n_1}(C)\subseteq \mathcal{TS}_{g_1,n_1}$ is either a special point,
 	or it is a Shimura curve.
 	Moreover, if $\rho_{n,n_1}(C)$ is a Shimura curve,
 	then $C$ is compact iff $\rho_{n,n_1}(C)$ is compact.
 \end{lemma}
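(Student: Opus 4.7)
The plan is to realize $\rho_{n,n_1}$ as coming from a morphism of Shimura data, so that the image is automatically a Shimura subvariety of $\mathcal{A}_{g_1}$ of dimension at most one, hence a CM point or a Shimura curve; the compactness dichotomy will then follow from the classification of Shimura curve data in \autoref{example shimura curves}.

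First I would interpret the map on Jacobians. After the base change that produces the $G$-action on $\overline{S}$, the subgroup $H_1\leqslant G$ acts fibrewise on the relative Jacobian $\mathcal{J}=\mathrm{Jac}(\overline{S}/\overline{B})$. Taking $H_1$-isotypic components yields, up to isogeny, a splitting $\mathcal{J}\sim \mathcal{J}^{H_1}\oplus \mathcal{J}_{H_1}$, and pullback along $\overline{S}\to \overline{S}_1=\overline{S}/H_1$ identifies $\mathcal{J}^{H_1}$ with $\mathrm{Jac}(\overline{S}_1/\overline{B})$ up to isogeny; the principal polarization on $\mathcal{J}$ restricts (modulo isogeny) to one pulled back from the principal polarization on $\mathrm{Jac}(\overline{S}_1/\overline{B})$. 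Since the projector to $\mathcal{J}^{H_1}$ is $G$-equivariant, $\rho_{n,n_1}$ descends from $B$ to $C$ canonically.

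Next I would translate this to Shimura data. Let $(\mathbf{G},X;X^+)$ define $C$, with representation $\mathbf{G}\hookrightarrow \mathrm{GSp}_{2g}$ on the standard symplectic $\mathbb{Q}$-space $V$. The $H_1$-action on $V$, arising through monodromy, commutes with $\mathbf{G}(\mathbb{Q})^+$, so $V^{H_1}\subseteq V$ is a $\mathbf{G}$-stable subspace carrying a symplectic form (non-degenerate up to isogeny, by restricting to each $H_1$-isotypic piece and applying Zarhin's trick if necessary to produce an honest principal polarization). The composite $\mathbf{G}\to \mathrm{GSp}(V)\to \mathrm{GSp}(V^{H_1})$, together with the Hodge morphism in $X^+$, gives a morphism of connected Shimura data $(\mathbf{G},X;X^+)\to (\mathrm{GSp}(V^{H_1}),\mathcal{X}_{g_1};\mathcal{X}_{g_1}^+)$, and the induced map of Shimura varieties coincides with $\rho_{n,n_1}$ after matching level structures. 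Its image is a connected Shimura subvariety of $\mathcal{A}_{g_1}$ of dimension $0$ or $1$, proving the first assertion.

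For the compactness part, the direction $C$ compact $\Rightarrow$ $\rho_{n,n_1}(C)$ compact is immediate. Conversely, if $\rho_{n,n_1}(C)$ is a Shimura curve with defining datum $(\mathbf{G}',X';X'^+)$, then by \autoref{example shimura curves} both $\mathbf{G}^{\mathrm{ad}}$ and $\mathbf{G}'^{\mathrm{ad}}$ are $\mathbb{Q}$-simple; the surjection $\mathbf{G}^{\mathrm{ad}}\twoheadrightarrow \mathbf{G}'^{\mathrm{ad}}$ coming from our morphism of Shimura data is non-trivial (the image being positive-dimensional), so its kernel is central and finite, making it a $\mathbb{Q}$-isogeny. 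Therefore the $\mathbb{Q}$-ranks of $\mathbf{G}^{\mathrm{ad}}$ and $\mathbf{G}'^{\mathrm{ad}}$ coincide, and compactness of a Shimura curve being equivalent to vanishing $\mathbb{Q}$-rank, compactness transfers in both directions. The main delicate point I anticipate is verifying that the restricted symplectic form on $V^{H_1}$ truly yields a principal polarization on $\mathcal{J}^{H_1}$ (so that the target of the morphism of Shimura data is indeed $\mathrm{GSp}(V^{H_1})$ rather than merely a linear quotient), but this is routine once one is willing to work up to $\mathbb{Q}$-isogeny.
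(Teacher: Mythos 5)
Your argument is correct and is exactly the unpacking of the paper's one-line proof: the image of $C$ under the morphism of Shimura data induced by projecting to the $H_1$-invariant part of the relative Jacobian is a Shimura subvariety of dimension $0$ or $1$ (hence a CM point or a Shimura curve), and the compactness transfer follows from the criterion in \autoref{example shimura curves} (compact iff $\Qbb$-rank of the adjoint group vanishes) together with the fact that a nontrivial surjection of the $\Qbb$-simple adjoint groups is an isomorphism. The polarization issue you flag is indeed routine, since $V^{H_1}$ is identified with $H^1$ of the quotient curve compatibly with the symplectic forms up to a scalar, which does not change the group $\GSp(V^{H_1})$ nor the associated datum.
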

 \begin{proof}
 	This follows directly from the definition of Shimura subvarieties  and the characterization of Shimura curves in $\Acalg$.
 \end{proof}

 \begin{remark}
 	The image $\rho_{n,n_1}$ might depend on the choices
 	of the $n$-superelliptic automorphism group on the general fiber of $\bar f$.
 	In other words, it is not clear whether there is a well-defined map
 	from $\mathcal{TS}_{g,n}$ to $\mathcal{TS}_{g_1,n_1}$.
 	In this paper, when talking about the map $\rho_{n,n_1}$,
 	it is understood that  an $n$-superelliptic automorphism group on the general fiber of $\bar f$
 	has already been chosed and fixed.
 \end{remark}

 The principle of the induction process is the following.
 \begin{proposition}\label{prop-3-6}
 	Let $C$ be a Shimura curve contained generically in $\mathcal{TS}_{g,n}$ with $g\geq n$.
 	If $n$ is not prime, then
 	$\rho_{n,n'}(C)$ is a Shimura curve
 	contained generically in $\mathcal{TS}_{g',n'}$,
 	where
 	$$n'=\max\left\{n_1~\big|~n_1<n\text{~and~}n_1~|~n\right\},$$
 	and $g'$ is determined by the formula \eqref{eqn-3-9(1)}.
 \end{proposition}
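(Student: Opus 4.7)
The proposition makes two assertions about $\rho_{n,n'}(C)$: that it is generically contained in $\mathcal{TS}_{g',n'}$ and that it is a Shimura curve rather than a CM point. The generic containment is immediate from the construction: $\bar f_1$ is by definition a family of semi-stable $n'$-superelliptic curves of genus $g'$ with smooth general fibers, so its relative Jacobian sends $\olb$ (factoring through $C$) into $\mathcal{TS}_{g',n'} \subseteq \mathcal A_{g'}$, with the generic image lying in the open superelliptic Torelli locus. For the Shimura-curve assertion, \autoref{lem-3-15} reduces the task to showing that $\bar f_1$ is non-isotrivial as a family of abstract curves, which I plan to establish by contradiction.

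\textbf{From isotriviality of $\bar f_1$ to constancy of branch configurations.} Suppose $\bar f_1$ is isotrivial. A preliminary step is to verify that the generic fiber of $\bar f_1$ has genus $g' \geq 2$: the hypothesis $g \geq n$ applied to \eqref{eqn-3-9} forces $\alpha_0 \geq 5$ whenever $n$ is composite (so $n \geq 4$), and feeding $\alpha_0 \geq 5$ together with $n' \geq 2$ into \eqref{eqn-3-9(1)} then yields $g' \geq 2$. Consequently a general fiber $F_0$ of $\bar f_1$ has finite automorphism group, so it admits only finitely many $n'$-superelliptic structures up to isomorphism of covers, and the forgetful map $\mathcal S_{g',n'} \to \mathcal M_{g'}$ has finite fibers. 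Isotriviality of $\bar f_1$ means the moduli map $\olb \to \mathcal M_{g'}$ is constant; combined with the connectedness of $\olb$ and the finiteness of this forgetful map, it follows that $\olb \to \mathcal S_{g',n'}$ is also constant. Thus all smooth fibers $y^{n'} = F_c(x)$ of $\bar f_1$ are mutually isomorphic as $n'$-superelliptic covers; equivalently, their branch configurations on $\mathbb P^1$ (the root sets of $F_c$ together with $\infty$ if $n' \nmid \alpha_0$, equipped with the prescribed local monodromy data) all lie in a single $\PGL_2$-orbit.

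\textbf{Lifting the isomorphism to $\bar f$.} An isomorphism $y^{n'} = F_1(x) \isom y^{n'} = F_2(x)$ of $n'$-superelliptic covers is, by definition, induced by a $\phi \in \PGL_2$ sending the branch configuration of the first to that of the second (so that after a suitable normalization one has $F_1 \circ \phi = c F_2$ for some $c \in \mathbb C^*$), together with a compatible scaling of the $y$-coordinate. But the branch configuration of the $n$-superelliptic cover $y^n = F_c(x)$ is encoded by exactly the same combinatorial datum on $\mathbb P^1$; so the same $\phi$, paired with an appropriate $n$-th-root scaling on $y$, lifts to an isomorphism $y^n = F_1(x) \isom y^n = F_2(x)$ of $n$-superelliptic covers. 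Consequently all smooth fibers of $\bar f$ are isomorphic as abstract curves, so the induced family of Jacobians over $C$ is constant, contradicting the hypothesis that $C$ is a positive-dimensional Shimura curve in $\mathcal A_g$.

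\textbf{Main obstacle.} The principal subtlety I anticipate is the careful bookkeeping of the extra branch point at $\infty$ with distinguished monodromy $a_\infty$ when $n \nmid \alpha_0$ (where $n'$ may or may not divide $\alpha_0$, leading to slightly different branch loci for $\bar f$ and $\bar f_1$), together with the possibility of multiple $n'$-superelliptic automorphism groups on a given fiber as in \autoref{rems-3-1}(ii)---one fixed choice must be made and then propagated along the connected base. The key technical input making the reduction to branch configurations work is the finiteness of $\mathcal S_{g',n'} \to \mathcal M_{g'}$, which in turn rests on the genus bound $g' \geq 2$ derived from $g \geq n$.
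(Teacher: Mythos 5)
Your argument breaks down at the lifting step, and it breaks down in precisely the case that carries the real content of the proposition. When $n'\mid\alpha_0$ but $n\nmid\alpha_0$, the branch data of the two covers are \emph{not} ``exactly the same combinatorial datum'': by \eqref{eqn-3-46} and \eqref{eqn-3-96}, the $n$-superelliptic cover $y^n=F_c(x)$ is branched over the $\alpha_0$ roots of $F_c$ \emph{and} over $\infty$ (with local monodromy $a_\infty\neq 1$), whereas the $n'$-superelliptic cover $y^{n'}=F_c(x)$ is unramified at $\infty$. An isomorphism of the $n'$-covers is induced by some $\phi\in\PGL_2$ matching the root sets of $F_{c_1}$ and $F_{c_2}$, but nothing forces $\phi(\infty)=\infty$; your displayed identity $F_1\circ\phi=cF_2$ already presupposes that $\phi$ is affine. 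Constancy of the $\alpha_0$-point configuration in $\mathcal{M}_{0,\alpha_0}/S_{\alpha_0}$ therefore does not imply constancy of the $(\alpha_0+1)$-point configuration obtained by adjoining $\infty$, and $\bar f$ can perfectly well be non-isotrivial while $\bar f_1$ is isotrivial. This is not hypothetical: the paper's \autoref{lem-3-19}\,(i)--(ii) shows that if $\rho_{n,n'}(C)$ fails to be a Shimura curve then necessarily $n'\mid\alpha_0$ and $n\nmid\alpha_0$, and $\bar f$ is (up to base change) a cyclic cover of $\olb\times\Pbb^1$ branched over $\alpha_0$ constant sections plus one \emph{moving} section $D_{\alpha_0+1}$ --- exactly the configuration your lifting argument purports to rule out.

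In the cases where the branch loci of the two covers coincide (i.e.\ $n\mid\alpha_0$, or $n'\nmid\alpha_0$), your argument is sound, and it is essentially the content of the proof of \autoref{lem-3-19}\,(i). But the remaining case requires the genuinely hard part of the paper's proof: the eigenspace vanishing $F^{1,0}_{\olb,i}=0$ for $i\geq m'$ with $m'\nmid i$ (\autoref{lem-3-19}\,(iv)), played off against the rank formula \eqref{eqn-3-35} via \eqref{eqn-3-171} to force $n'\leq 3$ with $m'\le 2$; and then, for the surviving cases $n=6$ and $n=4$, the uniqueness and centrality of the superelliptic automorphism group (\autoref{lem-3-51}, \autoref{lem-3-52}) feeding into the Arakelov-type bound \eqref{eqn-3-153}, which yields $\rank A^{1,0}_{\olb}=0$ and hence contradicts non-isotriviality of $\bar f$. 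None of this is present in, or replaceable by, the branch-configuration argument, so the proof as proposed has a genuine gap.
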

 The above proposition will be postponed until \autoref{sec-technical}.
 In the following, we prove \autoref{thm-curve} based on this principle.

 \begin{proof}[Proof of \autoref{thm-curve}]
 Let $C$ be a Shimura curve contained generically in $\mathcal{TS}_{g,n}$ with $g\geq 8$,
 and $\bar f:\,\ols \to \olb$ be the family of semi-stable
 $n$-superelliptic curves representing $C$ as in \autoref{sec-represent}.
 Assume that the general fiber of $\bar f$ is given by $y^n=F(x)$,
 where $F(x)$ is a separable polynomial in $x$ with $\deg(F)=\alpha_0$.
 By \autoref{prop-moonen}, we may assume that $g\geq n$, or equivalently $\alpha_0\geq 4$ by the Riemann-Hurwitz formula \eqref{eqn-3-9}.
 %
 Moreover, we claim that
 \begin{lemma}\label{prop-3-7}
 If $g\geq 8$ and $\alpha_0=4$, there does not exist any Shimura curve contained generically
 in $\mathcal{TS}_{g,n}$.
 \end{lemma}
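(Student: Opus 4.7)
The plan is to run, for each of the finitely many $(n,g)$ pairs compatible with $\alpha_0=4$ and $g\geq 8$, a case-by-case version of the Higgs-bundle argument used in the proof of \autoref{thm-curve-prime}. First I would enumerate these pairs via \eqref{eqn-3-9}: the subcase $n\mid 4$ gives $g\leq 3$ and is excluded, so $n\nmid 4$ and $g=\frac{3n-2-\gcd(4,n)}{2}$, which yields an explicit short list of $n$ in each residue class mod $4$. For every such $n$ the eigenspace ranks $\rank E_{\olb,i}^{1,0}$ are completely determined by \eqref{eqn-3-35}.

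Let $\bar f:\ols\to\olb$ be the family of semi-stable $n$-superelliptic curves representing the hypothetical Shimura curve $C\subset\mathcal{TS}_{g,n}$; after a possible base change we assume the global $G\cong\mathbb{Z}/n\mathbb{Z}$-action on $\ols$. The first task is to secure $i_m:=\max\{i\mid F_{\olb,i}^{1,0}\neq 0\}\geq n/2$; as in Step~I of Case~(II) of the proof of \autoref{thm-curve-prime}, if not, then the Hodge symmetry \eqref{eqn-3-102} combined with \eqref{eqn-3-35} forces $\rank A_{\olb}^{1,0}\geq 2\sum_{i>n/2}\rank E_{\olb,i}^{1,0}$, which for most of our $n$ already violates the bound $(4g-4)/\lambda_{n,c}$ of \autoref{prop-3-11}; here it is crucial that with $\alpha_0=4$ the invariant $\lambda_{n,c}$ stays close to $12$. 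With $i_m\geq n/2$ secured, \autoref{prop-3-13} furnishes a fibration $\bar f':\ols\to\olb'$ through which every $1$-form coming from $\bigoplus_{i=n-i_m}^{i_m}F_{\olb,i}^{1,0}$ factors, and the induced $G$-action on $\olb'$ has quotient $\olb'/G\cong\mathbb{P}^1$ ramified over $\beta$ points.

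The decisive step is to extract the two companion inequalities analogous to \autoref{lem-3-28}: from \eqref{eqn-3-205} together with $\rank F_{\olb,n-i_m}^{1,0}\geq\rank F_{\olb,i_m}^{1,0}\geq 1$ one gets $\beta\geq 4$, while a Riemann-Hurwitz analysis of the restriction $\bar f'|_{\ol F}:\ol F\to\olb'$ of a smooth fibre, exactly as in part~(iii) of the proof of \autoref{lem-3-28}, yields $\frac{2g}{n-1}\geq 2\beta-3$. Specialising $\alpha_0=4$ one computes $\frac{2g}{n-1}=\frac{3n-2-\gcd(4,n)}{n-1}\leq 3$, forcing $\beta\leq 3$, a contradiction. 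In the non-compact case the same conclusion follows even more directly from \autoref{lem-3-71}, since by \cite[Corollary\,4.4]{viehweg-zuo-04} the flat subbundle $F_{\olb}^{1,0}$ becomes trivial after base change and the genus $g(\olb')$ computed via Hurwitz-Chevalley-Weil automatically dominates $\rank F_{\olb}^{1,0}$.

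The main obstacle is Step~I: for composite $n$ one loses the transitivity of $\Gal\big(\mathbb{Q}(\xi_n)/\mathbb{Q}\big)$ on the eigen-subspaces that made \autoref{lem-3-101} applicable in the prime case, and for small even $n$ such as $n=8,10,12,14$ the upper bound of \autoref{prop-3-11} is nearly tight, so the generic violation above may fail and one cannot a priori exclude $i_m<n/2$ by a global rank estimate alone. These boundary cases I would dispose of by a finer direct computation of the individual ranks $\rank A_{\olb,i}^{1,0}$ using \eqref{eqn-3-35} with $\alpha_0=4$, and when necessary by descending via the projection $\rho_{n,n_1}$ of \eqref{eqn-3-123} to a prime-order quotient and invoking \autoref{thm-curve-prime}; the finiteness of our list of $n$ makes this a bounded verification, entirely parallel to the treatment of the boundary case $p=5$ at the end of the proof of \autoref{thm-curve-prime}.
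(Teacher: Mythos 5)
Your argument is built on the wrong branch of the dichotomy. You propose to \emph{secure} $i_m\geq n/2$ by showing that $i_m<n/2$ would force $\rank A^{1,0}_{\olb}\geq 2\sum_{i>n/2}\rank E^{1,0}_{\olb,i}$ in violation of \autoref{prop-3-11}. But the paper proves exactly the opposite inclusion (\autoref{prop-3-14}\,(1)): when $\alpha_0=4$ one has $F^{1,0}_{\olb,i}=0$ for \emph{all} $i\geq n/2$, because each $E^{1,0}_{\olb,i}$ with $i>n/2$ has rank at most $1$ by \eqref{eqn-3-35}, so a nonzero flat piece there would be all of $E^{1,0}_{\olb,i}$ and would generate (via \autoref{lem-3-20} and \autoref{lem-3-21}, with the negativity condition holding for \emph{every} $i$) a fibration absorbing the whole of $F^{1,0}_{\olb}$, contradicting \autoref{lem-3-71}. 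Moreover your proposed exclusion of $i_m<n/2$ fails numerically at precisely the critical value: for $n=9$, $\alpha_0=4$ one gets $g=12$, $\rank A^{1,0}_{\olb}=2(1+1+0+0)=4$, while $\lambda_{9,c}=12-\tfrac{192}{160}=\tfrac{54}{5}$ gives the bound $\tfrac{4g-4}{\lambda_{9,c}}=\tfrac{44}{10.8}\approx 4.07$ --- no contradiction. So the case $n=9$, $i_m<n/2$ survives every estimate in your toolkit, and your fallbacks do not close it: the descent $\rho_{9,3}$ lands in $\mathcal{TS}_{3,3}$ with $g_1=3<8$, where no inductive input exists, and ``finer rank computations'' reproduce the non-contradictory $4\leq 4.07$.

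The surviving case requires a genuinely different mechanism, which is the actual content of the paper's \autoref{claim-3-4}: after pinning down $C$ compact, $n$ odd, and then $n=9$, $g=12$ by the Arakelov bound, one trivializes the rank-one flat eigenpieces $F^{1,0}_{\olb,3}$ and $F^{1,0}_{\olb,4}$ (both with index \emph{below} $n/2$, so \autoref{prop-3-13} does not apply) via Deligne's semisimplicity, kills the wedge of the corresponding $1$-forms using the Hurwitz--Chevalley--Weil vanishing $H^0(\ols,\Omega^2_{\ols})_7=0$, applies Castelnuovo--de Franchis to produce a fibration $\bar f'$ of fiber-degree $2$, exhibits the general fiber as a cyclic $\mathbb{Z}/18\mathbb{Z}$-cover of $\Pbb^1$ branched over exactly $4$ points, and then concludes by Moonen's classification \cite[Theorem\,3.6]{moonen-10} of one-dimensional special families $Z(m,N,a)$. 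None of these steps appears in your proposal, and the Riemann--Hurwitz comparison $\beta\geq 4$ versus $\tfrac{2g}{n-1}\geq 2\beta-3$ that you designate as decisive is never reached, since its hypothesis $i_m\geq n/2$ cannot be established.
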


 The proof of the above lemma will be postponed until the end of this subsection.
 Let's first complete the proof of \autoref{thm-curve}.
 We prove by induction on the number of prime factors in $n$.
 By \cite[Theorem\,E]{lu-zuo-14}, \autoref{thm-curve-prime} and \autoref{prop-3-7},
 we may assume that $n$ is not prime and $\alpha_0\geq 5$.
 Let
 $$n'=\max\left\{n_1~\big|~n_1<n\text{~and~}n_1~|~n\right\}.$$
 Since $n$ is not a prime, it follows that $n'\geq 2$.
 Consider the image $\rho_{n,n'}(C)\subseteq \mathcal{TS}_{g',n'}$, where $\rho_{n,n'}$ is defined in \eqref{eqn-3-123}.
 If
 \begin{equation}\label{eqn-3-95}
 g'\geq 8,
 \end{equation}
 then $\rho_{n,n'}(C)$ is not a Shimura curve by induction.
 Combining this with \autoref{prop-3-6},
 it follows that $C$ is   not a Shimura curve either.
 This gives a contradiction.
 By \eqref{eqn-3-9(1)}, it is easy to verify that the above condition\,\eqref{eqn-3-95} is satisfied
 unless $(n,\alpha_0)$ belongs to the following list (note that $\alpha_0\geq 5$ and $g\geq 8$ by our assumption).
 \begin{equation}\label{eqn-3-21}
 \left\{\begin{aligned}
 \text{Case (a):~}~&~\text{$n=4$ and $7\leq \alpha_0\leq 16$;}\\
 \text{Case (b):~}~&~\text{$n=6$ and $5\leq \alpha_0\leq 9$;}\\
 \text{Case (c):~}~&~\text{$n=8$ and $5\leq \alpha_0\leq 6$;}\\
 \text{Case (d):~}~&~\text{$n=9$ and $5\leq \alpha_0\leq 9$;}\\
 \text{Case (e):~}~&~\text{$n=10$, $15$ or $25$, and $\alpha_0=5$.}
 \end{aligned}\right.
 \end{equation}
 To complete the proof,
 it suffices to prove the non-existence of Shimura curves in the above cases.

 We again prove by contradiction.
 Assume that such a Shimura curve $C$ exists and let $\bar f$ be as above.
 By a possible base change, we may assume that
 the group $G\cong \mathbb Z/n\mathbb Z$ acts on $\ols$ as before,
 and hence there is an induced action of $G$
 on the associated logarithmic Higgs bundle as well as its subbundles with induced
 eigenspace decompositions as in \eqref{eqn-3-38} and \eqref{eqn-3-33}.
 Moreover, the rank of each eigenspace
 $E^{1,0}_{\olb,i}$ can be computed by \eqref{eqn-3-35}.
 We claim that
 \begin{lemma}\label{claim-3-6}
 Let $(n,\alpha_0)$ be as in \eqref{eqn-3-21}. Then the following statements hold.

 {\rm (i)}. If $3~|~n$, then $F^{1,0}_{\olb,2n/3}=0$.

 {\rm (ii)}.
 If $\rank E^{1,0}_{\olb,i}=1$, then $F^{1,0}_{\olb,i}=0$.

 {\rm (iii)}.
 Let
 $$i_m=\max\big\{i~\big|~F^{1,0}_{\olb,i}\neq 0\big\}.$$
 Then $i_m> n/2$, and hence after a suitable finite \'etale base
 change, there exists a unique morphism $\bar f':\,\ol S \to \ol B'$ such that
 \begin{equation}\label{eqn-3-106}
 \left\{\begin{aligned}
 \rank F^{1,0}_{\olb,i}&\,=\dim H^0\big(\ol S,\,\Omega^1_{\ol S}\big)_{i},
 \quad\forall~n-i_m\leq i \leq i_m;\\
 \bigoplus_{i= n-i_m}^{i_m} H^0\big(\ol S,\,\Omega^1_{\ol S}\big)_{i}
 &\,\subseteq \big(\bar f'\big)^*H^0\big(\ol B',\,\Omega^1_{\ol B'}\big).
 \end{aligned}\right.
 \end{equation}

 {\rm (iv)}.
 Let $i_m$ be as above.
 If $\gcd(n,i_m)=1$, then the curve $C$ is compact, and $G\cong \mathbb Z/n\mathbb Z$ induces a faithful action
 on $\olb'$ (here $\olb'$ is from (iii) above) such that $\ol B'/G\cong \bbp^1$ and that
 \begin{equation}\label{eqn-3-43}
 H^0\big(\ol S,\,\Omega^1_{\ol S}\big)_{i}
 =\big(\bar f'\big)^*H^0\big(\ol B',\,\Omega^1_{\ol B'}\big)_{i},\quad\text{~for any $1\leq i \leq n-1$ with $\gcd(i,n)=1$}.
 \end{equation}
 In particular,
 \begin{equation}\label{eqn-3-184}
 \begin{aligned}
 	g(\olb') \geq~& \frac{\varphi(n)}{2}\cdot
 	\left(\dim H^0\big(\ol S,\,\Omega^1_{\ol S}\big)_{i_m}+ \dim H^0\big(\ol S,\,\Omega^1_{\ol S}\big)_{n-i_m}\right)\\
 	=~&\frac{\varphi(n)}{2}\cdot
 	\left(\rank F^{1,0}_{\olb,i_m}+\rank F^{1,0}_{\olb,n-i_m}\right),
 \end{aligned}
 \end{equation}
 where $\varphi(n)$is the Euler phi function,
i.e. the number of non-negative integers less than $n$ which are relatively prime to $n$.
 \end{lemma}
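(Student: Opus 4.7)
The lemma consists of four parts and I would prove them in the order (ii), (i), (iii), (iv), reusing the toolkit developed for the prime case (Theorem~\ref{thm-curve-prime}) together with the quotient construction $\rho_{n,n_1}$ introduced just above the lemma. Part~(ii) would be a rank argument by contradiction: if $\rank E^{1,0}_{\ol B,i}=1$ but $F^{1,0}_{\ol B,i}\neq 0$, then necessarily $F^{1,0}_{\ol B,i}=E^{1,0}_{\ol B,i}$, forcing $A^{1,0}_{\ol B,i}=0$. Lemma~\ref{lem-3-42} then yields $A^{1,0}_{\ol B,n-i}=0$, and Lemma~\ref{lem-3-12} propagates the vanishing across the $\Gal(\mathbb{Q}(\xi_n)/\mathbb{Q})$-orbit of $i$. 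Summing $\rank A^{1,0}_{\ol B}$ and comparing with the upper bound in Proposition~\ref{prop-3-11} (via the Arakelov equality~\eqref{eqn-3-191}) will yield a contradiction by an explicit finite computation for each pair $(n,\alpha_0)$ listed in~\eqref{eqn-3-21}.

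For part~(i), I would let $H\leqslant G$ be the unique subgroup of order $n/3$ and consider the $3$-superelliptic quotient family $\bar f_3\colon \ol S/H\to \ol B$, whose Higgs bundle carries an action of $G/H\cong\mathbb{Z}/3\mathbb{Z}$. A direct check on the $G$-action identifies $F^{1,0}_{\ol B_3,2}\cong F^{1,0}_{\ol B,2n/3}$. Since $\bar f$ is non-isotrivial, so is $\bar f_3$; hence by Proposition~\ref{prop-3-6}, applied iteratively, $\rho_{n,3}(C)$ is a Shimura curve generically contained in $\mathcal{TS}_{g_3,3}$. Theorem~\ref{thm-curve-prime} then forces $g_3<8$, and using~\eqref{eqn-3-9(1)} with~\eqref{eqn-3-21} one sees that only finitely many cases remain. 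For each of these I would verify $F^{1,0}_{\ol B_3,2}=0$ directly against Moonen's classification in~\cite{moonen-10}, bootstrapping with the already-proven part~(ii) whenever $\rank E^{1,0}_{\ol B_3,2}=1$.

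Part~(iii) would mimic Step~I of the proof of Theorem~\ref{thm-curve-prime}: if $i_m\leq n/2$, then $A^{1,0}_{\ol B,i}=E^{1,0}_{\ol B,i}$ for every $i>n/2$, and Lemma~\ref{lem-3-42} gives $\rank A^{1,0}_{\ol B}\geq 2\sum_{i>n/2}\rank E^{1,0}_{\ol B,i}$; evaluating via~\eqref{eqn-3-35} and comparing with the bound of Proposition~\ref{prop-3-11} produces a contradiction for every $(n,\alpha_0)$ in~\eqref{eqn-3-21}. Once $i_m>n/2$ is established, Proposition~\ref{prop-3-13} immediately supplies the fibration $\bar f'$ and the inclusion~\eqref{eqn-3-106}. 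For part~(iv), the coprimality $\gcd(n,i_m)=1$ makes $(\mathbb{Z}/n\mathbb{Z})^{\times}=\Gal(\mathbb{Q}(\xi_n)/\mathbb{Q})$ act transitively on the indices coprime to $n$; the argument paralleling Claim~\ref{claim-3-31} will then extend the pull-back lifting from index $i_m$ to every such index, giving~\eqref{eqn-3-43}. The uniqueness of $\bar f'$ will force the $G$-action on $\ol S$ to descend to $\ol B'$, and $\ol B'/G\cong\mathbb{P}^1$ because $\ol S/G\to \ol B$ is ruled. The compactness of $C$ will be obtained by excluding the non-compact alternative: by~\cite{viehweg-zuo-04} the unitary part $\mathbb{V}^{u}_B$ becomes trivial after a finite \'etale base change, and the non-compact analogue of the prime-case argument, combined with the lower bound~\eqref{eqn-3-184}, will produce constraints on $\rho_{n,n_1}(C)$ incompatible with~\eqref{eqn-3-21}, again a finite verification.

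The hardest step will be part~(i), which demands a careful tracking of eigenspaces through the $3$-superelliptic quotient and a case-by-case comparison with Moonen's list for the small-$g_3$ exceptions; the compactness assertion in part~(iv) will similarly call for an explicit finite enumeration once the Galois-transitivity fibration argument is in place.
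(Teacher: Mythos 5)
Your overall architecture (quotient families for (i), rank counting for (iii), Galois transitivity for (iv)) points in the right direction, but two of the four parts rest on arguments that do not work as stated. For part (ii), your contradiction runs the wrong way: from $F^{1,0}_{\olb,i}=E^{1,0}_{\olb,i}$ you deduce $A^{1,0}_{\olb,i}=A^{1,0}_{\olb,n-i}=0$, which makes $\rank A^{1,0}_{\olb}$ \emph{smaller}, and a small ample part can never violate the \emph{upper} bound of \autoref{prop-3-11}. Moreover \autoref{lem-3-12} only controls the ranks of the \emph{trivial} subsystems $(F^{1,0}_{\olb,i})^{tr}$ along Galois orbits; in the compact case the ${\rm Gal}(\Qbb(\xi_n)/\Qbb)$-action does not stabilize $\mathbb V_B^u$ at all, so there is no propagation of the vanishing of $A^{1,0}_{\olb,i}$ to conjugate indices. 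The mechanism the paper actually uses (\autoref{prop-3-14}) is entirely different: $\rank E^{1,0}_{\olb,i}=1$ forces the intersection numbers $\wt\Gamma\cdot\big(\omega_{\wt Y}(\wt R)\otimes{\mathcal L^{(j)}}^{-1}\otimes{\mathcal L^{(i)}}^{-1}\big)$ to be negative for \emph{all} $j$, whence (Esnault--Viehweg eigensheaves, Bogomolov/Deligne triviality, Castelnuovo--de Franchis) a second fibration with $g(\olb')\geq\rank F^{1,0}_{\olb}$, contradicting \autoref{lem-3-71}. A similar issue affects your part (i): Moonen's classification in \cite{moonen-10} decides which full families $Z(m,N,a)$ are special; it gives no handle on the flat part of the Higgs bundle of an arbitrary Shimura curve sitting inside a non-special family, so "verify against Moonen's list" is not a proof step. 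The paper instead assumes $F^{1,0}_{\olb,2n/3}\neq 0$, transfers it to $\rank\wt F^{1,0}_{\olb,2}>0$ on the degree-$3$ quotient family via \autoref{lem-3-18}, invokes \autoref{prop-3-13} to get a fibration with $g(\olb')\geq(\rank\wt E^{1,0}_{\olb,1}-\rank\wt E^{1,0}_{\olb,2})+2$, and contradicts the degree bound \eqref{eqn-3-131} because the quotient fiber genus $g_1\leq\alpha_0-1\leq 8$ is too small.

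For part (iii), your rank-counting argument is exactly the paper's treatment of the \emph{compact} case (modulo the $i=n/2$ term for even $n$, which needs \autoref{cor-3-4} and the fact that $\rho_{n,2}(C)$ is again a Shimura curve to contribute), but \autoref{prop-3-11} gives no bound at all when $C$ is non-compact unless $n\in\{3,4\}$ and $q_{\bar f}>0$. The non-compact case for $n\in\{8,9,10,15,25\}$ therefore falls outside your argument; the paper handles it by a separate route (triviality of $\mathbb V_B^u$ from \cite[Cor.\,4.4]{viehweg-zuo-04}, the Galois transitivity of \autoref{lem-3-12} combined with \autoref{lem-3-42}, and for $n=4,6$ the quotient maps $\rho_{4,2}$, $\rho_{6,3}$). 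Your part (iv) is essentially the paper's argument and is fine in outline, though the compactness claim again requires the explicit chain (triviality of $\mathbb V_{B,1}$ because $\rank E^{0,1}_{\olb,1}=0$, hence $g(\olb')\geq\frac{\varphi(n)}{2}\rank E^{1,0}_{\olb,1}$, then \eqref{eqn-3-124} and \eqref{eqn-3-131} applied to a fiber over $\Delta_{nc}$) rather than an unspecified "finite verification."
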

 \noindent
 We will prove the above lemma at the end of this subsection.
 Based on the above lemma,
 let's first derive contradictions case-by-case with $(n,\alpha_0)$ in \eqref{eqn-3-21},
 and hence prove \autoref{thm-curve}.

 {\vspace{1mm} \it Case} (a).
 In this case, $i_m=3$ by \autoref{claim-3-6}\,(iii).
 Hence by \eqref{eqn-3-106}, one has
 \begin{equation*}
 g(\olb')\geq \rank F^{1,0}_{\olb}.
 \end{equation*}
 This gives a contradiction to \autoref{lem-3-71} as $g\geq 8$.


 {\vspace{1mm} \it Case} (b). In this case, one has $i_m=5$ by \autoref{claim-3-6}\,(i) and (iii).
 Hence similar to the above case, there is also a contradiction to \autoref{lem-3-71}.

 {\vspace{1mm} \it Case} (c). From \autoref{claim-3-6} it follows that
 $\alpha_0=6$, $i_m=5$ and $C$ is compact,
 and that \eqref{eqn-3-43} and \eqref{eqn-3-184} hold.
 Note also that $F^{1,0}_{\olb,5}\subsetneq E^{1,0}_{\olb,5}$ and hence $\rank F^{1,0}_{\olb,5}=1$;
 otherwise, one has $\rank F^{1,0}_{\olb,5}= \rank E^{1,0}_{\olb,5}=2$,
 and hence both $E^{1,0}_{\olb,3}$ and $E^{1,0}_{\olb,5}$ are flat,
 from which together with \eqref{eqn-3-184} it follows that
 $$g(\olb') \geq \frac{\varphi(8)}{2}\cdot \left(\rank F^{1,0}_{\olb,5}+\rank F^{1,0}_{\olb,3}\right)
 =\frac{4}{2}\cdot \left(\rank E^{1,0}_{\olb,5}+\rank E^{1,0}_{\olb,3}\right)=10.$$
 This contradicts \eqref{eqn-3-131} as $g=17$ by \eqref{eqn-3-9} in this case.
 Thus,
 $$\left\{\begin{aligned}
 &\rank A^{1,0}_{\ol B,7}=\rank E^{1,0}_{\ol B,7}=0;\\
 &\rank A^{1,0}_{\ol B,6}=\rank E^{1,0}_{\ol B,6}=1, &&\text{by \autoref{claim-3-6}\,(ii);}\\
 &\rank A^{1,0}_{\ol B,5}=\rank E^{1,0}_{\ol B,5}-1=1, &&\text{by the above arguments};\\
 &\rank A^{1,0}_{\ol B,4}=\rank E^{1,0}_{\ol B,4}=2, &&\text{by \autoref{lem-3-19}\,(i) and \autoref{cor-3-4}.}
 \end{aligned}\right.$$
 Hence
 $$\rank A^{1,0}_{\ol B}=2\sum_{i=5}^7 \rank A^{1,0}_{\ol B,7}+ \rank A^{1,0}_{\ol B,4}=6.$$
 This is a contradiction to \eqref{eqn-3-101-2}.

 {\vspace{1mm} \it Case} (d).
 In this case, we can derive a contradiction similarly as the above case.
 Indeed, Similar as above, by \autoref{claim-3-6}
 one obtains that $i_m=5$ and $C$ is compact.
 If $\rank F^{1,0}_{\olb,5}=\rank E^{1,0}_{\olb,5}$, then one obtains a contradiction to \eqref{eqn-3-131};
 if $\rank F^{1,0}_{\olb,5}\leq \rank E^{1,0}_{\olb,5}-1$, i.e., $\rank A^{1,0}_{\olb,5}\geq 1$,
 then
 $$\rank A^{1,0}_{\ol B}=2\sum_{i=5}^8 \rank A^{1,0}_{\ol B,7}\geq 2+2\sum_{i=6}^8 \rank A^{1,0}_{\ol B,7}
 =2+2\sum_{i=6}^8 \rank E^{1,0}_{\ol B,7},$$
 which contradicts \eqref{eqn-3-101-2}.

 {\vspace{1mm} \it Case} (e).
 By \autoref{claim-3-6} (ii) and (iii), one checks easily that
 $n\neq 10$, and that $i_m=8$ (resp. $i_m=13$ or $14$) when $n=15$ (resp. $n=25$).
 In the later two cases, by \autoref{claim-3-6}, after a suitable further \'etale base change, there is a unique fibration
 $\bar f':\,\ols \to \olb'$ and $G$ acts faithfully on $\olb'$ with $\olb'/G\cong \bbp^1$.
 Let $\ol F$ be a general fiber of $\bar f$, and $\ol\Gamma=\ol F/G\cong \bbp^1$ the quotient.
 Then similar to the proof of \autoref{lem-3-28}, one has the following commutative diagram.
 $$\xymatrix{
 	\ol F \ar[rrr]^-{\bar f'|_{\ol F}} \ar[d]_-{\Pi|_{\ol F}}&&&\ol B' \ar[d]^-{\pi'}  \\
 	\ol\Gamma\ar[rrr]^-{\bar\varphi'|_{\ol \Gamma}}\cong \bbp^1 &&& \ol B'/G\cong \bbp^1
 }$$
 %
 %
 Let $\beta$ be the number of branch points of the cover $\pi':\,\olb'\to \bbp^1$.
 Then similar to the proof of \autoref{lem-3-28}, one proves that $\beta\geq 4$ and $\alpha>2(\beta-1)$,
 where $\alpha=\alpha_0+1=6$ is the number of the branch points of $\Pi|_{\ol F}$.
 This gives a contradiction.
 \end{proof}

 \vspace{1.5mm}
 In the rest of this subsection
 we  prove \autoref{prop-3-7} and \autoref{claim-3-6},
 for which one needs the following proposition generalizing \autoref{prop-3-13}. Its proof will be given in later \autoref{sec-pf-3-14}.
 \begin{proposition}\label{prop-3-14}
 	Let $\bar f:\,\ol S \to \ol B$ be the family of semi-stable curves representing
 	a Shimura curve $C$ contained generically in $\mathcal{TS}_{g,n}$ with $g\geq 8$.
 	Assume that the group $G\cong \mathbb Z/n\mathbb Z$
 	acts on $\ol S$ whose restriction on the general fiber is
 	the $n$-superelliptic automorphism group (up to base change).
 	\begin{enumerate}
 	\item If $\alpha_0=4$, then $F^{1,0}_{\olb,i}=0$ for any $i\geq n/2$.
 	\item If $(n,\alpha_0)$ belongs to the list in \eqref{eqn-3-21}, then
 	      $F^{1,0}_{\olb,i}=0$ for any $i>n/2$ with $\rank E^{1,0}_{\olb,i}=1$.
 	\end{enumerate}
 \end{proposition}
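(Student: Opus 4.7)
The plan is to argue by contradiction, adapting the strategy of Case~(II) in the proof of \autoref{thm-curve-prime}. Suppose $F^{1,0}_{\ol B, i_0}\neq 0$ for some $i_0$ in the forbidden range---that is, $i_0\geq n/2$ for part~(1), or $i_0>n/2$ with $\rank E^{1,0}_{\ol B,i_0}=1$ for part~(2). Setting $i_m=\max\{i:F^{1,0}_{\ol B,i}\neq 0\}$, we have $i_m\geq i_0\geq n/2$ by \eqref{eqn-3-172}.

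The first step is to apply \autoref{prop-3-13} with this $i_m$: after a finite \'etale base change, one obtains a fibration $\bar f':\ol S\to \ol B'$ through which all $1$-forms in $\bigoplus_{i=n-i_m}^{i_m} H^0(\ol S,\Omega^1_{\ol S})_i$ factor, and for which $\dim H^0(\ol S,\Omega^1_{\ol S})_i=\rank F^{1,0}_{\ol B,i}$ throughout the range $n-i_m\leq i\leq i_m$. Because $\ol S/G$ is ruled over $\ol B$, the uniqueness of $\bar f'$ forces $G$ to descend to an action on $\ol B'$ with $\ol B'/G\cong \bbp^1$ and makes $\bar f'$ equivariant, exactly as in Step~II of Case~(II) of the proof of \autoref{thm-curve-prime}. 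One next restricts $\bar f'$ to a general fiber $\ol F$ of $\bar f$ to produce the diagram
\[\xymatrix{\ol F \ar[rr]^-{\bar f'|_{\ol F}} \ar[d]_-{\Pi|_{\ol F}} && \ol B' \ar[d]^-{\pi'} \\ \ol\Gamma\cong\bbp^1 \ar[rr]^-{\bar\varphi'|_{\ol\Gamma}} && \ol B'/G\cong \bbp^1}\]
The Riemann--Hurwitz computation in the proof of \autoref{lem-3-28} yields $\alpha>2(\beta-1)$, where $\alpha$ is given by \eqref{eqn-3-45} and $\beta$ is the number of branch points of $\pi'$; the Hurwitz--Chevalley--Weil formula then controls each $\dim H^0(\ol B',\Omega^1_{\ol B'})_i$ in terms of the local monodromy data.

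For part~(1), $\alpha_0=4$ gives $\alpha\leq 5$, so $\beta\leq 3$; combined with \autoref{lem-3-101}-type constraints forcing $\dim H^0(\ol B',\Omega^1_{\ol B'})_{i_m}>0$, this bounds $g(\ol B')$ from above. Compared with the lower bound $g(\ol B')\geq \rank F^{1,0}_{\ol B,i_m}+\rank F^{1,0}_{\ol B,n-i_m}$ coming from \autoref{prop-3-13}, \autoref{lem-3-42}, and \eqref{eqn-3-35}, this rules out all admissible configurations once $g\geq 8$. For part~(2), the hypothesis $\rank E^{1,0}_{\ol B,i_0}=1$ forces $F^{1,0}_{\ol B,i_0}=E^{1,0}_{\ol B,i_0}$, whence $A^{1,0}_{\ol B,i_0}=0$ and by \autoref{lem-3-42} also $A^{1,0}_{\ol B,n-i_0}=0$; comparing with the upper bound on $\rank A^{1,0}_{\ol B}$ from \autoref{prop-3-11} and running separately through each of the finitely many pairs $(n,\alpha_0)$ in \eqref{eqn-3-21} will yield the desired contradiction.

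The hard part will be the final numerical case analysis: each $(n,\alpha_0)$ in \eqref{eqn-3-21} must be treated individually, balancing the Hurwitz--Chevalley--Weil constraints on $\ol B'$ against the Miyaoka--Yau type bound on $\rank A^{1,0}_{\ol B}$ from \autoref{prop-3-11} and the ramification data of $\pi'$. The argument mirrors, and extends, the $p=5$ sub-case in the proof of \autoref{thm-curve-prime}, and relies crucially on the restricted arithmetic of the finite list \eqref{eqn-3-21} and on $\alpha_0=4$ being small enough to make Shimura curves untenable once $g\geq 8$.
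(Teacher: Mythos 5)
Your proposal misses the key mechanism of the paper's argument, and the route you substitute for it does not close. The paper's proof is much more direct: assuming $F^{1,0}_{\olb,i_0}\neq 0$ for some forbidden $i_0$, one checks (using \eqref{eqn-3-35} and \eqref{eqn-3-20}, and this is exactly where the hypotheses $\alpha_0=4$, resp.\ $\rank E^{1,0}_{\olb,i_0}=1$, enter) that the negativity condition \eqref{eqn-3-90} holds for the pair $(i,i_0)$ for \emph{every} $1\leq i\leq n-1$, not merely for $n-i_0\leq i\leq i_0$. By \autoref{lem-3-20} and \autoref{lem-3-21} this trivializes \emph{all} the eigenspaces $F^{1,0}_{\olb,i}$ after an \'etale base change and makes every $1$-form from the whole of $F^{1,0}_{\olb}$ factor through a single fibration $\bar f'$, so $g(\olb')\geq \rank F^{1,0}_{\olb}$ and \autoref{lem-3-71} gives $g<8$ at once. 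Your plan only invokes \autoref{prop-3-13}, which controls the partial range $n-i_m\leq i\leq i_m$ and hence only yields $g(\olb')\geq\sum_{i=n-i_m}^{i_m}\rank F^{1,0}_{\olb,i}$; this can be strictly smaller than $\rank F^{1,0}_{\olb}$, so \autoref{lem-3-71} is not available and you are forced into the unfinished case analysis you describe.

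That case analysis has two concrete problems. For part (2), your proposed contradiction with the upper bound \eqref{eqn-3-101-2} on $\rank A^{1,0}_{\olb}$ runs in the wrong direction: the hypothesis $F^{1,0}_{\olb,i_0}=E^{1,0}_{\olb,i_0}$ gives $A^{1,0}_{\olb,i_0}=A^{1,0}_{\olb,n-i_0}=0$, which \emph{decreases} any lower bound on $\rank A^{1,0}_{\olb}$; to contradict an upper bound you need $\rank A^{1,0}_{\olb}$ forced to be large, which is what happens in Step I of Case (II) under the \emph{opposite} assumption $F^{1,0}_{\olb,i}=0$ for $i>n/2$. For part (1), with $\alpha_0=4$ one has $\frac{2g}{n-1}\approx 3$, so the lower bound $g(\olb')\geq\rank F^{1,0}_{\olb,i_m}+\rank F^{1,0}_{\olb,n-i_m}$ is at most about $3$, while the upper bound on $g(\olb')$ coming from $\beta\leq 3$ for a degree-$n$ cyclic cover of $\bbp^1$ is of order $(n-1)/2$, which exceeds $3$ for the relevant $n\geq 7$; no contradiction results. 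The missing idea is precisely that the smallness of $\alpha_0$ (resp.\ of $\rank E^{1,0}_{\olb,i_0}$) upgrades the wedge-vanishing test to the full eigenspace range.
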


 \vspace{1mm}
 \begin{proof}[Proof of \autoref{prop-3-7}]
 Since $g\geq 8$, it follows that $n\geq 7$.
 As before, after a suitable finite base change, we may assume that
 the group $G\cong \mathbb Z/n\mathbb Z$ acts on $\ols$,
 and hence there is an induced action of $G$
 on the associated logarithmic Higgs bundle as well as its subbundles with induced
 eigenspace decompositions as in \eqref{eqn-3-38} and \eqref{eqn-3-33}.
 Since $\alpha_0=4$,
 according to \autoref{prop-3-14}, it follows that $F_{\olb,i}^{1,0}= 0$ for any $i\geq n/2$.
 Moreover, we claim that
 \begin{claim}\label{claim-3-4}
 	If $F_{\olb,i}^{1,0}= 0$ for any $i\geq n/2$, then
 	\begin{enumerate}
 		\item[(1)\,] $n=9$, and hence $g=12$;
 		
 		\item[(2)\,] after a suitable finite \'etale base change,
 		there exists an irregular fibration $\bar f':\,\ols \to \olb'$
 		such that the group $G\cong \mathbb Z/9\mathbb Z$ induces a faithful action on $\olb'$ with $\olb'/G\cong \bbp^1$;
 		
 		\item[(3)\,] the general fiber $\ol F$ admits a cyclic cover of degree $18$ to $\bbp^1$ branched over exactly $4$ points.
 	\end{enumerate}
 \end{claim}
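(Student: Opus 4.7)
The plan is to prove the three conclusions in order, with the technical heart in~(1) and~(2).

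For~(1), the first step is to combine the hypothesis with the symmetry $\rank A^{1,0}_{\olb,i}=\rank A^{1,0}_{\olb,n-i}$ from \autoref{lem-3-42} to express
$$\rank A^{1,0}_{\olb}=\epsilon_n\cdot\rank E^{1,0}_{\olb,n/2}+2\sum_{i>n/2}\rank E^{1,0}_{\olb,i},$$
where $\epsilon_n\in\{0,1\}$ depending on the parity of $n$. Substituting the explicit values of \eqref{eqn-3-35} with $\alpha_0=4$ and comparing against the upper bound of \autoref{prop-3-11}(i), a finite case check on composite $n\geq 9$ (primes being excluded by \autoref{thm-curve-prime} and $n=8$ failing $g\geq 8$) leaves $n=9$ as the only admissible value in the compact case. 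For non-compact $C$, \cite[Corollary\,4.4]{viehweg-zuo-04} gives $\mathbb V_B^u=\mathbb V_B^{tr}$ after a finite \'etale base change, so \autoref{lem-3-12} together with \autoref{lem-3-42} forces $\rank F^{1,0}_{\olb,i}=\rank E^{1,0}_{\olb,i}-\rank E^{1,0}_{\olb,n-i}$ for $i<n/2$, and this function must be $\mathrm{Gal}(\mathbb Q(\xi_n)/\mathbb Q)$-invariant on index orbits. A direct computation shows this invariance fails for every $n\geq 7$; for $n=9$ the orbit $\{1,2,4,5,7,8\}$ of $(\mathbb Z/9\mathbb Z)^\times$ gives the non-constant sums $3,3,1,1,3,3$. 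Hence $C$ is compact and $n=9$, so $g=12$.

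For~(2), the hypothesis together with \autoref{lem-3-42} determines the explicit ranks $\rank F^{1,0}_{\olb,i}=(3,3,1,1,0,0,0,0)$ for $i=1,\dots,8$, so $F_{\olb}^{1,0}\oplus F_{\olb}^{0,1}$ has total rank~$16$. The plan is to produce $\bar f'$ by an argument parallel to the proof of \autoref{prop-3-13}, using the $\mathbb Q$-local subsystem $\bigoplus_{i=1}^{4}(\mathbb V^u_{B,i}\oplus\mathbb V^u_{B,9-i})$, which is stable under both complex conjugation and the $\mathrm{Gal}(\mathbb Q(\xi_9)/\mathbb Q)$-action. After a further finite \'etale base change this subsystem becomes a trivial local system of rank~$16$, identifying an isotrivial abelian sub-scheme of dimension~$8$ in the universal Jacobian of $\bar f$. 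The constancy of this sub-scheme yields a morphism $\bar f':\ols\to\olb'$ distinct from $\bar f$, and uniqueness of the trivialisation forces the $G$-action on $\ols$ to descend to $\olb'$ with $\olb'/G$ rational (since $\ols/G$ is ruled over $\olb$), i.e., $\olb'/G\cong\bbp^1$.

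For~(3), the restriction $\bar f'|_{\ol F}:\ol F\to\olb'$ to a general fibre of $\bar f$ is forced to be a double cover by the dimensions of the isotrivial and variable factors of $\Jac(\ol F)$, and its deck involution $\sigma$ commutes with $G$ since $\bar f'|_{\ol F}$ is $G$-equivariant. The group $\langle G,\sigma\rangle\cong\mathbb Z/18\mathbb Z$ then provides a cyclic cover $\ol F\to\bbp^1$ of degree~$18$, factoring through $\ol F/G\cong\bbp^1$ followed by the quotient by $\sigma$. Parity constraints on the $5$ superelliptic branch points under the $\sigma$-involution (which has exactly $2$ fixed points in $\ol F/G\cong\bbp^1$) force exactly one of them to be $\sigma$-fixed, yielding exactly $4$ downstairs branch points.

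The main obstacle is Step~(2): the vanishing of $F^{1,0}_{\olb,i}$ for $i\geq n/2$ prevents producing any $G$-invariant holomorphic $2$-form by pairing holomorphic eigen-forms of complementary weights, so the Castelnuovo--de Franchis input to \autoref{prop-3-13} is not directly available. One must instead work with the full $\mathbb Q$-rational flat subsystem $F_{\olb}^{1,0}\oplus F_{\olb}^{0,1}$ and extract the fibration from an isotrivial isogeny factor of the relative Jacobian, verifying carefully that this factor is indeed a curve Jacobian onto which $\ols$ admits a dominant morphism and that the $G$-action descends compatibly.
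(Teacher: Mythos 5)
There are two genuine gaps. First, in your step (1) the exclusion of even $n$ does not go through by the numerical comparison you propose. Your parenthetical that $n=8$ fails $g\geq 8$ is false: by \eqref{eqn-3-9} with $n=8$, $\alpha_0=4$ one gets $g=9$. For this case $\alpha=5$, $d=2$, so $\lambda_{8,c}=10$ and the bound \eqref{eqn-3-101-2} reads $\rank A^{1,0}_{\olb}\leq 32/10$, while your formula gives $\rank A^{1,0}_{\olb}=\rank E^{1,0}_{\olb,4}+2\sum_{i>4}\rank E^{1,0}_{\olb,i}=1+2=3$ --- no contradiction. The paper excludes even $n$ by a different input: \autoref{cor-3-4} (resting on \autoref{lem-evenrank}) forces $\rank A^{1,0}_{\olb,n/2}$ to be even, hence zero since $\rank E^{1,0}_{\olb,n/2}=1$, so $F^{1,0}_{\olb,n/2}=E^{1,0}_{\olb,n/2}\neq 0$, contradicting the hypothesis. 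Your non-compact argument is workable in spirit (it parallels the paper's use of \cite[Corollary 4.4]{viehweg-zuo-04} plus \autoref{lem-3-12}), but as stated the invariance check for "every $n\geq 7$" is only asserted, and your sample computation for $n=9$ presupposes the rank values that are only available after (1) is proved.

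Second, and more seriously, your step (2) rests on the assertion that the full rank-$16$ flat subsystem becomes trivial after a finite \'etale base change. For a \emph{compact} Shimura curve this is precisely what fails in general --- the paper emphasizes in the introduction that $F^{1,0}_{\olb}$ "does not have to be trivial, even after any finite base changes" --- and nothing in your argument supplies it for the rank-$3$ pieces $F^{1,0}_{\olb,1},F^{1,0}_{\olb,2}$. The obstacle you flag at the end (no $G$-invariant $2$-form from complementary weights) is real but is circumvented in the paper by pairing weights that do \emph{not} sum to $n$: Deligne's lemma \cite[\S4.2]{deligne-71} trivializes only the rank-one pieces $F^{1,0}_{\olb,3}$ and $F^{1,0}_{\olb,4}$, giving nonzero $\omega\in H^0(\ols,\Omega^1_{\ols})_3$ and $\eta\in H^0(\ols,\Omega^1_{\ols})_4$; then $H^0(\ols,\Omega^2_{\ols})_7=0$ because the Hurwitz--Chevalley--Weil formula gives $H^0(\ol F,\omega_{\ol F})_7=0$ on a general fiber, so $\omega\wedge\eta=0$ and Castelnuovo--de Franchis produces $\bar f'$ directly, with faithfulness of the $G$-action on $\olb'$ following from the presence of both eigenvalues $3$ and $4$ in the pulled-back forms. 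Your step (3) is essentially the paper's argument (double cover, commuting involution, $\Zbb/18\Zbb$), though the paper derives $\deg(\bar f'|_{\ol F})=2$ from a branch-point count rather than from dimensions of Jacobian factors.
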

 Assume  the  claim above for the moment,
it then  follows that $\bar f$ is a universal family of cyclic covers of $\bbp^1$ with group $\wt G\cong \mathbb{Z}/18\mathbb{Z}$,
 since the general fiber $\ol F$ admits a cyclic cover of degree $18$ to $\bbp^1$ branched over exactly $4$ points.
 Hence according to \cite[Theorem\,3.6]{moonen-10},
 $C$ is not a Shimura curve since $g=12$.
 Therefore, it suffices to prove the above claim.

 {\vspace{1mm}\noindent\it Proof of \autoref{claim-3-4}.}
 (1). First of all, we show that $C$ is compact.
 In fact, since $E^{1,0}_{\olb,n-1}=0$ by \eqref{eqn-3-35}, one obtains $E^{1,0}_{\olb,1}=F^{1,0}_{\olb,1}$ by \autoref{lem-3-42}.
 If $C$ is  non-compact,
 then from \cite[Corollary\,4.4]{viehweg-zuo-04} it follows that
 the Higgs subbundle
 $$\left(E^{1,0}_{\olb}\oplus E^{0,1}_{\olb},~\theta_{\olb}\right)_1
 =\left(F^{1,0}_{\olb}\oplus F^{0,1}_{\olb},~0\right)_1$$
 is a trivial Higgs subbundle after a possible \'etale base change.
 In other words, the corresponding local subsystem $\mathbb V_{B,\,1}=\mathbb V_{B,\,1}^{tr}$
 is trivial.
Combine this  with \autoref{lem-3-12},
 it follows that $\mathbb V_{B,\,i}$ is also trivial for any $1\leq i\leq n-1$ and $\gcd(i,n)=1$.
 On the other hand, by \eqref{eqn-3-35}, it is easy to verify that there exists
 $i_0>n/2$ such that $\gcd(i_0,n)=1$ and $\rank E^{1,0}_{\olb,i_0}=1.$
 Hence $F_{\olb,i_0}^{1,0}=E^{1,0}_{\olb,i_0} \neq 0$,
 which is a contradiction to the assumption.

 Next we prove that $n$ is odd;
 otherwise, since $C$ is compact and $\rank E^{1,0}_{\olb,n/2}=1$,
 it follows from \autoref{cor-3-4} that $A^{1,0}_{\olb,n/2}=0$,
 i.e., $F^{1,0}_{\olb,n/2}=E^{1,0}_{\olb,n/2}\neq 0$, which contradicts the assumption.

 Finally, since $F^{1,0}_{\olb,i}=0$ for all $i>n/2$,
 it follows that $A^{1,0}_{\ol B,i}=E^{1,0}_{\olb,i}$ for all $i>n/2$.
 As $n$ is odd, by \eqref{eqn-3-102} and \eqref{eqn-3-35}, one obtains
 \begin{equation*}
 \rank A^{1,0}_{\ol B}=
 2\sum_{i=(n+1)/2}^{n-1}\rank E^{1,0}_{\olb,i}=
 \left\{\begin{aligned}
 &(n-1)/2, &\quad&\text{if~}n=4k+1 \text{~for some~}k>0;\\
 &(n+1)/2, &\quad&\text{if~}n=4k+3 \text{~for some~}k>0.
 \end{aligned}\right.
 \end{equation*}
 Combining this with \eqref{eqn-3-9} and \eqref{eqn-3-101-2},
 we obtain that $n=9$, and hence $g=12$.

 (2).
 Since $F_{\olb,i}^{1,0}= 0$ for any $i>n/2=9/2$,
 by \eqref{eqn-3-35} and \eqref{eqn-3-102}, one obtains that
 \begin{equation*}
 \rank F_{\olb,1}^{1,0}=\rank F_{\olb,2}^{1,0}=3,\quad
 \rank F_{\olb,3}^{1,0}=\rank F_{\olb,4}^{1,0}=1.
 \end{equation*}
 Hence by \cite[\S\,4.2]{deligne-71},
 it follows that after a suitable \'etale base change,
 both $F_{\olb,3}^{1,0}$ and $F_{\olb,4}^{1,0}$ become trivial.
 In other words, one has
 $$\dim H^0\big(\ols,\Omega_{\ols}^{1}\big)_3=\dim H^0\big(\ols,\Omega_{\ols}^{1}\big)_4=1.$$

 Next, we claim that $H^0\big(\ols,\Omega_{\ols}^{2}\big)_7=0$ by Hurwitz-Chevalley-Weil's formula (cf. \cite[Proposition\,5.9]{moonen-oort-13});
 indeed, one has $H^0\big(\ol F,\omega_{\ol F}\big)_7=0$ by Hurwitz-Chevalley-Weil's formula,
 from which it follows that
 $j^*(\gamma)=0$ for any $\gamma\in H^0\big(\ols,\Omega_{\ols}^{2}\big)_7$, where
 $$j^*:~ H^0\big(\ols,\Omega_{\ols}^{2}\big)_7= H^0\big(\ols,\omega_{\ols}\big)_7 \lra H^0\big(\ol F,\omega_{\ol F}\big)_7=0$$
 is the canonical pulling-back and $j:\,\ol F \hookrightarrow \ols$ is the embedding of a general fiber of $\bar f$ into $\ols$.
 Since $\ol F$ is general,
 it follows that $H^0\big(\ols,\Omega_{\ols}^{2}\big)_7=0$ as required.

 Now let $\omega\in H^0\big(\ols,\Omega_{\ols}^{1}\big)_3$ and $\eta \in H^0\big(\ols,\Omega_{\ols}^{1}\big)_4$
 be two non-zero one-forms.
 Since $\omega\wedge\eta \in H^0\big(\ols,\Omega_{\ols}^{2}\big)_7$ by construction,
 it follows that $\omega\wedge\eta=0$.
 Hence by Castelnuovo-de Franchis lemma (cf. \cite[Theorem\,IV-5.1]{bhpv-04}), there exists an irregular fibration $\bar f':\,\ol S \to \ol B'$ such that
 \begin{equation}\label{eqn-3-185}
 	H^0\big(\ol S,\,\Omega^1_{\ol S}\big)_{3} \oplus H^0\big(\ol S,\,\Omega^1_{\ol S}\big)_{4}
 	\subseteq  \big(\bar f'\big)^*H^0\big(\ol B',\,\Omega^1_{\ol B'}\big).
 \end{equation}
 It is clear that such a fibration is unique, and hence the group $G$
 induces an action on $\olb'$.
 Moreover, the induced action is faithful; otherwise
 $$\big(\bar f'\big)^*H^0\big(\ol B',\,\Omega^1_{\ol B'}\big)\subseteq H^0\big(\ol S,\,\Omega^1_{\ol S}\big)^{G_0}
 =\bigoplus_{m_0\,|\,i} H^0\big(\ol S,\,\Omega^1_{\ol S}\big)_i,$$
 which contradicts \eqref{eqn-3-185},
 where $G_0 \leq G$ is the kernel of the action of $G$ on $\olb'$ and $m_0=|G_0|$.
 Finally, since $\ols/G$ is ruled, it follows that $\olb'/G\cong \bbp^1$.
 %
 %
 %
 %
 %
 %

 (3).
 Let $\ol F$ be a general fiber of $\bar f$,
 and consider the restricted map $\bar f'|_{\ol F}:\,\ol F \to \olb'$.
 Since $G\cong \mathbb Z/9\mathbb Z$ acts faithfully on both $\ol F$ and $\olb'$,
 whose quetients are both isomorphic to $\bbp^1$.
 And similar to the proof of \autoref{lem-3-28}, one has the following commutative diagram.
 $$\xymatrix{
 	\ol F \ar[rrr]^-{\bar f'|_{\ol F}} \ar[d]_-{\Pi|_{\ol F}}&&&\ol B' \ar[d]^-{\pi'}  \\
 	\ol\Gamma\ar[rrr]^-{\bar\varphi'|_{\ol \Gamma}}\cong \bbp^1 &&& \ol B'/G\cong \bbp^1
 }$$
 %
 %
 We claim that $\deg(\bar f'|_{\ol F})=2$.
 Indeed, note that the cover $\Pi|_{\ol F}$ has exactly $\alpha=5$ branch points,
 and one checks easily that $\pi'$ admits $\beta\geq 3$ branch points since $g(\olb')>0$.
 Because $\bar f$ is not isotrivial,
 similar to the proof of \eqref{eqn-3-127-1}, one shows that
 $$3=\alpha-2>\deg(\bar f'|_{\ol F})\cdot (\beta-2)\geq \deg(\bar f'|_{\ol F})>1.$$
 Thus $\deg(\bar f'|_{\ol F})=2$ as required.

 Since $\deg(\bar f'|_{\ol F})=2$,
 it induces an involution $\tau_0$ on $\ol F$, such that $\olb'=\ol F/\langle\tau_0\rangle$.
 Let $\wt G \subseteq \Aut(\ol F)$ be the subgroup generated by $G$ and $\tau_0$.
 As $\bar f'|_{\ol F}$ is equivariant with respect to $G$,
 it follows that $\tau_0$ commutes with $G$.
 Hence $\wt G$ is cyclic group of order $|\wt G|=18$.
 Moreover, by considering the composition map
 $\pi'\circ \big(\bar f'|_{\ol F}\big):\,\ol F \to \bbp^1$, one checks easily that
 $\pi'\circ \big(\bar f'|_{\ol F}\big)$ is a cyclic cover branched over exactly $4$ points.
 This completes the proof.
 \end{proof}

 \begin{proof}[Proof of \autoref{claim-3-6}]
 (i).
 We prove by contradiction.
 Assume that $\rank F^{1,0}_{\olb,2n/3}>0$.
 As $3~|~n$ and $(n,\alpha_0)$ belongs to \eqref{eqn-3-21}, it follows that $5\leq \alpha_0\leq 9$.
 According to \autoref{prop-3-6},
 the image $\rho_{n,3}(C)\subseteq \mathcal{TS}_{g_1,3}$ is still a Shimura curve,
 where $\rho_{n,3}$ is defined in \eqref{eqn-3-123}. Moreover, by \eqref{eqn-3-9(1)} one gets
 $$g_1=\left\{\begin{aligned}
 &\alpha_0-1, &\quad&\text{if~}3{\not|}~\alpha_0;\\
 &\alpha_0-2, && \text{if~}3\,|\,\alpha_0.
 \end{aligned}\right.$$
 Let $\bar f_1:\,\ols_1 \to \olb$
 be the family of $3$-superelliptic curves associated to $\rho_{n,3}(C)\subseteq \mathcal{TS}_{g_1,3}$,
 and denote by  $\left(\wt E^{1,0}_{\olb}\oplus \wt E^{0,1}_{\olb},~\wt \theta_{\olb}\right)$
 the corresponding Higgs bundle associated to $\bar f_1$.
 Then it follows from \autoref{lem-3-18}, whose proof is given later in Section 4, that we have the isomorphism
 $$\wt E_{\olb,2}^{1,0} \cong  E_{\olb,2n/3}^{1,0}$$
 Hence $\rank \wt F^{1,0}_{\olb,2} >0$ by our hypothesis, where $\wt F^{1,0}_{\olb}\subseteq \wt E^{1,0}_{\olb}$ is the flat part.
 Therefore, by \autoref{prop-3-13} for $\bar f_1$,
 after a suitable \'etale base change,
 there exists a fibration $\bar f':\,\ols_1 \to \olb'$ different from $\bar f_1$ such that
 $$\begin{aligned}
 g(\olb')\geq \rank \wt F^{1,0}_{\olb,1}+\rank \wt F^{1,0}_{\olb,2}
 =~&\left(\rank \wt E^{1,0}_{\olb,1}-\rank \wt E^{1,0}_{\olb,2}\right)+2\,rank \wt F^{1,0}_{\olb,2}\\
 \geq~&\left(\rank \wt E^{1,0}_{\olb,1}-\rank \wt E^{1,0}_{\olb,2}\right)+2.
 \end{aligned}$$
 This is a contradiction to \eqref{eqn-3-131} since $5\leq \alpha_0\leq 9$.

 \vspace{1mm} (ii).
 This follows from \autoref{prop-3-14}.

 \vspace{1mm} (iii).
 By \autoref{prop-3-13}, it suffices to prove that
 \begin{equation}\label{eqn-3-207}
 	i_m>n/2.
 \end{equation}
 We divide the proof into two cases.

 Consider first the case when $C$ is non-compact.
 In this case, the proof is similar to that of \autoref{lem-3-72}.
 By \cite[Corollary\,4.4]{viehweg-zuo-04},
 we may assume that the unitary local subsystem $\mathbb{V}_{B}^u\subseteq \mathbb{V}_{B} \otimes \mathbb{C}$
 is trivial after a suitable finite base change, i.e., $\mathbb{V}_{B}^u=\mathbb{V}_{B}^{tr}$.
 We proceed along the possible values of $n$:
 \begin{list}{}
 	{\setlength{\labelwidth}{6mm}
 		\setlength{\leftmargin}{8mm}
 		\setlength{\itemsep}{2.5mm}}
 	\item[(1).] Using \autoref{lem-3-12} and \autoref{lem-3-42}, one proves easily that
 	$\rank F^{1,0}_{\olb,n/2+1}>0$ if $n=8$ or $10$;
 	and that $\rank F^{1,0}_{\olb,(n+1)/2}>0$ if $n=9$, $15$ or $25$.
 	
 	\item[(2).] If $n=6$, then by \autoref{lem-3-15} and \autoref{prop-3-6}, $\rho_{6,3}(C)$ is again a non-compact Shimura curve,
 	where $\rho_{6,3}$ is defined in \eqref{eqn-3-123}.
 	Hence by \autoref{lem-3-18}, it suffices to prove $\rank \wt F^{1,0}_{\olb,2}>0$,
 	where $\wt F^{1,0}_{\olb}\oplus \wt F^{0,1}_{\olb}$ is denoted to be the flat subbundle
 	associated to the new family representing $\rho_{6,3}(C)$.
 	This has already been discussed in \autoref{lem-3-72}:
 	suppose that $\rank \wt F^{1,0}_{\olb,2}=0$.
 	Then using \eqref{eqn-3-176} and \eqref{eqn-3-35}, one derives a contradiction to \eqref{eqn-3-101-1}.
 	
 	\item[(3).] If $n=4$, we will derive a contradiction when $\rank  F^{1,0}_{\olb,3}=0$.~
 	By \autoref{lem-3-15} and \autoref{prop-3-6}, $\rho_{4,2}(C)$ is again a non-compact Shimura curve,
 	where $\rho_{4,2}$ is defined in \eqref{eqn-3-123}.
 	Moreover, $\rho_{4,2}(C)$ is contained generically in the hyperelliptic Torelli locus.
 	Hence according to the proof of \cite[Theorem\,1.4\,(ii)]{lu-zuo-14}, one has
 	$\rank \wt F^{1,0}_{\olb}\leq 1$, where $\wt F^{1,0}_{\olb}\oplus \wt F^{0,1}_{\olb}$ is denoted to be the flat subbundle
 	associated to the new family representing $\rho_{4,2}(C)$.
 	Thus $\rank F^{1,0}_{\olb,2}=\rank \wt F^{1,0}_{\olb}\leq 1$ by \autoref{lem-3-18}.
 	Since we assume that $\rank  F^{1,0}_{\olb,3}=0$, by \eqref{eqn-3-102} and \eqref{eqn-3-35} one has
 	$$\rank F^{1,0}_{\olb}=\rank F^{1,0}_{\olb,1}+\rank F^{1,0}_{\olb,2}\leq \big(\rank E^{1,0}_{\olb,1}-\rank E^{1,0}_{\olb,3}\big)+1.$$
 	Equivalently, one has
 	$$\rank A^{1,0}_{\olb}\geq 2\,\rank E^{1,0}_{\olb,3}+\rank E^{1,0}_{\olb,2}-1.$$
 	It is clear that $q_{\bar f}=\rank F^{1,0}_{\olb}\neq 0$.
 	According to \eqref{eqn-3-35}, the above bound on $\rank A^{1,0}_{\olb}$ gives a contradiction to \eqref{eqn-3-101-1}.
 \end{list}

 Consider next the case when $C$ is compact.~
 In this case, we prove \eqref{eqn-3-207} by contradiction.
 Assume that $F^{1,0}_{\olb,i}=0$ for all $i>n/2$.
 Then $A^{1,0}_{\ol B,i}=E^{1,0}_{\ol B,i}$ for any $i>n/2$.
 Combing this with \eqref{eqn-3-102}, one obtains
 \begin{equation*}
 \rank A^{1,0}_{\ol B}=\left\{
 \begin{aligned}
 &2\sum_{i=(n+1)/2}^{n-1}\rank E^{1,0}_{\ol B,i}, &&\text{if~}2{\not|}~n;\\
 &\rank A^{1,0}_{\ol B,n/2}+2\sum_{i=(n+2)/2}^{n-1}\rank E^{1,0}_{\ol B,i}, &&\text{if~}2\,|\,n.
 \end{aligned}\right.
 \end{equation*}
 We claim that $\rho_{n,2}(C)$ is still a Shimura curve if $2\,|\,n$.
 In fact, if $\rho_{n,2}(C)$ were not a Shimura curve, it follows easily
 from \autoref{prop-3-6} and \autoref{lem-3-19} that $n=6$ and $\alpha_0=8$.
 We remark here that we apply \autoref{prop-3-6} twice
 and use the fact that $\rho_{4,2}\circ \rho_{8,4}=\rho_{8,2}$ when excluding the case when $n=8$.
 For the case when $n=6$ and $\alpha_0=8$,
 by \autoref{lem-3-51} and \autoref{lem-3-52} together with \autoref{lem-3-19}\,(v), one obtains a contradiction.
 Thus we may assume that $\rho_{n,2}(C)$ is still a Shimura curve when $2\,|\,n$.
 Combining the above equation with \autoref{cor-3-4} and \eqref{eqn-3-9}, we obtain
 a contradiction to \eqref{eqn-3-101-2}.
 In fact, let $\nu=\frac{4(g-1)}{\rank A_{\olb}^{1,0}}$. Then
 \noindent{\begin{center}\label{stand-notation}
 \renewcommand{\arraystretch}{1.2}
 \begin{tabular}{|c|c|c|c|c|c|c|c|c|}\hline
 &$n=25$&$n=15$&$n=10$&$n=9$&$n=8$&$n=6$&$n=4$ \\\hline
 $\alpha_0=5$ &$\nu=10$         & $\nu=10$&$\nu=10$& $\nu=\frac{15}{2}$ & $\nu\leq \frac{26}{3}$& $\nu\leq 9$&---\\\hline
 $\alpha_0=6$ & ---             &  ---    &  ---   & $\nu=9~$\, &  $\nu\leq 8$& $\nu\leq 9$ &---\\\hline
 $\alpha_0=7$ & ---             &  ---    &  ---   & $\nu=\frac{23}{3}$&  ---   & $\nu\leq 7$ & $\nu\leq 8$\\\hline
 $\alpha_0=8$ & ---             &  ---   &  ---   & $\nu=9~$\, &  ---   & $\nu\leq 8$& $\nu\leq 8$ \\\hline
 $\alpha_0=9$ & ---             &  ---    &  ---   & $\nu=9~$\, &  ---   & $\nu\leq 9$& $\nu\leq \frac{22}{3}$\\\hline
 $\alpha_0=10$& ---             &  ---    &  ---   &  ---   &  ---   &  ---   & $\nu\leq 8$\\\hline
 $\alpha_0=11$& ---             &  ---    &  ---   &  ---   &  ---   &  ---   & $\nu\leq 7$\\\hline
 $\alpha_0=12$& ---             &  ---    &  ---   &  ---   &  ---   &  ---   &$\nu\leq 7$\\\hline
 $\alpha_0=13$& ---             &  ---    &  ---   &  ---   &  ---   &  ---   &$\nu\leq \frac{34}{5}$\\\hline
 $\alpha_0=14$& ---             &  ---    &  ---   &  ---   &  ---   &  ---   &$\nu\leq \frac{36}{5}$\\\hline
 $\alpha_0=15$& ---             &  ---   &  ---   &  ---   &  ---   &  ---   &$\nu\leq 8$\\\hline
 $\alpha_0=16$& ---             &  ---    &  ---   &  ---   &  ---   &  ---   &$\nu\leq 8$\\\hline
 \end{tabular}
 \end{center}}
 \noindent
 This contradicts \eqref{eqn-3-101-2}.

 \vspace{1mm} (iv).
 From the uniqueness of the fibration as in (iii), it follows that $G$ admits an induced action on $\olb'$.
 Since $\gcd(n,i_m)=1$, it follows from \eqref{eqn-3-106} that this induced action is faithful.
 Moreover, $\olb'/G\cong \bbp^1$ as the quotient $\ols/G$ is ruled.

 The equality \eqref{eqn-3-43} follows from a similar argument as the proof of \autoref{claim-3-31}.
 Indeed, by \eqref{eqn-3-106} one has
 $$H^0\big(\ol S,\,\Omega^1_{\ol S}\big)_{i_m} \oplus H^1\big(\ol S,\,\mathcal{O}_{\ols}\big)_{i_m}=
 \left(H^1(\ols,\,\mathbb Q) \otimes \mathbb{C}\right)_{i_m}
 \subseteq  \big(\bar f'\big)^*\left(H^1(\olb',\,\mathbb Q)\otimes \mathbb{C}\right),$$
 and the eigen-subspaces $\left(H^1(\ols,\,\mathbb Q) \otimes \mathbb{C}\right)_{i}$'s for $1\leq i\leq p-1$ with $\gcd(n,i)=1$
 are permuted by this action of the arithmetic Galois subgroup ${\rm Gal}\big(\mathbb{Q}(\xi_n)/\mathbb{Q}\big)$,
 where $\xi_n$ is a primitive $n$-th root of the unit.
 Hence
 $$\bigoplus_{1\leq i\leq n \atop \gcd(i,n)=1}\left(H^1(\ols,\,\mathbb Q) \otimes \mathbb{C}\right)_{i}
 \subseteq \big(\bar f'\big)^*\left(H^1(\olb',\,\mathbb Q)\otimes \mathbb{C}\right).$$
 By taking the $(1,0)$-part, we proves \eqref{eqn-3-43}.

 The inequality \eqref{eqn-3-184} follows immediately from \eqref{eqn-3-43} together with \eqref{eqn-3-106},
 by noting also that for any $1\leq \{i,j\}\leq n-1$ with $\gcd(n,i)=\gcd(n,j)=1$ one has
 $$\dim H^0\big(\ol B',\,\Omega^1_{\ol B'}\big)_{i}+\dim H^0\big(\ol B',\,\Omega^1_{\ol B'}\big)_{n-i}
 =\dim H^0\big(\ol B',\,\Omega^1_{\ol B'}\big)_{j}+\dim H^0\big(\ol B',\,\Omega^1_{\ol B'}\big)_{n-j}.$$

 Finally, we prove by contradiction that $C$ is compact.
 Assume that $C$ is non-compact.
 If $n=4$ or $6$, then $i_m=n-1$ since $i_m>n/2$ and $\gcd(n,i_m)=1$.
 Hence by \eqref{eqn-3-106} one has
 $g(\olb')\geq \rank F^{1,0}_{\olb}$,
 This gives a contradiction to \autoref{lem-3-71} as $g\geq 8$.
 For the rest cases in \eqref{eqn-3-21},
 one checks by \eqref{eqn-3-35} easily that $\rank E^{0,1}_{\olb,1}=0$,
 combining which together with \autoref{lem-3-42} and \cite[Corollary\,4.4]{viehweg-zuo-04},
 one obtains that the Higgs subbundle
 $$\left(E^{1,0}_{\olb} \oplus E^{0,1}_{\olb},~\theta\right)_1
 =\left(F^{1,0}_{\olb}\oplus F^{0,1}_{\olb},~0\right)_1$$
 is a trivial Higgs subbundle after a possible \'etale base change.
 In other words, the corresponding local subsystem $\mathbb V_{\olb,\,1}$ is trivial.
 By a similar argument as in \autoref{lem-3-72},
 one shows that $\mathbb V_{\olb,\,i}$ is also trivial for any $1\leq i\leq n-1$ and $\gcd(i,n)=1$.
 Hence
 $$g(\olb')\geq \sum_{1\leq i\leq n \atop \gcd(i,n)=1} \rank E^{1,0}_{\olb,i}=\frac{\varphi(n)}{2}\cdot \rank E^{1,0}_{\olb,1}.$$
 Since $C$ is non-compact, $\Delta_{nc}\neq \emptyset$.
 Similar to the proof of \autoref{lem-3-71}, one derives also a contradiction to \eqref{eqn-3-131}
 when restricting $\bar f'$ to a fiber $\ol F$ over $\Delta_{nc}$.
 This completes the proof.
 \end{proof}

 \section{Family of superelliptic curves}\label{sec-superelliptic-family}
 In this section we prove some technical properties of semi-stable families of superelliptic curves,
 which are used in the last section,
 mainly including:
 \begin{enumerate}
 	\item an upper bound on the ample part in the Hodge bundle, i.e., the proof of \autoref{prop-3-11};
 	\item the existence of a fibration structure, i.e., the proof of \autoref{prop-3-13};
 	\item transitivity of Shimura curves, i.e., the proof of \autoref{prop-3-6}.
 \end{enumerate}

 In \autoref{sec-4-pre} we recall some basic facts and notations on families of curves.
 In \autoref{subsec-3-2} we investigate the invariants of a superelliptic family.
 In \autoref{subsec-3-4} we study the behavior of the flat part contained in the associated logarithmic Higgs bundle.
 In \autoref{subsec-3-x} we study the irregular superelliptic family.
 Finally in Sections\,\ref{sec-pf-3-11}-\ref{sec-pf-3-14} we prove the technical results used in the last section.

%

 \subsection{Preliminaries}\label{sec-4-pre}
 In the subsection, we collect some generalitie on families of curves.
 We refer to \cite{bhpv-04} for more details.

 Recall that a semi-stable  (resp. stable) curve is  a complete connected reduced nodal  curve such that
 each rational component intersects with the other components at $\geq 2$  (resp. 3) points.
 A semi-stable (resp. stable)  family of curves is a
 flat projective morphism  $\bar f:\,\ol S\to \ol B$
 from a projective surface $\ol S$ to a smooth projective curve $\ol B$
 with connected fibres such that all the singular fibres of $\bar f$ are
 semi-stable (resp. stable) curves. Moreover, $\bar f$ is said to be
 hyperelliptic if a general fibre of $\bar f$ is a hyperelliptic curve;
 and to be isotrivial if  all its smooth fibres are isomorphic to each other.
 From now on, we  assume
 that $\bar f:\,\ol S \to \ol B$ is a semi-stable family  of curves of genus
 $g\geq 2$ with singular fibres $\Upsilon\to\Delta$ and     $\ol S$ is
 smooth.

 Denote by $\omega_{\ol S/\ol B}=\omega_{\ol S}\otimes \bar f^*\omega_{\ol B}^{\vee}$ the
 relative canonical sheaf of $\bar f$. Let $\chi(\mathcal O_{\ol S})$
 be the Euler characteristic of the structure sheaf,
 and $\chit(\cdot)$ be the topological Euler characteristic.
 Consider the following relative invariants:
 \begin{equation}\label{defofrelativeinv}
 \left\{
 \begin{aligned}
 &\omega_{\ol S/\ol B}^2=\omega_{\ol S}^2-8(g-1)\big(g(\olb)-1\big),\\
 &\delta(\bar f)=\chit(\ol S)-4(g-1)\big(g(\olb)-1\big)=\sum_{\ol F\in \Upsilon}\delta(\ol F),\\
 &\deg \bar f_*\omega_{\ol S/\ol B}=\chi(\mathcal O_{\ol S})-(g-1)\big(g(\olb)-1\big),
 \end{aligned}\right.
 \end{equation}
 where $\delta(\ol F)$ is the number of nodes contained in the fiber $\ol F$.
 All the invariants in \eqref{defofrelativeinv} are nonnegative and
 satisfy the Noether's formula:
 \begin{equation}\label{formulanoether}
 12\deg \bar f_*\omega_{\ol S/\ol B}=\omega_{\ol S/\ol B}^2+\delta(\bar f).
 \end{equation}
 Since $\bar f$ is semi-stable, we also have the identity
 \begin{equation}\label{eqnomega=Omega}
 \omega_{\ol S/\ol B}\cong \Omega^1_{\ol S/\ol B}(\log\Upsilon).
 \end{equation}
 Recall also that
 $$\delta(\ol F)=\sum_{i=0}^{[g/2]} \delta_i(\ol F),$$
 where $\delta_i(\ol F)$ is the number of nodes of type $i$ contained in $\ol F$.
 Here we say a singular
 point $q$ of $\ol F$ to be  of type $i\in \big[1,  [g/2]\big]$ (resp. 0) if the
 partial normalization of $\ol F$ at $q$ consists of two connected
 components of arithmetic genera $i$ and $g-i$ (resp. is connected).
 By definition, a singular fiber $\ol F$ has a compact Jacobian if and only if $\delta_0(\ol F)=0$.
 Define $\delta_i(\bar f)=\sum\limits_{\ol F\in \Upsilon} \delta_i(\ol F)$,
 $\delta_h(\ol F)=\sum\limits_{i=2}^{[g/2]} \delta_i(\ol F)$,
 and $\delta_h(\bar f)=\sum\limits_{\ol F\in \Upsilon} \delta_h(\ol F)$.
 Then
 \begin{equation}\label{formulaofdelta_f}
 \left\{\begin{aligned}
 \delta(\ol F)&\,=\sum_{i=0}^{[g/2]}\delta_i(\ol F)=\delta_0(\ol F)+\delta_1(\ol F)+\delta_h(\ol F);\\
 \delta(\bar f)&\,=\sum_{i=0}^{[g/2]}\delta_i(\bar f)=\delta_0(\bar f)+\delta_1(\bar f)+\delta_h(\bar f).
 \end{aligned}\right.
 \end{equation}

 \subsection{Invariants for a family of superelliptic curves}\label{subsec-3-2} In this section we define the local invariants for families of superelliptic curves, and we show that the relative invariants  \eqref{defofrelativeinv} of such a family can be expressed as
 suitable combinations of the local invariants.\vspace{1mm}

 Recall from  \autoref{def-superelliptic} that, an $n$-superelliptic curve $\ol F$ has an affine equation of the form
 $$y^n=F(x)$$ with $F$ a separable polynomial.
 We always denote by $\alpha_0=\deg(F)$.
 As we have seen in \eqref{eqn-3-9}, the genus $g=g(\ol F)$ is determined by $n$ and $\alpha_0$.
 Moreover, there is an induced $n$-superelliptic cover $\pi:\,\ol F\to \bbp^1$,
 which is a cyclic cover with covering group $G\cong \mathbb Z/n\mathbb Z$,
 branch locus $R$, and local monodromy $a$ around $R$ as in \autoref{rems-3-1}\,(i).
Let $\alpha$ be given by \eqref{eqn-3-45}.
 Then $n\,|\,\alpha$ if and only if $n\,|\,\alpha_0$ or $n\,|\,(\alpha_0+1)$. Hence from \autoref{rems-3-1}\,(i) we see that
 \begin{equation}\label{eqn-3-80}
 \text{the cover $\pi$ is branched over $\infty$ with local monodromy $a_{\infty}\neq 1$ if and only if $n{\not|}~\alpha$.}
 \end{equation}


 {\vspace{1mm}}We first study local families of semi-stable superelliptic curves,
  i.e., restriction of a family $\bar f:S\ra T$ to an open subset $T_0\subset T$
  with $T_0\isom\{t\in\Cbb:|t|<1\}$ (for the analytic topology).
 \begin{lemma}\label{lem-3-2}
 	Let $\bar f_0:\,S_0 \to T_0$
 	be a local family of semi-stable superelliptic curves, with $T_0$ identified with  the open unit disk in $\Cbb$.
 	Assume that $\ol F_0=\bar f_0^{-1}(0)$ is the unique singular fiber.
 	Then after a suitable base change, the following statements hold:
 	\begin{enumerate}
 		\item the group $G\cong \mathbb Z/n\mathbb Z$ acts on $S_0$
 		such that it reduces to an $n$-superelliptic automorphism group
 		on the general fiber when taking the restriction,
 		and that the quotient map
 		$\Pi_0:\,S_0 \to S_0/G$ branches over $\alpha$ disjoint sections $\{D_i\}_{i=1}^{\alpha}$ of $\bar \varphi_0$
 		plus certain nodes in $\ol \Gamma_0=\bar \varphi_0^{-1}(0)$, where $\alpha$ is given in \eqref{eqn-3-45},
 		and $\bar \varphi_0:\,S_0/G \to T_0$ is the induced family;
 		\item the fiber $\ol \Gamma_0$ with $\ol \Gamma_0\cap \{D_i\}_{i=1}^{\alpha}$ as its marked points
 		is a semi-stable $\alpha$-pointed rational curve;
 		\item there are at least two marked points on $\ol \Gamma_0'$,
 		where $\ol \Gamma_0'$ is any one of the two connected components of $\ol \Gamma_0'\setminus\{y\}$ and $y$ is any node in $\ol \Gamma_0$.
 	\end{enumerate}
 \end{lemma}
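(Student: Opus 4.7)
The plan is to first lift the $n$-superelliptic $G$-action from the generic fiber to the entire surface $S_0$ by a suitable finite base change, and then to analyze the quotient $S_0/G \to T_0$ as a family of pointed semi-stable rational curves. The three claimed properties will be read off from this quotient together with the semi-stability of $\ol F_0$.

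For the first step, the generic fiber of $\bar f_0$ is an $n$-superelliptic curve and hence carries the canonical $G \cong \mathbb Z/n\mathbb Z$-action. Following the discussion in \autoref{rems-3-1}(iii), a choice of generator of $G$ determines a rational section of the relative automorphism scheme $\mathrm{Aut}(S_0/T_0)$, which extends to a global section after a suitable finite base change $T_0' \to T_0$ (not necessarily \'etale above $0$). Pulling back, resolving singularities, and contracting $(-1)$-curves in fibers, we obtain a new semi-stable family still denoted $\bar f_0: S_0 \to T_0$ on which $G$ acts fibrewise with the prescribed restriction to the generic fiber. The quotient $\bar\varphi_0: S_0/G \to T_0$ is then a family of semi-stable rational curves, so the central fiber $\ol\Gamma_0$ is a tree of $\mathbb P^1$'s. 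By \eqref{eqn-3-46} and \eqref{eqn-3-80}, the ramification locus of $\Pi_0$ meets the generic fiber in exactly $\alpha$ points, and their closures give $\alpha$ horizontal divisors on $S_0/G$. After a further base change to separate limit coincidences (with subsequent minimal resolution), these become disjoint sections $D_1, \ldots, D_\alpha$ of $\bar\varphi_0$. Since $\Pi_0$ is \'etale away from $\cup D_i$ and the nodes of $\ol\Gamma_0$, assertion (1) follows.

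To establish (2) and (3), I would argue by contradiction using the semi-stability of $\ol F_0$. Each component of $\ol\Gamma_0$ is rational, so semi-stability of the $\alpha$-pointed curve amounts to each component carrying at least two special points (markings or nodes). For (3), suppose removing some node $y \in \ol\Gamma_0$ yields a connected component $\ol\Gamma_0'$ carrying at most one marked point. Let $\Sigma' \subseteq \ol F_0$ be the preimage of the closure of $\ol\Gamma_0'$ under $\Pi_0$. Analyzing the local structure of the cyclic cover at $y$, using the local monodromy data along each component of $\ol\Gamma_0$ and the ramification indices at $y$, one shows that $\Sigma'$ must contain a rational irreducible component of $\ol F_0$ meeting the rest of $\ol F_0$ in at most one point, contradicting the semi-stability of $\ol F_0$. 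Statement (2) follows from the same analysis applied directly to single components. The main technical obstacle is this explicit case analysis at $y$: one must track how the local monodromy from \eqref{eqn-3-46} assembles across the two branches at $y$ and determine the number, ramification type, and arithmetic genera of the preimage components, since subgroups of $G$ may fix subsets of $\Pi_0^{-1}(y)$ differently depending on whether the branches at $y$ lie in the branch locus.
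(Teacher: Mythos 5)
Your proposal follows essentially the same route as the paper: lift the $G$-action after a finite base change as in \autoref{rems-3-1}\,(iii), and deduce (2) and (3) by showing that a violation would force $\ol F_0$ to contain a rational component meeting the rest of the fiber in at most one point (a $(-1)$-curve), contradicting semi-stability. The ``technical obstacle'' you flag is in fact immediate: once $\ol\Gamma_0'$ has at most one mark, semi-stability of the pointed tree reduces you to a single extremal component $C_0$ with exactly one node and one mark, and any component of $\Pi_0^{-1}(C_0)$ is then automatically a smooth rational curve with a unique point over the node, so no monodromy bookkeeping is needed.
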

 \begin{proof}
 	The first two statements are clear.
 	For the third one, first it is clear that $\ol \Gamma_0'$ contains at least one marked point.
 	Hence it suffices to derive a contradiction if $\ol \Gamma_0'$ contains exactly one marked point.
 	If this indeed occurs, then there is an irreducible component $C_0\subseteq \ol \Gamma_0'\subseteq \ol \Gamma_0$ such that
 	$C_0$ intersects $\ol \Gamma_0\setminus C_0$ at only one point and that there is a unique marked point on $C_0$.
 	It follows that $E_0$ is still a smooth rational curve with one intersection point
 	with $\ol F_0\setminus E_0$, i.e., $E_0$ is a $(-1)$-curve,
 	where $E_0\subseteq \ol F_0$ is any component in the inverse image of $C_0$.
 	This implies that the fiber $\ol F_0$ is not semi-stable, which is absurd.
 \end{proof}

 \begin{definition}\label{def-3-1}
 	Let $\bar f_0:\,S_0 \to T_0\triangleq \big\{t\,\big|\,|t|<1\big\}$
 	be a local family of semi-stable $n$-superelliptic curves
 	with $\ol F_0=\bar f_0^{-1}(0)$ as the unique singular fiber. The index of a node $x\in{\ol F}_0$, denoted as ${\rm index}(x)$, is defined as follows:
 	
 	(i). Assume first that $\bar f_0$ satisfies the three statements in \autoref{lem-3-2}.
 	Let $\ell=|G\cdot x|$ be the number of points in the $G$-orbit of $x$,
 	and $y\in \ol \Gamma_0=\ol F_0/G$ be the image of $x$.
 	Assume that $\ol \Gamma_0\setminus \{y\}=\ol \Gamma_0'\cup \ol \Gamma_0''$
 	such that $\ol \Gamma_0'$ (resp. $\ol \Gamma_0''$) contains $\gamma$
 	(resp. $\alpha-\gamma$) marked points with $\alpha-\gamma\geq \gamma$.
 	In the case when $n{\not|}~\alpha$,
 	the section $D_{\alpha}$ is assumed to be the one whose local monodromy is equal to $a_{\infty}$,
 	where $a_{\infty}$ \big($\neq 1$ by \eqref{eqn-3-80}\big) is the local monodromy of $\infty$ as in \eqref{eqn-3-46};
 	and we always assume that $p_{\alpha}:=D_{\alpha}\cap \ol \Gamma_0\in \ol \Gamma_0'$ if $\gamma=\alpha/2$.
 	Then we define ${\rm index}(x)$ to be the triple $(\gamma,\,\ell,\,k)$ with
 	$$
 	k=\left\{\begin{aligned}
 	&0, &\quad&\text{if $n\,|\,\alpha$, or if $n{\not|}~\alpha$ and $p_{\alpha} \in \ol \Gamma_0''$};\\
 	&1, &&\text{if $n{\not|}~\alpha$ and $p_{\alpha} \in \ol \Gamma_0'$}.
 	\end{aligned}\right.$$
  \begin{center}
  	\setlength{\unitlength}{1.4mm}
  	\begin{picture}(80,37)
  	
  	\qbezier(30,31)(0,33)(30,35)
  	\qbezier(0,31)(30,33)(0,35)
  	
  	\multiput(15,24)(0,1){4}{\circle*{0.5}}
  	
  	\qbezier(30,19)(0,21)(30,23)
  	\qbezier(0,19)(30,21)(0,23)
  	
  	\put(0,3){\line(5,1){20}}
  	\put(30,3){\line(-5,1){20}}
  	
  	\put(15,15){\vector(0,-1){6}}
  	
  	\put(15,6){\circle*{1}}
  	\put(15,21){\circle*{1}}
  	\put(15,33){\circle*{1}}
  	
  	\put(14,3.5){$y$}
  	\put(12,17.5){$x_1=x$}
  	\put(13,30){$x_{\ell}$}
  	
  	\put(31,2){$\ol\Gamma_0''$}
  	\put(-3,2){$\ol\Gamma_0'$}
  	
  	\multiput(5,4)(2.5,0.5){3}{\circle*{0.8}}
  	\put(25,4){\circle*{1}}
  	\multiput(25,4)(-2.5,0.5){3}{\circle*{0.8}}
  	\put(25,6){$p_{\alpha}$}
  	
  	\put(4,2){\tiny $\gamma$ points}
  	\put(17,2.4){\tiny $\alpha-\gamma$}
  	\put(23,2){\tiny points}
  	
  	\put(16,11){$\Pi_0$}
  	
  	\put(-7,-5){typical case when ${\rm index}(x)=(\gamma,\,\ell,\,0)$;}

  	\qbezier(80,31)(50,33)(80,35)
  	\qbezier(50,31)(80,33)(50,35)
  	
  	\multiput(65,24)(0,1){4}{\circle*{0.5}}
  	
  	\qbezier(80,19)(50,21)(80,23)
  	\qbezier(50,19)(80,21)(50,23)
  	
  	\put(50,3){\line(5,1){20}}
  	\put(80,3){\line(-5,1){20}}
  	
  	\put(65,15){\vector(0,-1){6}}
  	
  	\put(65,6){\circle*{1}}
  	\put(65,21){\circle*{1}}
  	\put(65,33){\circle*{1}}
  	
  	\put(64,3.5){$y$}
  	\put(62,17.5){$x_1=x$}
  	\put(63,30){$x_{\ell}$}
  	
  	\put(81,2){$\ol\Gamma_0''$}
  	\put(47,2){$\ol\Gamma_0'$}
  	
  	\multiput(55,4)(2.5,0.5){3}{\circle*{0.8}}
  	\put(55,4){\circle*{1}}
  	\multiput(75,4)(-2.5,0.5){3}{\circle*{0.8}}
  	\put(52.5,5.5){$p_{\alpha}$}
  	
  	\put(54,2){\tiny $\gamma$ points}
  	\put(67,2.4){\tiny $\alpha-\gamma$}
  	\put(73,2){\tiny points}
  	
  	\put(66,11){$\Pi_0$}

  	\put(43,-5){typical case when ${\rm index}(x)=(\gamma,\,\ell,\,1)$.}
  	\end{picture}
  \end{center}
  \vspace{8mm}

 	
 	(ii). In the general case, we choose any base change $T_0'\ra T_0$ such that the family $\bar f_0':\,S_0'\to T_0'$ obtained by pulling-back
 	satisfies the three statements in \autoref{lem-3-2} and let $x'\in \ol F_0'$ be any node over $x$.
 	Then we define ${\rm index}(x):={\rm index}(x')$.
 	One checks that the definition is independent on the choices of the base change and the node $x'$.
 \end{definition}

 \begin{definition}\label{def-3-4}
 	(i). For any singular fiber $\ol F$ in a semi-stable family $\bar f:\, \ol S \to \ol B$
 	of $n$-superelliptic curves of genus $g\geq 2$,
 	we define the singularity index $s_{\gamma,\ell}(\ol F)$ (resp. $s_{\gamma,\ell}'(\ol F)$) of $\ol F$ to be the number of nodes in $\ol F$
 	with index equal to $(\gamma,\ell,0)$ (resp. $(\gamma,\ell,1)$).
 	
 	(ii). For a semi-stable family $\bar f:\, \ol S \to \ol B$ of $n$-superelliptic curves of genus $g\geq 2$,
 	we define the singularity indices of $\bar f$ as
 	$$s_{\gamma,\ell}(\bar f)=\sum s_{\gamma,\ell}(\ol F);\quad\qquad
 	s_{\gamma,\ell}'(\bar f)=\sum s_{\gamma,\ell}'(\ol F).$$
 	Here the sum takes over all singular fibers (which are finitely many) of $\bar f$.
 	If there is no confusion, we simply denote by $s_{\gamma,\ell}=s_{\gamma,\ell}(\bar f)$
 	and $s_{\gamma,\ell}'=s_{\gamma,\ell}'(\bar f)$.
 \end{definition}

 \begin{remarks}\label{rem-3-1}
 	(i). By the definition together with \autoref{lem-3-2},
 	the index $(\gamma,\ell,k)$ of a node $x$ satisfies
 	that $2\leq \gamma\leq [\alpha/2]$, and that
 	\begin{equation}\label{eqn-3-44}
 	\ell=\left\{\begin{aligned}
 	&\gcd(\gamma,n),&\quad&\text{if~}k=0;\\
 	&\gcd(\alpha-\gamma,\,n),&&\text{if~}k=1.
 	\end{aligned}\right.
 	\end{equation}
 	In other words,
 	 $$s_{\gamma,\ell}=s_{\gamma,\ell}'=0,\qquad\text{~if~}\gamma=1\text{~or~}\gamma>[\alpha/2];\qquad\qquad$$
 	$$s_{\gamma,\ell}=0,~\text{~if~}\ell\neq \gcd(\gamma,n);
 	\qquad s_{\gamma,\ell}'=0,~\text{~if~}\ell\neq \gcd(\alpha-\gamma,n).$$
 	The equality \eqref{eqn-3-44} needs some explanations:
 	By \autoref{def-3-1}, we may assume that the singular fiber
 	$\ol F_0$ admits a cyclic cover $\pi:\, \ol F_0 \to \ol \Gamma_0$ with covering group $G$,
 	where $\ol \Gamma_0$ is a singular rational curve.
 	Let $y=\pi(x)$, and $\ol \Gamma_0\setminus\{y\}=\ol \Gamma_0'\cup \ol \Gamma_0''$
 	with $\ol \Gamma_0'$ (resp. $\ol \Gamma_0''$) containing $\gamma$
 	(resp. $\alpha-\gamma\geq \gamma$) marked points.
 	Then
 	 \begin{list}{}
 	 	{\setlength{\labelwidth}{3mm}
 	 		\setlength{\leftmargin}{5mm}
 	 		\setlength{\itemsep}{2.5mm}}
 	 	\item[$\bullet$] if $k=0$: the local monodromy of each marked point on $\ol \Gamma_0'$ is equal to 1, and thus
 	 		 \begin{list}{}
 	 		 	{\setlength{\labelwidth}{1mm}
 	 		 		\setlength{\leftmargin}{3mm}
 	 		 		\setlength{\itemsep}{2.5mm}}
 	 		 	\item[$\cdot$] if $n\,|\,\gamma$: $\pi$ is not branched over $y$;
 	 		 	\item[$\cdot$] if $n{\not|~}\gamma$: the local monodromy around $y\in\ol\Gamma_0'$ for the restricted cyclic cover $\pi^\inv({\ol\Gamma_0'})\ra{\ol\Gamma'_0}$ is equal to $$a'_y=n([\frac{\gamma}{n}]+1)-\gamma$$
 	 		 \end{list}
 	 	\item[$\bullet$] if $k=1$: the local monodromy of each marked point on $\ol \Gamma_0''$ is equal to 1, and thus
 	 	\begin{list}{}
 	 		{\setlength{\labelwidth}{1mm}
 	 			\setlength{\leftmargin}{3mm}
 	 			\setlength{\itemsep}{2.5mm}}
 	 		\item[$\cdot$] if $n\,|\,(\alpha-\gamma)$: $\pi$ is not branched over $y$;
 	 		\item[$\cdot$] if $n{\not|~}(\alpha-\gamma)$: the local monodromy around $y\in\ol\Gamma''_0$ for the restricted cyclic cover is $$a_y''=n([\frac{\alpha-\gamma}{n}]+1)-(\alpha-\gamma).$$
 	 	\end{list}
 	 \end{list}
We thus obtain the value of $\ell$:
 	
 	$$\ell=|G\cdot x|=|\pi^{-1}(y)|=\left\{\begin{aligned}
 	&n=\gcd(\gamma,\,n), &&\text{if $k=0$ and $n\,|\,\gamma$};\\
 	&\gcd(a_y',\,n)=\gcd(\gamma,\,n), &\quad&\text{if $k=0$ and $n{\not|}~\gamma$};\\
 	&n=\gcd(\alpha-\gamma,\,n), &&\text{if $k=1$ and $n\,|\,(\alpha-\gamma)$};\\
 	&\gcd(a_y'',\,n)=\gcd(\alpha-\gamma,\,n), &\quad&\text{if $k=1$ and $n{\not|}~(\alpha-\gamma)$}.
 	\end{aligned}\right.$$
 	
 	(ii). For any singular fiber $\ol F$, we have
 	\begin{equation}\label{eqn-3-54}
 	s_{\gamma,\ell}'(\ol F)=0,~\forall~\gamma,\ell,\qquad\text{if~}n\,|\,\alpha,
 	\end{equation}
 	and
 	$$\begin{aligned}
 	&\quad\sum_{\gamma=2}^{[\alpha/2]}\sum_{\ell >1}\big(s_{\gamma,\ell}(\ol F)+s_{\gamma,\ell}'(\ol F)\big)=\delta_0(\ol F);\\[1mm]
 	&\left\{\begin{aligned}
 	&s_{\gamma,1}(\ol F)+s_{\gamma,1}'(\ol F)=\delta_{i(\gamma)}(\ol F), &\quad&\text{if~}r_{\infty}=n,\\[1mm]
 	&s_{\gamma,1}(\ol F)=\delta_{i(\gamma)}(\ol F),
 	\quad s_{\gamma,1}'(\ol F)=\delta_{j(\gamma)}(\ol F), &\quad&\text{if~}r_{\infty} \neq n,
 	\end{aligned}\right.
 	\end{aligned}$$
 	where $r_{\infty}=\frac{n}{\gcd(n,\alpha_0)}$ is the ramification index of the cover $\pi$ at $\infty$ as in \autoref{rems-3-1}\,(i),
 	and
 	$$\begin{aligned}
 	i(\gamma)&~=\frac{(n-1)(\gamma-1)}{2}\\
 	j(\gamma)&~=\frac{(n-1)(\gamma-2)+\frac{r_{\infty}-1}{r_{\infty}}\cdot n}{2}=g-\frac{(n-1)(\alpha-\gamma-1)}{2}.
 	\end{aligned}$$
 \end{remarks}

 \begin{theorem}\label{invariantstheorem}
 	Let $\olf:\,\ols\to \olb$ be a family of semi-stable $n$-superelliptic curves of genus $g\geq2$ over a projective curve,
 	and let $b_{\gamma}=\frac{(n^2-1)\gamma(\alpha-\gamma)}{\alpha-1}-n^2.$
 	Then
 	\begin{eqnarray}
 	\omega_{\ols/\olb}^2&\hspace{-2mm}=\hspace{-2mm}&
 	\left\{\begin{aligned}
 	&\sum_{\gamma,\,\ell} b_{\gamma}\cdot \frac{s_{\gamma,\ell}}{\ell^2}, &&\text{if $n\,|\,\alpha$},\\[1mm]
 	&\hspace{-2mm}\begin{aligned}
 	&~\sum_{\gamma,\,\ell} \left(b_{\gamma}
 	-\frac{(n^2-r_{\infty}^2)\gamma(\gamma-1)}{r_{\infty}^2(\alpha-1)(\alpha-2)}\right)
 	\cdot \frac{s_{\gamma,\ell}}{\ell^2}\\
 	&+\sum_{\gamma,\,\ell} \left(b_{\gamma}
 	 -\frac{(n^2-r_{\infty}^2)(\alpha-\gamma)(\alpha-\gamma-1)}{r_{\infty}^2(\alpha-1)(\alpha-2)}\right)
 	\cdot \frac{s_{\gamma,\ell}'}{\ell^2},
 	\end{aligned}&&\text{if $n{\not|}~\alpha$},
 	\end{aligned}\right.\label{K_f^2formula}\\[2mm]
 	\qquad \deg \olf_*\omega_{\ols/\olb}&\hspace{-2mm}=\hspace{-2mm}&
 	\left\{\begin{aligned}
 	&\frac{1}{12}\sum_{\gamma,\,\ell} (b_{\gamma}+\ell^2)
 	\cdot \frac{s_{\gamma,\ell}}{\ell^2}, &&\text{if $n\,|\,\alpha$},\\[1mm]
 	&\hspace{-2mm}\begin{aligned}
 	&~\frac{1}{12}\sum_{\gamma,\,\ell} \left(b_{\gamma}+\ell^2
 	-\frac{(n^2-r_{\infty}^2)\gamma(\gamma-1)}{r_{\infty}^2(\alpha-1)(\alpha-2)}\right)
 	\cdot \frac{s_{\gamma,\ell}}{\ell^2}\\
 	&+\frac{1}{12}\sum_{\gamma,\,\ell} \left(b_{\gamma}+\ell^2
 	 -\frac{(n^2-r_{\infty}^2)(\alpha-\gamma)(\alpha-\gamma-1)}{r_{\infty}^2(\alpha-1)(\alpha-2)}\right)
 	\cdot \frac{s_{\gamma,\ell}'}{\ell^2},
 	\end{aligned}&&\text{if $n{\not|}~\alpha$},
 	\end{aligned}\right.\label{chi_fformula}\\[2mm]
 	\delta(\bar f)&\hspace{-2mm}=\hspace{-2mm}&
 	\left\{\begin{aligned}
 	&\sum_{\gamma,\,\ell} s_{\gamma,\ell}, &~&\text{if $n\,|\,\alpha$},\\[1mm]
 	&\sum_{\gamma,\,\ell} \big(s_{\gamma,\ell}+s_{\gamma,\ell}'\big), &&\text{if $n{\not|}~\alpha$}.\\
 	\end{aligned}\right.\label{e_fformula}
 	\end{eqnarray}
 \end{theorem}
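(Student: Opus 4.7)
The three formulas are interrelated. Noether's formula \eqref{formulanoether} reduces \eqref{chi_fformula} to the combination of \eqref{K_f^2formula} and \eqref{e_fformula}, and \eqref{e_fformula} follows by direct bookkeeping from \autoref{rem-3-1}(ii): every node of a singular fiber is counted exactly once by one of the singularity indices $s_{\gamma,\ell}$ or $s'_{\gamma,\ell}$, so summing gives $\delta(\bar f)$. The essential task is therefore to establish \eqref{K_f^2formula}, and the plan is to prove this via a local analysis of the cyclic cover structure, node by node.

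First, by \autoref{lem-3-2} together with the fact that the relative invariants in \eqref{defofrelativeinv} transform predictably under finite base change of $\olb$, one may assume after a suitable base change that $G\cong \Zbb/n\Zbb$ acts on $\ols$ with quotient $Y=\ols/G$ a semi-stable family of genus-zero curves over $\olb$, and with the quotient map $\Pi:\ols\to Y$ branched along $\alpha$ disjoint horizontal sections $D_1,\dots,D_\alpha$ of $\bar\varphi:Y\to\olb$. In the case $n\nmid\alpha$ the section $D_\alpha$ has ramification index $r_\infty=n/\gcd(n,\alpha_0)$, while all other sections have ramification index $n$. The canonical bundle formula for the cyclic cover then reads
\begin{equation*}
\omega_{\ols/\olb}\equiv \Pi^*\Bigl(\omega_{Y/\olb}+\sum_{i=1}^{\alpha}\tfrac{r_i-1}{r_i}D_i\Bigr)
\end{equation*}
as $\Qbb$-divisors (with $r_i=n$ for $i<\alpha$). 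Squaring and applying the projection formula expresses $\omega_{\ols/\olb}^2$ as a combination of intersection numbers on $Y$ involving $\omega_{Y/\olb}$ and the $D_i$; the generic part vanishes because $Y$ is birationally a $\Pbb^1$-bundle over $\olb$, so only a sum of local contributions from the nodes of singular fibers remains.

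For a node $x\in\ols$ of index $(\gamma,\ell,k)$ sitting over a node $y=\Pi(x)\in\ol\Gamma$, the combinatorial datum (that deleting $y$ splits $\ol\Gamma$ into two sides with $\gamma$ and $\alpha-\gamma$ marked points respectively, with the position of the distinguished section $D_\alpha$ tracked by $k$) determines the local structure of $\Pi$ near $x$ completely. Resolving the singularity of $Y$ at $y$ and contracting to a smooth ruled model, followed by pulling back to the minimal model of $\ols$, one computes the local contribution: the factor $1/\ell^2$ arises because $y$ has $\ell$ preimages in $\ols$ which are permuted transitively by $G$, and direct intersection-theoretic computation on the local chain of $\Pbb^1$'s yields the coefficient $b_\gamma=\frac{(n^2-1)\gamma(\alpha-\gamma)}{\alpha-1}-n^2$. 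Summing over all nodes and all singular fibers will produce \eqref{K_f^2formula}, with \eqref{chi_fformula} following by Noether.

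The hardest part is the precise local calculation, especially the case $n\nmid\alpha$. There one must carefully track how the contribution $\frac{r_\infty-1}{r_\infty}D_\alpha$ from the non-trivial ramification at $\infty$ interacts with the splitting of the marked points across the node $y$: this interaction is what produces the correction term $-\frac{(n^2-r_\infty^2)\gamma(\gamma-1)}{r_\infty^2(\alpha-1)(\alpha-2)}$ when $D_\alpha$ sits on the side containing $\alpha-\gamma$ marked points, and its $s'_{\gamma,\ell}$-counterpart $-\frac{(n^2-r_\infty^2)(\alpha-\gamma)(\alpha-\gamma-1)}{r_\infty^2(\alpha-1)(\alpha-2)}$ when $D_\alpha$ is on the opposite side. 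A case analysis based on the divisibilities of $\gamma$ and $\alpha-\gamma$ by $n$ (which controls whether the local cover ramifies over $y$) is the main obstacle; once it is in place, summing the local contributions gives the stated formulas.
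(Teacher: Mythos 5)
Your proposal follows essentially the same route as the paper: reduce \eqref{chi_fformula} to \eqref{K_f^2formula} via Noether, get \eqref{e_fformula} by bookkeeping, pass to a base change where $G$ acts with ruled quotient, apply the cyclic-cover canonical bundle formula, and evaluate $\sum_i D_i^2$ by contracting to a $\Pbb^1$-bundle and tracking blow-ups at points where $\gamma$ of the sections meet (the Cornalba--Harris argument), with the $1/\ell^2$ coming from the orbit size and the $r_\infty$-corrections from the position of $D_\alpha$. The only caveat is that the coefficient $b_\gamma$ is not obtained by a purely local computation at each node: one needs the global identity $(\alpha-1)\sum_i D_{i,0}^2=2\sum_{i<j}D_{i,0}\cdot D_{j,0}$ on the $\Pbb^1$-bundle model (since any two sections differ by fiber classes) as the normalization that converts the blow-up bookkeeping into the stated formula, but your mention of contracting to a ruled model supplies exactly this ingredient.
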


 %

 \begin{proof}
 	First, \eqref{e_fformula} follows directly from \autoref{def-3-4}; see also \autoref{rem-3-1}\,(ii).
 	Note also that \eqref{chi_fformula} follows from \eqref{formulanoether}, \eqref{K_f^2formula} and \eqref{e_fformula}.
 	So it suffices to prove \eqref{K_f^2formula}.
 	
 	
 	
 	Since any finite base change only modifies the two sides of \eqref{K_f^2formula} by a common multiple, we may, up to a finite base change, assume that there exists an action of $G\cong \mathbb Z/n\mathbb Z$
 	on $\ols$, such that $\ol Y=\ols/G$ is ruled over $\olb$,
 	and that the quotient $\ol \Pi:\, \ols \to \oly$
 	is branched over $\alpha$ disjoint sections
 	$\big\{D_i\big\}_{i=1}^{\alpha}$ of the induced family $\bar\varphi:\,\oly\to\olb$
 	and possibly some of the nodes in fibers of $\bar\varphi$ (cf. \autoref{lem-3-2}).
 	Moreover, if $n{\not|}~\alpha$, one may further assume that $D_{\alpha}$
 	is the section whose restriction to a general fiber
 	$\ol \Gamma\cong \bbp^1$ of $\bar\varphi$ corresponds to
 	the branch point $\infty$ of the restricted cover $\ol\Pi|_{\ol F}:\,\ol F \to \ol \Gamma\cong \bbp^1$.
 	In this case, the ramification index of $D_{\alpha}$ is $r_{\infty}=\frac{n}{\gcd(a_{\infty},n)}$.
 	Let's consider the following commutative diagram.
 	$$\xymatrix{
 		\ols \ar[rr]^-{\ol \Pi} \ar[d]_-{\bar f}
 		&& \ol Y=\ols/G \ar[d]^-{\bar \varphi}\\
 		\olb \ar@{=}[rr] && \olb
 	}$$

 	For any node $y_j$ in some fiber $\ol \Gamma_0$ of $\bar\varphi$,
 	if the local equation of $\oly$ around $y_j$ is given by $xy-t^{m_j}$,
 	then we call $m_j$ the multiplicity of $y_j$.
 	Let $\ell_j$ be the number of points in $\ols$ over $y_j$,
 	and $\ol \Gamma_0\setminus \{y_j\}=\ol \Gamma_0'\cup \ol \Gamma_0''$
 	with $\ol \Gamma_0'$ (resp. $\ol \Gamma_0''$) containing $\gamma_j$ (resp. $\alpha-\gamma_j\geq \gamma_j$) marked points.
 	If $n{\not|}~\alpha$ and $\gamma_j=\alpha/2$, then we assume that $D_{\alpha}\cap \ol \Gamma_0\in \ol \Gamma_0'$.
 	Then we define the index of $y_j$ to be $(\gamma_j,\,\ell_j,\,k_j)$, where
 	$$
 	k_j=\left\{\begin{aligned}
 	&0, &\quad&\text{if $n\,|\,\alpha$, or if $n{\not|}~\alpha$ and $D_{\alpha}\cap \ol \Gamma_0\in \ol \Gamma_0''$};\\
 	&1, &&\text{if $n{\not|}~\alpha$ and $D_{\alpha}\cap \ol \Gamma_0\in \ol \Gamma_0'$}.
 	\end{aligned}\right.$$
 	
 	With the notations introduced above, we claim that
 	\begin{claim}\label{claim-3-1}
 		(i). If $n\,|\,\alpha$, then
 		$$(\alpha-1)\sum_{i=1}^{\alpha}D_{i}^2=-\sum_{y_j} m_j\gamma_j(\alpha-\gamma_j).$$
 		
 		(ii). If $n{\not|}~\alpha$, then
 		$$\begin{aligned}
 		(\alpha-2)\sum\limits_{i=1}^{\alpha-1}D_i^2&\,=-\sum_{y_j} m_j(\gamma_j-k_j)(\alpha+k_j-1-\gamma_j);\\
 		(\alpha-1)\sum_{i=1}^{\alpha}D_{i}^2&\,=-\sum_{y_j} m_j\gamma_j(\alpha-\gamma_j).
 		\end{aligned}$$
 	\end{claim}
 	\begin{proof}[Proof of \autoref{claim-3-1}]
 		The proof is similar to that of \cite[Lemma\,4.8]{cornalba-harris-88}.
 		As an illustration, we prove (ii) here.
 		
 		Note that both of the sides are invariant if we resolve the singularities on $\ol Y$,
 		and hence we may assume that $\ol Y$ is smooth.
 		Moreover, we may contract $\oly$ to a $\mathbb P^1$ bundle $\varphi:\,Y\to \olb$ over $\olb$
 		such that the order of the singularities of $R_0=\sum D_{i,0}$ is at most $[\alpha/2]$,
 		and that $D_{\alpha,0}$ does not pass through any singularity of order equal to $\alpha/2$ if $n{\not|}~\alpha$,
 		where $D_{i,0}\subseteq Y$ is the image of $D_i$.
 		Note that in the $\mathbb P^1$ bundle $Y$, one has
 		$(D_{i,0}-D_{j,0})^2=0$, i.e., $D_{i,0}^2+D_{j,0}^2=2D_{i,0}\cdot D_{j,0}$. Hence
 		$$\begin{aligned}
 		(\alpha-2)\sum_{i=1}^{\alpha-1}D_{i,0}^2&\,=2\sum_{1\leq i<j\leq \alpha-1}D_{i,0}\cdot D_{j,0};\\
 		(\alpha-1)\sum_{i=1}^{\alpha}D_{i,0}^2&\,=2\sum_{1\leq i<j\leq \alpha}D_{i,0}\cdot D_{j,0}.
 		\end{aligned}$$
 		Now blowing up a point with exactly $\gamma$ of these sections passing through it will create
 		a node $y$ of index $(\gamma,\ell,k)$ in the fibers,
 		where $k=0$ if $D_{\alpha,0}$ does not pass through this point,
 		and $k=1$ if $D_{\alpha,0}$ passes through this point;
 		at the same time, the left hand side (resp. the right hand side) of the first equality above
 		decreases by $(\gamma-k)(\alpha-2)$ \big(resp. $(\gamma-k)(\gamma-k-1)$\big),
 		and the left hand side (resp. the right hand side) of the second equality above
 		decreases by $\gamma(\alpha-1)$ \big(resp. $\gamma(\gamma-1)$\big).
 		Thus
 		$$\begin{aligned}
 		(\alpha-2)\sum\limits_{i=1}^{\alpha-1}D_i^2&
 		\,=2\sum_{1\leq i<j\leq \alpha-1}D_{i}\cdot D_{j}-\sum_{y_j} m_j(\gamma_j-k_j)(\alpha+k_j-1-\gamma_j);\\
 		(\alpha-1)\sum_{i=1}^{\alpha}D_{i}^2&\,=2\sum_{1\leq i<j\leq \alpha}D_{i}\cdot D_{j}-\sum_{y_j} m_j\gamma_j(\alpha-\gamma_j).
 		\end{aligned}$$
 		Note that in $\ol Y$, these sections $\big\{D_i\big\}_{i=1}^{\alpha}$ are disjoint with each other,
 		i.e., $D_i\cdot D_j=0$ for any $i\neq j$.
 		Hence we complete the proof of the claim.
 	\end{proof}
 	
 	Come back to the proof of \eqref{K_f^2formula}.
 	Let $\xi_{\gamma,\ell}$ (resp. $\xi_{\gamma,\ell}'$) be the number of the nodes in fibers of $\bar\varphi$
 	with index $(\gamma,\ell,0)$ (resp. $(\gamma,\ell,1)$), counted according to their multiplicities.
 	Then
 	\begin{equation}\label{eqn-3-55}
 	s_{\gamma,\ell}=\ell\cdot \frac{\xi_{\gamma,\ell}}{n/\ell}=\frac{\ell^2}{n}\cdot \xi_{\gamma,\ell},
 	\qquad\quad s_{\gamma,\ell}'=\ell\cdot \frac{\xi_{\gamma,\ell}'}{n/\ell}=\frac{\ell^2}{n}\cdot \xi_{\gamma,\ell}'.
 	\end{equation}
 	By the definitions, we have (see also \autoref{rem-3-1}\,(ii))
 	\begin{equation}\label{eqn-3-52}
 	\text{$\xi_{\gamma,\ell}'=s_{\gamma,\ell}'=0$,\qquad if $n\,|\,\alpha$.}
 	\end{equation}
 	According to \autoref{claim-3-1}, we obtain that if $n\,|\,\alpha$, then
 	\renewcommand{\theequation}{\arabic{section}-\arabic{equation}a}
 	\begin{equation}\label{eqn-3-49-a}
 	\quad (\alpha-1)\sum_{i=1}^{\alpha}D_{i}^2=-\sum_{\gamma,\,\ell} \gamma(\alpha-\gamma)\xi_{\gamma,\ell}
 	=-\sum_{\gamma,\,\ell} \frac{n\gamma(\alpha-\gamma)}{\ell^2}\cdot s_{\gamma,\ell};
 	\end{equation}
 	and that if $n{\not|}~\alpha$, then
 	\addtocounter{equation}{-1}
 	\renewcommand{\theequation}{\arabic{section}-\arabic{equation}b}
 	\begin{equation}\label{eqn-3-49-b}
 	\left\{\begin{aligned}
 	(\alpha-2)\sum\limits_{i=1}^{\alpha-1}D_i^2
 	&\,=-\sum_{\gamma,\,\ell} \Big(\gamma(\alpha-1-\gamma)\xi_{\gamma,\ell}+(\gamma-1)(\alpha-\gamma)\xi_{\gamma,\ell}'\Big)\\
 	&\,=-\sum_{\gamma,\,\ell}\bigg(\frac{n\gamma(\alpha-1-\gamma)}{\ell^2}\cdot s_{\gamma,\ell}
 	+\frac{n(\gamma-1)(\alpha-\gamma)}{\ell^2}\cdot s_{\gamma,\ell}'\bigg),\\
 	(\alpha-1)\sum_{i=1}^{\alpha}D_{i}^2
 	&\,=-\sum_{\gamma,\,\ell} \gamma(\alpha-\gamma)(\xi_{\gamma,\ell}+\xi_{\gamma,\ell}')
 	=-\sum_{\gamma,\,\ell} \frac{n\gamma(\alpha-\gamma)}{\ell^2}\cdot (s_{\gamma,\ell}+s_{\gamma,\ell}').
 	\end{aligned}\right.
 	\end{equation}
 	\renewcommand{\theequation}{\arabic{section}-\arabic{equation}}
 	By the construction, we have
 	\begin{equation}\label{eqn-3-50}
 	\omega_{\oly/\olb}^2=-\sum_{\gamma,\,\ell} (\xi_{\gamma,\ell}+\xi_{\gamma,\ell}')
 	=-\sum_{\gamma,\,\ell} \frac{n(s_{\gamma,\ell}+s_{\gamma,\ell}')}{\ell^2};\qquad
 	\omega_{\oly/\olb}\cdot D_i=-D_i^2,\,~\,\forall~1\leq i\leq \alpha.
 	\end{equation}
 	By the Riemann-Hurwitz formula, one has
 	$$\omega_{\ols/\olb}^2=
 	\left\{\begin{aligned}
 	&\ol\Pi^*\Big(\omega_{\oly/\olb}+\frac{n-1}{n}\sum_{i=1}^{\alpha}D_i\Big), &\quad&\text{if $n\,|\,\alpha$};\\
 	 &\ol\Pi^*\Big(\omega_{\oly/\olb}+\frac{n-1}{n}\sum_{i=1}^{\alpha-1}D_i+\frac{r_{\infty}-1}{r_{\infty}}D_{\alpha}\Big),
 	&&\text{if $n{\not|}~\alpha$}.
 	\end{aligned}\right.$$
 	Hence if $n\,|\,\alpha$, then
 	\renewcommand{\theequation}{\arabic{section}-\arabic{equation}a}
 	\begin{equation}\label{eqn-3-51-a}
 	\omega_{\ols/\olb}^2=n\Big(\omega_{\oly/\olb}+\frac{n-1}{n}\sum_{i=1}^{\alpha}D_i\Big)^2
 	=n\Big(\omega_{\oly/\olb}^2-\frac{n^2-1}{n^2}\sum_{i=1}^{\alpha}D_i^2\Big),
 	\end{equation}
 	and if $n{\not|}~\alpha$, then
 	\addtocounter{equation}{-1}
 	\renewcommand{\theequation}{\arabic{section}-\arabic{equation}b}
 	\begin{equation}\label{eqn-3-51-b}
 	\begin{aligned}
 	 \omega_{\ols/\olb}^2&\,=n\Big(\omega_{\oly/\olb}+\frac{n-1}{n}\sum_{i=1}^{\alpha-1}D_i+\frac{r_{\infty}-1}{r_{\infty}}D_{\alpha}\Big)^2\\
 	 &\,=n\Big(\omega_{\oly/\olb}^2-\frac{n^2-1}{n^2}\sum_{i=1}^{\alpha-1}D_i^2-\frac{r_{\infty}^2-1}{r_{\infty}^2}D_{\alpha}^2\Big).
 	\end{aligned}
 	\end{equation}
 	\renewcommand{\theequation}{\arabic{section}-\arabic{equation}}
 	Combining the equalities \eqref{eqn-3-51-a}, \eqref{eqn-3-51-b},
 	\eqref{eqn-3-50}, \eqref{eqn-3-49-a}, \eqref{eqn-3-49-b}, and \eqref{eqn-3-52}
 	all together, we complete the proof of \eqref{K_f^2formula} and \autoref{invariantstheorem}.
 \end{proof}

 We conclude this subsection by proving the non-existence of hyperelliptic fibers in a family of $n$-superelliptic curves,
 which will be used to prove \autoref{prop-3-11} in \autoref{sec-pf-3-11}.

 \begin{lemma}\label{lem-3-23}
 	Let $\olf:\,\ols\to \olb$ be a family of semi-stable $n$-superelliptic curves
 	as in \autoref{invariantstheorem} with $\alpha\geq 5$.
 	Assume moreover that the general fiber of $\bar f$ is not hyperelliptic.
 	Then $\bar f$ admits no (smooth or singular) hyperelliptic fiber with a compact Jacobian.
 \end{lemma}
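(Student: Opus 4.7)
Suppose for contradiction that $\bar f$ admits some hyperelliptic fiber $\ol F_0$ with compact Jacobian. The plan is to derive a contradiction separately when $\ol F_0$ is smooth and when it is singular. After a finite base change we may assume, as in \autoref{rems-3-1}(iii), that the cyclic group $G\cong \mathbb Z/n\mathbb Z$ acts on the whole surface $\ol S$ and restricts to the $n$-superelliptic automorphism group on every fiber; in particular $G$ acts on $\ol F_0$ with quotient $\ol F_0/G=\ol\Gamma_0$ a (nodal) rational curve with $\alpha$ marked points coming from the $\alpha$ disjoint sections $D_i$ of $\bar\varphi:\ol Y=\ol S/G\to\olb$ (cf. \autoref{lem-3-2}). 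Since the general fibre is not hyperelliptic, we have $n\geq 3$, and the genus formula \eqref{eqn-3-9} together with $\alpha\geq 5$ yields $g(\ol F_0)\geq \tfrac{3(n-1)}{2}>n-1$.

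For the smooth case, $\ol F_0$ carries a $g^1_n$ (the $n$-superelliptic pencil $\pi_0$) and a $g^1_2$ (the hyperelliptic pencil). The Castelnuovo--Severi inequality gives either $g(\ol F_0)\leq (n-1)(2-1)=n-1$ or the two pencils factor through a common covering $\ol F_0\to D$ of degree $d\mid\gcd(n,2)$. The first alternative contradicts $g(\ol F_0)>n-1$, so we must have $d=2$; this forces $n$ to be even, $D\cong\Pbb^1$, and the $n$-superelliptic cover to factor through the hyperelliptic one. Then the hyperelliptic involution $\iota$ belongs to $G$, and in the coordinates $y^n=F(x)$ the unique involution in $G$ is $(x,y)\mapsto(x,-y)$. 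Setting $z=y^2$ expresses the quotient $\ol F_0/\iota$ as the curve $z^{n/2}=F(x)$, which by the genus formula has genus $\tfrac{(n/2-1)(\alpha-2)}{2}>0$ for $n\geq 4$ and $\alpha\geq 5$, contradicting $\ol F_0/\iota\cong\Pbb^1$.

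For the singular case, the hyperelliptic involution $\iota_0$ of the semi-stable hyperelliptic curve $\ol F_0$ with compact Jacobian is uniquely characterised (as the unique involution whose quotient is a tree of $\Pbb^1$'s), and so it lies in the centre of $\Aut(\ol F_0)$. Hence $\iota_0$ commutes with the $G$-action and descends to an involution $\bar\iota_0$ of $\ol\Gamma_0$ which permutes the $\alpha$ marked points. The plan is to show that this fibrewise involution propagates over $\olb$: since $\bar\varphi:\ol Y\to\olb$ is (birational to) a $\Pbb^1$-bundle whose $\alpha$ disjoint sections $\{D_i\}_{i=1}^{\alpha}$ determine it up to automorphisms, the involution $\bar\iota_0$ extends rationally to an involution $\bar\iota$ of $\ol Y$ permuting the $D_i$'s. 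The lift of $\bar\iota$ to $\ol S$ then produces an involution $\hat\iota$ of the generic fibre $\ol F$ commuting with $G$, and specialising back to $\iota_0$ shows $\ol F/\hat\iota$ is rational, so the general fibre is hyperelliptic --- contradicting the smooth-case analysis (or directly the hypothesis).

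The main obstacle is the propagation step of the singular case: one must rigorously justify, using the uniqueness of $\iota_0$ and the rigidity of the configuration $(\ol Y;D_1,\ldots,D_\alpha)$ for $\alpha\geq 5$, that $\bar\iota_0$ really does extend to a rational involution of $\ol Y$ preserving the branch divisor --- or equivalently that no isolated hyperelliptic specialisation can occur in the closure of the $n$-superelliptic locus for $n\geq 3$, $\alpha\geq 5$. One expects to handle this either via the deformation theory of automorphisms of stable curves with compact Jacobians (where $\Aut$ is an unramified group scheme) or by a direct case analysis of the types $(\gamma,\ell,k)$ of nodes listed in \autoref{rem-3-1} against the admissible hyperelliptic degenerations.
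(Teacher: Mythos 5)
Your treatment of a smooth hyperelliptic fibre is essentially the paper's Step I (the Castelnuovo--Severi inequality), and your extra care with the degenerate case of that inequality --- showing that a common factorisation of the two pencils would force the hyperelliptic involution into $G$, whence the quotient $z^{n/2}=F(x)$ would be a rational curve of positive genus --- is a reasonable elaboration of a point the paper only asserts. (Your intermediate bound $g(\ol F_0)\geq\tfrac32(n-1)$ fails when $\alpha_0=4$ and $n\nmid\alpha_0$, but the inequality $g(\ol F_0)>n-1$ that you actually need still holds there, so this is only a slip.)

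The singular case, however, contains a genuine gap, and you have located it yourself: the propagation step does not work. An automorphism of a special fibre of a family of stable curves need not extend to nearby fibres --- the automorphism group scheme of the stable model is finite and \emph{unramified} over the base, not \'etale, so the component through $\iota_0$ may lie entirely over the point $0$. Concretely, for your involution $\bar\iota_0$ of $\ol\Gamma_0$ to extend to a fibrewise involution of the $\Pbb^1$-bundle $\ol Y\to\olb$ permuting the sections $D_1,\dots,D_\alpha$, the cross-ratios of the $\alpha\geq5$ marked points would have to be constant along $\olb$, which is exactly what non-isotriviality forbids; so the ``rigidity of the configuration'' works against you, not for you. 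Nor can the conclusion be reached some other way by propagation: a one-parameter family of non-hyperelliptic curves can perfectly well specialise to a hyperelliptic stable curve of compact type in general, so any correct argument must exploit the coexistence of the order-$n$ action and the hyperelliptic involution \emph{on the singular fibre itself}. That is what the paper does: it shows (using that every node of a compact-type fibre separates, hence is fixed by both $\tau$ and $\iota$, which therefore commute on each component) that every non-rational component $D$ of the stable model satisfies $g(D)\leq\tfrac{n-1}{2}$ and $D^2=-1$, and then a count of nodes versus components forces exactly two non-rational components, giving $g\leq n-1$ and contradicting the genus formula for $\alpha\geq5$. Your proposal as written does not supply such a fibrewise argument, so the singular case remains unproved.
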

 \begin{proof}
 	We divide the proof into three steps.
 	
 	{\vspace{1.5mm}\noindent \bf Step I:}
 	We show that $\bar f$ admits no smooth hyperelliptic fiber.
 	
 	This is clear by the Castelnuovo-Severi inequality (cf. \cite{accola-06}).
 	Indeed, if there was such a smooth hyperelliptic fiber $\ol F$,
 	then $\ol F$ admits two different maps of degree $2$ and $n$ respectively onto $\bbp^1$
 	without common non-trivial factorization.
 	Hence $g\leq n-1$ by the Castelnuovo-Severi inequality (cf. \cite{accola-06}).
 	This is a contradiction to \eqref{eqn-3-9} once $\alpha\geq 5$.
 	
 	{\vspace{1.5mm}\noindent \bf Step II:}
 	Let $\ol F$ be a singular hyperelliptic fiber of $\olf$ with a compact Jacobian.
 	For any irreducible component $D\subseteq \ol F$,
 	we show that
 	\begin{enumerate}
 		\item[$\bullet$] either $g(D)=0$ and $D^2=-2$;
 		\item[$\bullet$] or $1\leq g(D)\leq \frac{n-1}{2}$ and $D^2=-1$.
 	\end{enumerate}
 	
 	Let $\ol F^{\#}$ be the stable model of $\ol F$.
 	Then it suffices to show $1\leq g(D)\leq \frac{n-1}{2}$ and $D^2=-1$ for any component $D\subseteq \ol F^{\#}$.
 	Note that $\ol F^{\#}$ admits two automorphisms $\tau$ and $\iota$,
 	where $\tau$ is of order $n$ and $\iota$ is the hyperelliptic involution.
 	Both the quotients $\ol F^{\#}/\langle\tau\rangle$ and $\ol F^{\#}/\langle\iota\rangle$ are trees of rational curves.
 	By \cite[Lemma\,5.7]{lu-zuo-14}, any component $D\subseteq \ol F^{\#}$ is not rational, i.e., $g(D)\geq 1$.
 	Hence both $\tau$ and $\iota$ act non-trivially on $D$.
 	It is clear that the hyperelliptic involution $\iota$ keeps $D$ invariant and acts faithfully on $D$.
 	Without loss of generality, we may assume that $\tau$ also keeps $D$ invariant and hence acts faithfully on $D$;
 	otherwise, we replace $\tau$ by a suitable power $\tau^{k_0}$
 	(the only difference is that the order of $\tau$ is smaller after this replacment).
 	For the sake of notations, we still denote by $\tau$ and $\iota$ their restriction on $D$.
 	Let
 	$$\Sigma_D=\{x\in D~|~x\text{~is a node of $\ol F^{\#}$}\}.$$
 	It is clear that $|\Sigma_D|=-D^2$,
 	Moreover, as $\ol F^{\#}$ admits a compact Jacobian,
 	$x$ is fixed by both $\tau$ and $\iota$ for any node $x\in \ol F^{\#}$;
 	in fact, let $\ol F'$ and $\ol F''$ be the two connected components of $\ol F^{\#}\setminus\{x\}$.
 	Then both $\ol F'$ and $\ol F''$ are kept invariant under $\tau$ (and $\iota$),
 	and $x$ is the unique intersection of $\ol F'$ and $\ol F''$ since $\ol F^{\#}$ admits a compact Jacobian.
 	Hence $x=\ol F'\cap \ol F''$ is fixed by both $\tau$ and $\iota$.
 	Thus $\tau$ and $\iota$ are commutative with each other as automorphisms of $D$,
 	since they admit at least one common fixed point on $D$ (i.e., any point in $\Sigma_D$).
 	It follows that $\tau$ induces a non-trivial automorphism $\tau'$ on $D/\langle\iota\rangle\cong \bbp^1$.
 	Let $|\tau|$ be the order of $\tau$, and
 	$$D_{\tau}=\left\{x\in D~|~\text{$x$ is fixed by $\tau^k$ for some $1\leq k<|\tau|$}\right\}\subseteq D,$$
 	and $\mu=\big|D_{\tau}\big|$.
 	Then by Hurwitz formula, one has
 	$$2g(D)\leq (n-1)(\mu-2).$$
 	Since $g(D)\neq 0$, it follows that $\mu\geq 3$.
 	Let ${\rm Fix}(\tau')\subseteq \bbp^1$ be the fixed locus of $\tau'$.
 	Then
 	\begin{equation*}
 	\big|{\rm Fix}(\tau')\big|=2, \quad\text{~and~}\quad \pi\big(D_{\tau}\big)\subseteq {\rm Fix}(\tau'),
 	\end{equation*}
 	where $\pi:\, D\to D/\langle\iota\rangle\cong \bbp^1$ is the quotient map, which is of degree two.
 	Note that $\Sigma_D\subseteq D_{\tau}$
 	and $\pi^{-1}\big(\pi(x)\big)=x$ for any $x\in \Sigma_D$.
 	It follows that
 	$$3\leq \mu=\big|D_{\tau}\big|
 	\leq 2\,\big|{\rm Fix}(\tau')\setminus \pi(\Sigma_D)\big|+|\Sigma_D|=4-|\Sigma_D|.$$
 	Therefore $\mu=3$, and hence $g(D)\leq \frac{n-1}{2}$ and $D^2=-|\Sigma_D|=-1$ as required.
 	
 	{\vspace{1.5mm}\noindent \bf Step III:}
 	We show that $\bar f$ admits no singular hyperelliptic fiber with a compact Jacobian.
 	
 	If there were such a singular hyperelliptic fiber $\ol F$,
 	let $k$ (resp. $k'$) be the number of rational components (resp. irrational components) in $\ol F$.
 	Then by Step II, one has
 	$$\delta(\ol F)=\frac12 \sum_{\{D\neq D'\}\subseteq \ol F} D\cdot D'=-\frac12\sum_{D\subseteq \ol F} D^2=\frac12(2k+k').$$
 	On the other hand, since $\ol F$ admits a compact Jacobian, we get
 	$$\delta(\ol F)=k+k'-1.$$
 	Combining the above two equalities, we obtain $k'=2$.
 	Hence $g\leq k\cdot 0 + k'\cdot \frac{n-1}{2}=n-1$,
 	which contradicts the Hurwitz formula \eqref{eqn-3-9} since $\alpha\geq 5$.
 	This completes the proof.
 \end{proof}

 \subsection{Flat part in the Higgs bundle for a family of superelliptic curves}\label{subsec-3-4}
 In this subsection, we study the flat part
 in the logarithmic Higgs bundle $\big(E_{\ol B}^{1,0}\oplus E_{\ol B}^{0,1},\,\theta_{\ol B}\big)$
 associated to a family $\bar f:\,\ols \to \olb$ of semi-stable superelliptic curves.
 The main purpose is to prove certain triviality of the flat part
 and the existence of fibration structions on $\ols$ up to base change.
 We will freely use the notations introduced at the beginning of \autoref{subsec-3-2}.

 As the main technique is based on cyclic covers,
 we always assume in this subsection that the group $G\cong \mathbb Z/n\mathbb Z$
 admits an action on $\ol S$ whose restriction on the general fiber
 is an $n$-superelliptic automorphism group.
 As we see in \autoref{rems-3-1},
 one can achieve this by a suitable finite base change (not necessarily \'etale).
 Note that the Galois cover $\ol\Pi:\,\ol S \to \ol Y=\ols/G$
 induces a Galois cover $\Pi':\,S'\to \wt Y$ whose branch locus is a normal crossing divisor
 and with ${\rm Gal}(\Pi')\cong G$,
 where $\wt Y \to \ol Y$ is the minimal resolution of singularities.
 Let $\wt S \to S'$ be the minimal resolution of singularities.
 Then there is an induced birational contraction $\wt S \to \ols$ fitting into the following commutative diagram.
 \vspace{-4mm}\begin{figure}[H]
 	$$\xymatrix{
 		\wt S \ar[r]\ar[d]\ar@/_7mm/"3,1"_-{\tilde f} \ar@/^4mm/"1,4"^-{\wt \Pi}
 		&S'\ar[rr]_-{\Pi'}&&\wt Y \ar[d]\ar@/^7mm/"3,4"^-{\tilde \varphi}\\
 		\ols \ar[d]^-{\bar f} \ar[rrr]^-{\ol\Pi}
 		&&& \ol Y \ar[d]_-{\bar\varphi}\\
 		\olb \ar@{=}[rrr] &&& \olb
 	}$$
 	\caption{Induced Galois cover}\label{figure-2}
 \end{figure}\vspace{-6mm}

 Since $\Pi':\,S'\to \wt Y$ is a cyclic cover of degree $n$,
 we may assume it is defined by the following relation:
 $$\mathcal L^{n} \equiv \mathcal O_Y\big(\wt{\mathfrak R}\big),\qquad\text{where `$\equiv$' stands for linear equivalence}.$$
 Following \cite{esnault-viehweg-92}, if $\wt {\mathfrak R}=\sum a_jD_j$ is the decomposition into prime divisors, we define
 $$\mathcal L^{(i)}=\mathcal L^i \otimes \mathcal O_{\wt Y}\Big(-\sum \Big[\frac{ia_j}{n}\Big]D_j\Big).$$
 Restricting to a general fiber $\wt \Gamma$ of $\tilde\varphi$, one has
 \begin{equation}\label{eqn-3-20}
 \mathcal L^{(i)}|_{\wt \Gamma} \cong \left\{\begin{aligned}
 &\mathcal O_{\bbp^1}\Big(\frac{i\alpha_0}{n}\Big), &\quad&\text{if~}\frac{i\alpha_0}{n}\in \mathbb{Z};\\
 &\mathcal O_{\bbp^1}\Big(\Big[\frac{i\alpha_0}{n}\Big]+1\Big), &&\text{if~}\frac{i\alpha_0}{n} \not\in \mathbb{Z}.
 \end{aligned}\right.
 \end{equation}
 Let $\wt R\subseteq \wt Y$ be the reduced branch divisor of $\Pi'$, i.e., the support of $\wt {\mathfrak R}$.
 Then by \cite[Lemma\,1.7]{viehweg-82}, one has the inclusion
 \begin{equation}\label{eqn-3-12}
 \tau_1:~\wt\Pi_*\Omega^1_{\wt S}\hookrightarrow
 \Omega^1_{\wt Y} \bigoplus
 \left(\bigoplus_{j=1}^{n-1}\Omega^1_{\wt Y}(\log \wt R)\otimes {\mathcal L^{(j)}}^{-1}\right).
 \end{equation}

 %

 \begin{lemma}\label{lem-3-1}
 	For each $1\leq i\leq n-1$, there is a sheaf morphism
 	\begin{equation}\label{eqn-3-15}
 	\varrho_i:~\tilde \varphi^*F^{1,0}_{\olb,i} \lra \wt\Pi_*\Omega^1_{\wt S},
 	\end{equation}
 	such that the induced canonical morphism
 	\begin{equation}\label{eqn-3-13}
 	F^{1,0}_{\olb,i}=\tilde \varphi_*\tilde \varphi^*F^{1,0}_{\olb,i}
 	\lra \tilde \varphi_*\wt\Pi_*\Omega^1_{\wt S}
 	=\tilde f_*\Omega^1_{\wt S}=\bar f_*\Omega^1_{\ol S}
 	\lra \bar f_*\Omega^1_{\ol S/\ol B}(\log\Upsilon)=E^{1,0}_{\olb}
 	\end{equation}
 	coincides with the inclusion $F^{1,0}_{\olb,i} \hookrightarrow F^{1,0}_{\olb}
 	\hookrightarrow E^{1,0}_{\olb}$.
 	Moreover, we may choose $\varrho_i$ so that the image of $\varrho_i$ is contained in
 	$\Omega^1_{\wt Y}(\log \wt R)\otimes {\mathcal L^{(i)}}^{-1}$ under the inclusion \eqref{eqn-3-12}.
 \end{lemma}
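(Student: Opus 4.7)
I would construct $\varrho_i$ in three steps: first lift $F^{1,0}_{\ol B,i}$ to a subsheaf of $\bar f_*\Omega^1_{\ol S}$ using the vanishing of the Higgs field on the flat part; then transport this lift across the birational contraction $\wt S\to \ol S$ and apply the $\tilde\varphi$-adjunction to obtain $\varrho_i$; finally match its image with the $i$-th $G$-eigen-component of the Esnault--Viehweg decomposition \eqref{eqn-3-12}.

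For the first step, I would apply $R^{\bullet}\bar f_*$ to the tautological sequence
\[
0 \lra \bar f^*\Omega^1_{\ol B}(\log\Delta) \lra \Omega^1_{\ol S}(\log\Upsilon) \lra \Omega^1_{\ol S/\ol B}(\log\Upsilon) \lra 0.
\]
The connecting morphism is precisely $\theta_{\ol B}$, so the identity $\theta_{\ol B}|_{F^{1,0}_{\ol B}}=0$ shows that $F^{1,0}_{\ol B}$ lies in the image of the natural map $\bar f_*\Omega^1_{\ol S}(\log\Upsilon)\to E^{1,0}_{\ol B}$, producing (after splitting off the $\Omega^1_{\ol B}(\log\Delta)$-piece) a lift $F^{1,0}_{\ol B}\hookrightarrow \bar f_*\Omega^1_{\ol S}(\log\Upsilon)$. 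Since $F^{1,0}_{\ol B}$ corresponds under Simpson's correspondence to a unitary local system, its local sections have trivial residues along $\Upsilon$, and the semi-stability of $\bar f$ forces this lift to factor through the subsheaf $\bar f_*\Omega^1_{\ol S}\subset \bar f_*\Omega^1_{\ol S}(\log\Upsilon)$. Restricting to the $i$-th $G$-eigen-component yields a canonical inclusion $F^{1,0}_{\ol B,i}\hookrightarrow \bar f_*\Omega^1_{\ol S}$.

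For the second step, since $\wt S\to \ol S$ is a birational morphism between smooth surfaces, one has $\bar f_*\Omega^1_{\ol S}=\tilde f_*\Omega^1_{\wt S}=\tilde\varphi_*\wt\Pi_*\Omega^1_{\wt S}$. Pulling back the inclusion of the preceding step by $\tilde\varphi^*$ and composing with the counit $\tilde\varphi^*\tilde\varphi_*\wt\Pi_*\Omega^1_{\wt S}\to \wt\Pi_*\Omega^1_{\wt S}$ defines
\[
\varrho_i:~\tilde\varphi^*F^{1,0}_{\ol B,i}\lra \wt\Pi_*\Omega^1_{\wt S},
\]
and the commutativity statement \eqref{eqn-3-13} is tautological: pushing $\varrho_i$ forward by $\tilde\varphi_*$ and composing with $\bar f_*\Omega^1_{\ol S}\to E^{1,0}_{\ol B}$ retraces the inclusion constructed in the first step. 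For the third step, I would recognize \eqref{eqn-3-12} as the $G$-eigen-decomposition of the locally free sheaf $\wt\Pi_*\Omega^1_{\wt S}$: the summand $\Omega^1_{\wt Y}(\log\wt R)\otimes\mathcal L^{(j)-1}$ is exactly the subsheaf on which $G$ acts through the character $\xi\mapsto\xi^j$, with the zero-th summand $\Omega^1_{\wt Y}$ being $G$-invariant. Because every map in the construction of $\varrho_i$ is $G$-equivariant (with trivial action on the base $\ol B$) and the source $F^{1,0}_{\ol B,i}$ is by definition the $i$-eigenspace of the $G$-action on $E^{1,0}_{\ol B}$, the image of $\varrho_i$ must land in the $i$-eigen-component $\Omega^1_{\wt Y}(\log\wt R)\otimes\mathcal L^{(i)-1}$.

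The step I expect to require the most care is the upgrade from logarithmic to genuinely holomorphic one-forms in the first step: making this precise demands the canonical Deligne extension of the unitary subsystem $\mathbb V^u_B$ and a local analysis near the components of $\Upsilon$, in order to ensure that no logarithmic pole of the lift survives at the boundary. Everything else is formal manipulation of the $G$-action and the cyclic-cover eigen-decomposition.
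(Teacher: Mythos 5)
Your overall route is the same as the paper's: produce a morphism $\bar f^*F^{1,0}_{\ol B}\to \Omega^1_{\ol S}$ lifting the inclusion $F^{1,0}_{\ol B}\hookrightarrow E^{1,0}_{\ol B}$, transport it to $\wt S$, push down along $\wt\Pi$, and use $G$-equivariance to place the $i$-th eigen-component inside $\Omega^1_{\wt Y}(\log\wt R)\otimes{\mathcal L^{(i)}}^{-1}$. Your steps two and three are exactly what the paper does (one remark: since your lift involves a choice, you should average over $G$ to make it equivariant, as the paper does with $\bar\varrho$; equivariance is not automatic from the construction). The real divergence is in step one. The paper does not prove the existence of the lift at all — it quotes it wholesale as \cite[Corollary~7.2]{lu-zuo-14} — whereas you attempt to re-derive it from the tautological sequence, and that derivation is where your argument is incomplete.

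Concretely, applying $\bar f_*$ to the tautological sequence and using $\theta_{\ol B}|_{F^{1,0}_{\ol B}}=0$ only shows that $F^{1,0}_{\ol B}$ lies in the \emph{image} of $\bar f_*\Omega^1_{\ol S}(\log\Upsilon)\to E^{1,0}_{\ol B}$. To obtain a sheaf morphism $F^{1,0}_{\ol B}\to \bar f_*\Omega^1_{\ol S}(\log\Upsilon)$ you must split the induced extension
\begin{equation*}
0\lra \Omega^1_{\ol B}(\log\Delta)\lra \mathcal G\lra F^{1,0}_{\ol B}\lra 0,
\end{equation*}
whose obstruction class lives in $H^1\big(\ol B,\,\big(F^{1,0}_{\ol B}\big)^{\vee}\otimes\Omega^1_{\ol B}(\log\Delta)\big)$; this group is not zero in general (e.g.\ when $\Delta=\emptyset$ it is Serre-dual to $H^0(\ol B, F^{1,0}_{\ol B})$, which is nonzero whenever the flat part has a trivial summand), so the phrase ``after splitting off the $\Omega^1_{\ol B}(\log\Delta)$-piece'' conceals a genuine step rather than a formality. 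Likewise, the descent from $\Omega^1_{\ol S}(\log\Upsilon)$ to $\Omega^1_{\ol S}$ via ``the unitary subsystem has trivial residues along $\Upsilon$'' is asserted, not proved — as you yourself flag — and requires the local analysis of the Deligne extension of $\mathbb V^u_B$ at the nodes (unitarity plus unipotence of the local monodromy forcing the residue to vanish). Both points are exactly the content of \cite[\S\,7]{lu-zuo-14}. Either cite that corollary, as the paper does, or carry out the splitting and residue arguments in full; as written, the first step of your plan is a restatement of what must be proved rather than a proof of it.
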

 \begin{proof}
 	According to \cite[Corollary\,7.2]{lu-zuo-14}, there exists a sheaf morphism
 	\begin{equation}\label{eqn-3-5}
 	\bar\varrho:~\bar f^*F^{1,0}_{\olb}\lra \Omega^1_{\ol S},
 	\end{equation}
 	such that the induced canonical morphism
 	\begin{equation*}
 	F^{1,0}_{\olb}=\bar f_*\bar f^*F^{1,0}_{\olb}\lra \bar f_*\Omega^1_{\ol S}\lra \bar f_*\Omega^1_{\ol S}(\log\Upsilon)
 	\lra \bar f_*\Omega^1_{\ol S/\ol B}(\log\Upsilon)=E^{1,0}_{\olb}
 	\end{equation*}
 	coincides with the inclusion $F^{1,0}_{\olb}\hookrightarrow  E^{1,0}_{\olb}.$
 	Since $\tilde\rho^*\Omega^1_{\ol S} \subseteq \Omega^1_{\wt S}$,
 	by pulling back \eqref{eqn-3-5}, we obtain a sheaf morphism
 	$$\tilde f^*F^{1,0}_{\olb}=\tilde\rho^* \bar f^*F^{1,0}_{\olb} \lra \Omega^1_{\wt S},$$
 	which corresponds to an element
 	$$\tilde\eta\in H^0\big(\wt S, \Omega^1_{\wt S}\otimes \tilde f^*{\big(F^{1,0}_{\olb}\big)}^{\vee}\big).$$
 	By pushing-out, we also obtain an element
 	$$\wt\Pi_*(\tilde\eta)\in H^0\left(\wt Y, \wt\Pi_*\big(\Omega^1_{\wt S}\otimes
 	\tilde f^*{\big(F^{1,0}_{\olb}\big)}^{\vee}\big)\right)
 	=H^0\left(\wt Y, \wt\Pi_*\Omega^1_{\wt S}\otimes \tilde \varphi^*{\big(F^{1,0}_{\olb}\big)}^{\vee}\right).$$
 	Hence one gets a sheaf morphism
 	\begin{equation}\label{eqn-3-14}
 	\varrho:~\tilde\varphi^*F^{1,0}_{\olb} \lra \wt\Pi_*\Omega^1_{\wt S}.
 	\end{equation}
 	By restricting to $\tilde \varphi^*F^{1,0}_{\olb,i}$, we obtain $\varrho_i$
 	as in \eqref{eqn-3-15} such that the induced morphism \eqref{eqn-3-13}
 	coincides with the inclusion $F^{1,0}_{\olb,i} \hookrightarrow F^{1,0}_{\olb}
 	\hookrightarrow E^{1,0}_{\olb}$.
 	
 	Note that the group $G$ acts on both sides of \eqref{eqn-3-5}.
 	One may require that the morphism $\bar\varrho$ is equivariant with respect to $G$.
 	So it is with the morphism $\varrho$.
 	Combining \eqref{eqn-3-14} with \eqref{eqn-3-12},
 	we obtain a sheaf morphism
 	$$\tilde\varphi^*F^{1,0}_{\olb} \lra
 	\Omega^1_{\wt Y} \bigoplus
 	\left(\bigoplus_{j=1}^{n-1}\Omega^1_{\wt Y}(\log \wt R)\otimes {\mathcal L^{(j)}}^{-1}\right),$$
 	which is compatible with the $G$-actions on both sides.
 	Hence the the image of $\varrho_i$ is contained in
 	$\Omega^1_{\wt Y}(\log \wt R)\otimes {\mathcal L^{(i)}}^{-1}$.
 \end{proof}

 %
 %

 \begin{lemma}\label{lem-3-20}
 	Let $\wt R\subseteq \wt Y$ be the reduced branch divisor of $\Pi'$ as above,
 	and $\wt \Gamma$ be a general fiber of $\tilde\varphi$.
 	Assume that $\wt R$ contains at least one section of $\tilde\varphi$,
 	and that there exist $1\leq i_1\leq i_2 \leq n-1$ such that
 	$F^{1,0}_{\olb,i_1}\neq 0$, $F^{1,0}_{\olb,i_2}\neq 0$, and
 	\begin{equation}\label{eqn-3-90}
 	\wt \Gamma \cdot \bigg(\omega_{\wt Y}(\wt R) \otimes
 	\left({\mathcal L^{(i_1)}}^{-1} \otimes {\mathcal L^{(i_2)}}^{-1}\right)\bigg)<0.
 	\end{equation}
 	Then both $F^{1,0}_{\olb,i_1}$ and $F^{1,0}_{\olb,i_2}$ become trivial
 	after a   suitable finite \'etale base change.
 \end{lemma}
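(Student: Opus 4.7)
The plan is to combine the Esnault--Viehweg cyclic-cover description embodied in \autoref{lem-3-1} with a fibre-wise positivity argument \`a la Bogomolov, and then to invoke Deligne's triviality lemma for flat Higgs bundles after a finite \'etale base change.

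First, I would pick nonzero sections $\omega_k\in H^0(\olb,F^{1,0}_{\olb,i_k})$ for $k=1,2$ and run them through the morphism $\varrho_{i_k}$ of \autoref{lem-3-1}, obtaining
$$\sigma_k:=\varrho_{i_k}(\tilde\varphi^{*}\omega_k)\in H^0\bigl(\wt Y,\;\Omega^1_{\wt Y}(\log\wt R)\otimes(\mathcal L^{(i_k)})^{-1}\bigr).$$
Their wedge is a section of $\omega_{\wt Y}(\wt R)\otimes(\mathcal L^{(i_1)})^{-1}\otimes(\mathcal L^{(i_2)})^{-1}$. The assumption that $\wt R$ contains a section of $\tilde\varphi$ guarantees that the restriction of $\omega_{\wt Y}(\wt R)$ to a general fiber $\wt\Gamma\cong\mathbb P^1$ is the canonical bundle of $\mathbb P^1$ twisted by the branch points on $\wt\Gamma$, and the numerical inequality \eqref{eqn-3-90} then forces the line bundle above to have strictly negative degree on $\wt\Gamma$. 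Since $\mathbb P^1$ has no nonzero sections of negatively twisted line bundles, $(\sigma_1\wedge\sigma_2)|_{\wt\Gamma}=0$ for every general $\wt\Gamma$, and because its zero locus is closed, $\sigma_1\wedge\sigma_2$ vanishes identically on $\wt Y$.

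Next, using the adjunction $\wt\Pi^{*}\dashv\wt\Pi_*$ I would lift $\sigma_k$ to one-forms $\eta_k\in H^0(\wt S,\Omega^1_{\wt S})_{i_k}$ satisfying $\eta_1\wedge\eta_2=0$. When $i_1\ne i_2$ the forms $\eta_1,\eta_2$ lie in distinct $G$-eigenspaces and are hence linearly independent, so by Castelnuovo--de Franchis there exists a fibration $\bar f':\wt S\to\olb'$ onto a smooth curve through which both forms factor; Galois symmetry under $\mathrm{Gal}(\mathbb Q(\xi_n)/\mathbb Q)$ and Hodge conjugation (as in the proof of \autoref{claim-3-31}) then promote this to the statement that every section of $F^{1,0}_{\olb,i_k}\oplus F^{0,1}_{\olb,i_k}$ is pulled back along $\bar f'$. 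In the degenerate case $i_1=i_2$ the wedge vanishes automatically, so instead I would apply the Bogomolov lemma \cite[Lemma\,7.5]{sak-80} to the saturated line subsheaf of $\Omega^1_{\wt Y}(\log\wt R)\otimes(\mathcal L^{(i_1)})^{-1}$ generated by $\sigma_1$; its negative fibrewise degree forces its Kodaira dimension to be at most $0$, and hence, after a finite base change, a factorization of $\sigma_1$ through a pullback from $\olb$.

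Finally, since $(F^{1,0}_{\olb,i_k},0)$ is a flat polystable Higgs summand corresponding under Simpson's correspondence to a unitary local system, and since all its holomorphic sections now factor through the fibration $\bar f'$ (or directly through $\olb$ in the degenerate case), Deligne's lemma \cite[\S\,4.2]{deligne-71} on the triviality of flat rank-one Higgs bundles applies to each irreducible summand and yields $F^{1,0}_{\olb,i_k}\cong\mathcal O_{\olb}^{\oplus r_{i_k}}$ after a suitable finite \'etale base change of $\olb$. The principal obstacle I foresee is the degenerate case $i_1=i_2$, where Castelnuovo--de Franchis fails and Bogomolov's lemma must be used instead, together with the careful descent in Deligne's lemma needed to ensure that the base change can be chosen genuinely \'etale and compatible with the $G$-action on $\ols$.
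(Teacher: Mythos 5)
Your argument has a genuine gap at its very first step: you choose ``nonzero sections $\omega_k\in H^0(\olb,F^{1,0}_{\olb,i_k})$'', but the hypothesis only says that the flat bundle $F^{1,0}_{\olb,i_k}$ has positive rank, not that it has any global holomorphic sections. A degree-zero unitary flat bundle on a curve generically has $H^0=0$; indeed, the existence of (enough) global sections is essentially equivalent to the triviality you are trying to prove, so starting from global sections is circular. The paper avoids this by never working with global sections of $F^{1,0}_{\olb,i}$: instead it feeds an arbitrary nonzero unitary \emph{subbundle} $\hat{\mathcal U}\subseteq F^{1,0}_{\olb,i}$ through the sheaf morphism $\varrho_i$ of \autoref{lem-3-1} and studies the image sheaf $\varrho_i(\tilde\varphi^*\hat{\mathcal U})\subseteq\Omega^1_{\wt Y}(\log\wt R)\otimes{\mathcal L^{(i)}}^{-1}$. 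The negativity \eqref{eqn-3-90} is then used to show this image has rank one (if it had rank two, the wedge would give a nonzero semi-positive quotient of the semi-positive bundle $\tilde\varphi^*\hat{\mathcal U}\otimes\tilde\varphi^*F^{1,0}_{\olb,i_2}$ sitting inside a line bundle that is fibrewise negative, a contradiction), and that the resulting line bundle $\hat M$ is nef with $\hat M^2=0$ and $\hat M\cdot D=0$ for the horizontal components $D\subseteq\wt R$.

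The second gap is in your concluding step. Producing a fibration $\bar f'$ via Castelnuovo--de Franchis through which all available one-forms factor is the content of \autoref{lem-3-21}, not of this lemma, and it does not by itself imply that the flat bundle is trivial. Likewise, invoking Deligne's lemma ``on each irreducible summand'' presupposes that the irreducible unitary summands of $F^{1,0}_{\olb,i}$ have rank one, which is exactly what must be proved. This is where the hypothesis that $\wt R$ contains a section $D$ of $\tilde\varphi$ enters: since $D\cong\olb$ and $M_{ij}\cdot D=0$ for the rank-one image $M_{ij}$ of each irreducible summand $\mathcal U_{ij}$, one gets a degree-zero line-bundle quotient $\mathcal O_D(M_{ij})$ of the polystable degree-zero bundle $\mathcal U_{ij}$, which therefore splits off; irreducibility forces $\mathcal U_{ij}$ itself to be that line bundle. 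Only then does Deligne's lemma \cite[\S\,4.2]{deligne-71} apply to turn the direct sum of flat line bundles into a trivial bundle after a finite \'etale base change. Your proposal uses the right ingredients (the Esnault--Viehweg eigensheaf inclusion, the fibrewise negativity, Deligne's lemma) but assembles them in a way that assumes the conclusion twice over.
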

 \begin{proof}
 	We divide the proof into two steps.
 	
 	{\vspace{1.5mm}\noindent\bf Step I:}
 	We show that for any non-zero unitary subbundle
 	$\hat{\mathcal U}\subseteq F^{1,0}_{\olb,i}$ with $i=i_1$ or $i_2$,
 	the image $\varrho_i(\tilde\varphi^*\hat{\mathcal U})$ is an invertible subsheaf $\hat M$
 	such that $\hat M$ is numerically effective (nef), $\hat M^2 = 0$,
 	and $\hat M\cdot D = 0$ for any component $D\subseteq \wt R_h$,
 	where $\varrho_i$ is given in \eqref{eqn-3-15}.
 	
 	\vspace{0.5mm}
 	The proof of this step is similar to that of \cite[Lemma\,7.3]{lu-zuo-14},
 	and here we only do it for $i=i_1$, as the case for $i=i_2$ is completely parallel.
 	By \autoref{lem-3-1}, the image $\varrho_{i_1}(\tilde\varphi^*\hat{\mathcal U})$ is contained in
 	$\Omega^1_{\wt Y}(\log \wt R)\otimes {\mathcal L^{(i_1)}}^{-1}$,
 	and it is non-zero if $\hat{\mathcal U}\neq 0$.
 	Mimicking the proof of \cite[Lemma\,7.3]{lu-zuo-14},
 	it suffices to show that the image $\varrho_{i_1}(\tilde\varphi^*\hat{\mathcal U})$ is a subsheaf of rank one
 	if $\hat{\mathcal U}\neq 0$.
 	We prove this by contradiction. Assume that it is not the case.
 	By taking wedge-product, one obtains a non-zero map
 	$$\begin{aligned}
 	\tau\circ\varrho_{i_1}\wedge \tau\circ\varrho_{i_2}:~
 	\tilde\varphi^*\hat{\mathcal U} \otimes \tilde\varphi^*F^{1,0}_{\olb,i_2}
 	\quad\lra\quad& \wedge^2\Omega^1_{\wt Y}(\log \wt R)\otimes \left({\mathcal L^{(i_1)}}^{-1} \otimes {\mathcal L^{(i_2)}}^{-1}\right)\\
 	=\,&\,\omega_{\wt Y}(\wt R) \otimes \left({\mathcal L^{(i_1)}}^{-1} \otimes {\mathcal L^{(i_2)}}^{-1}\right).
 	\end{aligned}$$
 	Denote by $\mathcal C$ the image of the above map, we proceed to establish the semi-positivity of $\mathcal C$; here we recall that a locally free sheaf $\mathcal E$ on $\wt Y$ is called semi-positive, if for any morphism $\psi: Z \to \wt Y$ from a smooth complete curve $Z$,
 	the pulling-back $\psi^*\mathcal E$ has no quotient line bundle of negative degree.
 	 \begin{list}{}
 	 	{\setlength{\labelwidth}{3mm}
 	 		\setlength{\leftmargin}{5mm}
 	 		\setlength{\itemsep}{2.5mm}}
 	 	\item[$\bullet$] On the one hand,
 	 	for any morphism $\psi: Z \to \wt Y$ from a smooth complete curve $Z$,
 	 	$\psi^*\big(\tilde\varphi^*F^{1,0}_{\olb,i_1} \otimes \tilde\varphi^*F^{1,0}_{\olb,i_2}\big)$
 	 	is poly-stable of slope zero since it comes from a unitary representation (cf. \cite{narasimhan-sesdahri-65}),
 	 	which implies that $\tilde\varphi^*\hat{\mathcal U} \otimes \tilde\varphi^*F^{1,0}_{\olb,i_2}$ is semi-positive.
 	 	Therefore, as a quotient of $\tilde\varphi^*\hat{\mathcal U} \otimes \tilde\varphi^*F^{1,0}_{\olb,i_2}$,
 	 	$\mathcal C$ is also semi-positive.
 	 	
 	 	\item[$\bullet$] On the other hand, from \eqref{eqn-3-90} it follows that
 	 	$\omega_{\wt Y}(\wt R) \otimes \left({\mathcal L^{(i_1)}}^{-1} \otimes {\mathcal L^{(i_2)}}^{-1}\right)$
 	 	can not contain any non-zero semi-positive subsheaf.
 	 	It contradicts the semi-positivity of $\mathcal C$.
 	 \end{list}
%
%
 	Hence the image $\varrho_{i_1}(\tilde\varphi^*\hat{\mathcal U})$ is a subsheaf of rank one as required.
 	
 	{\vspace{1.5mm}\noindent\bf Step II:}
 	We show that both $F^{1,0}_{\olb,i_1}$ and $F^{1,0}_{\olb,i_2}$ become trivial
 	after a suitable finite \'etale base change.
 	
 	\vspace{0.5mm}
 	The proof of this step is similar to that of \cite[Proposition\,7.4]{lu-zuo-14}.
 	In fact, by \cite[\S\,4.2]{deligne-71}, 
 	it suffices to show that $F^{1,0}_{\olb,i}$ is a direct sum of line bundles
 	after a suitable unramified base change for $i=i_1$ or $i_2$.
 	By assumption, $\wt R_h$ contains at least one section of $\tilde\varphi$.
 	Let $D\subseteq \wt R_h$ be such a section, and
 	$$F^{1,0}_{\olb,i}=\bigoplus_j\, \mathcal U_{ij}$$
 	be the decomposition of $F^{1,0}_{\olb,i}$ into irreducible subbundles.
 	By Step I with the unitary subbundle $\mathcal U_{ij}\subseteq F^{1,0}_{\olb,i}$,
 	we obtain $M_{ij}\cdot D=0$,
 	i.e., $\deg\mathcal O_D(M_{ij})=0$, where $M_{ij}$ is the image $\varrho_i(\tilde\varphi^*\mathcal U_{ij})$.
 	As $D$ is a section, $D\cong \ol B$. Hence we may view $\mathcal O_D(M_{ij})$ as an invertible
 	sheaf on $\ol B$, which is a quotient of $\mathcal U_{ij}$
 	since $M_{ij}$ is a quotient of $\tilde\varphi^*\mathcal U_{ij}$.
 	As $\mathcal U_{ij}$ comes from a unitary local system, $\mathcal U_{ij}$ is poly-stable.
 	Thus $\mathcal U_{ij}=\mathcal O_D(M_{ij}) \oplus \mathcal U_{ij}'$.
 	Because $\mathcal U_{ij}$ is irreducible,
 	$\mathcal U_{ij}=\mathcal O_D(M_{ij})$ is a line bundle as required.
 \end{proof}

 \begin{remark}\label{rem-3-4}
 	Let $a_{\infty}$ be the local monodromy around $\infty$ for the induced cyclic cover to $\mathbb P^1$
 	on the general fiber of $\bar f$ as in \eqref{eqn-3-46}.
 	If $a_{\infty}\neq 1$, then $\wt R$ clearly contains a section of $\tilde\varphi$.
 	In general, as pointed out at the beginning in the proof of \autoref{invariantstheorem},
 	after a suitable finite base change (may not be \'etale),
 	$\wt R$ consists of distinguished sections of $\tilde\varphi$ (i.e., the inverse image of $\{D_i\}_{i=1}^{\alpha}$)
 	plus certain components in the fibers of $\tilde\varphi$ (i.e., the inverse image of certain nodes in fibers of $\bar \varphi$).
 	In particular, we can always achieve the assumption that $\wt R$ contains at least one section of $\tilde\varphi$
 	using base change (may not be \'etale).
 \end{remark}

 \begin{corollary}\label{cor-3-3}
 	Assume that $\wt R$ contains at least one section of $\tilde\varphi$,
 	and that $F^{1,0}_{\olb,i_0}\neq 0$ for some $i_0\geq n/2$.
 	Then after a suitable unramified base change,
 	$F^{1,0}_{\olb,i}$ is trivial for any $n-i_0\leq i \leq i_0$.
 \end{corollary}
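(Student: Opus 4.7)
The plan is to reduce the corollary to a pairwise application of \autoref{lem-3-20}. First, by the symmetry relation \eqref{eqn-3-172} of \autoref{lem-3-42}, the hypothesis $F^{1,0}_{\olb,i_0}\neq 0$ with $i_0\geq n/2$ guarantees that $\rank F^{1,0}_{\olb,n-i_0}\geq \rank F^{1,0}_{\olb,i_0}>0$, so both ``endpoints'' of the interval $[n-i_0,i_0]$ carry non-trivial flat eigen-subbundles. For every index $i$ with $n-i_0\leq i\leq i_0$ and $F^{1,0}_{\olb,i}\neq 0$, I would apply \autoref{lem-3-20} to the pair $(i_1,i_2)=(i,i_0)$ to force both $F^{1,0}_{\olb,i}$ and $F^{1,0}_{\olb,i_0}$ to become trivial after a suitable finite \'etale base change; since only finitely many indices are involved, a single common base change handles all of them simultaneously.

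The key step is to verify the numerical condition \eqref{eqn-3-90} for each such pair. On a general fiber $\wt\Gamma\cong \bbp^1$ of $\tilde\varphi$, the intersection numbers are
$$\wt\Gamma\cdot \omega_{\wt Y}=-2,\qquad \wt\Gamma\cdot \wt R=\alpha,\qquad \wt\Gamma\cdot \mathcal L^{(j)}=\deg \mathcal L^{(j)}|_{\wt\Gamma}=\left\lceil \frac{j\alpha_0}{n}\right\rceil,$$
where the last formula is read off from \eqref{eqn-3-20}. Hence \eqref{eqn-3-90} reduces to the strict inequality
$$\deg \mathcal L^{(i)}|_{\wt\Gamma}+\deg \mathcal L^{(i_0)}|_{\wt\Gamma}>\alpha-2.$$
Using the elementary estimate $\lceil x\rceil+\lceil y\rceil\geq \lceil x+y\rceil$ and the bound $i+i_0\geq n$ coming from $i\geq n-i_0$, the left-hand side is at least $\lceil (i+i_0)\alpha_0/n\rceil\geq \alpha_0$. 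The definition \eqref{eqn-3-45} gives $\alpha_0\in\{\alpha,\alpha-1\}$, so $\alpha_0\geq \alpha-1>\alpha-2$ in either case, and the required strict inequality is established.

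The indices $i\in[n-i_0,i_0]$ for which $F^{1,0}_{\olb,i}$ already vanishes need no further treatment. The only minor point to watch is that the running hypothesis of \autoref{lem-3-20}, namely that $\wt R$ contain at least one section of $\tilde\varphi$, is exactly the standing assumption of the corollary and is preserved under unramified base change, so no additional work is required. I expect the main obstacle to be merely bookkeeping: arranging that the finitely many \'etale base changes produced by \autoref{lem-3-20} can be assembled into one common base change, and tracking the compatibility of the sections of $\tilde\varphi$ under pullback. Once this is done, every $F^{1,0}_{\olb,i}$ with $n-i_0\leq i\leq i_0$ is trivial after one finite unramified base change, yielding the corollary.
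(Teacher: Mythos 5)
Your proposal is correct and follows essentially the same route as the paper: one verifies the numerical condition \eqref{eqn-3-90} on a general fiber for each pair $(i,i_0)$ with $n-i_0\leq i\leq i_0$ and $F^{1,0}_{\olb,i}\neq 0$ (the paper computes $\wt\Gamma\cdot\omega_{\wt Y}(\wt R)=\alpha-2$ and uses \eqref{eqn-3-20} exactly as you do), and then applies \autoref{lem-3-20} to conclude triviality after a common finite \'etale base change. The only superfluous step is your appeal to \eqref{eqn-3-172}, which is stated for Shimura curves and is not needed since every index is paired directly with $i_0$.
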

 \begin{proof}
 	Note that
 	$$\wt \Gamma\cdot \omega_{\wt Y}(\wt R)=\left\{\begin{aligned}
 	&\alpha_0-2, &\quad&\text{if~}n\,|\,\alpha_0;\\[1mm]
 	&\alpha_0-1, &&\text{if~}n{\not|}~\alpha_0.
 	\end{aligned}\right.$$
 	Hence by \eqref{eqn-3-20}, one checks easily that
 	$$\wt \Gamma\cdot \bigg(\omega_{\wt Y}(\wt R) \otimes \left({\mathcal L^{(i)}}^{-1} \otimes {\mathcal L^{(i_0)}}^{-1}\right)\bigg)<0,\qquad \forall~n-i_0\leq i \leq i_0.$$
 	Hence by applying \autoref{lem-3-20} to the case when $i_1=i_0$ and $i_2=i$ with $n-i_0\leq i \leq i_0$ and $F^{1,0}_{\olb,i}\neq 0$,
 	we complete the proof.
 \end{proof}

 In the rest part of this subsection,
 we focus on the existence of a fibration structure on $\ols$.
 Since $G$ acts on $\ol S$,
 it induces an action on $H^0(\ol S,\,\Omega^1_{\ol S})$.
 Let
 $$H^0\big(\ol S,\,\Omega^1_{\ol S}\big)=\bigoplus_{i=0}^{n-1} H^0\big(\ol S,\,\Omega^1_{\ol S}\big)_i$$
 be the eigenspace decomposition.
 Then $H^0\big(\ol S,\,\Omega^1_{\ol S}\big)_0\cong \bar f^*H^0\big(\ol B,\,\Omega^1_{\ol B}\big)$,
 and according to \cite[Theorem\,3.1]{fujita-78},
 \begin{equation}\label{eqn-3-32}
 \dim H^0\big(\ol S,\,\Omega^1_{\ol S}\big)_i=\rank \big(F^{1,0}_{\olb,i}\big)^{tr},\quad\forall~1\leq i\leq n-1,
 \end{equation}
 where $\big(F^{1,0}_{\olb,i}\big)^{tr}\subseteq F^{1,0}_{\olb,i}$ is the trivial part contained in
 the flat bundle $F^{1,0}_{\olb,i}$ as in \autoref{lem-3-12}.

 \begin{lemma}\label{lem-3-21}
 	Assume that there exist $1\leq i_1\leq i_2 \leq n-1$, such that
 	\eqref{eqn-3-90} holds, and that $H^0\big(\ol S,\,\Omega^1_{\ol S}\big)_{i_1}\neq 0$ and $H^0\big(\ol S,\,\Omega^1_{\ol S}\big)_{i_2}\neq 0$.
 	Then there exists a unique fibration $\bar f':\,\ol S \to \ol B'$ such that
 	\begin{equation}\label{eqn-3-91}
 	H^0\big(\ol S,\,\Omega^1_{\ol S}\big)_{i}
 	\subseteq \big(\bar f'\big)^*H^0\big(\ol B',\,\Omega^1_{\ol B'}\big), \qquad\text{for $i=i_1,i_2$}.
 	\end{equation}
 \end{lemma}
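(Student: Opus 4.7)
The plan is to apply the Castelnuovo-de Franchis lemma to construct the desired fibration, with the main technical step being to prove that $\omega_1\wedge\omega_2=0$ for non-zero $\omega_j\in H^0(\ols,\Omega^1_{\ols})_{i_j}$. The vanishing of the wedge will be derived from the negativity hypothesis \eqref{eqn-3-90} via the Esnault-Viehweg eigen-decomposition of the cyclic cover $\wt\Pi:\wt S\to\wt Y$.

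First I would fix non-zero $\omega_j\in H^0(\ols,\Omega^1_{\ols})_{i_j}$ for $j=1,2$ and pull them back to $\wt S$ through the birational contraction $\wt S\to\ols$. Via the inclusion \eqref{eqn-3-12}, each $\omega_j$ corresponds to a global section $\tilde\omega_j$ of the line bundle $\Omega^1_{\wt Y}(\log\wt R)\otimes(\mathcal L^{(i_j)})^{-1}$ on $\wt Y$, sitting in the $\xi^{i_j}$-eigensheaf of $\wt\Pi_*\Omega^1_{\wt S}$. The wedge $\tilde\omega_1\wedge\tilde\omega_2$ is then a global section of
$$\omega_{\wt Y}(\wt R)\otimes(\mathcal L^{(i_1)})^{-1}\otimes(\mathcal L^{(i_2)})^{-1}.$$
By hypothesis \eqref{eqn-3-90}, this line bundle has negative degree on a general fiber $\wt\Gamma\cong\bbp^1$ of $\tilde\varphi$, so the restriction of the section to $\wt\Gamma$ must vanish; since this holds on a Zariski-dense family of fibers, $\tilde\omega_1\wedge\tilde\omega_2$ vanishes identically on $\wt Y$, and pulling back gives $\omega_1\wedge\omega_2=0$ on $\ols$.

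Assume first that $i_1\neq i_2$. Then $\omega_1$ and $\omega_2$ lie in distinct $G$-eigenspaces, so they are linearly independent, and their wedge vanishes. By the Castelnuovo-de Franchis lemma (cf. \cite[Theorem\,IV-5.1]{bhpv-04}), there exists a unique fibration $\bar f':\ols\to\ol B'$ onto a smooth projective curve of genus $g(\ol B')\geq 2$ such that $\omega_1,\omega_2\in(\bar f')^*H^0(\ol B',\Omega^1_{\ol B'})$. To extend \eqref{eqn-3-91} to the full eigenspaces, I would apply the same wedge-vanishing argument to an arbitrary $\omega\in H^0(\ols,\Omega^1_{\ols})_{i_1}$ paired with $\omega_2$: the resulting fibration must coincide with $\bar f'$ by uniqueness, so $\omega\in(\bar f')^*H^0(\ol B',\Omega^1_{\ol B'})$; and symmetrically for $H^0(\ols,\Omega^1_{\ols})_{i_2}$. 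The borderline case $i_1=i_2$ is either trivial (when the eigenspace is one-dimensional, any single $1$-form factors through its Stein image in its Albanese) or handled by picking two linearly independent forms in the same eigenspace and repeating the argument.

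The main obstacle is the careful bookkeeping needed in Steps 1--2: verifying that the wedge computed on $\wt Y$ via the Esnault-Viehweg decomposition agrees with $\omega_1\wedge\omega_2$ pulled back from $\ols$, and that the morphisms produced by \autoref{lem-3-1} patch compatibly with the wedge product so that $\tilde\omega_1\wedge\tilde\omega_2$ indeed lives in the advertised summand. One may also need to invoke the device of \autoref{rem-3-4} (a possibly non-\'etale base change) to guarantee that $\wt R$ contains a section of $\tilde\varphi$ and the eigensheaf identifications are valid in the precise form used.
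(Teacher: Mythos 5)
Your proposal is correct and follows essentially the same route as the paper: locate $\omega_1\wedge\omega_2$ in the Esnault--Viehweg eigensheaf $\omega_{\wt Y}(\wt R)\otimes{\mathcal L^{(i_1)}}^{-1}\otimes{\mathcal L^{(i_2)}}^{-1}$ (the paper pulls everything back to $\wt S$ and computes degrees against a fiber $\wt F$, but this is the same negativity argument as yours on $\wt\Gamma$), conclude the wedge vanishes from \eqref{eqn-3-90}, and apply Castelnuovo--de Franchis, extending to the whole eigenspaces by uniqueness of the fibration. The only caveat is your fallback for $i_1=i_2$ with a one-dimensional eigenspace (a single $1$-form on a surface need not come from a fibration), but the paper's proof silently makes the same linear-independence assumption, which holds in all the applications since the two forms lie in distinct $G$-eigenspaces.
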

 \begin{proof}
 	First, the uniqueness of such a fibration is clear.
 	It suffices to show the existence of such a fibration with the property \eqref{eqn-3-91}.
 	By Castelnuovo-de Franchis lemma (cf. \cite[Theorem\,IV-5.1]{bhpv-04}), it is enough to show that
 	\begin{equation}\label{eqn-3-69}
 	\omega_{i_1}\wedge\omega_{i_2}=0, \quad\forall~\omega_{i_1}\in H^0\big(\ol S,\,\Omega^1_{\ol S}\big)_{i_1}
 	\text{~and~}\omega_{i_2}\in H^0\big(\ol S,\,\Omega^1_{\ol S}\big)_{i_2}.
 	\end{equation}
 	As an easy case, if $i_1+i_2=n$, then
 	it is clear that
 	$$\omega_{i_1}\wedge \omega_{i_2} \in H^0\big(\ol S,\,\Omega^2_{\ol S}\big)^{G}
 	=\ol\Pi^*H^0\big(\ol Y,\,\Omega^2_{\ol Y}\big)=0,\quad
 	\text{since $\ol Y$ is ruled over $\ol B$}.$$
 	In general, we have to apply a more detailed description on the differential sheaves
 	associated to cyclic covers due to Esnault-Viehweg \cite{esnault-viehweg-92}.
 	First note that $H^0\big(\wt S,\,\Omega^k_{\wt S}\big)\cong H^0\big(\ol S,\,\Omega^k_{\ol S}\big)$
 	for $k=1,2$,
 	and this isomorphism is compatible with the action of the group $G$.
 	It suffices to prove \eqref{eqn-3-69} for $\wt S$.
 	There is an inclusion
 	$$\iota_k:~\wt\Pi_*\Omega^k_{\wt S}\hookrightarrow
 	\Omega^k_{\wt Y} \bigoplus
 	\left(\bigoplus_{i=1}^{n-1}\Omega^k_{\wt Y}(\log \wt R)\otimes {\mathcal L^{(i)}}^{-1}\right).$$
 	Taking global sections, we obtain an injection
 	$$\iota_k:~H^0\big(\wt S,\,\Omega^k_{\wt S}\big)=H^0\big(\wt Y,\,\Pi_*\Omega^k_{\wt S}\big)
 	\,\hookrightarrow\, H^0\big(\wt Y,\,\Omega^k_{\wt Y}\big) \bigoplus
 	\left(\bigoplus_{i=1}^{n-1} H^0\big(\wt Y,\,\Omega^k_{\wt Y}(\log \wt R)\otimes {\mathcal L^{(i)}}^{-1}\big)\right),$$
 	which is compatible with the action of the group $G$ on both sides,
 	i.e.,
 	$$\left\{\begin{aligned}
 	\iota_k\Big(H^0\big(\wt S,\,\Omega^k_{\wt S}\big)_0\Big) &\,\subseteq H^0\big(\wt Y,\,\Omega^k_{\wt Y}\big),\\
 	\iota_k\Big(H^0\big(\wt S,\,\Omega^k_{\wt S}\big)_i\Big)
 	&\,\subseteq H^0\big(\wt Y,\,\Omega^k_{\wt Y}(\log \wt R)\otimes {\mathcal L^{(i)}}^{-1}\big),
 	\quad \forall~1\leq i\leq n-1.
 	\end{aligned}\right.$$
 	By pulling back, we have the following injection map, which is just the inclusion map.
 	$$
 	\wt\Pi^*(\iota_k):~ H^0\big(\wt S,\,\Omega^k_{\wt S}\big)_i \hookrightarrow
 	H^0\big(\wt S,\,\Omega^k_{\wt S}(\log \wt R')\otimes \wt\Pi^*{\mathcal L^{(i)}}^{-1}\big),
 	\quad \forall~1\leq i\leq n-1,
 	$$
 	where $\wt R'$ is the support of the divisor $\wt\Pi^{-1}(\wt R)$.
 	Since the wedge-product operation is clearly commutative with the inclusion map,
 	it follows that for any two $1$-forms
 	$\omega_{i_1}\in H^0\big(\ol S,\,\Omega^1_{\ol S}\big)_{i_1}$
 	and $\omega_{i_2}\in H^0\big(\ol S,\,\Omega^1_{\ol S}\big)_{i_2}$, one has
 	$$\omega_{i_1}\wedge\omega_{i_2}\in
 	H^0\big(\wt S,\,\Omega^k_{\wt S}(\log \wt R')\otimes
 	\wt\Pi^*{\mathcal L^{(i_1)}}^{-1}\otimes \wt\Pi^*{\mathcal L^{(i_2)}}^{-1}\big).$$
 	Let $\wt F\subseteq \wt S$ be a general fiber of $\tilde f$, and $\wt \Gamma=\wt\Pi\big(\wt F\big)$.
 	Then by the assumption \eqref{eqn-3-90}, one obtains that
 	$$
 	\wt F\cdot \Big(\Omega^2_{\wt S}(\log \wt R')\otimes
 	\wt\Pi^*{\mathcal L^{(i_1)}}^{-1}\otimes \wt\Pi^*{\mathcal L^{(i_2)}}^{-1} \Big)
 	=n\,\wt \Gamma \cdot\Big(\omega_{\wt Y}(\wt R) \otimes {\mathcal L^{(i_1)}}^{-1} \otimes {\mathcal L^{(i_2)}}^{-1}\Big)<0.
 	$$
 	Hence
 	$$H^0\big(\wt Y,\,\Omega^2_{\wt Y}(\log \wt R)\otimes {\mathcal L^{(i_1)}}^{-1}\otimes {\mathcal L^{(i_2)}}^{-1}\big)=0.$$
 	This shows that $\omega_{i_1}\wedge\omega_{i_2}=0$ as required.
 \end{proof}

 \begin{corollary}\label{cor-3-11}
 	Assume that $H^0\big(\ol S,\,\Omega^1_{\ol S}\big)_{i_0}\neq 0$ for some $i_0\geq n/2$.
 	Then after a suitable base change, there exists a unique fibration $\bar f':\,\ol S \to \ol B'$ such that
 	\begin{equation}\label{eqn-3-42}
 	\bigoplus_{i= n-i_0}^{i_0} H^0\big(\ol S,\,\Omega^1_{\ol S}\big)_{i}
 	\subseteq \big(\bar f'\big)^*H^0\big(\ol B',\,\Omega^1_{\ol B'}\big).
 	\end{equation}
 \end{corollary}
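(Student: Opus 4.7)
\smallskip\noindent\textbf{Proof proposal for Corollary 3.11.}
The plan is to bootstrap from Lemma 3.21 by applying it repeatedly with the pair $(i_1,i_2)=(i,i_0)$ as $i$ ranges over $\{n-i_0,\dots,i_0\}$, and then to glue the resulting fibrations via the uniqueness statement in Lemma 3.21. First, after a suitable (possibly non-\'etale) finite base change I may assume, as explained in Remark 3.4, that the reduced branch divisor $\wt R\subset \wt Y$ contains at least one section of $\tilde\varphi$. Since $H^0(\ol S,\Omega^1_{\ol S})_{i_0}\neq 0$, formula \eqref{eqn-3-32} gives $\rank(F^{1,0}_{\ol B,i_0})^{tr}>0$, in particular $F^{1,0}_{\ol B,i_0}\neq 0$. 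The hypothesis of Corollary 3.3 is therefore satisfied, so after a further \'etale base change I can assume that $F^{1,0}_{\ol B,i}$ is trivial for every $n-i_0\leq i\leq i_0$, and consequently $\dim H^0(\ol S,\Omega^1_{\ol S})_i=\rank F^{1,0}_{\ol B,i}$ for those $i$.

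Next, for each index $i$ with $n-i_0\leq i\leq i_0$ and $H^0(\ol S,\Omega^1_{\ol S})_i\neq 0$ (equivalently $F^{1,0}_{\ol B,i}\neq 0$), I verify the numerical inequality \eqref{eqn-3-90} with $i_1=\min(i,i_0)$ and $i_2=\max(i,i_0)$. This is exactly the same computation already carried out in the proof of Corollary 3.3: one uses \eqref{eqn-3-20} together with $\wt\Gamma\cdot\omega_{\wt Y}(\wt R)=\alpha_0-2$ or $\alpha_0-1$ (according to whether $n\mid\alpha_0$), and the fact that $(n-i)+(n-i_0)>n$ since $i\geq n-i_0$ and $i_0\geq n/2$, to check that the intersection number is strictly negative. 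Hence Lemma 3.21 produces for each such $i$ a unique fibration $\bar f'_i\colon \ol S\to \ol B'_i$ pulling back both $H^0(\ol S,\Omega^1_{\ol S})_{i_0}$ and $H^0(\ol S,\Omega^1_{\ol S})_i$.

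Finally, I glue these fibrations. Fixing a non-zero 1-form $\omega_{i_0}\in H^0(\ol S,\Omega^1_{\ol S})_{i_0}$, every $\bar f'_i$ satisfies $\omega_{i_0}\in (\bar f'_i)^*H^0(\ol B'_i,\Omega^1_{\ol B'_i})$; because two distinct fibrations of a smooth projective surface cannot share a non-zero pulled-back 1-form (a standard consequence of Castelnuovo--de Franchis), all the $\bar f'_i$ coincide with a single fibration $\bar f'\colon\ol S\to\ol B'$. Taking the union of the eigenspaces $H^0(\ol S,\Omega^1_{\ol S})_i$ over the indices $i$ with $n-i_0\leq i\leq i_0$ (those with the eigenspace zero contribute nothing and are automatic), yields the desired inclusion \eqref{eqn-3-42}; uniqueness of $\bar f'$ follows from the uniqueness clause of Lemma 3.21 applied to any pair $(i,i_0)$ with both eigenspaces nonzero. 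The main technical point to watch is that the base changes invoked in the first two steps (to secure a section of $\wt R$ and to trivialize the $F^{1,0}_{\ol B,i}$) can be performed simultaneously; this is harmless since a common refinement still satisfies the hypotheses of both Corollary 3.3 and Lemma 3.21.
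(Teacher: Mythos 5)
Your proposal follows essentially the same route as the paper: reduce via \autoref{cor-3-3} and \autoref{rem-3-4} to the case where all $F^{1,0}_{\olb,i}$ with $n-i_0\leq i\leq i_0$ are trivial, apply \autoref{lem-3-21} to each pair $(i,i_0)$, and identify the resulting fibrations through the uniqueness clause. One small slip: the inequality driving \eqref{eqn-3-90} should read $i+i_0\geq n$ (equivalently $(n-i)+(n-i_0)\leq n$), not $(n-i)+(n-i_0)>n$, but the computation you invoke from \autoref{cor-3-3} is the correct one.
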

 \begin{proof}
 	Since $H^0\big(\ol S,\,\Omega^1_{\ol S}\big)_{i_0}\neq 0$,
 	it follows from \eqref{eqn-3-32} that $F^{1,0}_{\olb,i_0}\neq 0$.
 	Hence by \autoref{cor-3-3} and \autoref{rem-3-4}, after a suitable base change,
 	$F^{1,0}_{\olb,i}$ is trivial for any $n-i_0\leq i \leq i_0$.
     Combining this with \eqref{eqn-3-35} and \eqref{eqn-3-102}, one obtains
 	$$\dim H^0\big(\ol S,\,\Omega^1_{\ol S}\big)_{n-i_0}=\rank F^{1,0}_{\olb,n-i_0}
 	\geq \rank F^{1,0}_{\olb,i_0}=\dim H^0\big(\ol S,\,\Omega^1_{\ol S}\big)_{i_0}>0.$$
 	According to the proof of \autoref{cor-3-3},
 	the assumption \eqref{eqn-3-90} holds if $i_1=i_0$ and $i_2=n-i_0$.
 	Hence by \autoref{lem-3-21}, there exists a unique fibration
 	$\bar f'_{n-i_0}:\,\ols \to \olb'_{n-i_0}$ such that
 	$$H^0\big(\ol S,\,\Omega^1_{\ol S}\big)_{i_0} \oplus H^0\big(\ol S,\,\Omega^1_{\ol S}\big)_{n-i_0}
 	\subseteq \big(\bar f'_{n-i_0}\big)^*H^0\big(\ol B'_{n-i_0},\,\Omega^1_{\ol B'_{n-i_0}}\big).$$
 	In fact, the same holds also if we replace $n-i_0$ by any $i$
 	satisfying that $n-i_0\leq i\leq i_0$ and that $H^0\big(\ol S,\,\Omega^1_{\ol S}\big)_{i}\neq 0$,
 	i.e., there exists a unique fibration
 	$\bar f'_i:\,\ols \to \olb'_i$ such that
 	$$H^0\big(\ol S,\,\Omega^1_{\ol S}\big)_{i_0} \oplus H^0\big(\ol S,\,\Omega^1_{\ol S}\big)_{i}
 	\subseteq \big(\bar f'_i\big)^*H^0\big(\ol B'_i,\,\Omega^1_{\ol B'_i}\big).$$
 	Since $H^0\big(\ol S,\,\Omega^1_{\ol S}\big)_{i_0}\neq 0$,
 	from the uniqueness of these fibrations $\bar f'_i$'s, it follows that
 	these $\bar f'_i$'s are in fact the same one.
 	We denote such a fibration by $\bar f':\,\ol S \to \ol B'$,
 	which is of course unique and the inclusion \eqref{eqn-3-42} holds.
 	This completes the proof.
 \end{proof}

 From the uniqueness of $\bar f'$ obtained in \autoref{lem-3-21},
 it follows that there is an induced map
 \begin{equation}\label{eqn-3-71}
 \iota:\,G\lra \Aut(\ol B').
 \end{equation}
 Since $G\cong \mathbb Z/n\mathbb Z$ is a cyclic group, ${\rm Ker}(\iota)\cong \mathbb Z/m\mathbb Z$ for some $m$ with $m\,|\,n$.

 \begin{lemma}\label{lem-3-17}
 	Assume that there exist $1\leq i_1\leq i_2 \leq n-1$, such that
 	\eqref{eqn-3-90} holds, and that $H^0\big(\ol S,\,\Omega^1_{\ol S}\big)_{i_1}\neq 0$ and $H^0\big(\ol S,\,\Omega^1_{\ol S}\big)_{i_2}\neq 0$.
 	Let $\bar f':\,\ol S \to \ol B'$ be the fibration obtained in \autoref{lem-3-21}, and $\iota$ be given in \eqref{eqn-3-71}.
 	Then
 	$$\big(\bar f'\big)^*H^0\big(\ol B',\,\Omega^1_{\ol B'}\big) \subseteq
 	\bigoplus_{m\,|\,i} H^0\big(\ol S,\,\Omega^1_{\ol S}\big)_{i},\quad\text{where $m$ is order of ${\rm Ker}(\iota)$.}$$
 \end{lemma}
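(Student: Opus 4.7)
The plan is to realize the target space $\bigoplus_{m\,|\,i} H^0(\ol S,\Omega^1_{\ol S})_i$ as exactly the $\Ker(\iota)$-invariants of $H^0(\ol S,\Omega^1_{\ol S})$, and then to check that every pulling-back class $(\bar f')^*\omega'$ is automatically $\Ker(\iota)$-invariant because $\Ker(\iota)$ acts trivially on $\ol B'$ by definition.

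First I would exploit the uniqueness established in \autoref{lem-3-21}: for any $\sigma\in G$, both $\bar f'$ and $\bar f'\circ \sigma$ are fibrations on $\ol S$ such that $H^0(\ol S,\Omega^1_{\ol S})_{i_1}\oplus H^0(\ol S,\Omega^1_{\ol S})_{i_2}$ is contained in the pull-back of $1$-forms from the base (the latter because the $G$-action preserves the eigenspace decomposition). By the uniqueness statement, $\bar f'\circ\sigma$ factors through $\bar f'$, yielding a (uniquely determined) automorphism $\iota(\sigma)\in\Aut(\ol B')$ satisfying the equivariance relation
\begin{equation*}
\bar f'\circ \sigma=\iota(\sigma)\circ \bar f'.
\end{equation*}
Thus $\iota:G\to\Aut(\ol B')$ is a well-defined group homomorphism, which is just the map already introduced in \eqref{eqn-3-71}.

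Next I would compute the action of $G$ on pulling-back $1$-forms: for any $\omega'\in H^0(\ol B',\Omega^1_{\ol B'})$ and $\sigma\in G$, the equivariance above gives
\begin{equation*}
\sigma^*(\bar f')^*\omega'=(\bar f'\circ\sigma)^*\omega'=(\bar f')^*\iota(\sigma)^*\omega'.
\end{equation*}
In particular, if $\sigma\in\Ker(\iota)$, then $\iota(\sigma)=\mathrm{id}_{\ol B'}$ and hence $\sigma^*(\bar f')^*\omega'=(\bar f')^*\omega'$. Therefore
\begin{equation*}
(\bar f')^*H^0(\ol B',\Omega^1_{\ol B'})\subseteq H^0(\ol S,\Omega^1_{\ol S})^{\Ker(\iota)}.
\end{equation*}

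Finally I would identify the right-hand side. Since $G\cong\mathbb Z/n\mathbb Z$ is cyclic, its unique subgroup of order $m$ is $\Ker(\iota)$, generated by $\sigma^{n/m}$ for a generator $\sigma$ of $G$. On the $i$-th eigenspace $H^0(\ol S,\Omega^1_{\ol S})_i$ the generator $\sigma$ acts by multiplication by $\xi_n^i$, so $\sigma^{n/m}$ acts by $\xi_n^{(n/m)i}$, and this equals $1$ iff $m\mid i$. Hence
\begin{equation*}
H^0(\ol S,\Omega^1_{\ol S})^{\Ker(\iota)}=\bigoplus_{m\,|\,i}H^0(\ol S,\Omega^1_{\ol S})_i,
\end{equation*}
and combining with the previous inclusion yields the claim. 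There is no real obstacle in the argument; the only care needed is to invoke the uniqueness of $\bar f'$ to get the $G$-equivariance, after which the rest is a straightforward character-theoretic identification of invariants in a cyclic group representation.
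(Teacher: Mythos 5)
Your proposal is correct and follows essentially the same route as the paper: show that $(\bar f')^*H^0(\ol B',\Omega^1_{\ol B'})$ lands in the $\Ker(\iota)$-invariants via the equivariance $\bar f'\circ\sigma=\iota(\sigma)\circ\bar f'$, then identify those invariants with $\bigoplus_{m\,|\,i}H^0(\ol S,\Omega^1_{\ol S})_i$ by a character computation. You are in fact slightly more careful than the paper, which writes $\tau^m$ for the generator of $\Ker(\iota)$ where (with $m$ the order of the kernel) it should be $\tau^{n/m}$, exactly as you have it.
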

 \begin{proof}
 	Let $\tau\in G$ be any generator of $G$.
 	Then $\tau^m$ is a generator of ${\rm Ker}(\iota)$ by construction.
 	Hence
 	$$\big(\bar f'\big)^*H^0\big(\ol B',\,\Omega^1_{\ol B'}\big) \subseteq
 	H^0\big(\ol S,\,\Omega^1_{\ol S}\big)^{\tau^m},$$
 	where
 	$$\begin{aligned}
 	H^0\big(\ol S,\,\Omega^1_{\ol S}\big)^{\tau^m}
 	\triangleq\,& \left\{\omega\in H^0\big(\ol S,\,\Omega^1_{\ol S}\big)~\big|~\big(\tau^m\big)^*\omega=\omega\right\}\\
 	=\,&\bigoplus_{m\,|\,i} H^0\big(\ol S,\,\Omega^1_{\ol S}\big)_{i}.
 	\end{aligned}$$
 	This completes the proof.
 \end{proof}

 \begin{corollary}\label{cor-3-8}
 	Assume that $H^0\big(\ol S,\,\Omega^1_{\ol S}\big)_{i_0}\neq 0$ for some $i_0\geq n/2$,
 	and let $\bar f':\,\ol S \to \ol B'$ be the fibration obtained in \autoref{cor-3-11}.
 	Assume moreover that $\gcd(i_0,n)=1$.
 	Then $G$ induces a faithful action on $\ol B'$, such that $\ol B'/G\cong \bbp^1$
 	and
 	\begin{equation}\label{eqn-3-43-1}
 	H^0\big(\ol S,\,\Omega^1_{\ol S}\big)_{i}
 	=\big(\bar f'\big)^*H^0\big(\ol B',\,\Omega^1_{\ol B'}\big)_{i},\quad\text{~for any $1\leq i \leq n-1$ with $\gcd(i,n)=1$}.
 	\end{equation}
 \end{corollary}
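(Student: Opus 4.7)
The plan is to derive the three conclusions in order: the faithfulness of the induced $G$-action on $\ol B'$, the isomorphism $\ol B'/G\cong\bbp^{1}$, and finally the eigenspace equality \eqref{eqn-3-43-1}. The action of $G$ on $\ol B'$ exists by the uniqueness of $\bar f'$ provided by \autoref{cor-3-11}, as already indicated around \eqref{eqn-3-71}.

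\emph{Faithfulness.} Let $m\mid n$ be the order of the kernel of $G\to \Aut(\ol B')$. From the proof of \autoref{cor-3-3}, the hypothesis \eqref{eqn-3-90} of \autoref{lem-3-17} holds with $(i_{1},i_{2})=(n-i_{0},i_{0})$; both $H^{0}(\ol S,\Omega^{1}_{\ol S})_{n-i_{0}}$ and $H^{0}(\ol S,\Omega^{1}_{\ol S})_{i_{0}}$ are nonzero by \autoref{cor-3-11}. Applying \autoref{lem-3-17} yields
\[
(\bar f')^{*}H^{0}(\ol B',\Omega^{1}_{\ol B'})\;\subseteq\;\bigoplus_{m\,\mid\,i}H^{0}(\ol S,\Omega^{1}_{\ol S})_{i}.
\]
Since this space contains the nonzero $H^{0}(\ol S,\Omega^{1}_{\ol S})_{i_{0}}$, we must have $m\mid i_{0}$; together with $m\mid n$ and $\gcd(i_{0},n)=1$, this forces $m=1$.

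\emph{The quotient is $\bbp^{1}$.} By equivariance, $\bar f'$ descends to a morphism $\ol Y=\ol S/G\to \ol B'/G$. Since $\ol Y$ is ruled over $\ol B$ via $\bar\varphi$, every component of every fibre of $\bar\varphi$ is rational, so if $g(\ol B'/G)\geq 1$ the descended morphism must be constant on each fibre of $\bar\varphi$ and therefore factors as $\ol Y\to\ol B\to\ol B'/G$. Hence the composition $\ol S\to \ol B'/G$ admits two factorisations through connected-fibre morphisms, namely $\bar f:\ol S\to\ol B$ and $\bar f':\ol S\to\ol B'$; the uniqueness of the Stein factorisation then gives an isomorphism $\ol B\isom \ol B'$ identifying $\bar f$ with $\bar f'$. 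This is absurd, because $(\bar f')^{*}$ has nonzero image in the nonzero eigenspace $H^{0}(\ol S,\Omega^{1}_{\ol S})_{i_{0}}$ with $i_{0}\neq 0$, whereas $\bar f^{*}H^{0}(\ol B,\Omega^{1}_{\ol B})$ lies in the $G$-invariant part $H^{0}(\ol S,\Omega^{1}_{\ol S})_{0}$. Thus $g(\ol B'/G)=0$, i.e., $\ol B'/G\isom\bbp^{1}$.

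\emph{Eigenspace equality.} The inclusion $\supseteq$ in \eqref{eqn-3-43-1} is immediate from the $G$-equivariance of $(\bar f')^{*}$. For $\subseteq$, start from the inclusion $H^{0}(\ol S,\Omega^{1}_{\ol S})_{n-i_{0}}\oplus H^{0}(\ol S,\Omega^{1}_{\ol S})_{i_{0}}\subseteq (\bar f')^{*}H^{0}(\ol B',\Omega^{1}_{\ol B'})$ provided by \autoref{cor-3-11}. Complex conjugation and the Hodge symmetry $\overline{H^{0}(\ol S,\Omega^{1}_{\ol S})_{n-i_{0}}}=H^{1}(\ol S,\mathcal O_{\ol S})_{i_{0}}$ then yield
\[
\bigl(H^{1}(\ol S,\mathbb Q)\otimes\mathbb C\bigr)_{i_{0}}\;\subseteq\;(\bar f')^{*}\bigl(H^{1}(\ol B',\mathbb Q)\otimes\mathbb C\bigr).
\]
Since the pull-back map is defined over $\mathbb Q$, the action of $\Gal(\mathbb Q(\xi_{n})/\mathbb Q)$ preserves this inclusion; this Galois group permutes transitively the eigenspaces indexed by $\{i:\gcd(i,n)=1\}$, so the same inclusion holds for every such $i$. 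Taking the $(1,0)$-part and invoking $G$-equivariance yields \eqref{eqn-3-43-1}. The main obstacle I anticipate is the step $\ol B'/G\isom\bbp^{1}$, which crucially exploits both the ruledness of $\ol Y$ and the fact that $\bar f'\neq \bar f$, the latter distinguished precisely by their differing behaviour on the $G$-eigenspaces; the other two assertions are essentially formal consequences of equivariance and the transitivity of the cyclotomic Galois action.
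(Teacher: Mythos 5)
Your proof is correct and follows essentially the same route as the paper: faithfulness via \autoref{lem-3-17} combined with $\gcd(i_0,n)=1$, the quotient $\ol B'/G\cong\bbp^1$ from the ruledness of $\ol Y=\ol S/G$, and the eigenspace equality from the $\Qbb$-rationality of $(\bar f')^*H^1(\ol B',\Qbb)$ together with the transitive $\Gal(\Qbb(\xi_n)/\Qbb)$-action on the eigenspaces with index prime to $n$. Your Stein-factorisation argument merely fills in the detail behind the paper's one-line claim that ruledness of $\ol Y$ forces $\ol B'/G\cong\bbp^1$.
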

 \begin{proof}
 	Let $\iota:G\ra \Aut({\ol B}')$ be defined in \eqref{eqn-3-71}.
 	By \autoref{cor-3-11} and \autoref{lem-3-17},
 	one obtains that $\iota$ is an isomorphism, and $G$ acts faithfully on $\ol B'$.
 	Moreover, taking the quotients of the $G$-actions, we obtain the following commutative diagram.
 	$$\xymatrix{
 		\ol S \ar[rr]^-{\ol \Pi} \ar[d]_-{\bar f'}
 		&& \ol Y \ar[d]^-{\bar \varphi'}\\
 		\ol B' \ar[rr]^-{\pi} && \ol B'/G
 	}$$
 	Since $\ol Y$ is a ruled surface over $\ol B$, it follows that $\olb'/G\cong \bbp^1$.
 	It remains to show \eqref{eqn-3-43-1}.
 	
 	Consider the $\mathbb Q$-vector space $H^1\big(\ol S,\,\mathbb Q\big)$,
 	which admits a natural $G$-action.
 	Let
 	$$H^1\big(\ol S,\,\mathbb Q\big)\otimes \mathbb C
 	=H^1\big(\ol S,\,\mathbb C\big)=\bigoplus_{i=0}^{n-1} H^1\big(\ol S,\,\mathbb C\big)_i$$
 	be the eigenspace decomposition.
 	The morphism $\bar f'$ induces an inclusion
 	\begin{equation}\label{eqn-3-17}
 	\big(\bar f'\big)^*:~ H^1\big(\ol B',\,\mathbb Q\big)\otimes \mathbb C
 	=H^1\big(\ol B',\,\mathbb C\big)
 	\hookrightarrow H^1\big(\ol S,\,\mathbb C\big)=H^1\big(\ol S,\,\mathbb Q\big)\otimes \mathbb C,
 	\end{equation}
 	which is clearly compatible with the actions of $G$ on both sides.
 	By \eqref{eqn-3-42}, it follows that
 	$$\big(\bar f'\big)^* H^1\big(\ol B',\,\mathbb C\big)_i = H^1\big(\ol S,\,\mathbb C\big)_i,
 	\qquad\text{for $i\in \{i_0,p-i_0\}$}.$$
 	As a vector subspace of $H^1\big(\ol S,\,\mathbb C\big)$,
 	$\big(\bar f'\big)^*H^1\big(\ol B',\,\mathbb C\big)$ is defined over $\mathbb Q$.
 	Hence it follows from \eqref{eqn-3-17} that
 	\begin{equation}\label{eqn-3-18}
 	\big(\bar f'\big)^* H^1\big(\ol B',\,\mathbb C\big)
 	\supseteq \bigoplus_{1\leq i\leq n-1 \atop \gcd(i,n)=1} H^1\big(\ol S,\,\mathbb C\big)_i.
 	\end{equation}
 	Combining \eqref{eqn-3-17} with \eqref{eqn-3-18},
 	and taking the $(1,0)$-parts, we complete the proof of \eqref{eqn-3-43-1}.
 \end{proof}

 \subsection{Irregular family of superelliptic curves}\label{subsec-3-x}
 In this subsection,
 we consider irregular families of superelliptic curves,
 i.e., the relative irregularity of the family is positive.
 This will be used to obtain a better slope inequality for irregular families of superelliptic curves.
 The main purpose of this subsection is to prove
 \begin{proposition}\label{prop-3-5}
 	Let $\olf:\,\ols\to \olb$ be a family of semi-stable $n$-superelliptic curves
 	of genus $g\geq2$ as in \autoref{invariantstheorem}.
 	Assume that the relative irregularity $q_{\bar f}:=q(\ol S)-g(\ol B)>0$.
 	Then
 	\begin{equation}\label{eqn-3-53}
 	\left\{\begin{aligned}
 	\frac{4}{n}\cdot s_{2,n}'&\,\leq \sum_{\gamma,\ell}\frac{n\gamma(\alpha-\gamma)}{\ell^2(\alpha-1)}\cdot (s_{\gamma,\ell}+s_{\gamma,\ell}');\\[1mm]
 	\frac{1}{n}\cdot s_{2,n}'&\,\leq \sum_{\gamma,\,\ell} \left(\frac{n\gamma(\gamma-1)}{\ell^2(\alpha-1)(\alpha-2)}s_{\gamma,\ell}
 	+\frac{n(\alpha-\gamma)(\alpha-\gamma-1)}{\ell^2(\alpha-1)(\alpha-2)}s_{\gamma,\ell}'\right).
 	\end{aligned}\right.
 	\end{equation}
 \end{proposition}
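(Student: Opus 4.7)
My plan is to translate the hypothesis $q_{\bar f}>0$ into the existence of a non-zero section of an explicit line bundle on the resolved ruled surface $\tilde Y$ and then extract both inequalities from intersection theory. After a suitable finite base change I may assume $G\cong\mathbb{Z}/n\mathbb{Z}$ acts on $\ol S$, giving the cyclic cover $\tilde\Pi:\tilde S\to\tilde Y$ and the Esnault-Viehweg inclusion \eqref{eqn-3-12}. By \eqref{eqn-3-32}, $q_{\bar f}>0$ forces $F^{1,0}_{\ol B,k}$ to contain a non-trivial trivial sub-line-bundle for some $k\in\{1,\dots,n-1\}$; via \autoref{lem-3-1} this yields a non-zero global section of $\Omega^1_{\tilde Y}(\log\tilde R)\otimes\mathcal{L}^{(k)-1}$, whose zero divisor $Z_k\subset\tilde Y$ is the main geometric object of the argument.

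I then recast the two target inequalities in divisorial form on the ruled surface. Using $s'_{2,n}=n\xi'_{2,n}$ from \eqref{eqn-3-55} together with the first identity in \eqref{eqn-3-49-b}, the first inequality in \eqref{eqn-3-53} is equivalent to $4\xi'_{2,n}\le -\sum_{j=1}^{\alpha}D_j^2$. Subtracting the two identities in \eqref{eqn-3-49-b} yields the closed-form expression $D_\alpha^2=-\sum\frac{n\gamma(\gamma-1)}{\ell^2(\alpha-1)(\alpha-2)}s_{\gamma,\ell}-\sum\frac{n(\alpha-\gamma)(\alpha-\gamma-1)}{\ell^2(\alpha-1)(\alpha-2)}s'_{\gamma,\ell}$, so the second inequality in \eqref{eqn-3-53} is equivalent to $\xi'_{2,n}\le -D_\alpha^2$. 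In this formulation both inequalities read as: the count of $(2,n,1)$-nodes on $\tilde Y$ is bounded by negativity of the branch divisor on the ruled surface, with all of the weight concentrated on the distinguished section $D_\alpha$ in the second case. I plan to prove them by showing that $Z_k$ contains with positive multiplicity each exceptional $\mathbb{P}^1$ on $\tilde Y$ produced by a $(2,n,1)$-node, then pairing $Z_k$ with $\sum_j D_j$ respectively with $D_\alpha$ using the ruled-surface numerics established in the proof of \autoref{invariantstheorem}.

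The main obstacle will be the uniform degree computation on these exceptional $\mathbb{P}^1$'s. Specifically, for a carefully chosen $k$ one needs to verify that $\Omega^1_{\tilde Y}(\log\tilde R)\otimes\mathcal{L}^{(k)-1}$ has strictly negative degree on every exceptional component $E$ arising from a $(2,n,1)$-node, so that effectivity of $Z_k$ forces $E\subset Z_k$. This combines the formula \eqref{eqn-3-20} for $\mathcal{L}^{(k)}$ along fiber components with the log-pole contributions from the exactly two sections meeting $E$ --- namely $D_\alpha$ (since $k=1$ in the node index) and one of the remaining $D_j$ --- and exploits that the very definition of $s'_{2,n}$ forces $n\mid(\alpha-2)$, which is what makes the numerical balance between the log-poles and $\mathcal{L}^{(k)}$ come out with the correct sign. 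The natural choice of $k$ is the smallest eigen-index with a trivial summand in $F^{1,0}_{\ol B,k}$; if needed I will invoke the Galois-conjugation argument of \autoref{lem-3-12} to propagate the conclusion across eigenspaces. Once the containment of the exceptional components in $Z_k$ with the correct multiplicity is secured, the prefactors $4/n$ and $1/n$ in \eqref{eqn-3-53} emerge from the observation that each such exceptional $\mathbb{P}^1$ meets two of the $D_j$'s when pairing against $\sum_j D_j$ but only one (namely $D_\alpha$) when pairing against $D_\alpha$ alone, giving the expected $4$-to-$1$ ratio after normalization by $n$.
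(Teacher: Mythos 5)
Your opening reduction is correct and coincides with the paper's own first step: via \eqref{eqn-3-55} and \eqref{eqn-3-49-b} the two inequalities of \eqref{eqn-3-53} are exactly the statements $\frac{4}{n}s_{2,n}'\leq-\sum_{i=1}^{\alpha}D_i^2$ and $\frac{1}{n}s_{2,n}'\leq-D_\alpha^2$, i.e.\ (in the paper's notation) $\sum_i D_{i,1}^2\leq 0$ and $D_{\alpha,1}^2\leq 0$ on the intermediate model $Y_1$. From there, however, your mechanism diverges from the paper's and, as proposed, it breaks at its central step. First, the degree computation you rely on does not come out negative: an exceptional $(-1)$-curve $E$ over a $(2,n,1)$-node meets $\wt R$ in exactly the two sections with local monodromies $1$ and $a_\infty=n-1$, so $\deg\mathcal L|_E=\frac1n(1+a_\infty)=1$ and $\sum_j[\tfrac{ka_j}{n}]D_j\cdot E=k-1$, whence $\deg\mathcal L^{(k)}|_E=1$ for \emph{every} $k$; combined with the residue sequence one finds $\Omega^1_{\wt Y}(\log\wt R)\otimes{\mathcal L^{(k)}}^{-1}\big|_E\cong\mathcal O_E\oplus\mathcal O_E(-1)$, which has a global section, so effectivity of your twisted form does not force $E\subset Z_k$. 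Second, even granting that containment, the pairing step is not available as stated: a section of a rank-two bundle only determines $\mathcal O(Z_k)$ as \emph{some} sub-line-bundle, whose class is not controlled by the bundle, so intersecting $Z_k$ with $\sum_j D_j$ or with $D_\alpha$ does not by itself produce the quantities $-\sum_j D_j^2$ or $-D_\alpha^2$ (and the claimed ``$4$-to-$1$ ratio'' from meeting two sections versus one is a $2$-to-$1$ count). Finally, your use of $q_{\bar f}>0$ --- a single trivial eigen-summand of $F^{1,0}_{\olb}$ --- is too weak an input; the analogous argument in \autoref{lem-3-20} needs \emph{two} eigencomponents with the negativity hypothesis \eqref{eqn-3-90}, which you do not have here.

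The paper's actual proof exploits $q_{\bar f}>0$ through the Albanese variety rather than through eigensheaves of forms. Since $s_{2,n}'>0$ forces $n\,|\,(\alpha-2)$, the superelliptic cover is totally ramified along every branch section \eqref{eqn-3-206}; hence $\tau$ fixes each component of the strict inverse image $\wt B_1$ of $R_1$ in $\wt S$ pointwise in the normal direction, and a local computation (\autoref{claim-3-3}) shows every $1$-form in the anti-invariant summand $W$ of $H^0(\wt S,\Omega^1_{\wt S})$ restricts to zero on $\wt B_1$. Therefore $\wt B_1$ (together with the rational exceptional curves) is contracted by the induced map $J_0:\wt S\to\Alb_0(\wt S)$, whose image is positive-dimensional precisely because $\dim\Alb_0(\wt S)=q_{\bar f}>0$ (\autoref{claim-3-2}). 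The Hodge index theorem then makes the intersection matrix of $R_1$ semi-negative definite, which gives both diagonal inequalities simultaneously. If you wish to salvage a forms-based route you would need to reproduce this vanishing of anti-invariant forms along the branch sections themselves (not along the exceptional curves) and still invoke a Hodge-index-type argument; as written, the proposal has a genuine gap.
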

 \begin{proof}
 	To prove \eqref{eqn-3-53}, we may assume that $s_{2,n}'>0$,
 	i.e., the singular fibers of $\bar f$ contains nodes with index $(2,n,1)$.
 	In particular, $n \,|\,(\alpha-2)$ by \eqref{eqn-3-44}, i.e., $\alpha_0=nk+1$ for some integer $k$ by \eqref{eqn-3-45}.
 	Hence for any general fiber $\ol F$ of $\bar f$,
 	\begin{equation}\label{eqn-3-206}
 	\text{the induced $n$-superelliptic cover $\pi:\,\ol F \to \bbp^1$ is totally ramified.}
 	\end{equation}
 	
 	Similar to the proof of \autoref{invariantstheorem}, we may assume that $G\cong \mathbb Z/n\mathbb Z$
 	admits an action on $\ol S$ such that $\ol \Pi:\,\ols \to \oly$ is branched
 	over $\alpha$ disjoint sections $\{D_i\}_{i=1}^{\alpha}$ and possibly over some nodes in fibers of $\bar\varphi$.
 	Let $\tilde\rho:\,\wt Y \to \ol Y$ be the resolution of the singularities of $\ol Y$,
 	and $\rho:\,\wt Y \to Y$ be a contraction to a $\bbp^1$-bundle over $\ol B$
 	such that the order of the singularities of $R_0=\sum\limits_{i=1}^{\alpha} D_{i,0}$ is at most $[\alpha/2]$,
 	and that $D_{\alpha,0}$ does not pass through any singularity of order equal to $\alpha/2$,
 	where $D_{i,0}\subseteq Y$ is the image $\rho\big(\tilde\rho^{-1}(D_i)\big)$.
 	$$\xymatrix{
 		\wt Y \ar[rrrr]^-{\rho_1} \ar[d]_-{\tilde\rho}\ar[drrrr]^-{\rho}
 		&&&& Y_1 \ar[d]^-{\rho_0}\\
 		\oly \ar[drr]_-{\bar\varphi} &&&& Y \ar[dll]^-{\varphi}\\
 		&&\ol B&&
 	}$$
 	Note that $\rho$ consists of a sequence of blowing-ups centered at singularities of $R_0$.
 	We reorder these blowing-ups as
 	$\rho:\,\wt Y \overset{\rho_1}\longrightarrow Y_1 \overset{\rho_0} \longrightarrow Y$
 	such that $\rho_0$ is the resolution of singularities of $\sum\limits_{i=1}^{\alpha-1} D_{i,0}$.
 	In other words, all nodes in the fibers of $\bar\varphi$ with index equal to $(2,n,1)$ (resp. not equal to $(2,n,1)$)
 	are created by $\rho_1$ (resp. $\rho_0$),
 	where the notion for the index of a node in the fibers of $\bar\varphi$ are introduced in the proof of \autoref{invariantstheorem}.
 	Let $D_{i,1}=\rho_1\big(\tilde\rho^{-1}(D_i)\big)$.
 	Then by \eqref{eqn-3-55},
 	$$\left\{\begin{aligned}
 	\sum_{i=1}^{\alpha}D_{i,1}^2&\,=\sum_{i=1}^{\alpha}\big(\tilde\rho^{-1}(D_i)\big)^2+4\xi_{2,n}'
 	=\sum_{i=1}^{\alpha}\big(\tilde\rho^{-1}(D_i)\big)^2+\frac{4}{n}\cdot s_{2,n}';\\
 	D_{\alpha,1}^2&\,=\big(\tilde\rho^{-1}(D_{\alpha})\big)^2+\xi_{2,n}'
 	=\big(\tilde\rho^{-1}(D_{\alpha})\big)^2+\frac{1}{n}\cdot s_{2,n}'
 	\end{aligned}\right.$$
 	Note also that the sections $\{D_i\}_{i=1}^{\alpha}$ do not pass through any singularity of $\ol Y$.
 	Hence $\{\tilde\rho^{-1}(D_i)\}_{i=1}^{\alpha}$ are still disjoint sections,
 	and by \eqref{eqn-3-49-b} one gets
 	$$\left\{\begin{aligned}
 	\sum_{i=1}^{\alpha}\big(\tilde\rho^{-1}(D_i)\big)^2&\,=\sum_{i=1}^{\alpha}D_{i}^2
 	=-\sum_{\gamma,\,\ell} \frac{n\gamma(\alpha-\gamma)}{\ell^2(\alpha-1)}\cdot (s_{\gamma,\ell}+s_{\gamma,\ell}');\\
 	\big(\tilde\rho^{-1}(D_{\alpha})\big)^2&\,=D_{\alpha}^2
 	=-\sum_{\gamma,\,\ell} \left(\frac{n\gamma(\gamma-1)}{\ell^2(\alpha-1)(\alpha-2)}s_{\gamma,\ell}
 	+\frac{n(\alpha-\gamma)(\alpha-\gamma-1)}{\ell^2(\alpha-1)(\alpha-2)}s_{\gamma,\ell}'\right).
 	\end{aligned}\right.$$
 	Therefore, it suffices to show that
 	$\sum\limits_{i=1}^{\alpha}D_{i,1}^2\leq 0$ and $D_{\alpha,1}^2\leq 0$, which follow directly from the next lemma.
 \end{proof}

 \begin{lemma}\label{lem-3-13}
 Keep the assumptions as in the proposition above. Then the divisor $R_1=\sum\limits_{i=1}^{\alpha}D_{i,1}$ is semi-negative definite.
 \end{lemma}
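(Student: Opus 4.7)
The plan is to exploit the positive relative irregularity $q_{\bar f} > 0$ to produce an auxiliary fibration on $Y_1$ whose fibers contain the divisor $R_1$, whence semi-negative definiteness will follow from Zariski's lemma.

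First, I would translate the hypothesis $q_{\bar f} > 0$ into concrete structure on the 1-forms. Since $q_{\bar f} = \dim \bigoplus_{i=1}^{n-1} H^0(\bar S, \Omega^1_{\bar S})_i$, there exists $1 \leq i_0 \leq n-1$ with $H^0(\bar S, \Omega^1_{\bar S})_{i_0} \neq 0$, and by complex-conjugation symmetry also $H^0(\bar S, \Omega^1_{\bar S})_{n-i_0} \neq 0$. Via the Esnault--Viehweg inclusion \eqref{eqn-3-12}, each such 1-form corresponds to a section of $\Omega^1_{\wt Y}(\log \wt R) \otimes {\mathcal L^{(i_0)}}^{-1}$ on $\wt Y$. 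Note that one may freely pass to a finite base change, under which $\wt R$ may be assumed to contain distinguished sections of $\tilde \varphi$ (cf. \autoref{rem-3-4}).

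Second, I would apply the Castelnuovo--de Franchis mechanism underlying \autoref{lem-3-21} to extract from the above 1-forms a non-trivial fibration $\bar f' \colon Y_1 \to C'$ distinct from the ruling $\bar \varphi_1 \colon Y_1 \to \bar B$. The crucial numerical input is that $Y_1$ is birational to the $\mathbb{P}^1$-bundle $Y$, hence $H^0(Y_1, \Omega^2_{Y_1}) = 0$; this forces the wedge of 1-forms coming from the conjugate eigenspaces to vanish, thereby producing the desired fibration.

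Third, I would verify that each section $D_{i,1}$ is contracted by $\bar f'$. Since the 1-forms producing $\bar f'$ lie in $\Omega^1_{\wt Y}(\log \wt R) \otimes {\mathcal L^{(i)}}^{-1}$ with $\wt R$ containing the strict transforms of the $D_i$'s, these forms vanish (in an appropriate sense) along each $D_{i,1}$, forcing the restriction $\bar f'|_{D_{i,1}}$ to be constant. Once the inclusion $R_1 \subseteq (\bar f')^{-1}(\text{points})$ is established, Zariski's lemma applied to $\bar f' \colon Y_1 \to C'$ immediately yields semi-negative definiteness of the intersection matrix of the components of $R_1$.

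The main obstacle is the third step: proving that each $D_{i,1}$ is actually contracted by $\bar f'$. This requires a delicate local analysis, using in particular the hypothesis $s_{2,n}' > 0$ established at the beginning of the proof of \autoref{prop-3-5}, which forces $n \mid (\alpha-2)$ and imposes tight constraints on the interaction between the cyclic cover structure and the behavior of the 1-forms along the branch divisor. The detailed verification relies on the explicit description of $\mathcal L^{(i)}|_{\wt \Gamma}$ given by \eqref{eqn-3-20}, together with the equivariance of the forms under the induced $G$-action, and on separating the contributions of the nodes of type $(2,n,1)$ (blown up by $\rho_1$) from the other branch singularities (resolved by $\rho_0$).
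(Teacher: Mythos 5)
Your overall strategy (produce a morphism contracting $R_1$, then conclude by a signature argument) is the right one, but the execution has genuine gaps. First, the claim that $H^0(\ol S,\Omega^1_{\ol S})_{i_0}\neq 0$ implies $H^0(\ol S,\Omega^1_{\ol S})_{n-i_0}\neq 0$ ``by complex-conjugation symmetry'' is false: conjugation sends $H^{1,0}_{i}$ to $H^{0,1}_{n-i}$, so Hodge symmetry relates holomorphic eigenforms of character $i$ to \emph{anti}-holomorphic ones of character $n-i$, not to holomorphic ones. Without two suitable eigenforms the Castelnuovo--de Franchis mechanism does not start. Second, the fibration you propose cannot live on $Y_1$: eigenforms with nontrivial character do not descend to the quotient $\ol S/G$, and $Y_1$ is (a blow-up of) a $\Pbb^1$-bundle over $\ol B$, so $q(Y_1)=g(\ol B)$ and it carries no fibration other than the ruling. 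Any map built from these forms is defined on $\wt S$ (or $\ol S$), and one must then transport the conclusion down to $R_1\subset Y_1$ via the total inverse image. Third, and most importantly, the step you yourself flag as the ``main obstacle'' --- that each $D_{i,1}$ (or rather its preimage in $\wt S$) is contracted --- is exactly where the content of the lemma lies, and you do not prove it.

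The paper's proof sidesteps the first two problems and supplies the missing third step. It uses the Albanese morphism $J_0:\wt S\to \Alb_0(\wt S)$ onto the ``anti-invariant'' part of the Albanese variety, which is nonconstant as soon as $q_{\bar f}>0$ (a single nontrivial eigenform suffices; no pair of conjugate eigenforms and no Castelnuovo--de Franchis are needed). The contraction of the strict transform $\wt B_1$ of $R_1$ is then an explicit local computation: since $s_{2,n}'>0$ forces $\alpha_0\equiv 1\pmod n$, the cover is totally ramified along the relevant components, so near a smooth point of such a component $D$ one may take coordinates with $D=\{y=0\}$ and $\tau(x,y)=(x,\epsilon y)$; any $1$-form $\zeta\,dx+\eta\,dy$ in the anti-invariant subspace $W\supseteq \im J_0^*$ satisfies $y\mid \zeta$, hence restricts to zero on $D$. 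Thus $\wt R$ is contracted by $J_0$, whose image is positive-dimensional, and the Hodge index theorem yields semi-negative definiteness, which descends to $R_1$. If you want to salvage your approach, you would need to replace the fibration on $Y_1$ by a map defined on $\wt S$ and actually carry out a vanishing-along-$D$ argument of this kind; as written, the proposal does not constitute a proof.
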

 \begin{proof}
 	Let $\psi:\,\wt S \to \ol S$ be the minimal blowing-up such that there exists a morphism
 	$\wt\Pi:\,\wt S \to \wt Y$ with the following commutative diagram.
 	$$\xymatrix{
 		\wt S \ar[rr]^-{\wt\Pi}\ar[d]_-{\psi}
 		&& \wt Y \ar[d]^-{\tilde \rho} \ar[rr]^-{\rho_1} && Y_1\\
 		\ol S\ar[rr]^-{\ol \Pi} && \ol Y
 	}$$
 	Let $\big(\rho_1\circ\wt\Pi\big)^{-1}(R_1)\subseteq \wt S$
 	be the total inverse image of $R_1$,
 	and $\wt R\subseteq \wt S$ be its support.
 	Then it suffices to show that the divisor $\wt R$ is semi-negative definite.
 	
 	By construction, the action of $G\cong \mathbb Z/n\mathbb Z$ on $\ol S$ lifts to $\wt S$.
 	Let $\Alb(\wt S)$ be the Albanese variety of $\wt S$,
 	and $\tau$ be any generator of $G$.
 	Then $\tilde f:=\bar f\circ \psi$ induces a map $\Alb(\tilde f):\, \Alb(\wt S)\to \Alb(\ol B)$
 	and $\tau$ has a natural action on $\Alb(\wt S)$.
 	Let
 	$$\Alb_0(\wt S)=\left\{x\in \Alb(\wt S)~\big|~\sum_{i=1}^{n}\tau^i(x)=e\right\},
 	\quad\text{where $e\in \Alb(\wt S)$ is the identity element}.$$
 	Then we claim that
 	\begin{claim}\label{claim-3-2}
 		$\Alb(\wt S)$ is isogenous to $\Alb_0(\wt S) \oplus \Alb(\tilde f)^{-1}\big(\Alb(\ol B)\big)$ and $\dim \Alb_0(\wt S)=q_{\bar f}$.
 	\end{claim}
 	\begin{proof}[Proof of \autoref{claim-3-2}]
 		Note that $\tau$ has a natural action on $H^0\big(\wt S, \Omega_{\wt S}^1\big)$ by pulling-back,
 		and the map $\tilde f$ induces an injection
 		$$\tilde f^*:~H^0\big(\ol B, \Omega_{\ol B}^1\big) \hookrightarrow H^0\big(\wt S, \Omega_{\wt S}^1\big),
 		~\text{~such that~}~
 		\tilde f^*H^0\big(\ol B, \Omega_{\ol B}^1\big)
 		=\left\{\omega \in H^0\big(\wt S, \Omega_{\wt S}^1\big)~\big|~\tau^*(\omega)=\omega\right\}.$$
 		To prove this claim, it suffices to show that
 		\begin{equation}\label{eqn-3-56}
 		H^0\big(\wt S, \Omega_{\wt S}^1\big)=\tilde f^*H^0\big(\ol B, \Omega_{\ol B}^1\big)\oplus W,
 		\quad\text{where~}W=\left\{\omega \in H^0\big(\wt S, \Omega_{\wt S}^1\big)~\big|~\sum_{i=1}^{n}(\tau^*\big)^i(\omega)=0\right\}.
 		\end{equation}
 		On the one hand, it is clear that $\tilde f^*H^0\big(\ol B, \Omega_{\ol B}^1\big)\cap W=0$.
 		On the other hand, for any $\omega\in H^0\big(\wt S, \Omega_{\wt S}^1\big)$,
 		let $\omega'=\frac{1}{n}\sum\limits_{i=1}^{n}(\tau^*\big)^i(\omega)$.
 		Then it is easy to verify that $\omega'\in \tilde f^*H^0\big(\ol B, \Omega_{\ol B}^1\big)$,
 		and that $\omega-\omega' \in W$.
 		Hence we obtain the decomposition $\omega=\omega'+(\omega-\omega')$ as required.
 	\end{proof}
 	
 	Denote by $J_0:\,\wt S \to \Alb_0(\wt S)$ the induced map.
 	Then we claim that
 	\begin{claim}\label{claim-3-3}
 		Let $\wt B_1 \subseteq \wt S$ be the strict inverse image of $R_1$.
 		Then $\wt B_1$ is contracted by $J_0$.
 	\end{claim}
 	\begin{proof}[Proof of \autoref{claim-3-3}]
 		Let $D\subseteq \wt B_1$ be any irreducible component, $\wt D$ its normalization,
 		$j:\,\wt D \to \wt S$ the induced map and $\vartheta=J_0\circ j:\,\wt D \to \Alb_0(\wt S)$ the composition.
 		We have to prove that $\vartheta(\wt D)$ is a point.
 		
 		We prove by contradiction. Assume that $\vartheta(\wt D)$ is one-dimensional.
 		Then the induced map
 		$$\vartheta^*:~H^0\left(\Alb_0(\wt S),\,\Omega_{\Alb_0(\wt S)}^1\right) \lra H^0\left(\wt D,\,\Omega_{\wt D}^1\right)$$
 		is non-zero. On the other hand, it is clear that $\vartheta^*$ factors through
 		$$H^0\left(\Alb_0(\wt S),\,\Omega_{\Alb_0(\wt S)}^1\right) \overset{J_0^*}\lra H^0\left(\wt S,\,\Omega_{\wt S}^1\right)
 		\overset{j^*}\lra H^0\left(\wt D,\,\Omega_{\wt D}^1\right).$$
 		By \eqref{eqn-3-56}, there is a decomposition of the form $H^0\left(\wt S,\,\Omega_{\wt S}^1\right)=\tilde{f}^*H^0({\ol B},\Omega^1_{\ol B})\oplus W$, and $W$ contains the image of $J_0^*$ according to the proof of \autoref{claim-3-2}.
 		To deduce a contradiction, it suffices to prove that the restriction
 		$$j^*\big|_{W}:~W \lra H^0\left(\wt D,\,\Omega_{\wt D}^1\right)$$
 		is zero.
 		
 		In fact, let $p\in D$ be an arbitrary smooth point of $D$.
 		Note that $\tau$ fixes $D$ due to \eqref{eqn-3-206}.
 		Hence locally around $p$, there exists local coordinate $(x,y)$ such that $C$ is defined by $y=0$
 		and the action of $\tau$ is given by $\tau(x,y)=(x,\epsilon y)$, where $\epsilon$ is a primitive $n$-th root of $1$.
 		For any $1$-form $$\omega=\zeta(x,y)dx+\eta(x,y)dy \in H^0\left(\wt S,\,\Omega_{\wt S}^1\right),$$
 		one has
 		$$\omega\in W ~\Longleftrightarrow~ \sum_{i=1}^n\zeta(x,\epsilon^iy)=0,~\,~\sum_{i=1}^n\epsilon^i\eta(x,\epsilon^iy)=0.$$
 		Hence if $\omega \in W$, $y$ should divide the function $\zeta(x,y)$, i.e.,
 		$\zeta(x,y)=y\cdot \tilde \zeta(x,y)$ for some function $\tilde \zeta(x,y)$.
 		In other words,
 		$j^*\omega\big|_{j^{-1}(p)}=0$ for any $\omega \in W$.
 		It follows that $j^*\omega=0$ for any $\omega \in W$ since $p$ is arbitrary.
 		This completes the proof.
 	\end{proof}
 	
 	Come back to the proof of \autoref{lem-3-13}.
 	By construction, the divisor $\wt R$ consists of $\wt B_1$ plus some rational curves.
 	According to \autoref{claim-3-3}, $\wt R$ is contracted by the map $J_0:\,\wt S \to \Alb_0(\wt S)$.
 	Note that the image $J_0(\wt S)$
 	generates the abelian variety $\Alb_0(\wt S)$ with $\dim \Alb_0(\wt S)=q_{\bar f}$ by \autoref{claim-3-2}.
 	Since $q_{\bar f}>0$, it follows that $J_0(\wt S)$ is not a point, i.e., $\dim J_0(\wt S)\geq 1$.
 	Therefore by the Hodge index theorem, $\wt R$ is semi-negative definite,
 	and hence $R_1$ is also semi-negative definite.
 	This completes the proof of \autoref{lem-3-13}.
 \end{proof}

 %
 %
 %

 \subsection{Proof of \autoref{prop-3-11}}\label{sec-pf-3-11}
 Since $g\geq n$, it follows that $\alpha\geq 5$ by the Riemann-Hurwitz formula \eqref{eqn-3-9}.
 According to \autoref{lem-3-23}, the family $\bar f$ admits no hyperelliptic fiber with compact Jacobian.
 Hence the Arakelov type equality \eqref{eqn-3-191} holds for $E^{1,0}_{\olb}$.
 Therefore, our conclusion follows from the next lemma.
 \begin{lemma}\label{prop-3-2}
 	Let $\olf: \ols\to \olb$ be a family of semi-stable $n$-superelliptic curves
 	as in \autoref{invariantstheorem}.
 	and let $\Upsilon_{nc} \to \Delta_{nc}$ be the singular fibers with non-compact Jacobians.
 	
 	%
 	%
 	
 	{\rm(i)}. 
 	If $\Delta_{nc}=\emptyset$ and $g\geq n$, then
 	\begin{equation}\label{eqn-3-19}
 	\deg \olf_*\omega_{\ols/\olb}\leq
 	\frac{2g-2}{\lambda_{n,c}}\cdot\deg\Omega^1_{\ol B},
 	\end{equation}
 	where $\lambda_{n,c}$ is defined in \eqref{eqn-3-30}.
 	
 	{\rm(ii)}.
 	Assume that $\Delta_{nc}\not=\emptyset$, $g\geq 4$ and $q_{\bar f}>0$.
 	If either $n=3$ or $4$, then
 	\begin{equation}\label{eqn-3-156}
 	\deg \olf_*\omega_{\ols/\olb} < \frac{2g-2}{\lambda_{n,nc}}\cdot\deg\Omega^1_{\ol B}(\log\Delta_{nc}),
 	\end{equation}
 	where $\lambda_{3,nc}$ and $\lambda_{4,nc}$
 	are defined in \eqref{eqn-3-154} and \eqref{eqn-3-155} respectively.
 \end{lemma}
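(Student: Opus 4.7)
The plan is to combine Noether's formula \eqref{formulanoether}, the explicit expressions for $\omega_{\ols/\olb}^2$, $\deg\bar f_*\omega_{\ols/\olb}$ and $\delta(\bar f)$ given in \autoref{invariantstheorem}, and a Miyaoka--Yau type inequality, so that the desired slope estimate reduces to a coefficient-by-coefficient comparison in the singularity indices $s_{\gamma,\ell}$ and $s_{\gamma,\ell}'$. For part~(i), since $\Delta_{nc}=\emptyset$ every singular fibre has compact Jacobian, and \autoref{rem-3-1}\,(ii) then forces $s_{\gamma,\ell}=s_{\gamma,\ell}'=0$ whenever $\ell>1$; hence both $\omega_{\ols/\olb}^2$ and $\deg\bar f_*\omega_{\ols/\olb}$ become non-negative linear combinations supported on indices with $\ell=1$, i.e.\ on $\gamma$ with $\gcd(\gamma,n)=1$ (respectively $\gcd(\alpha-\gamma,n)=1$). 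I would then apply the Miyaoka--Yau inequality $\omega_{\ol S}^2\leq 3\chit(\ol S)$---which holds since \autoref{lem-3-23} excludes hyperelliptic fibres and $\ol S$ is consequently of general type---to obtain, via \eqref{defofrelativeinv}, the bound $\omega_{\ols/\olb}^2\leq 3\delta(\bar f)+4(g-1)(g(\ol B)-1)$; substituting Noether converts this into $\omega_{\ols/\olb}^2\leq 9\deg\bar f_*\omega_{\ols/\olb}+(g-1)(g(\ol B)-1)$. Next I derive a slope bound $\omega_{\ols/\olb}^2\geq\mu\cdot\deg\bar f_*\omega_{\ols/\olb}$ by taking $\mu$ to be the minimum of the ratios $12b_\gamma/(b_\gamma+1)$ and of the corresponding ratios for the primed terms, over all admissible $\gamma$; computing this minimum case-by-case in $n$ and in $\alpha\bmod n$ produces exactly $\mu=9+\lambda_{n,c}/4$. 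Rearranging $(\mu-9)\deg\bar f_*\omega_{\ols/\olb}\leq(g-1)(g(\ol B)-1)$ and using $\deg\Omega^1_{\ol B}=2g(\ol B)-2$ then yields \eqref{eqn-3-19}.

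For part~(ii) I would repeat the same scheme but with the logarithmic Miyaoka--Yau inequality for the pair $(\ol S,\Upsilon_{nc})$, which replaces the contribution $(g(\ol B)-1)$ by the degree of $\Omega^1_{\ol B}(\log\Delta_{nc})$. The crucial extra input in this case is \autoref{prop-3-5}, whose two inequalities, available because $q_{\bar f}>0$, control the index $s_{2,n}'$---normally the cheapest index in the slope ratio for $n=3$ and $n=4$---by the remaining singularity indices. Adding a suitable positive multiple of the inequalities of \autoref{prop-3-5} into the coefficient comparison suppresses the would-be minimiser $s_{2,n}'$ and strictly improves $\mu$, giving $\mu>9+\lambda_{n,nc}/4$ and hence the strict inequality \eqref{eqn-3-156}. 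The two sub-cases in the definitions of $\lambda_{3,nc}$ and $\lambda_{4,nc}$ reflect whether after this suppression the new minimising term in the slope ratio is an $s_{\gamma,1}$ with small $\gamma$ or an $s_{\gamma,\ell}'$ with $\gamma\neq 2$, which depends on the residue class of $\alpha$ modulo $n$.

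The main obstacle will be the coefficient optimisation itself: one must identify, among the finitely many admissible $(\gamma,\ell,k)$, the triple that attains the minimum of the slope ratio, and verify that the outcome agrees with the closed-form expressions in \eqref{eqn-3-30}, \eqref{eqn-3-154} and \eqref{eqn-3-155}. The arithmetic splittings in these formulas---the special treatment of $n=3$, of $n=4$ with $\alpha=4k+3$, and of $\alpha=3k+2$ or $\alpha=4k+2$ in part~(ii)---correspond precisely to the cases where either the expected minimising $\gamma$ fails the coprimality condition $\gcd(\gamma,n)=1$ (so the minimum jumps to the next admissible value) or where the application of \autoref{prop-3-5} yields a sharper saving. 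A secondary technical point is justifying the Miyaoka--Yau inequality in the required setting, which relies on semi-stability and non-isotriviality of $\bar f$ together with, for part~(ii), its logarithmic version in the sense of Miyaoka--Sakai for the pair $(\ol S,\Upsilon_{nc})$.
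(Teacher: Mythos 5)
Your overall architecture is the same as the paper's: express $\omega_{\ols/\olb}^2$ and $\deg\olf_*\omega_{\ols/\olb}$ through the singularity indices via \autoref{invariantstheorem}, extract a slope lower bound by a coefficient-by-coefficient comparison (using \autoref{prop-3-5} to suppress $s_{2,n}'$ in part~(ii)), and play it against a Miyaoka--Yau upper bound. However, there are two concrete problems. First, the Miyaoka--Yau input you use is the wrong one. The paper invokes the fibration-adapted inequality of \cite[Theorem\,4.1]{lu-zuo-14}, namely $\omega_{\ols/\olb}^2\leq(2g-2)\deg\Omega^1_{\olb}(\log\Delta_{nc})+2\delta_1(\bar f)+3\delta_h(\bar f)$, which is exactly why \autoref{prop-3-4} is stated with the extra terms $2\delta_1(\bar f)+3\delta_h(\bar f)$ on the right: they cancel. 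Your classical $\omega_{\ol S}^2\leq 3\chit(\ol S)$, even where it applies, only yields $\omega_{\ols/\olb}^2\leq 4(g-1)(g(\olb)-1)+3\delta(\bar f)$, i.e.\ it weights \emph{every} node by $3$, whereas the refined inequality weights elliptic-tail nodes ($\delta_1$) by $2$. (Your justification of Miyaoka--Yau via ``no hyperelliptic fibres $\Rightarrow$ general type'' is also a non sequitur; \autoref{lem-3-23} is used for an entirely different purpose, namely to guarantee $\Lambda=\emptyset$ so that the Arakelov equality \eqref{eqn-3-191} holds, and the classical inequality additionally needs nefness of $K_{\ol S}$, which you do not address.)

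Second, and as a direct consequence, your claimed identity $\mu=9+\lambda_{n,c}/4$ fails in precisely the special cases of \eqref{eqn-3-30}. One checks that $4(\mu-9)=\min_\gamma 12(z_\gamma-3)/(z_\gamma+1)$, and this coincides with $\lambda_{n,c}$ only when the minimizing node type lies in $\delta_h$; for $n=3$ (where the $(2,1,0)$-nodes are elliptic tails, so $\delta_1=s_{2,1}$) and for $n=4$ with $\alpha=4k+3$ (where $\delta_1=s_{2,1}'$), the paper's constant is $12(z_2-2)/(z_2+1)$, which is strictly larger. Concretely, for $n=3$, $\alpha=6$ your route gives the effective constant $12-6(\alpha-1)/(\alpha-3)=2$, while $\lambda_{3,c}=12-\tfrac{9(\alpha-1)}{2(\alpha-3)}=4.5$; so you would prove a strictly weaker inequality than \eqref{eqn-3-19}, which is not enough for the applications in \autoref{prop-3-11}. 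Your diagnosis of these exceptional cases (failure of the coprimality condition $\gcd(\gamma,n)=1$) is therefore not the right explanation --- $\gamma=2$ is perfectly admissible for $n=3$ --- the true source is the $\delta_1$-versus-$\delta_h$ weighting in the Miyaoka--Yau type inequality, which your global $K^2\leq 3c_2$ route cannot see. To repair the argument you must prove the slope inequality in the form of \autoref{prop-3-4}, keeping the terms $2\delta_1(\bar f)+3\delta_h(\bar f)$ explicit, and pair it with \cite[Theorem\,4.1]{lu-zuo-14}.
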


 The above lemma follows directly from the Miyaoka-Yau type inequality \cite[Theorem\,4.1]{lu-zuo-14} together the following improved
 slope inequalities for a family of semi-stable superelliptic curves.

 \begin{lemma}\label{prop-3-4}
 	Let $\olf: \ols\to \olb$ be a family of semi-stable $n$-superelliptic curves
 	as before.
 	
 	{\rm(i)}. If $\Delta_{nc}=\emptyset$ and $g\geq n$, then
 	\begin{equation}\label{eqn-3-57}
 	\omega_{\ols/\olb}^2\geq \lambda_{n,c} \cdot \deg \olf_*\omega_{\ols/\olb}+2\delta_1(\bar f)+3\delta_h(\bar f),
 	\end{equation}
 	where $\lambda_{n,c}$ is defined in \eqref{eqn-3-30}.
 	
 	{\rm(ii)}. Assume that $\Delta_{nc}\not=\emptyset$, $g\geq 4$ and $q_{\bar f}>0$.
 	If either $n=3$ or $4$, then
 	\begin{equation}\label{eqn-3-132}
 	\omega_{\ols/\olb}^2 \geq \lambda_{n,nc} \cdot \deg \olf_*\omega_{\ols/\olb}+2\delta_1(\bar f)+3\delta_h(\bar f),
 	\end{equation}
 	where $\lambda_{3,nc}$ and $\lambda_{4,nc}$ are defined in \eqref{eqn-3-154} and \eqref{eqn-3-155} respectively.
 \end{lemma}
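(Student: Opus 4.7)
\textbf{Proof proposal for \autoref{prop-3-4}.}

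The plan is to reduce both inequalities to purely combinatorial statements about the singularity indices $s_{\gamma,\ell}$ and $s'_{\gamma,\ell}$ by substituting the closed-form expressions for $\omega_{\ols/\olb}^2$, $\deg\olf_*\omega_{\ols/\olb}$ and $\delta_i(\bar f)$ given in \autoref{invariantstheorem} and \autoref{rem-3-1}(ii). Using Noether's formula \eqref{formulanoether}, the target inequality
\[
\omega_{\ols/\olb}^2\,\geq\,\lambda\cdot\deg\olf_*\omega_{\ols/\olb}+2\delta_1(\bar f)+3\delta_h(\bar f)
\]
is equivalent to
\[
(12-\lambda)\omega_{\ols/\olb}^2\,\geq\,\lambda\,\delta_0(\bar f)+(\lambda+24)\delta_1(\bar f)+(\lambda+36)\delta_h(\bar f).
\]
After substituting the expressions from \eqref{K_f^2formula} and \eqref{e_fformula}, together with the identification of $\delta_0$, $\delta_1$, $\delta_h$ in terms of $s_{\gamma,\ell}$ and $s'_{\gamma,\ell}$ (see \autoref{rem-3-1}(ii)), both sides become explicit $\mathbb{Q}$-linear combinations of the nonnegative quantities $s_{\gamma,\ell}$ and $s'_{\gamma,\ell}$. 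Hence it suffices to verify that every resulting coefficient is nonnegative.

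For part (i), I would organize the verification along the stratification $2\leq\gamma\leq[\alpha/2]$, with $\ell=\gcd(\gamma,n)$ (resp.\ $\ell=\gcd(\alpha-\gamma,n)$) by \eqref{eqn-3-44}. When $\ell>1$, the term contributes to $\delta_0$, and the inequality reads $(12-\lambda)b_\gamma\geq\lambda\ell^2$ (possibly with the correction from $n\nmid\alpha$); when $\ell=1$, the term contributes to $\delta_{i(\gamma)}$ or $\delta_{j(\gamma)}$, and the inequality reads $(12-\lambda)b_\gamma\geq\lambda+24$ if $i(\gamma)=1$ (or $j(\gamma)=1$) and $(12-\lambda)b_\gamma\geq\lambda+36$ otherwise. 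Plugging in $b_\gamma=\frac{(n^2-1)\gamma(\alpha-\gamma)}{\alpha-1}-n^2$ one checks that the extreme case giving the tightest bound comes from $\gamma=2$; the explicit formula for $\lambda_{n,c}$ in \eqref{eqn-3-30} is chosen so that this extreme coefficient inequality becomes an equality. All remaining coefficients are then automatically nonnegative, since $b_\gamma$ is increasing in $\gamma$ on the range $[2,\alpha/2]$ while $\ell^2\leq n^2$ is bounded. The assumption $g\geq n$ (equivalently $\alpha\geq 5$) is exactly what is needed to make these case comparisons work and, via \autoref{lem-3-23}, to exclude hyperelliptic fibers that would otherwise spoil the node classification.

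For part (ii), the new input is \autoref{prop-3-5}: when $q_{\bar f}>0$, the singularity index $s'_{2,n}$ is controlled by two linear inequalities in the remaining $s_{\gamma,\ell}$, $s'_{\gamma,\ell}$. The strategy is to add a suitable nonnegative combination of these two inequalities to the raw inequality obtained above, so as to transfer weight away from the $s'_{2,n}$ term, which is the one giving the worst coefficient ratio. The precise linear combination to take is dictated by the subcases $n=3$ or $4$ together with the residue class of $\alpha$ modulo $n$; this explains the two subcases in \eqref{eqn-3-154} and \eqref{eqn-3-155}. After this improvement, the remaining coefficient verification is again a finite case check.

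The main obstacle will be the sheer bookkeeping: in part (i) one must keep track of the correction terms $\frac{(n^2-r_\infty^2)\gamma(\gamma-1)}{r_\infty^2(\alpha-1)(\alpha-2)}$ and its $\gamma\mapsto\alpha-\gamma$ analogue when $n\nmid\alpha$, and verify the coefficient positivity uniformly in $\gamma$, $\ell$ and $r_\infty$. In part (ii) the real difficulty is choosing the correct convex combination of the two inequalities in \eqref{eqn-3-53} that neutralizes the $s'_{2,n}$ contribution while not ruining the signs of the other coefficients; this is precisely what forces the restriction to $n\in\{3,4\}$, since for larger $n$ the two bounds in \autoref{prop-3-5} are too weak to effect the needed transfer.
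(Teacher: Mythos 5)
Your proposal matches the paper's proof in all essentials: both parts are reduced via \autoref{invariantstheorem} and \autoref{rem-3-1}(ii) to verifying nonnegativity of explicit coefficients of the $s_{\gamma,\ell}$, $s'_{\gamma,\ell}$ (with the $\gamma=2$ term saturating the bound and fixing $\lambda_{n,c}$), and part (ii) is handled exactly as you describe, by adding the convex combination of the two inequalities of \autoref{prop-3-5} with weight $x=\frac{\alpha(\alpha-7)}{(\alpha-1)(\alpha-4)}$ to absorb the $s'_{2,n}$ term. The only small simplification you can make in part (i) is that $\Delta_{nc}=\emptyset$ forces $s_{\gamma,\ell}=s'_{\gamma,\ell}=0$ for all $\ell>1$, so no coefficient check is needed there, and \autoref{lem-3-23} is not used in this lemma (it enters only in the deduction of \autoref{prop-3-11}).
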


 To complete the proof of \autoref{prop-3-11}, it suffices to prove the above improved slope inequalities.
 \begin{proof}[Proof of \autoref{prop-3-4}]
 	We use the notations introduced in \autoref{subsec-3-2}.
 	
 	(i).
 	Since $\Delta_{nc}=\emptyset$, one gets that $\delta_0(\bar f)=0$,
 	from which and \autoref{rem-3-1}, it follows that $s_{\gamma,\ell}=s_{\gamma,\ell}'=0$ for any $\ell>1$,
 	and
 	$$s_{\gamma,1}=0,~\text{~if~}\gcd(\gamma,n)\neq 1, \quad\text{and}\quad
 	s_{\gamma,1}'=0,~\text{~if~}\gcd(\alpha-\gamma,n)\neq 1.$$
 	Moreover,
 	$$\begin{aligned}
 	\delta_1(\bar f)&\,=\left\{\begin{aligned}
 	&s_{2,1},&\quad&\text{if~}n=3;\\
 	&s_{2,1}', &&\text{if~}n=4\text{~and~}\alpha=4k+3\text{~with~}k\geq 1;\\
 	&0, &&\text{otherwise.}
 	\end{aligned}\right.\\
 	\delta_h(\bar f)&\,=\sum_{\gamma=2}^{[\alpha/2]}(s_{\gamma,1}+s_{\gamma,1}')-\delta_1(\bar f).
 	\end{aligned}$$
 	Hence by \autoref{invariantstheorem} we obtain
 	$$\begin{aligned}
 	&\omega_{\ols/\olb}^2- \lambda_{n,c} \cdot \deg \olf_*\omega_{\ols/\olb}\\
 	=~&\left\{\begin{aligned}
 	&\sum_{\gamma=2}^{[\alpha/2]}\Big(z_{\gamma}-\frac{\lambda_{n,c}}{12}(z_{\gamma}+1)\Big) s_{\gamma,1}, &~&\text{if~}n\,|\,\alpha_0,\\
 	&\sum_{\gamma=2}^{[\alpha/2]}\Big(z_{\gamma}-\frac{\lambda_{n,c}}{12}(z_{\gamma}+1)\Big) s_{\gamma,1}
 	+\sum_{\gamma=2}^{[\alpha/2]}\Big(z_{\gamma}'-\frac{\lambda_{n,c}}{12}(z_{\gamma}'+1)\Big) s_{\gamma,1}', &~&\text{if~}n{\not|}~\alpha_0,
 	\end{aligned}\right.\\
 	\geq~& 2\delta_1(\bar f)+3\delta_h(\bar f),
 	\end{aligned}$$
 	where
 	$$\begin{aligned}
 	z_{\gamma}&\,=\frac{(n^2-1)\gamma(\alpha-\gamma)}{\alpha-1}-n^2
 	 -\frac{(n^2-d^2)\gamma(\gamma-1)}{d^2(\alpha-1)(\alpha-2)}~\text{~with~}d=\left\{\begin{aligned}
 	&n,&&\text{if~}n\,|\,\alpha_0,\\
 	&r_{\infty},&&\text{if~}n{\not|}~\alpha_0,
 	\end{aligned}\right.\\
 	z_{\gamma}'&\,=\frac{(n^2-1)\gamma(\alpha-\gamma)}{\alpha-1}-n^2
 	-\frac{(n^2-d^2)(\alpha-\gamma)(\alpha-\gamma-1)}{d^2(\alpha-1)(\alpha-2)}.
 	\end{aligned}$$
 	This completes the proof.
 	
 	(ii).
 	We prove here only for the case when $n=3$;
 	the case when $n=4$ can be proven similarly.
 	Since $n=3$, it follows from \eqref{eqn-3-45} that $\alpha=3k$ or $3k+2$ for some integer $k$.
 	According to \eqref{eqn-3-54} and \eqref{eqn-3-44},
 	when $\alpha=3k$, we have
 	$$s_{\gamma,\ell}'=0; \qquad s_{\gamma,1}=0,~ \text{~if~}3\,|\,\gamma;
 	\qquad s_{\gamma,3}=0,~ \text{~if~}3{\not|}~\gamma;
 	$$
 	and when $3k+2$, we have
 	\begin{equation*}\left\{\begin{aligned}
 	&s_{\gamma,1}=0,&~& \text{if~}3\,|\,\gamma;  &\qquad&s_{\gamma,1}'=0,&~& \text{if~}3\,|\,(\gamma+1);\\
 	&s_{\gamma,3}=0,&& \text{if~}3{\not|}~\gamma;  &&s_{\gamma,3}'=0,&& \text{if~}3{\not|}~(\gamma+1).
 	\end{aligned}\right.
 	\end{equation*}
 	Hence
 	\begin{equation}\label{eqn-3-61}
 	\delta_0(\bar f)=\sum_{\gamma=2}^{[\alpha/2]}(s_{\gamma,3}+s_{\gamma,3}'),\quad
 	\delta_1(\bar f)=s_{2,1},\quad \delta_h(\bar f)=\sum_{\gamma=3}^{[\alpha/2]}(s_{\gamma,1}+s_{\gamma,1}'),
 	~\text{~and~}~ s_{2,3}=0.
 	\end{equation}
 	
 	If $\alpha=3k$, then $s_{2,3}'=0$;
 	moreover, since $g\geq 4$,  one has $k\geq 2$ by \eqref{eqn-3-9}.
 	Hence by \autoref{invariantstheorem} one obtains that
 	(where $\lambda_1=\frac{15\alpha-63}{2(\alpha-3)}$)
 	$$\begin{aligned}
 	\omega_{\ols/\olb}^2-\lambda_1\cdot \deg \olf_*\omega_{\ols/\olb}
 	=&~\sum_{\gamma=2}^{[\alpha/2]}
 	\left(\Big(\frac{8\gamma(\alpha-\gamma)}{\alpha-1}-9\Big)
 	-\frac{\lambda_1}{12}\cdot \Big(\frac{8\gamma(\alpha-\gamma)}{\alpha-1}-8\Big)\right)(s_{\gamma,1}+s_{\gamma,1}')\\
 	&+\sum_{\gamma=3}^{[\alpha/2]}
 	\left(\Big(\frac{8\gamma(\alpha-\gamma)}{9(\alpha-1)}-1\Big)
 	 -\frac{\lambda_1}{12}\cdot\frac{8\gamma(\alpha-\gamma)}{9(\alpha-1)}\right)(s_{\gamma,3}+s_{\gamma,3}')\\
 	\geq&\,2\delta_1(\bar f)+3\delta_h(\bar f).
 	\end{aligned}$$
 	
 	If $\alpha=3k+2$,
 	then one again has $k\geq 2$.
 	If moveover $s_{2,3}'=0$, then one can show again that
 	$$\omega_{\ols/\olb}^2\geq \frac{15\alpha-63}{2(\alpha-3)}\cdot \deg \olf_*\omega_{\ols/\olb}
 	+2\delta_1(\bar f)+3\delta_h(\bar f).$$
 	Hence we may assume that $s_{2,3}'\neq 0$.
 	In this case, we have (where $\lambda_2=\frac{6\alpha-18}{\alpha-2}$)
 	\begin{equation}\label{eqn-3-59}
 	\begin{aligned}
 	&~\omega_{\ols/\olb}^2-\lambda_2\cdot \deg \olf_*\omega_{\ols/\olb}\\
 	=&~\sum_{\gamma=2}^{[\alpha/2]}
 	\left(\Big(\frac{8\gamma(\alpha-\gamma)}{\alpha-1}-9\Big)
 	-\frac{\lambda_2}{12}\cdot \Big(\frac{8\gamma(\alpha-\gamma)}{\alpha-1}-8\Big)\right)(s_{\gamma,1}+s_{\gamma,1}')\\
 	&-\frac19s_{2,3}'+\sum_{\gamma=3}^{[\alpha/2]}
 	\left(\Big(\frac{8\gamma(\alpha-\gamma)}{9(\alpha-1)}-1\Big)
 	 -\frac{\lambda_2}{12}\cdot\frac{8\gamma(\alpha-\gamma)}{9(\alpha-1)}\right)(s_{\gamma,3}+s_{\gamma,3}').
 	\end{aligned}
 	\end{equation}
 	Note that $s_{2,1}'=0$ in this case.
 	Hence by \eqref{eqn-3-53} for $n=3$ one obtains that
 	\begin{equation*}
 	\left\{\begin{aligned}
 	s_{2,3}'&\,\leq \sum_{\gamma=2}^{[\alpha/2]}\frac{9\gamma(\alpha-\gamma)}{2\alpha} \Big(s_{\gamma,1}+\frac{s_{\gamma,3}}{9}\Big)
 	+\sum_{\gamma=3}^{[\alpha/2]}\frac{9\gamma(\alpha-\gamma)}{2\alpha} \Big(s_{\gamma,1}'+\frac{s_{\gamma,3}'}{9}\Big);\\[1mm]
 	s_{2,3}'&\,\leq \sum_{\gamma=2}^{[\alpha/2]} \frac{9\gamma(\gamma-1)}{2(\alpha-2)}\Big(s_{\gamma,1}+\frac{s_{\gamma,3}}{9}\Big)
 	 +\sum_{\gamma=3}^{[\alpha/2]}\frac{9(\alpha-\gamma)(\alpha-\gamma-1)}{2(\alpha-2)}\Big(s_{\gamma,1}'+\frac{s_{\gamma,3}'}{9}\Big).
 	\end{aligned}\right.
 	\end{equation*}
 	Let $x=\frac{\alpha(\alpha-7)}{(\alpha-1)(\alpha-4)}$.
 	Then $0< x<1$ since $\alpha\geq 8$.
 	Hence
 	\begin{equation}\label{eqn-3-60}
 	\begin{aligned}
 	s_{2,3}'\leq&~ \sum_{\gamma=2}^{[\alpha/2]}\left(\frac{9\gamma(\alpha-\gamma)}{2\alpha}\cdot x
 	+\frac{9\gamma(\gamma-1)}{2(\alpha-2)}\cdot (1-x)\right) \Big(s_{\gamma,1}+\frac{s_{\gamma,3}}{9}\Big)\\
 	& +\sum_{\gamma=3}^{[\alpha/2]}\left(\frac{9\gamma(\alpha-\gamma)}{2\alpha}\cdot x
 	+\frac{9(\alpha-\gamma)(\alpha-\gamma-1)}{2(\alpha-2)}\cdot (1-x)\right)\Big(s_{\gamma,1}'+\frac{s_{\gamma,3}'}{9}\Big)\\
 	 =&~\sum_{\gamma=2}^{[\alpha/2]}\frac{9\gamma\big(\alpha^2-(\gamma+8)\alpha+10\gamma+4\big)}{2(\alpha-2)(\alpha-4)} \Big(s_{\gamma,1}+\frac{s_{\gamma,3}}{9}\Big)\\
 	& +\sum_{\gamma=3}^{[\alpha/2]}\frac{9(\alpha-\gamma)\big((\gamma+2)\alpha-10\gamma+4\big)}{2(\alpha-2)(\alpha-4)}\Big(s_{\gamma,1}'+\frac{s_{\gamma,3}'}{9}\Big)
 	\end{aligned}
 	\end{equation}
 	Combining \eqref{eqn-3-60} together with \eqref{eqn-3-59} and \eqref{eqn-3-61}, we obtain that
 	if $s_{2,3}'> 0$, then
 	$$\omega_{\ols/\olb}^2\geq \frac{6\alpha-18}{\alpha-2}\cdot \deg \olf_*\omega_{\ols/\olb}
 	+2\delta_1(\bar f)+3\delta_h(\bar f).$$
 	This completes the proof of \eqref{eqn-3-132} for the case when $n=3$. 	
 \end{proof}


 \subsection{Proof of \autoref{prop-3-13}}\label{sec-pf-3-13}
 It follows directly from \autoref{cor-3-3} and \autoref{cor-3-11}.
 \qed

 \subsection{Proof of \autoref{prop-3-6}}\label{sec-technical}
 In this subsection, we prove \autoref{prop-3-6},
 which is used in the induction process in the proof of \autoref{thm-curve}.
 We use the notations introduced at the beginning of \autoref{sec-non-general}.
 Before entering the proof, let's first do some preparations.


 \begin{lemma}\label{lem-3-18}
 	Let $\left(E^{1,0}_{\olb}\oplus E^{0,1}_{\olb},~\theta_{\olb}\right)$
 	and $\left(\wt E^{1,0}_{\olb}\oplus \wt E^{0,1}_{\olb},~\wt \theta_{\olb}\right)$
 	be the corresponding logarithmic Higgs bundles associated to the families $\bar f$ and $\bar f_1$ respectively.
 	Then the Galois group $G\cong \mathbb Z/n\mathbb Z$ (resp. $G_1\cong \mathbb Z/n_1\mathbb Z$)
 	admits a natural action on the logarithmic Higgs bundle associated to $\bar f$ (resp. $\bar f_1$),
 	and the eigenspaces satisfy that (where $m_1=\frac{n}{n_1}$)
 	\begin{equation}\label{eqn-3-72}
 	\left(\wt E^{1,0}_{\olb}\oplus \wt E^{0,1}_{\olb},~\wt \theta_{\olb}\right)_i
 	\cong \left(E^{1,0}_{\olb}\oplus E^{0,1}_{\olb},~\theta_{\olb}\right)_{im_1},
 	\qquad \forall~1\leq i\leq n_1-1,
 	\end{equation}
 \end{lemma}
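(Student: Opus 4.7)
The plan is to identify the Higgs bundle of the quotient family $\bar f_1$ with the $H_1$-invariant part of the Higgs bundle of $\bar f$, and then compute how the residual $G/H_1 \cong G_1$-action breaks these invariants into $G_1$-eigenspaces. Since $G \cong \mathbb{Z}/n\mathbb{Z}$ is abelian, the subgroup $H_1$ of order $m_1 = n/n_1$ is normal, and the quotient $G/H_1$ is naturally isomorphic to $G_1 \cong \mathbb{Z}/n_1\mathbb{Z}$; moreover the action of $G$ on $\ol S$ descends to an action of $G_1$ on $\ol S/H_1$, and after the standard resolution and contraction procedure this agrees with the action of $G_1$ on $\ol S_1$ coming from the $n_1$-superelliptic structure on the general fibre.

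First I would establish, at the level of local systems, an identification
\[
\wt{\mathbb V}_B \cong (\mathbb V_B)^{H_1}
\]
obtained via the rational dominant map $\ol\Pi_{n,n_1}:\ol S\dashrightarrow \ol S_1$, using that (away from the branch locus) $\ol S_1$ is the honest quotient $\ol S/H_1$ and that $R^1(\bar f_1)_*\mathbb Q = (R^1\bar f_*\mathbb Q)^{H_1}$ by the Leray spectral sequence for a finite quotient. By Simpson's correspondence \cite{sim90}, this $H_1$-equivariantly translates into an isomorphism of logarithmic Higgs bundles
\[
\bigl(\wt E_{\olb}^{1,0}\oplus \wt E_{\olb}^{0,1},\wt\theta_{\olb}\bigr)\cong\bigl(E_{\olb}^{1,0}\oplus E_{\olb}^{0,1},\theta_{\olb}\bigr)^{H_1},
\]
compatible with the $G_1 = G/H_1$-action on both sides. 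Compatibility with the Higgs field is automatic because it is induced by the edge map of the tautological exact sequence, which is functorial with respect to the finite quotient map.

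Next I would compute which eigenspaces survive. Fix a generator $\tau\in G$ and a primitive $n$-th root of unity $\xi$, so that the eigenspace $(E_{\olb}^{1,0}\oplus E_{\olb}^{0,1})_j$ is the $\xi^j$-eigenspace of $\tau$. Then $H_1=\langle\tau^{n_1}\rangle$, and $\tau^{n_1}$ acts on $(E_{\olb}^{1,0}\oplus E_{\olb}^{0,1})_j$ by $\xi^{jn_1}$, which is trivial if and only if $m_1\mid j$. Hence
\[
(E_{\olb}^{1,0}\oplus E_{\olb}^{0,1})^{H_1}=\bigoplus_{i=0}^{n_1-1}(E_{\olb}^{1,0}\oplus E_{\olb}^{0,1})_{im_1}.
\]
Choosing $\xi_1:=\xi^{m_1}$ as the compatible primitive $n_1$-th root of unity used to label the eigenspaces of $G_1$ on $\wt E$, the image of $\tau$ in $G_1$ acts on $(E_{\olb}^{1,0}\oplus E_{\olb}^{0,1})_{im_1}$ by $\xi^{im_1}=\xi_1^i$. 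This identifies the $i$-th $G_1$-eigenspace of $\wt E$ with $(E)_{im_1}$, yielding the desired isomorphism for $1\leq i\leq n_1-1$.

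The main (minor) obstacle is verifying that the identification at the level of local systems is compatible with the Deligne canonical extensions over $\ol B$, so that the Higgs bundle decomposition on $\ol B$ (and not merely on $B$) is respected, and that passing between $\ol S/H_1$, its desingularization, and $\ol S_1$ does not alter the $H_1$-invariants of the higher direct images; both points follow from the fact that these operations are isomorphisms in codimension one and that $\bar f$, $\bar f_1$ are both semi-stable so the Higgs bundles are pulled back from the minimal compactification in a compatible way.
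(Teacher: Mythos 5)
Your proposal is correct and follows essentially the same route as the paper's proof: identify $\bigl(\wt E^{1,0}_{\olb}\oplus\wt E^{0,1}_{\olb},\wt\theta_{\olb}\bigr)$ with the $H_1$-invariant part of $\bigl(E^{1,0}_{\olb}\oplus E^{0,1}_{\olb},\theta_{\olb}\bigr)$ via the (birational) quotient $\ol S/H_1$, observe that these invariants are exactly $\bigoplus_{i}(E^{1,0}_{\olb}\oplus E^{0,1}_{\olb},\theta_{\olb})_{im_1}$, and match the residual $G_1=G/H_1$-eigenspaces. You supply somewhat more justification than the paper (the local-system identification via Simpson's correspondence and the compatibility with canonical extensions and birational modifications), and you correctly take $H_1=\langle\tau^{n_1}\rangle$ of order $m_1$, where the paper's proof misstates the generator as $\tau^{m_1}$.
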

 \begin{proof}
 	Let $\tau\in G$ be any generator, $H \leqslant G$ be the subgroup generated by $\tau^{m_1}$, and $G_1=G/H$.
 	Then
 	$$\bigoplus_{i=1}^{n_1-1}\left(E^{1,0}_{\olb}\oplus E^{0,1}_{\olb},~\theta_{\olb}\right)_{im_1}
 	=\left(E^{1,0}_{\olb}\oplus E^{0,1}_{\olb},~\theta_{\olb}\right)^H.$$
 	On the other hand,
 	by construction, $\ols_1$ is birational to the quotient $\ols/H$.
 	It follows that
 	$$\left(\wt E^{1,0}_{\olb}\oplus \wt E^{0,1}_{\olb},~\wt \theta_{\olb}\right)
 	\cong \left(E^{1,0}_{\olb}\oplus E^{0,1}_{\olb},~\theta_{\olb}\right)^H.$$
 	Hence
 	$$\left(\wt E^{1,0}_{\olb}\oplus \wt E^{0,1}_{\olb},~\wt \theta_{\olb}\right)
 	\cong \bigoplus_{i=1}^{n_1-1}\left(E^{1,0}_{\olb}\oplus E^{0,1}_{\olb},~\theta_{\olb}\right)_{im_1}.$$
 	Note that the group $G_1$ acts naturally on both sides,
 	and that the above isomorphism is clearly equivariant with respect to the actions of $G_1$.
 	This proves \eqref{eqn-3-72}.
 \end{proof}

 \begin{lemma}\label{lem-3-19}
 	Notations as above.
 	Assume that $C$ is a Shimura curve but $\rho_{n,n_1}(C)$ is not a Shimura curve.
 	Then
 	\begin{enumerate}
 		\item[(i)] $n_1\,|\,\alpha_0$ and $n\not |~\alpha_0$;
 		
 		\item[(ii)] up to a suitable finite \'etale base change, $\bar f:\,\ols\to \olb$ is birational to the resolution of
 		a degree-$n$ cyclic cover of a $\mathbb P^1$-bundle $\varphi:\,Y \to \olb$
 		branched exactly over $\alpha_0+1$ disjoint sections,
 		denoted as $D_1,\cdots,D_{\alpha_0+1}$, such that the local monodromy is 1 around $D_1,\cdots,D_{\alpha_0}$, and equals $a_\infty=n(1-\{\frac{\alpha_0}{n}\})$ around $D_{\alpha_0+1}$;
 		
 		\item[(iii)] up to a suitable finite \'etale base change,
 		the Higgs subbundle
 		$$\bigoplus_{i=1}^{n_1-1}\left(E^{1,0}_{\olb}\oplus E^{0,1}_{\olb},~\theta_{\olb}\right)_{im_1}$$
 		is a trivial Higgs subbundle, where $m_1=\frac{n}{n_1}$;
 		
 		\item[(iv)] $F_{\olb,i}^{1,0}=0$ for any $i\geq m_1$ and $m_1{\not|}~i$;
 		
 		\item[(v)] if moreover
 		the general fiber of $\bar f$ admits a unique $n$-superelliptic automorphism group $G$
 		and there eixsts a generator of $G$
 		commuting with any automorphism of the general fiber,
 		then
 		\begin{equation}\label{eqn-3-153}
 		\rank A_{\olb}^{1,0} \leq \left\{\begin{aligned}
 		 &\frac{\left(n^2-\big(\frac{n}{r_{\infty}}\big)^2\right)(\alpha_0-2)}{6\big((n-1)\alpha_0-2n\big)}, &\quad&\text{if~}\ell_1=1;\\[1mm]
 		 &\frac{n^2-\big(\frac{n}{r_{\infty}}\big)^2}{6n}+\frac{(\ell_1^2-1)\alpha_0}{6n(\alpha_0-2)}, &&\text{if~}\ell_1>1.
 		\end{aligned}\right.
 		\end{equation}
 		where $r_{\infty}=\frac{n}{\gcd(n,\alpha_0)}$ and $\ell_1=\gcd(\alpha_0-1,n)$.
 	\end{enumerate}
 \end{lemma}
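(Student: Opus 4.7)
\medskip

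\noindent\textbf{Proof plan.}
The plan is to derive (iii) first as a direct consequence of the hypothesis together with \autoref{lem-3-15}. Since $C$ is Shimura but $\rho_{n,n_1}(C)$ is not, \autoref{lem-3-15} forces $\rho_{n,n_1}(C)$ to be a single special point in $\mathcal{TS}_{g_1,n_1}$. The Jacobian of $\bar f_1$ is then, up to isogeny, a constant CM abelian variety over $\ol B$, and by the standard rigidity of polarized families of abelian varieties the associated Higgs bundle $(\wt E^{1,0}_{\olb}\oplus \wt E^{0,1}_{\olb},~\wt\theta_{\olb})$ trivializes after a finite \'etale base change of $\ol B$. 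Invoking the identification of eigen-sheaves in \autoref{lem-3-18} yields (iii) at once.

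Next I would establish (i) by contradiction using (iii). If $n_1\nmid \alpha_0$, the induced $n_1$-superelliptic cover on a general fiber of $\bar f_1$ has branch locus $\{x_1,\dots,x_{\alpha_0},\infty\}$ with local monodromy as described in \autoref{rems-3-1}\,(i); the isotriviality of $\bar f_1$ would then force this pointed configuration to be $\mathrm{PGL}_2$-constant over $\ol B$. But the same configuration of branch points drives $\bar f$ (only the local monodromy at $\infty$ differs), so $\bar f$ would be isotrivial too, contradicting the non-triviality of the Shimura curve $C$; an entirely parallel argument excludes $n\mid\alpha_0$. Part (ii) then follows as a structural consequence: with $n_1\mid\alpha_0$ but $n\nmid\alpha_0$, after a suitable \'etale base change of $\ol B$ the roots of $F(x)$ define $\alpha_0$ disjoint sections of a $\mathbb{P}^1$-bundle $\varphi:Y\to\ol B$, and $\bar f$ is birational to the resolution of the degree-$n$ cyclic cover of $Y$ branched over these sections together with the section at infinity, with monodromy $a_\infty=n\bigl(1-\{\alpha_0/n\}\bigr)$.

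For (iv) I intend to combine (iii) with the Esnault--Viehweg description of eigen-sheaves recalled in \autoref{subsec-3-4}. Each flat eigen-component $F^{1,0}_{\ol B,i}$ embeds, via the sheaf morphism $\varrho_i$ of \autoref{lem-3-1}, into $\Omega^1_{\wt Y}(\log \wt R)\otimes {\mathcal L^{(i)}}^{-1}$, while (iii) already supplies a large trivial subbundle at the indices that are multiples of $m_1$. For $i\geq m_1$ with $m_1\nmid i$, I would pair a hypothetical non-trivial section of $F^{1,0}_{\ol B,i}$ with one of the trivial sections at a suitable index $jm_1$ and take the wedge product; using the explicit formula \eqref{eqn-3-20} for $\mathcal L^{(i)}|_{\wt\Gamma}$ together with the branch data from (ii), one checks that the resulting $2$-form lies in a sheaf having negative intersection with a general fiber of $\tilde\varphi$, contradicting semi-positivity exactly as in the proof of \autoref{lem-3-20}. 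This forces $F^{1,0}_{\ol B,i}=0$ as required.

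Finally, (v) is a numerical payoff. Under the additional hypothesis on $G$, the cyclic action extends to $\ol S$ without base change by \autoref{rems-3-1}\,(iv); combining (iii) and (iv), the ample part $A^{1,0}_{\ol B}$ is concentrated in the eigencomponents with $i<m_1$ and their conjugates, so $\rank A^{1,0}_{\ol B}$ can be computed by summing the ranks given by \eqref{eqn-3-35} in this restricted range. On the other side, the Arakelov equality \eqref{eqn-3-191} for the Shimura curve $C$ relates $\deg A^{1,0}_{\ol B}$ to $\deg\Omega^1_{\ol B}(\log\Delta_{nc})$, and the slope inequality from \autoref{prop-3-4} — combined with the explicit expression of $\deg\bar f_*\omega_{\ol S/\ol B}$ obtained from \autoref{invariantstheorem} applied to the branch data of (ii) — will produce the bound \eqref{eqn-3-153}, the two cases $\ell_1=1$ vs.\ $\ell_1>1$ corresponding to whether the section at $\infty$ is a ramification point of the induced $n_1$-cover on a general fiber. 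The main obstacle will be (iv): the passage from triviality at multiples of $m_1$ to vanishing of the other flat parts requires careful control of the local monodromy at $D_{\alpha_0+1}$ and a delicate balancing of eigen-indices, which will be the technically most subtle step.
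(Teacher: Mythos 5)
Your treatment of (i)--(iii) matches the paper's: $\rho_{n,n_1}(C)$ being a point forces $\bar f_1$ to be isotrivial, the branch loci of the two covers can then differ only by the section at infinity (whence (i) and (ii)), and (iii) follows from the triviality of the Higgs bundle of an isotrivial family together with \autoref{lem-3-18}. The genuine gap is in (iv). The negativity computation you invoke (negative intersection of $\omega_{\wt Y}(\wt R)\otimes{\mathcal L^{(i)}}^{-1}\otimes{\mathcal L^{(jm_1)}}^{-1}$ with a general fiber, ``contradicting semi-positivity as in \autoref{lem-3-20}'') does not and cannot force $F^{1,0}_{\olb,i}=0$. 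In \autoref{lem-3-20} that argument only shows the image of $\varrho_i$ has rank one and hence that the flat eigenparts become \emph{trivial} bundles; the vanishing of the wedge product is not a contradiction but the hypothesis of Castelnuovo--de Franchis, which then \emph{produces} a fibration $\bar f':\ols\to\olb'$ with $H^0(\ols,\Omega^1_{\ols})_{i}\subseteq(\bar f')^*H^0(\olb',\Omega^1_{\olb'})$. The actual contradiction in the paper requires an extra input you have not supplied: since $\ols_1\cong\olb\times\ol F_1$ up to \'etale base change, the composition $\ol{pf}_1:\ols\dashrightarrow\ols_1\to\ol F_1$ is a morphism, and by (iii) together with Fujita's theorem one has the \emph{equality} $(\ol{pf}_1)^*H^0(\ol F_1,\Omega^1_{\ol F_1})=\bigoplus_{m_1\mid i,\,i\neq0}H^0(\ols,\Omega^1_{\ols})_i$ (equation \eqref{eqn-3-81}). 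The uniqueness of the Castelnuovo--de Franchis fibration identifies $\bar f'$ with $\ol{pf}_1$, and only then does a nonzero $H^0(\ols,\Omega^1_{\ols})_{i_0}$ with $m_1\nmid i_0$ become impossible, since it would have to lie in a pullback space spanned exactly by the eigencomponents with index divisible by $m_1$. Without this identification your wedge-product step terminates in a true statement ($\omega_1\wedge\omega_2=0$) rather than a contradiction.

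Part (v) is also too loosely argued to deliver \eqref{eqn-3-153}. The bound is not obtained by ``summing the ranks given by \eqref{eqn-3-35} in the restricted range'' nor from the slope inequality of \autoref{prop-3-4}; the paper computes $\deg E^{1,0}_{\olb}=\deg\bar f_*\omega_{\ols/\olb}$ exactly from \autoref{invariantstheorem} using the fact (forced by (ii)) that the only nonzero singularity index is $s'_{2,\ell_1}$, then bounds $\deg\Omega^1_{\olb}(\log\Delta_{nc})$ from below via the Hurwitz formula applied to the projection $\psi:D_{\alpha_0+1}\to\bbp^1$ of the extra branch section in $Y\cong\olb\times\bbp^1$, and feeds both into the Arakelov equality \eqref{eqn-3-191}. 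Note also that the dichotomy $\ell_1=1$ versus $\ell_1>1$ records whether the singular fibers have compact Jacobians (i.e.\ whether $\Delta_{nc}$ is empty), not whether $\infty$ ramifies in the $n_1$-cover.
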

 \begin{proof}
 	As above, after a possible base change, we assume that
 	the group $G$ acts on $\ols$, and let $\bar f_1:\,\ols_1\to \olb$
 	be the new family of semi-stable $n_1$-superelliptic curves.
 	Assume that $\bar f$ is given by $y^n=F_t(x)$, where $t$ is the parameter.
 	Then both of the two families $\bar f$ and $\bar f_1$, up to base change,
 	are birational to the resolution of cyclic covers of a $\mathbb P^1$-bundle
 	$\varphi:\,Y\to \ol B$ branched over the zero locus of $F_t(x)$ and possibly over the section at the infinity
 	(depending on whether $n_1$ and $n$ divide $\alpha_0$ or not respectively).
 	In other words, choose suitable birational models,
 	we have the following diagram.
 	$$\xymatrix{
 		\wt S \ar[rr]^-{\Pi_{n,n_1}} \ar[dr]_-{\Pi} && \wt S_1\ar[dl]^-{\Pi_1}\\
 		&Y&}$$
 	Here $\Pi_{n,n_1}$ is induced by the rational map $\ol\Pi_{n,n_1}:\,\ols \dashrightarrow \ols_1$,
 	and the difference between the branch loci of $\Pi$ and of $\Pi_1$
 	is at most the section at the infinity; see \eqref{eqn-3-46} and \eqref{eqn-3-96}.
 	
 	Since $\rho_{n,n_1}(C)$ is not a Shimura curve,
 	it follows from \autoref{lem-3-15} that $\rho_{n,n_1}(C)$ is a (special) point in $\mathcal{TS}_{g_1,n_1}$.
 	In other words, the semi-stable family $\bar f_1$ is isotrivial by construction.
 	Being semi-stable, the family $\bar f_1$ is actually a smooth family.
 	In other words, up to some elementary transformations of the $\bbp^1$-bundle $Y$,
 	the branch locus $R_1$ of the cover $\Pi_1$
 	is smooth and the restricted map $\varphi|_{R_1}:\,R_1\to \olb$ is \'etale;
 	here we recall that an {\it elementary transformation} of a $\bbp^1$-bundle $X$
 	is a new $\bbp^1$-bundle $X'$ obtained by first blowing up some point $x\in X$
 	and then contracting the strict inverse image of the fiber of $X$ through $x$.
 	We should remark that an elementary transformation is a birational operation,
 	but it may transfer the section at the infinity to somewhere else.
 	
 	\vspace{1mm}
 	(i).
 	The first statement is clear from the above arguments;
 	otherwise,
 	the branch loci of $\Pi$ and $\Pi_1$ are the same by the above arguments together with \eqref{eqn-3-46} and \eqref{eqn-3-96}.
 	Since $\bar f_1$ is isotrivial,
 	it follows that $\bar f$ is also an iso-trivial family, which is a contradiction since $C$ is a Shimura curve.
 	
 	(ii).
 	By (i) and its proof above,
 	we obtain that the branch locus $R$ of $\Pi$ equals the branch locus $R_1$ of $\Pi_1$
 	plus one another section.
 	On the other hand, as the restricted map $\varphi|_{R_1}:\,R_1\to \olb$ is \'etale,
 	it follows that after a possible suitable finite \'etale base change,
 	$R_1$ becomes $\alpha_0$ disjoint sections.
 	Since $R$ equals $R_1$ plus one another section $D_{\alpha_0+1}$,
 	and the local monodromy around each component in $R_1$ (resp. the component $D_{\alpha_0+1}$) is $1$
 	\big(resp. $a_{\infty}=n\left(\big[\frac{\alpha_0}{n}\big]+1\right)-\alpha_0$\big) by construction,
 	this proves the second statement.
 	
 	(iii).
 	Since $\bar f_1$ is isotrivial,
 	the Higgs bundle associated to $\bar f_1$ is trivial after a suitable unramified base change.
 	Hence this statement follows directly from \eqref{eqn-3-72}.
 	
 	(iv).
 	Note that the rank of the subbundle $F_{\olb,i}^{1,0}$
 	does not decrease after base change.
 	Hence it suffices to prove the statement after any finite base change. 
 	By the above arguments, it follows that $\ol S_1\cong \olb \times \ol F_1$ up to some finite \'etale base change,
 	where $\ol F_1$ is a general fiber of $\bar f_1:\,\ols_1\to\olb$.
 	Composed the rational map $\ol\Pi_{n,n_1}:\,\ols \dashrightarrow \ols_1$ with the second projection $\ols_1 \to \ol F_1$,
 	we obtain a rational map $\ol{pf}_1:\,\ols \dashrightarrow \ol F_1$.
 	Since $g(\ol F_1)>0$ by construction, it follows that $\ol{pf}_1$ is in fact a morphism.
 	By (iii) together with \cite[Theorem\,3.1]{fujita-78} (see also \eqref{eqn-3-32}), we have
 	$$\rank \bar f_*\Omega^1_{\ols/\olb}(\log \Upsilon)_i=\dim H^0\big(\ol S,\,\Omega^1_{\ol S}\big)_{i},
 	\quad\forall~m_1\,|\,i ~\&~ i\neq 0.$$
 	Together with \eqref{eqn-3-72}, we obtain that
 	\begin{equation}\label{eqn-3-81}
 	\big(\ol{pf}_1\big)^*H^0\big(\ol F_1,\,\Omega^1_{\ol F_1}\big)
 	=\bigoplus_{m_1\,|\,i ~\&~ i\neq 0} H^0\big(\ol S,\,\Omega^1_{\ol S}\big)_{i}.
 	\end{equation}
 	Assume that the flat part $F_{\olb,i_0}^{1,0}\neq 0$
 	for some $i_0\geq m_1$ and $m_1{\not|}~i_0$.
 	By (i), one has $\alpha_0=kn_1$ with $k\geq 1$.
 	
 	If $k\geq 2$, i.e., $\alpha_0\geq 2n_1$,
 	then $F_{\olb,n-m_1}^{1,0}\neq 0$ by \eqref{eqn-3-72}.
 	Hence up to base change,
 	we may assume that $F_{\olb,i_0}^{1,0}$ is trivial by \autoref{cor-3-3}.
 	This is equivalent to saying that
 	$H^0\big(\ol S,\,\Omega^1_{\ol S}\big)_{i_0}\neq 0$ by \eqref{eqn-3-32}.
 	By \autoref{cor-3-11}, there exists a unique fibration $\bar f':\,\ols \to \olb'$ such that \eqref{eqn-3-42} holds.
 	Together with \eqref{eqn-3-81}, it follows that $\bar f'$ is the same as $\ol{pf}_1$ obtained above.
 	This is a contradiction by \eqref{eqn-3-42} and \eqref{eqn-3-81}.
 	
 	If $k=1$, i.e., $\alpha_0=n_1$, then $F_{\olb,n-2m_1}^{1,0}\neq 0$ by \eqref{eqn-3-72}.
 	Moreover, if we let $\wt \Gamma$ be the general fiber of $\tilde\varphi$ as in \autoref{subsec-3-4},
 	then
 	$$\begin{aligned}
 	&\wt \Gamma \cdot \bigg(\omega_{\wt Y}(\wt R) \otimes
 	\left({\mathcal L^{(n-2m_1)}}^{-1} \otimes {\mathcal L^{(i_0)}}^{-1}\right)\bigg)\\
 	=~&-2+(\alpha_0+1)-\frac{(n-2m_1)\alpha_0}{n}
 	-\left(\frac{i_0\cdot n}{n}-\left[\frac{i_0(n-\alpha_0)}{n}\right]\right)\\
 	 =~&1-\frac{i_0}{m_1}-\left(\frac{i_0(n-\alpha_0)}{n}-\left[\frac{i_0(n-\alpha_0)}{n}\right]\right)<0.
 	\end{aligned}
 	$$
 	Hence similarly as above, one derive a contradiction.
 	This proves (iv).
 	
 	(v).
 	In this case, as remarked in \autoref{rems-3-1}\,(iii),
 	the group $G\cong \mathbb Z/n\mathbb Z$ can be extended to $\ols$ without any base change.
 	In other words, the above properties (i)-(iii) hold for $\bar f$, and simultaneously
 	one has the Arakelov type equality as in \eqref{eqn-3-191} for $\bar f$.
 	
 	Let $s_{\gamma,\ell}=s_{\gamma,\ell}(\bar f)$ and $s_{\gamma,\ell}'=s_{\gamma,\ell}'(\bar f)$
 	be the local invariants of $\bar f$ introduced in \autoref{def-3-4}.
 	We first claim that
 	\begin{equation}\label{eqn-3-74}
 	\text{$s_{\gamma,\ell}=0$ for any $(\gamma,\ell)$, \quad and $s_{\gamma,\ell}'=0$ unless $(\gamma,\ell)=(2,\ell_1)$.}
 	\end{equation}
 	In fact, by (ii), the branch locus of $\Pi$
 	consists of $\alpha_0$ disjoint sections $\sum\limits_{i=1}^{\alpha_0}D_i$
 	plus one another section $D_{\alpha_0+1}$.
 	Moreover, the local monodromy around $D_i$ is 1 for $1\leq i\leq \alpha_0$, and that around $D_{\alpha_0+1}$ is $a_{\infty}$.
 	Let $\xi_{\gamma,\ell}$ (resp. $\xi_{\gamma,\ell}'$)
 	be the number of the nodes in fibers of $\bar \varphi$
 	with index $(\gamma,\ell,0)$ (resp. $(\gamma,\ell,1)$), counted according to their multiplicities,
 	where $\bar\varphi:\,\oly\triangleq\ols/G\to \olb$ is the induced quotient family as in the proof of \autoref{invariantstheorem}.
 	Then it is easy to see that
 	\begin{equation*}
 	\text{$\xi_{\gamma,\ell}=0$ for any $(\gamma,\ell)$;\quad
 		and $\xi_{\gamma,\ell}'=0$ unless $\gamma=2$}.
 	\end{equation*}
 	Moreover, $\xi_{2,\ell}=0$ unless $\ell=\ell_1$ by \eqref{eqn-3-44}.
 	Hence \eqref{eqn-3-74} follows from \eqref{eqn-3-55}.
 	
 	\vspace{1mm}
 	According to \eqref{eqn-3-74} together with \eqref{chi_fformula} and \eqref{eqn-3-55}, we get
 	\begin{equation}\label{eqn-3-82}
 	\begin{aligned}
 	\deg E_{\ol B}^{1,0}=\deg \bar f_*\omega_{\ols/\olb}&\,=
 	 \left(\frac{\left(n^2-\big(\frac{n}{r_{\infty}}\big)^2\right)(\alpha_0-2)}{\alpha_0}+(\ell_1^2-1)\right)
 	\cdot \frac{s_{2,\ell_1}'}{12\ell_1^2}\\
 	 &\,=\left(\frac{\left(n^2-\big(\frac{n}{r_{\infty}}\big)^2\right)(\alpha_0-2)}{\alpha_0}+(\ell_1^2-1)\right)
 	\cdot \frac{\xi_{2,\ell_1}'}{12n}.
 	\end{aligned}
 	\end{equation}
 	
 	On the other hand, by Step 1 we know that
 	  the $\bbp^1$-bundle $Y$ contains $\alpha_0\geq 4$ disjoint sections up to a suitable unramified base change, hence
 	it follows that $Y\cong \olb \times \bbp^1$.
 	Let $pr_2:\,Y\cong \olb \times \bbp^1 \to \bbp^1$ be the projection, and
 	$$\psi:~D_{\alpha_0+1} \hookrightarrow Y \overset{pr_2}\lra \bbp^1$$
 	be the induced map on $D_{\alpha_0+1}$.
 	Note that $D_i$ is contracted by $pr_2$ for $1\leq i\leq \alpha_0$.
 	It follows that $\psi$ is surjective. Let $R_{\psi}\subseteq D_{\alpha_0+1}$
 	be the ramified divisor of $\psi$. Then by Hurwitz formula,
 	it follows that
 	$$\deg(R_{\psi})=2g(D_{\alpha_0+1})-2-2\deg(\psi)=2g(\olb)-2+2\deg(\psi).$$
 	Let $\nu_i=|D_i\cap D_{\alpha_0+1}|$ be the number of points contained in $D_i\cap D_{\alpha_0+1}$ for $1\leq i\leq \alpha_0$,
 	and $|\Delta|$ be the number of the singular fibers of $\bar f$.
 	Then
 	\begin{equation}\label{eqn-3-76}
 	\xi_{2,1}'=\sum\limits_{i=1}^{\alpha_0} D_i\cdot D_{\alpha_0+1}=\alpha_0 \deg(\psi),
 	\end{equation}
 	and
 	$$\sum_{i=1}^{\alpha_0} \nu_i=|\Delta|\leq s_{2,1}'=\frac{\ell_1^2}{n}\cdot \xi_{2,1}'
 	=\frac{\ell_1^2}{n}\cdot \alpha_0\deg(\psi).$$
 	Hence
 	\begin{equation}\label{eqn-3-85}
 	\begin{aligned}
 	2g(\olb)-2+2\deg(\psi)=\deg(R_{\psi})&\,\geq \sum_{i=1}^{\alpha_0}(\deg(\psi)-\nu_i)\\
 	&\,=\alpha_0\cdot\deg(\psi)-\sum_{i=1}^{\alpha_0}\nu_i =\alpha_0\cdot\deg(\psi)-|\Delta|\\
 	&\,\geq \left(1-\frac{\ell_1^2}{n}\right)\cdot \alpha_0 \deg(\psi)
 	\end{aligned}
 	\end{equation}
 	Combining \eqref{eqn-3-82} with \eqref{eqn-3-76}, we have
 	\begin{equation}\label{eqn-3-87}
 	\deg E_{\ol B}^{1,0}
 	 =\left(\frac{\left(n^2-\big(\frac{n}{r_{\infty}}\big)^2\right)(\alpha_0-2)}{\alpha_0}+(\ell_1^2-1)\right)
 	\cdot \frac{\alpha_0 \deg(\psi)}{12n}.
 	\end{equation}
 	Note that
 	\begin{equation*}
 	|\Delta_{nc}|=0,\quad\text{if~}\ell_1=1;\qquad\text{and}\qquad |\Delta_{nc}|=|\Delta|,\quad\text{if~}\ell_1>1.
 	\end{equation*}
 	Combining this with the Arakelov type equality in \eqref{eqn-3-191} and \eqref{eqn-3-85}, one obtains
 	$$\deg E_{\ol B}^{1,0}=\deg \bar f_*\omega_{\ols/\olb}\geq \left\{\begin{aligned}
 	&\frac{\rank A_{\olb}^{1,0}}{2}\cdot\left(\frac{(n-1)\alpha_0}{n}-2\right)\deg(\psi), &\quad&\text{if~}\ell_1=1;\\
 	&\frac{\rank A_{\olb}^{1,0}}{2}\cdot\left(\alpha_0-2\right)\deg(\psi), &\quad&\text{if~}\ell_1>1.
 	\end{aligned}\right.$$
 	Combining the above inequality together with \eqref{eqn-3-87}, we complete the proof.
 \end{proof}

 In order to apply \autoref{lem-3-19}\,(v), one still needs the next two lemmas.
 \begin{lemma}\label{lem-3-51}
 	Let $\ol F$ be an $n$-superelliptic curve of genus $g>(p-1)(2p-1)$ with $n=2p$ for some prime $p$.
 	Then $\ol F$ admits a unique $n$-superelliptic cover.
 \end{lemma}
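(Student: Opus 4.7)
I would argue by contradiction: suppose $\bar F$ admits two distinct $n$-superelliptic automorphism groups $G_1, G_2 \leq \mathrm{Aut}(\bar F)$, each cyclic of order $n=2p$ with $\bar F/G_i \cong \mathbb{P}^1$. Since $G_i \cong \mathbb{Z}/2p$ has a unique involution $\iota_i$ and a unique order-$p$ element $\tau_i$, the subgroup $G_1 \cap G_2$ has order in $\{1,2,p,2p\}$, and I would rule out each nontrivial case.

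If $|G_1\cap G_2|=p$, write $\langle \tau\rangle = G_1\cap G_2$. Then the quotient $D=\bar F/\langle\tau\rangle$ carries two distinct order-$2$ subgroups $G_i/\langle\tau\rangle$ each with quotient $\mathbb{P}^1$, giving two hyperelliptic structures on $D$. For $g(D)\geq 2$, uniqueness of the hyperelliptic involution, combined with the fact that the kernel of $\mathrm{Aut}(\bar F)\to \mathrm{Aut}(D)$ restricted to the normalizer of $\langle\tau\rangle$ equals $\langle\tau\rangle$, forces $G_1=G_2$. For $g(D)\leq 1$, Riemann--Hurwitz bounds the branch locus of $D\to \mathbb{P}^1$ by $4$ points; since each of the $\alpha_0$ zeros of $F(x)$ descends to such a branch point under the factorization $\bar F\to D\to \mathbb{P}^1$ of the total ramification $2p=2\cdot p$, one obtains $\alpha_0\leq 4$, which via \eqref{eqn-3-9} forces $g$ far below $(p-1)(2p-1)$.

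If $|G_1\cap G_2|=2$, write $\langle \iota\rangle = G_1\cap G_2$. On $C=\bar F/\langle\iota\rangle$ the two cyclic subgroups $G_i/\langle\iota\rangle$ give two distinct degree-$p$ covers $C\to \mathbb{P}^1$; as $p$ is prime, these share no nontrivial intermediate, so Castelnuovo--Severi yields $g(C)\leq (p-1)^2$. On the other hand, the factorization $2p=2\cdot p$ at each zero of $F$ produces $\alpha_0$ ramification points of index $p$ in $C\to \mathbb{P}^1$, so Riemann--Hurwitz gives $g(C)\geq (p-1)(\alpha_0-2)/2$; combining yields $\alpha_0\leq 2p$. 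A short case analysis on $\gcd(\alpha_0,2p)$ in the genus formula \eqref{eqn-3-9} then bounds $g\leq (p-1)(2p-1)$, contradicting the hypothesis.

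The remaining case $G_1\cap G_2=\{e\}$ is the principal obstacle: a direct application of Castelnuovo--Severi to $\pi_1,\pi_2$ yields only $g\leq (2p-1)^2$, which is too weak. Here I would analyze the subgroup $H:=\langle G_1,G_2\rangle \leq \mathrm{Aut}(\bar F)$, which has order at least $4p^2$ and quotient $\bar F/H=\mathbb{P}^1$ since this is a quotient of $\mathbb{P}^1=\bar F/G_1$. After first ruling out noncommutativity (where Hurwitz-type bounds on the resulting non-abelian Galois cover $\bar F\to \mathbb{P}^1$ already restrict $g$ via the $\mathbb{Z}/2p$-cyclic stabilizers), one has $H\cong \mathbb{Z}/2p\times\mathbb{Z}/2p$ acting as the Galois group of an abelian cover of degree $4p^2$. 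Matching the ramification pattern against the two superelliptic structures --- each zero of $F(x)$ forces a stabilizer equal to $G_1$ on the $\pi_1$-side and to a cyclic order-$2p$ conjugate on the $\pi_2$-side --- pins down the signature of the cover to exactly that of the Fermat curve of degree $2p$, and Riemann--Hurwitz then forces $g=(p-1)(2p-1)$ precisely, again contradicting $g>(p-1)(2p-1)$. The hard part is this last rigidity step: using only the two abelian cyclic structures, one must deduce that the Galois cover $\bar F\to \mathbb{P}^1$ is Fermat-type.
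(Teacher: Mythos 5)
Your division into cases according to $|G_1\cap G_2|\in\{1,2,p,2p\}$ is a sensible refinement, and the two intermediate cases are handled correctly: for $|G_1\cap G_2|=p$ the uniqueness of the hyperelliptic involution on $\ol F/\langle\tau\rangle$ (which has genus $\geq 2$ because $\alpha_0>2p$, which in turn follows from $g>(p-1)(2p-1)$ and \eqref{eqn-3-9}) forces $G_1=G_2$; and for $|G_1\cap G_2|=2$ the Castelnuovo--Severi inequality applied to the two distinct degree-$p$ maps $\ol F/\langle\iota\rangle\to\Pbb^1$ gives $g(\ol F/\langle\iota\rangle)\leq(p-1)^2$, hence $\alpha_0\leq 2p$ and $g\leq(p-1)(2p-1)$. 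This second argument is essentially the paper's own mechanism: the paper applies Castelnuovo--Severi to the intermediate $p$-superelliptic curve $D:z^p=F(x)$, whose genus exceeds $(p-1)^2$ once $\alpha_0>2p$, to conclude its $\Zbb/p$-structure is unique; the paper merely organizes the cases by whether $D=\ol F/\langle\iota_1\rangle$ and $\wt D=\ol F/\langle\iota_2\rangle$ are abstractly isomorphic rather than by $|G_1\cap G_2|$.

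The genuine gap is the case $G_1\cap G_2=\{e\}$, which you correctly flag as the obstacle but do not close. The sketch you offer there is not a proof: ruling out non-commutativity of $H=\langle G_1,G_2\rangle$ by ``Hurwitz-type bounds'' is not routine, since $|H|\geq 4p^2$ is far below $84(g-1)$ when $\alpha_0$ is large; and even granting $H\cong(\Zbb/2p)^2$, the ``rigidity step'' identifying the signature of $\ol F\to\ol F/H$ with that of the Fermat curve is exactly the missing content --- the two cyclic structures determine the stabilizers only along the two families of totally ramified points, not the full branch datum of the $H$-cover. Note also that the naive Castelnuovo--Severi applications available in this configuration (to the pair of double covers $\ol F\to D,\wt D$, or to the pair of degree-$p$ covers onto the two hyperelliptic quotients) only yield $\alpha_0\lesssim 4p$, short of the needed $\alpha_0\leq 2p$. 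For what it is worth, the paper's own treatment of the corresponding configuration (the case $D\not\cong\wt D$) is extremely terse, and the claimed numerical contradiction between $g\leq 1+2g(D)+2g(\wt D)$ and \eqref{eqn-3-9} is not apparent as written, since $g$ is roughly $(p-\tfrac12)\alpha_0$ while $1+4g(D)$ is roughly $2(p-1)\alpha_0$; so you have put your finger on the genuinely delicate configuration, but an actual argument for it is still required.
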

 \begin{proof}
 	This follows from the Castelnuovo-Severi inequality (cf. \cite{accola-06}).
 	Indeed, if there were two different $n$-superelliptic covers:
 	$$\pi:~\ol F\lra \bbp^1,\qquad\text{and}\qquad \tilde \pi:~\ol F\lra \bbp^1,$$
 	then the ramified loci of both $\pi$ and $\tilde \pi$
 	have the same number of points with the same ramification indices.
 	Locally, we may assume that $\pi$ and $\tilde \pi$ are defined by
 	$y^n=F(x)$ and $y^n=\wt F(x)$ respectively,
 	where both $F(x)$ and $\wt F(x)$ are separable polynomials of equal degree.
 	Let $\alpha_0=\deg (F)=\deg (\wt F)$,
 	and let $D$ and $\wt D$ be defined by $z^p=F(x)$ and $z^p=\wt F(x)$ respectively.
 	Then $\alpha_0>2p$ by \eqref{eqn-3-9} as $g>(p-1)(2p-1)$,
 	and one has
 	$$g(D)=g(\wt D)=\left\{\begin{aligned}
 	&\frac{(p-1)(\alpha_0-2)}{2}, &~&\text{if~}p~|~\alpha_0;\\
 	&\frac{(p-1)(\alpha_0-1)}{2}, &~&\text{if~}p {\not|~}\alpha_0.
 	\end{aligned}\right.$$
 	
 	If $D\not \cong \wt D$, then $\ol F$ admits two different double covers to $D$ and $\wt D$ respectively.
 	Hence by the Castelnuovo-Severi inequality,
 	$$g\leq 1+2g(D)+2g(\wt D).$$
 	This contradicts \eqref{eqn-3-9} together with the above formulas for $g(D)$ and $g(\wt D)$.
 	
 	If $D\cong \wt D$, then again by the Castelnuovo-Severi inequality,
 	there is a unique action of $G'=\mathbb Z/p\mathbb Z$ on $D\cong \wt D$.
 	By the definitions of $D$ and $\wt D$,
 	this implies that the polynomials $F(x)$ and $\wt F(x)$ are conjugate to each other under the
 	automorphism group of $\bbp^1$.
 	Hence the covers $\pi$ and $\tilde \pi$ are the same, which is also a contradiction.
 \end{proof}

 \begin{lemma}\label{lem-3-52}
 	For any $n$-superelliptic curve $\ol F$,
 	if $\ol F$ admits a unique $n$-superelliptic automorphism group $G\cong \mathbb Z/n\mathbb Z$,
 	then $\sigma\circ\tau=\tau\circ\sigma$ for any $\sigma\in G$ and $\tau \in \Aut(\ol F)$.
 \end{lemma}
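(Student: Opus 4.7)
The plan is to show that $G$ is central in $\Aut(\ol F)$, i.e., that the conjugation action of $\Aut(\ol F)$ on $G$ is trivial. First, uniqueness of $G$ implies that $G$ is normal in $\Aut(\ol F)$: for any $\tau \in \Aut(\ol F)$, the conjugate $\tau G \tau^{-1}$ is again a cyclic subgroup of order $n$, and the quotient $\ol F/(\tau G \tau^{-1}) \cong \ol F/G \cong \bbp^1$ inherits the same branch data (transported via $\tau$), so $\tau G \tau^{-1}$ qualifies as an $n$-superelliptic automorphism group; uniqueness then forces $\tau G \tau^{-1} = G$. Consequently, for any fixed generator $\sigma$ of $G$, there exists $k = k(\tau) \in (\mathbb Z/n\mathbb Z)^*$ with $\tau\sigma\tau^{-1} = \sigma^k$, and the goal reduces to proving $k \equiv 1 \pmod n$.

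The key tool is the tangential character of $\sigma$ at its fixed points. Choose an equation $y^n = F(x)$ for $\ol F$ with $d := \deg F \geq 2$, and let $\sigma$ be the generator defined by $\sigma^*(x) = x$ and $\sigma^*(y) = \xi y$ for a primitive $n$-th root of unity $\xi$. Then ${\rm Fix}(\sigma)$ coincides with the set of totally ramified points of $\pi: \ol F \to \bbp^1 = \ol F/G$: the $d$ finite points $P_1,\ldots,P_d$ over the roots of $F$, plus (when $\gcd(n,d) = 1$) the unique point $P_\infty$ over $\infty$. At each fixed point $P$, $\sigma$ acts faithfully on the tangent space $T_P \ol F$ by multiplication by $\xi^{m_P}$ with $\gcd(m_P, n) = 1$. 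A direct computation using $y$ as a local uniformizer at $P_i$ gives $m_{P_i} \equiv 1 \pmod n$, while at $P_\infty$ (when present) an analogous computation with the local uniformizer $s = v^a t^b$ (where $v = y t^{\lceil d/n \rceil}$ and the integers $a,b$ are chosen via B\'ezout so that $s^n$ acquires a simple zero at $\infty$) yields $m_{P_\infty} \equiv -d^{-1} \pmod n$.

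Since $\gcd(k,n) = 1$ implies ${\rm Fix}(\sigma^k) = {\rm Fix}(\sigma)$, the automorphism $\tau$ permutes ${\rm Fix}(\sigma)$, and the relation $\tau\sigma\tau^{-1} = \sigma^k$ translates tangentially into the congruences $m_P \equiv k \cdot m_{\tau(P)} \pmod n$ for every $P \in {\rm Fix}(\sigma)$. Since the set $\{P_1,\ldots,P_d\}$ has at least two elements while ${\rm Fix}(\sigma)$ contains at most one additional point $P_\infty$, the pigeonhole principle guarantees that some $P_i$ is mapped by $\tau$ to a finite $P_j$, giving $1 \equiv k \cdot 1 \pmod n$ and hence $k \equiv 1$. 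The only subtle case, which is the main technical obstacle, occurs when $\tau$ does swap some finite fixed point with $P_\infty$: then the remaining $d - 1 \geq 1$ finite-to-finite pairs still force $k \equiv 1$, while the swap itself requires $k \equiv -d \pmod n$; combining these gives $d \equiv -1 \pmod n$, under which $m_{P_\infty} \equiv -d^{-1} \equiv 1$ automatically, so all characters agree and no contradiction arises. Thus $\tau\sigma\tau^{-1} = \sigma$ in every case, establishing centrality of $G$ in $\Aut(\ol F)$.
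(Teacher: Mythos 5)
Your proof is correct, but it takes a genuinely different route from the paper's. Both arguments begin identically: uniqueness of the $n$-superelliptic automorphism group forces $\tau G\tau^{-1}=G$, so $\tau$ descends to $\ol F/G\cong \bbp^1$. From there the paper works in coordinates: $\tau^*(x)=\tau_1(x)$ is a M\"obius transformation preserving the branch locus, the functional equation $(\tau^*y)^n=F(\tau_1(x))$ shows that $(\tau^*y)/y$ has $n$-th power a constant times an $n$-th power in $\Cbb(x)$, hence $\tau^*(y)=c(x)\,y$ for an explicit $c(x)\in\Cbb(x)$; that $\tau^*$ preserves the line $\Cbb(x)\cdot y$ (the $\xi$-eigenspace of $\sigma^*$) is precisely what makes the commutation check immediate. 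You instead reduce, via normality, to showing that the conjugation character $k$ in $\tau\sigma\tau^{-1}=\sigma^k$ equals $1$, and you pin it down from the local rotation numbers of $\sigma$ at its fixed points (the totally ramified points of $\pi$): the $d=\deg F$ points of $\ol F$ over the roots of $F$ all carry rotation number $1$, at most one further fixed point lies over $\infty$, so some finite fixed point maps to a finite fixed point and $m_P\equiv k\,m_{\tau(P)}\pmod n$ gives $k\equiv 1$. Your argument is more conceptual, avoids writing $\tau$ explicitly on the second coordinate, and would adapt to other cyclic covers in which at least two branch points share the same local monodromy; the cost is invoking the standard facts that the tangential character at a fixed point of a finite-order automorphism is faithful and transforms by conjugation as you state. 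Two small remarks: you should make explicit the (harmless) hypothesis $\deg F\geq 2$, which holds for every superelliptic curve of positive genus and in all the paper's applications; and the paragraph on the swap with $P_\infty$ is only a consistency check, since a single finite-to-finite pair already forces $k\equiv 1$.
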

 \begin{proof}
 	Let $\ol F$ be defined by $y^n=F(x)$ as usual.
 	As $\ol F$ admits a unique $n$-superelliptic automorphism group,
 	it follows that $\sigma$ acts only $y$ and $\tau$
 	induces an automorphism on the quotient $\bbp^1=\ol F/G$.
 	In other words,
 	$$\sigma(x,y)=(x,\xi y) ~\text{~with $\xi^n=1$},\quad\qquad \tau(x,y)=\big(\tau_1(x), \tau_2(x,y)\big),$$
 	Note that $\tau_1$ is an automorphism of $\bbp^1$
 	and keeps the branch locus of $\pi:\,\ol F\to \bbp^1=\ol F/G$ invariant.
 	
 	If $\pi$ is branched over $\infty$ with local monodromy $a_{\infty}\neq 1$,
 	then $\tau_1$ must keep $\infty$ invariant,
 	and $F\big(\tau_1(x)\big)$ has the same set of roots as $F(x)$.
 	Hence $\tau_1(x)=ax+b$ for some $a,b\in \mathbb C$,
 	and $F\big(\tau_1(x)\big)=k^{*} \cdot F(x)$ with $k^*\neq 0$.
 	As $\tau$ is an automorphism group of $\ol F$, one obtains that
 	$\tau_2(x,y)=\eta\cdot y$ with $\eta^n=k^*$.
 	Therefore, it is clear that $\sigma\circ\tau=\tau\circ\sigma$.
 	
 	It remains to consider the case when $\pi$ is not branched over $\infty$,
 	since the case when $\pi$ is branched over $\infty$ with local monodromy $1$ can
 	be reduced to the former case by automorphism of $\bbp^1$.
 	In this case, $n~|~\alpha_0$, where $\alpha_0=\deg(F(x))$.
 	Moreover, as an automorphism of $\bbp^1$, $\tau_1$ has the form $\tau_1(x)=\frac{ax+b}{cx+d}$.
 	Since $\tau_1$ keeps the set of roots of $F(x)$ invariant,
 	$F\big(\tau_1(x)\big)=\frac{k^{*} \cdot F(x)}{(ax+b)^{\alpha_0}}$.
 	Hence $\tau_2(x,y)=\frac{\eta\cdot y}{(ax+b)^{\alpha_0/n}}$ with $\eta^n=k^*$.
 	One checks easily again that $\sigma\circ\tau=\tau\circ\sigma$.
 	This completes the proof.
 \end{proof}

 \begin{proof}[Proof of \autoref{prop-3-6}]
 	We prove by contradiction. Assume that $\rho_{n,n'}(C)$ is not a Shimura curve.
 	Then  by \autoref{lem-3-19} one has
 	\begin{equation}\label{eqn-3-202}
 	\text{$n'~|~\alpha_0~$ and $~n {\not|~}\alpha_0$}.\qquad
 	\end{equation}
 	\begin{equation}\label{eqn-3-201}
 	F_{\olb,i}^{1,0}=0, \qquad \text{for any $i\geq m'$ with $m'{\not|}~i$, where $m'=n/n'$}.
 	\end{equation}
 	On the other hand, by \eqref{eqn-3-35} one has
 	$$\rank E_{\olb,m'+1}^{1,0} - \rank E_{\olb,n-m'-1}^{1,0}
 	=\alpha_0-1-2\left[\frac{(m'+1)\alpha_0}{n}\right].$$
 	
 	If $n'\geq 4$, or $n'=m'=3$,
 	then one checks easily that
 	$$\rank E_{\olb,m'+1}^{1,0} - \rank E_{\olb,n-m'-1}^{1,0}>0.$$
 	This together with \eqref{eqn-3-171} implies in particular that $F_{\olb,m'+1}^{1,0}\neq 0$,
 	which contradicts \eqref{eqn-3-201}.
 	
 	If $n'=3$ and $m'=2$, then $n=6$ and $\alpha_0=6k+3$ for some $k\geq 1$ by \eqref{eqn-3-202};
 	if $n'=2$, then $n=4$ by the definition of $n'$, and $\alpha_0=4k+2$ for some $k\geq 1$ by \eqref{eqn-3-202}.
 	In any of the two cases above, by \autoref{lem-3-51} and \autoref{lem-3-52},
 	one verifies easily that the assumptions of \autoref{lem-3-19}\,(v) are satisfied.
 	Hence by \eqref{eqn-3-153} one has $\rank A_{\olb}^{1,0}<1$, i.e., $\rank A_{\olb}^{1,0}=0$.
 	In other words, $\deg E_{\olb}^{1,0}=\deg A_{\olb}^{1,0}=0$, which is a contradiction since $\bar f$ is non-isotrivial.
 	This completes the proof.
 \end{proof}

 %
 %

 We end this subsection by showing the following property on a Shimura $C\subseteq \mathcal{TS}_{g,n}$ with $2~|~n$.
 \begin{corollary}\label{cor-3-4}
 	Let $C\subseteq \mathcal{TS}_{g,n}$ be a Shimura curve as in \autoref{prop-3-6}.
 	If $C$ is compact and $n$ is even, then either
 	
 	(i)~ $\rho_{n,2}(C)$ is a point and $A_{\olb,n/2}^{1,0}=0$;\\[1mm]
 or
 	
 	(ii)~ $\rho_{n,2}(C)$ is also a Shimura curve, $\rank A_{\olb,n/2}^{1,0}$ is even, and
 	\begin{equation}\label{eqn-3-208}
 		\rank A_{\olb,n/2}^{1,0} \geq \frac{\rank E_{\olb,n/2}^{1,0}}{2}.
 	\end{equation}
 \end{corollary}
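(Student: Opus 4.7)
The plan is to split along the dichotomy provided by \autoref{lem-3-15}: the image $\rho_{n,2}(C)$ is either a CM point or a Shimura curve, which will give cases (i) and (ii) respectively. In both cases the key identification comes from \autoref{lem-3-18} applied with $n_1=2$: since $\wt E_{\olb,0}=0$ and $n_1-1=1$, the entire Higgs bundle of the quotient family $\bar f_1$ is concentrated in its $i=1$ eigenspace, giving
$$(\wt E^{1,0}_{\olb}\oplus \wt E^{0,1}_{\olb},\,\wt\theta_{\olb}) \;\cong\; (E^{1,0}_{\olb}\oplus E^{0,1}_{\olb},\,\theta_{\olb})_{n/2},$$
and this identification is compatible with the canonical decomposition into ample and flat parts. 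In particular, $\rank E^{1,0}_{\olb,n/2}=\rank \wt E^{1,0}_{\olb}=g_1$ and $\rank A^{1,0}_{\olb,n/2}=\rank \wt A^{1,0}_{\olb}$.

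For case (i), assuming $\rho_{n,2}(C)$ is a CM point, I would invoke \autoref{lem-3-19}(iii) with $n_1=2$ and $m_1=n/2$: after a suitable finite étale base change, the Higgs subbundle $\bigoplus_{i=1}^{n_1-1}(E^{1,0}_{\olb}\oplus E^{0,1}_{\olb},\theta_{\olb})_{im_1}$, which in this setting is precisely $(E^{1,0}_{\olb}\oplus E^{0,1}_{\olb},\theta_{\olb})_{n/2}$, becomes trivial. Since triviality forces the Higgs field on this eigenspace to vanish generically and the ample subbundle is characterized as the part where $\theta_{\olb}$ is generically an isomorphism, this forces $A^{1,0}_{\olb,n/2}=0$, which is (i).

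For case (ii), $\rho_{n,2}(C)$ is a Shimura curve, and it is compact since $C$ is (again by \autoref{lem-3-15}). Applying \autoref{lem-evenrank} to $\rho_{n,2}(C)$ gives that $\rank \wt A^{1,0}_{\olb}$ is even, which via the identification above yields the desired evenness of $\rank A^{1,0}_{\olb,n/2}$. The inequality \eqref{eqn-3-208} translates, via the same identification, to the assertion $\rank \wt A^{1,0}_{\olb}\geq g_1/2$ for the compact hyperelliptic Shimura curve $\rho_{n,2}(C)$, or equivalently $\rank \wt F^{1,0}_{\olb}\leq g_1/2$.

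This last bound on the flat part is the main obstacle. I would derive it by combining the representation-theoretic classification from the proof of \autoref{lem-evenrank} with the analysis of Shimura curves in the hyperelliptic Torelli locus carried out in \cite{lu-zuo-14}: writing the symplectic representation $\Gbf^{\der}\to \Sp_{2g_1}$ as $\Qbb^{2g_1}=T\oplus V$ with $V$ corresponding to $\wt A$ and $T$ to $\wt F$, the divisibility constraint $4\,|\,\dim_{\Qbb}V$ (from the quaternionic or unitary structure on $V$) combined with the explicit bounds on the flat part of compact Shimura curves generically in the hyperelliptic Torelli locus established in \cite{lu-zuo-14} forces $\dim_{\Qbb}T\leq \dim_{\Qbb}V$, yielding $\rank \wt F^{1,0}_{\olb}\leq g_1/2$ and completing the proof.
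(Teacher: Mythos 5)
Your architecture matches the paper's: the dichotomy from \autoref{lem-3-15}, the identification of the Higgs bundle of the quotient hyperelliptic family $\bar f_1$ with the eigenspace $\left(E^{1,0}_{\olb}\oplus E^{0,1}_{\olb},\theta_{\olb}\right)_{n/2}$ via \autoref{lem-3-18} (with $n_1=2$), the vanishing of $A^{1,0}_{\olb,n/2}$ when $\rho_{n,2}(C)$ is a point, and the appeal to \autoref{lem-evenrank} for the parity of $\rank A^{1,0}_{\olb,n/2}$. All of that is sound and is essentially what the paper does.

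The gap is in the final step, which you flag as ``the main obstacle'' and then do not close. The inequality \eqref{eqn-3-208} reduces, via the identification, to $\rank \wt F^{1,0}_{\olb}\leq \tfrac12\rank\wt E^{1,0}_{\olb}=g_1/2$ for the compact hyperelliptic Shimura curve $\rho_{n,2}(C)$, and your proposed derivation --- the divisibility $4\mid\dim_{\Qbb}V$ ``combined with explicit bounds'' from \cite{lu-zuo-14} forcing $\dim_{\Qbb}T\leq\dim_{\Qbb}V$ --- does not work as stated. Divisibility of $\rank\wt A_{\olb}$ by $4$ places no upper bound on the complementary flat summand ($\rank\wt A_{\olb}=4$ with $\rank\wt F_{\olb}=2g_1-4$ arbitrarily large is perfectly consistent with it), and the classification behind \autoref{lem-evenrank} constrains only the subrepresentation $V$ carrying the ample part, saying nothing about the size of the trivial summand $T$. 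The paper instead uses two concrete inputs that your sketch omits: first, that for a non-isotrivial semi-stable family of hyperelliptic curves the flat part becomes trivial after an \'etale base change, so that $\rank\wt F^{1,0}_{\olb}=q_{\bar f_1}$, the relative irregularity (\cite[Thm.\,4.7]{lu-zuo-14} or \cite[Thm.\,A.1]{lu-zuo-13}); and second, Xiao's theorem \cite[Thm.\,1]{xiao-92} bounding the relative irregularity of a non-isotrivial hyperelliptic fibration by $g_1/2$, which together give \eqref{eqn-3-209}. Without the first input one only has the inequality $q_{\bar f_1}\leq\rank\wt F^{1,0}_{\olb}$, which points the wrong way; without the second there is no source for the numerical bound at all.
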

 \begin{proof}
 	Consider the image $\rho_{n,2}(C)\subseteq \mathcal{TH}_{g',2}$,
 	where $\rho$ is defined in \eqref{eqn-3-123}.
 	By \autoref{lem-3-15}, $\rho_{n,2}(C)$ is either a point,
 	or a compact Shimura curve.
 	In the first case, it is clear that $A_{\olb,n/2}^{1,0}=0$ by \autoref{lem-3-18}.
 	In the later case,  from \autoref{lem-evenrank} together with \eqref{eqn-3-72}, it follows that
 	$\rank A_{\olb,n/2}^{1,0}$ is even;
 	and for \eqref{eqn-3-208}, it suffices to prove
 		\begin{equation}\label{eqn-3-209}
 		\rank \wt A_{\olb}^{1,0} \geq \frac{\rank \wt E_{\olb}^{1,0}}{2},
 		\end{equation}
 	where we write $\wt{\bullet}$ for the corresponding objects associated to
 	the family $\bar f_1$ of semi-stable hyperelliptic curves associated to the Shimura curve $\rho_{n,2}(C)$.
 	In this hyperelliptic case, up to \'etale base change, one has $q_{\bar f_1}=\rank \wt E_{\olb}^{1,0}-\rank \wt A_{\olb}^{1,0}$ (cf. \cite[Thm.\,4.7]{lu-zuo-14} or \cite[Thm.\,A.1]{lu-zuo-13}),
 	and hence \eqref{eqn-3-209} follows from \cite[Thm.\,1]{xiao-92}
 	since $\bar f_1$ is clearly non-isotrivial.
 \end{proof}

 \subsection{Proof of \autoref{prop-3-14}}\label{sec-pf-3-14}
 (1).
 We prove by contradiction.
 Assume that $F_{\olb,i_0}^{1,0}\neq 0$ for some $i_0>n/2$.
 Then $F_{\olb,i_0}^{1,0}=E_{\olb,i_0}^{1,0}$,
 since $\rank E_{\olb,i_0}^{1,0}\leq 1$ by \eqref{eqn-3-35} together with the assumption that $\alpha_0=4$.
 Let $\wt R\subseteq \wt Y$ and $\wt \Gamma \subseteq \wt Y$ be the same as in \autoref{lem-3-20}.
 Then
 $$\begin{aligned}
 &\wt \Gamma\cdot \bigg(\omega_{\wt Y}(\wt R) \otimes \left({\mathcal L^{(i)}}^{-1} \otimes {\mathcal L^{(i_0)}}^{-1}\right)\bigg)<0,\qquad
 \forall~1\leq i\leq n-1.
 \end{aligned}$$
 Hence by \autoref{lem-3-20} and \autoref{lem-3-21},
 after a suitable finite \'etale base change,
 there exists a unique fibration $\bar f':\,\ols \to \olb'$ such that
 $$\dim H^0\big(\ols,\Omega_{\ols}^{1}\big)_i=\rank F_{\olb,i}^{1,0}, \quad
 H^0\big(\ols,\Omega_{\ols}^{1}\big)_i\subseteq (\bar f')^* H^0\big(\olb',\Omega_{\olb'}^{1}\big),
 \quad \forall~1\leq i\leq n-1.$$
 In other words, $g(\olb')\geq \rank F_{\olb}^{1,0}$,
 which is a contradiction by \autoref{lem-3-71}.

 (2).
 We prove again by contradiction.
 Assume that there exists some $i_0>n/2$
 such that $\rank F^{1,0}_{\olb,i_0}=\rank E^{1,0}_{\olb,i_0}=1$.
 Then similar as above, by \eqref{eqn-3-35} and \eqref{eqn-3-20} one checks that
 $$
 \wt \Gamma\cdot \bigg(\omega_{\wt Y}(\wt R) \otimes \left({\mathcal L^{(i)}}^{-1} \otimes {\mathcal L^{(i_0)}}^{-1}\right)\bigg)<0,
 \qquad\forall~1\leq i\leq n-1.
 $$
 Therefore, by \autoref{lem-3-20} and \autoref{lem-3-21},
 after a suitable \'etale base change,
 there exists a fibration $\bar f':\,\ols \to \olb'$ such that
 $g(\olb')\geq \rank F^{1,0}_{\olb}$,
 which contradicts \autoref{lem-3-71}.
 %
 \qed

{\vspace{2mm} \bf Acknowledgment.}
We would like to thank Frans Oort for his interest to our paper and many helpful comments.
We would also like to thank Shengli Tan for the discussion on the automorphism group action
in a family of superelliptic curves.


 \providecommand{\bysame}{\leavevmode\hbox to3em{\hrulefill}\thinspace}
 \providecommand{\MR}{\relax\ifhmode\unskip\space\fi MR }
 \providecommand{\MRhref}[2]{%
 	\href{http://www.ams.org/mathscinet-getitem?mr=#1}{#2}
 }
 \providecommand{\href}[2]{#2}

 \end{document}